\documentclass[12pt,oneside,english,a4paper]{amsart}
\usepackage[T1]{fontenc}
\usepackage[utf8]{inputenc}
\usepackage[dvipsnames]{xcolor}
\usepackage[numbers,sort,compress]{natbib}
\usepackage{comment,todonotes}
\usepackage{bm}
\usepackage{amstext,amsthm,amssymb}
\usepackage{xparse,mathrsfs,mathtools}
\usepackage[english]{babel}
\usepackage{tikz}
\usepackage{esint}
\usepackage{float}
\usepackage{subcaption}
\usepackage{placeins}
\usepackage{lmodern, microtype}  
\usepackage{geometry}

\newtheorem*{thm*}{Theorem}
\newtheorem{thm}{Theorem}[section]

\newtheorem{lem}{Lemma}[section]
\makeatletter\let\c@lem\c@thm\makeatother

\newtheorem{cor}{Corollary}[section]
\makeatletter\let\c@cor\c@thm\makeatother

\newtheorem{prop}{Proposition}[section]
\makeatletter\let\c@prop\c@thm\makeatother

\makeatletter\let\c@Claim\c@thm\makeatother

\makeatletter\let\c@question\c@thm\makeatother

\newtheorem{Def}{Definition}[section]
\makeatletter\let\c@Def\c@thm\makeatother

\newtheorem{Conj}{Conjecture}[section]
\makeatletter\let\c@Conj\c@thm\makeatother

\theoremstyle{remark}
\newtheorem{rem}{Remark}[section]
\makeatletter\let\c@rem\c@thm\makeatother
\newtheorem*{rem*}{Remark}

\usepackage[pdftex,unicode=true,%
pdfusetitle,pdfdisplaydoctitle=true,%
pdfpagemode=UseOutlines,%
bookmarks=true,bookmarksnumbered=false,bookmarksopen=true,bookmarksopenlevel=2,%
breaklinks=true,%
pdfencoding=unicode,psdextra,
pdfcreator={},
pdfborder={0 0 0}
]{hyperref}

\usepackage[capitalise,noabbrev]{cleveref}

\crefname{thm}{Theorem}{Theorems}
\Crefname{thm}{Theorem}{Theorems}
\crefname{lem}{Lemma}{Lemmas}
\Crefname{lem}{Lemma}{Lemmas}
\crefname{cor}{Corollary}{Corollaries}
\Crefname{cor}{Corollary}{Corollaries}
\crefname{prop}{Proposition}{Propositions}
\Crefname{prop}{Proposition}{Propositions}
\crefname{Claim}{Claim}{Claims}
\Crefname{Claim}{Claim}{Claims}
\crefname{question}{Question}{Questions}
\Crefname{question}{Question}{Questions}
\crefname{Def}{Definition}{Definitions}
\Crefname{Def}{Definition}{Definitions}
\crefname{Conj}{Conjecture}{Conjectures}
\Crefname{Conj}{Conjecture}{Conjectures}
\crefname{rem}{Remark}{Remarks}
\Crefname{rem}{Remark}{Remarks}
\crefname{equation}{Equation}{Equations}
\Crefname{equation}{Equation}{Equations}

\newcommand{\NN}{\mathbb{N}}
\newcommand{\ZZ}{\mathbb{Z}}
\newcommand{\QQ}{\mathbb{Q}}
\newcommand{\RR}{\mathbb{R}}
\newcommand{\CC}{\mathbb{C}}
\newcommand{\PP}{\mathbb{P}}

\newcommand{\FFF}{K}
\newcommand{\FFFF}{K_{\bullet, D}}

\newcommand{\QQfield}{\mathbb{Q}(\sqrt{-D})}

\newcommand{\hfat}{\mathfrak{h}}

\newcommand{\Tgaus}{\mathcal{G}}
\newcommand{\Cgaus}{\mathcal{T}}
\newcommand{\Mpartition}{\mathscr{P}}
\newcommand{\myprojector}{\mathscr{P}}
\newcommand{\mynihl}{\mathscr{N}}
\newcommand{\myid}{\mathbb{I}}
\newcommand{\BanachHolo}{\mathscr{C}(\mathscr{P})}

\newcommand{\ringoel}{\mathscr{O}_L}
\newcommand{\ringint}{\mathcal{O}_K}

\newcommand{\Myoperator}{\mathscr{L}}
\newcommand{\Aopera}{\mathscr{K}}
\newcommand{\Myoperatordual}{\mathscr{L}^\star}
\newcommand{\ExpX}{\mathbb{E}_X}

\newcommand{\tick}[1]{\bm{#1}}
\newcommand{\alphares}{\langle \tick{\alpha} \rangle}
\newcommand{\fatalpha}{\bm{\alpha}}

\newcommand{\LandauO}{\mathcal{O}}

\newcommand{\vol}{{\text{vol}}}
\newcommand{\boar}[1]{\left[ #1 \right]}

\DeclareMathOperator{\sgn}{\text{sgn}}

\renewcommand\Re{\operatorname{Re}}
\renewcommand\Im{\operatorname{Im}}

\title{The Limiting distribution of elliptic Dedekind sums}
\author{M. BORDIGNON AND P. MINELLI}

\address{Dipartimento di Matematica “F. Enriques”, Università degli Studi di Milano, Via Saldini 50, 20133 Milano, Italy.}
\email{matteo.bordignon@unimi.it}

\address{Institute for Analysis and Number Theory, TU-Graz, Kopernikussgasse 24, 8010 Graz, Austria}
\email{minelli@tugraz.at}

\subjclass[2020]{Primary 11F20, 11J70 ; Secondary 37C30, 60F05}

\begin{document}

\begin{abstract}
    We consider elliptic Dedekind sums that were introduced by Sczech as generalizations of the classical ones to complex lattices. We prove that these sums -suitably normalized- have a Gaussian limiting distribution. As an application, we prove a conjecture due to Ito \cite{ItoDed}.
\end{abstract}

\maketitle

\section{Introduction}
Let $h,k$ be integers with $k> 0$. \textit{Dedekind sums} are defined as
\[
\mathfrak{s}(h,k) \;=\; \sum_{l=1}^{k-1}
  \left(\!\left(\frac{l}{k}\right)\!\right)
  \left(\!\left(\frac{hl}{k}\right)\!\right),
\]
where
\[
((x)) \;=\;
\begin{cases}
  x - \lfloor x \rfloor - \tfrac{1}{2} & \text{if } x \notin \mathbb{Z}, \\
  0 & \text{if } x \in \mathbb{Z},
\end{cases}
\]
is the usual sawtooth function. It is possible to show that $\mathfrak{s}$ depends only on the quotient of its two arguments, that is $\mathfrak{s}:\QQ\backslash\{0\} \to \RR$, by $\frac{h}{k}\to \mathfrak{s}(\frac{h}{k}).$ Therefore, from now on, we will tacitly assume $\gcd(h,k)=1$. These sums appeared for the first time in Dedekind's investigation of the transformation of the $\eta$ function under the group $\text{SL}_2(\mathbb{Z})$ \cite{DedekindErl}.
The Dedekind eta function is given by
\[
\eta(\tau) \;=\; e^{\pi i \tau/12}
  \prod_{n=1}^{\infty}\bigl(1 - e^{2\pi i n\tau}\bigr),
  \qquad \tau \in \mathbb{H}.
\]
Under the action of a linear fractional transformation given by an element
$A= \bigl(\begin{smallmatrix} a & b \\ c & d \end{smallmatrix}\bigr)
\in \mathrm{SL}_2(\mathbb{Z})$ with $c > 0$, the $\eta$ function transforms (see e.g. \cite[Chapter 3]{Apostol1990}) as
\[
\eta\!\left(A\tau\right)
  \;=\; \varepsilon(a,b,c,d)\cdot(-i(c\tau+d))^{1/2}\cdot\eta(\tau),
\]
where the multiplier is given by
\[
\varepsilon(a,b,c,d)
  \;=\; \exp\!\left(\pi i\left(\frac{a+d}{12c} - \mathfrak{s}(d,c)\right)\right),
\]
or, written in logarithmic terms,
\begin{align}\label{eq:logtransform}
\log \eta(Az)=\log \eta(z)+ i\pi\left(\frac{a+d}{12c}-\mathfrak{s}(d,c)\right)+\frac{1}{2}\log\left(-i(cz+d)\right).
\end{align}
From the above relations it is possible to extract the well-known \textit{reciprocity formula}
\begin{align}\label{eq:reclawded}
    \mathfrak{s}(h,k)+\mathfrak{s}(k,h)=\frac{1}{12}\left(\frac{h}{k}+\frac{k}{h}+\frac{1}{hk}\right)-\frac{1}{4},
\end{align}
valid for positive integers $h,k$ with $\gcd(h,k)=1$.  These sums have been extensively studied. We briefly mention the result of Hickerson \cite{Hickerson}, who showed that the set $\{\mathfrak{s}(\frac{h}{k}): \frac{h}{k}\in \mathbb{Q}\}$ is dense in the real line and the set $\{((\frac{h}{k}), \mathfrak{s}(\frac{h}{k})): \frac{h}{k}\in \QQ\}$ is dense in the plane. Concerning distributional results, Vardi \cite{Vardiequidistribution} showed that for any real $r>0$, the set
\[
\{ r\mathfrak{s}(h,k) \text{ mod } 1: 0<h<k\leq X \text{ with } \gcd(h,k)=1\}
\]
equidistributes in $[0,1)$ as $X\to\infty$. Moreover, he proved that, under a suitable normalization, Dedekind sums possess a limiting distribution that is \textit{Cauchy},  see \cite{Vardi}. More formally, let us denote $\Omega_Q:=\{\frac{h}{k}\in \mathbb{Q}\cap [0,1], \, 1\leq k\leq Q\}$, equipped with the uniform probability measure. Vardi proved the following.

\begin{thm}\label{thm:Vardi}
   For any $\epsilon>0$ and  $Q$ large enough, we have
\begin{align*}
\mathbb{P}_Q\left(z\in \Omega_Q: \frac{\mathfrak{s}(z)}{\log Q}\leq \frac{x}{2\pi}\right)= \frac{1}{\pi}\int_{-\infty}^x \frac{1}{1+y^2} \,dy +\LandauO\left(\frac{1}{(\log Q)^{\frac{1}{5}-\epsilon}}\right),
\end{align*} 
 uniformly in $x\in \mathbb{R}$.
\end{thm}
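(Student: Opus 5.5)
The plan is to express $\mathfrak{s}(h/k)$ through the continued fraction expansion of $h/k$, reduce the statement to a limit theorem for sums of partial quotients, and then prove a stable (Cauchy) law for those sums with a rate. Iterating the reciprocity formula \eqref{eq:reclawded} along the Euclidean algorithm, and using that $\mathfrak{s}$ depends only on the fraction modulo $1$, Barkan--Hickerson--Knuth type formulas give
\[
\mathfrak{s}\left(\frac{h}{k}\right)=\frac{1}{12}\sum_{j=1}^{n}(-1)^{j+1}a_j+\LandauO(1),
\qquad \frac{h}{k}=[0;a_1,\dots,a_n].
\]
The $\LandauO(1)$ term collects the boundary terms $\tfrac{1}{12}\bigl(\tfrac{h}{k}+\tfrac{1}{hk}\bigr)$ and the parity correction $-\tfrac14$. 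These are bounded, so after dividing by $\log Q$ they can be absorbed into the error. The problem therefore becomes: show that the alternating sum $S(z)=\sum_j(-1)^{j+1}a_j(z)$, normalized by $\log Q$, has a Cauchy law on $\Omega_Q$. Concretely one needs $S/\log Q$ to converge to a Cauchy variable with scale $\pi/6$, so that $\mathfrak{s}/\log Q$ converges to a Cauchy variable with scale $1/(2\pi)$, which is the normalization in the statement.

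For this I would use the transfer operator method for the Gauss map $T(x)=\{1/x\}$, following Vallée and Baladi--Vallée. Define the weighted operators
\[
\Myoperator_{s,t}f(x)=\sum_{a\ge1}e^{ita}(a+x)^{-2s}f\!\left(\frac{1}{a+x}\right).
\]
Because the sum alternates, one works with the two-step operator $\Myoperator_{s,t}\Myoperator_{s,-t}$, which is equivalent to keeping track of the parity. The Dirichlet series $\sum_{h/k}e^{itS(h/k)}k^{-2s}$ can then be written as $(I-\Myoperator_{s,t}\Myoperator_{s,-t})^{-1}$ applied to suitable functions, plus an odd-length correction. For small $|t|$, perturbation theory on a Banach space of holomorphic functions gives a dominant eigenvalue
\[
\lambda(s,t)=1-\frac{6}{\pi^2}\cdot\frac{\pi}{2}|t|\,(1+o(1))-\kappa(s-1)+\dots .
\]
The $|t|$ term, rather than a $t^2$ term, comes from the heavy tail $\mathbb{P}(a_1=a)\asymp a^{-2}$. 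Alternation leaves the modulus of the linear term unchanged, but the imaginary parts $\pm i\,t\log|t|$ cancel between consecutive steps. This cancellation is why the limit is symmetric Cauchy with no drift.

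Next comes a Perron-type formula, combined with Dolgopyat-type estimates (Baladi--Vallée) for $\Myoperator_{s,t}$ on vertical lines $|\Im s|$ large. These give a quasi-power expansion of the characteristic function on $\Omega_Q$:
\[
\mathbb{E}_Q\!\left[e^{i\tau S/\log Q}\right]=\exp\!\left(-c|\tau|+\LandauO\!\left(\frac{|\tau|\log\log Q}{\log Q}\right)\right)
\]
for $|\tau|\le(\log Q)^{\theta}$. Feeding this into the Berry--Esseen smoothing inequality yields uniform convergence in $x$ with a power-of-$\log Q$ error.

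I expect the main obstacle to be controlling the operator uniformly in $t$ and $s$ in the non-analytic regime. The map $t\mapsto\Myoperator_{s,t}$ is only Hölder (of order $1^-$) at $t=0$ because of the heavy tails, so the perturbative expansion of $\lambda(s,t)$ carries $|t|\log|t|$ error terms. These have to be balanced against the Dolgopyat bound and the truncation in Perron's formula. Optimizing these competing errors is where an exponent like $\tfrac15-\epsilon$ should come from. I would first attempt the simpler route of proving the quasi-power estimate for $s$ real near $1$ only, and handle the counting over $k\le Q$ by a Tauberian argument, accepting the weaker rate that results.
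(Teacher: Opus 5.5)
The paper gives no proof of this statement. It quotes it from Vardi, whose argument goes through the link between Dedekind sums and Kloosterman sums and the spectral theory of automorphic forms. The paper adds that Bettin and Drappeau reproved it dynamically, with period law $f_1(x)=\lfloor 1/x\rfloor$, $f_2=-f_1$, and obtained the stronger error $(\log Q)^{-1+\epsilon}$.

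Your main route is essentially that dynamical reproof, i.e.\ the one-dimensional model of what the paper does for Sczech sums. It runs through Barkan--Hickerson--Knuth, the two-step twisted Gauss operator, perturbation at $(s,t)=(1,0)$, Perron's formula with a Dolgopyat bound on vertical lines, and Berry--Esseen. Compared with Vardi's method, it needs only the reciprocity law, not the modular structure behind it. It also adapts to other period laws and to the complex setting, and it gives a better rate. Vardi's approach instead buys the connection with Kloosterman sums, which he also used for his equidistribution result.

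Your guess that $\tfrac15$ comes from balancing the non-analyticity in $t$ against the Dolgopyat bound is mistaken: $\tfrac15$ is an artifact of Vardi's method. Carried out carefully, your scheme gives $(\log Q)^{-1+\epsilon}$, which implies the statement. The two inputs are the continuity modulus $|t|^{1-2\delta}$ of $t\mapsto\Myoperator_{s,t}$ on $\sigma\ge 1-\delta$, and $1-s_0(t)=\tfrac{3}{\pi}|t|+\LandauO(|t|^{2-\epsilon})$.

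\textbf{Normalization.} Since $\mathfrak{s}=S/12+\LandauO(1)$ and $2\pi\mathfrak{s}/\log Q$ is standard Cauchy, $S/\log Q$ must have scale $12/(2\pi)=6/\pi$, not $\pi/6$. Per Gauss step, the real part of $\lambda(1,t)-1$ is $-\frac{\pi}{2\log 2}|t|+\dots$, because the Gauss density at $0$ is $1/\log 2$, not $6/\pi^2$. For the two-step operator this gives $\lambda(1,t)-1=-\frac{\pi}{\log 2}|t|+\dots$. Dividing by $-\partial_s\lambda(1,0)=\frac{\pi^2}{3\log 2}$ gives $1-s_0(t)\sim\frac{3}{\pi}|t|$. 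Hence $\chi_Q(t)\approx Q^{2(s_0(t)-1)}=e^{-\frac{6}{\pi}|t|\log Q}$, as required.

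\textbf{Cancellation of imaginary parts.} It is exact at first order. By invariance, $a_1$ and $a_2$ have the same law under $\mu$, so the first-order term is $2\Re\int(e^{ita_1}-1)\,d\mu$. The cross term $\int(e^{ita_1}-1)(e^{-ita_2}-1)\,d\mu$ is $\LandauO(t^2\log^2|t|)$, the analogue of the paper's bound for \eqref{eq:twist}.

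\textbf{Behaviour near $\tau=0$.} The Perron error $\LandauO(Q^{-\delta})$ is not proportional to $|\tau|$. So in the Berry--Esseen integral you need an a priori bound $\chi_Q(t)=1+\LandauO(|t|\log^2 Q)$, the analogue of Lemma~\ref{lem:veryverysmalt}. It follows from $\mathbb{E}_Q\sum_j a_j\ll\log^2 Q$.

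\textbf{The Tauberian fallback.} Drop the route via real $s$ plus a Tauberian theorem; it cannot give the statement.
\begin{itemize}
\item The coefficients $\sum_h e^{itS(h/k)}$ are complex, so Karamata-type theorems do not apply.
\item Wiener--Ikehara needs control on all of $\Re s=1$ and gives no remainder.
\item In the relevant range $t=\tau/\log Q$, the pole $s_0(t)$ lies at distance $\asymp|\tau|/\log Q$ from $1$.
\end{itemize}
A rate uniform in $x$ requires the contour shift, and hence the vertical-line bound that Proposition~\ref{prop:LV-bound} provides here.
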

In both these results, Vardi exploited the link with the spectral theory of automorphic forms. We refer the reader to the thematic monograph \cite{RadeGross} and the more up-to-date survey \cite{Girstmairsurvey} for a comprehensive overview of various results regarding these sums.

\medskip
There are also several possible generalizations of Dedekind sums; see for instance \cite{Apostolgen, Zagier} or the work of Beck \cite{Beck2003}, which unifies many of the previous generalizations. A few decades ago, Sczech \cite{Sczechinv} introduced an \textit{elliptic} version of Dedekind sums, where $\mathbb{Z}$ is essentially replaced by the ring of integers $\ringint$ of an imaginary quadratic number field $K=\QQfield$, with $D>0$ a square-free integer. These sums, which are known in the literature as \textit{Sczech sums} or \textit{elliptic sums}, share many properties with their classical analogue. In order to better describe the construction performed by Sczech, let us recall that for coprime $h,k \in \mathbb{Z}$, $k \neq 0$, we have the following alternative representation of Dedekind sums:
\begin{align}\label{def:dedekindcotdef}
\mathfrak{s}(h,k)=\frac{1}{4k}\sum_{j=1}^{k-1}\cot\left(\pi\frac{hj}{k}\right)\cot\left(\pi\frac{j}{k}\right).
\end{align}
Let now $K$ be an imaginary quadratic field and let $L\subset \mathbb{C}$ be a non-degenerate lattice. Let $\ringoel=\{m\in \mathbb{C}: mL\subset L\}$ be the ring of multipliers. This is either the ring of integers $\ringint$ of the field $K$ or an order. From now on, we will consider only the cases where $K$ has class number one and $L$ is selected such that $\ringoel=\ringint$.
Define, for $k\in \mathbb{Z}_{\geq 0}$ and $z\in \mathbb{C}$, the functions 
\begin{align}\label{eq:Gkintro}
    G_k(z)=\sum_{\substack{w\in L\\ z+w\notin L}} (z+w)^k| w+z|^{-s}\Big|_{s=0},
\end{align}
the value above being given by analytic continuation. For $k=1$, the functions above are analogous to the $\cot(\pi z)$. Therefore, mimicking \cref{def:dedekindcotdef}, Sczech defined, for $a, c\in \ringint$, $c\neq 0$, the sum
\begin{align*}
\mathfrak{D}(a,c)=\frac{1}{c}\sum_{r\in L/cL} G_1\left(\frac{ar}{c}\right)G_1\left(\frac{r}{c}\right).
\end{align*}
He also proved that the sum above is nontrivial whenever $D\notin \{1, 3\}$, while $\mathfrak{D}(a,c)=0$ for all $a,c\in \ringint$ with $ac\not=0$ if $D=1$ (Gauss integers) or $D=3$ (Eisenstein integers). Moreover, for two coprime elements $a, c\in \ringint$, one has the following analogue of the reciprocity law in \eqref{eq:reclawded}
\begin{align*}
    \mathfrak{D}(a,c)+\mathfrak{D}(c,a)=2iG_2(0)\Im\left(\frac{a}{c}+\frac{c}{a}+\frac{1}{ac}\right),
\end{align*}
where $\Im(z)$ denotes the imaginary part of $z$ and $G_2(0)\neq 0$ provided $D\notin\{1, 3\}$. A further relevant analogy with the classical case is the existence of a function $H: \mathbb{H}^3\to \CC$ on the upper half space, transforming under $\text{SL}_2(\ringint)$, whose transformation law is described by a relation analogous to \eqref{eq:logtransform}. However, unlike the classical case, the construction of such a function is highly non trivial and postdates the work of Sczech, see \cite{Ito-logeta}.

It is possible to show, see \cite[p. 200]{ItoDed}, that the normalized quantity
\begin{align*}
\tilde{\mathfrak{D}}(a,c):=\frac{1}{i\sqrt{|D|} G_2(0)}\mathfrak{D}(a,c),
\end{align*}
is rational. Once again, $\tilde{\mathfrak{D}}$ can be seen as a function of the quotient of the two arguments: $\tilde{\mathfrak{D}}: K\backslash\{0\}\to K$ by $\frac{a}{c}\to \tilde{\mathfrak{D}}(\frac{a}{c})$. Ito \cite{ItoDed} proved an analogue of Hickerson's density theorem: when $K$ is Euclidean and the sum $\mathfrak{D}$ is nontrivial (i.e., $D\in\{2,7,11\}$), the set $\{(\tilde{\mathfrak{D}}(\frac{a}{c}), \frac{a}{c}): \frac{a}{c}\in K\}$ is dense in $\mathbb{R}\times\mathbb{C}$. This result was subsequently generalized to any imaginary quadratic field $K$ with $D\notin\{1,3\}$ by Bartel et al. \cite{Bartelletall}. Recently, an analogue of Vardi's equidistributional result has been established by Klinger, Logan and Wong \cite{Klinger-LoganWong}, using spectral methods.
For the remainder of this paper, we restrict our attention to the class number one case, moreover, whenever not differently specified, we will assume $D\in \{2, 7, 11\}$. Consider the Farey set
\begin{align}\label{def:fareyset}
   \FFF_{D}(X) := \FFF(X) := \left\{ \frac{a}{c} \in K : |c|^2 < X \right\}.
\end{align}
 Seeking to understand the distribution of $\tilde{\mathfrak{D}}$ over the set $\FFF_D(X)$, Ito \cite{ItoDed} performed numerical experiments (for $D=2$) and formulated the following conjecture.

\begin{Conj}[Ito]\label{eq:Itoconj}
Let $B_2:=\{z=x+iy: 0\leq x<1,\ 0<y\leq \frac{1}{\sqrt{2}}\}$. Then, for $D=2$,
\begin{align*}
    \lim_{X\to \infty} \frac{1}{\#(\FFF_2(X)\cap B_2)}\sum_{\frac{a}{c}\in \FFF_2(X)\cap B_2} \big\vert\tilde{\mathfrak{D}}\left(a,c\right)\big\vert=\infty.
\end{align*}
\end{Conj}
To put the above claim in perspective, recall that for classical Dedekind sums one can show (see \cite{Girstmair}) that there are constants $C_1, C_2>0$ with $C_2=2C_1$ such that
\begin{align*}
   (C_1+o(1))\log^2 X \leq \frac{1}{\#\{\frac{h}{k}\in \mathbb{Q}\cap [0,1]: k<X\}} \sum_{\substack{\frac{h}{k}\in \mathbb{Q} \cap [0,1]\\ k<X}} |\mathfrak{s}(h,k)| \leq (C_2+o(1))\log^2 X.
\end{align*}
To the best of our knowledge, no asymptotic formula is known. The best result until now, from which the double average result above descends, is due to Girstmair and Schoissengeier \cite{Girstmair}, who proved
\begin{align}\label{eq:girstmairdouble}
    \frac{1}{\pi^2}\log^2 k + o(\log^2 k)\leq \frac{4}{\varphi(k)}\sum_{\substack{1\leq h\leq k\\ \gcd(h,k)=1}} |\mathfrak{s}(h, k)| \leq  \frac{2}{\pi^2}\log^2 k + o(\log^2 k),
\end{align}
their key achievement being the lower bound, while the upper bound follows from Hickerson's formula together with pre-existing results tracing back to Heilbronn \cite{Heil}. Girstmair and Schoissengeier also conjectured that the correct asymptotic should match the upper bound, as discussed in the introduction of \cite{Girstmair}. Prior to Girstmair and Schoissengeier, asymptotics for even moments of Dedekind sums were obtained by Conrey et al. \cite{Conrey}, who proved 
\begin{align}
\sum_{\substack{1\leq h\leq k\\\gcd(h,k)=1}}\vert\mathfrak{s}(h,k)\vert^{2m}\sim  C_m(k) k^{2m},
\end{align}
where $C_m(k)$ is an Archimedean factor. This result has since seen several improvements concerning the error term; see for instance \cite{Wenpeng}. Very recently, a ``shifted'' version of Conrey's result was obtained by Borda, Munsch and Shparlinski \cite{BMSparigor}, while for a generalized version see \cite[Theorem 3]{BettinEsterman}.
\subsection{Statement of results}

For $D=2$, define
\begin{align}\label{eq:exagonaldomain1}
    I_{2}:=\Bigl\{ z=x+iy: |x|\leq \frac{1}{2},\ |y|\leq \frac{1}{\sqrt{2}}\Bigr\}.
\end{align}
For $D=7,11$, define
\begin{align}\label{eq:exagonaldomain2}
    I_{D}:=\Bigl\{ z=x+iy: |x|\leq \frac{1}{2},\ \bigl|y\pm x/\sqrt{D}\bigr|\leq \frac{D+1}{4\sqrt{D}}\Bigr\}.
\end{align}
Set
\begin{align*}
\FFFF(X):=\Bigl\{ z=\frac{a}{b}\in K: 1\leq |b|^2 < X\Bigr\}\cap I_D,
\end{align*}
and equip $\FFFF(X)$ with the uniform probability measure $\mathbb{P}_X$.
The goal of the present paper is to establish a limiting distribution 
for suitably normalized Sczech sums $\tilde{\mathfrak{D}}(a,c)$. 
The main result - namely \cref{thm:maintheorem} below - is an 
analogue of \cref{thm:Vardi}, with a remarkable difference: the 
limiting distribution is \textit{Gaussian} rather than \textit{Cauchy}, and is also highlighted in \cref{fig:histogram} below.
\begin{thm}[The limiting distribution of Sczech sums]
\label{thm:maintheorem}
Let $X$ be sufficiently large and let $D\in \{2,7, 11\}$. There is a constant $\kappa_D>0$ 
such that, uniformly in $x\in\mathbb{R}$,
\begin{align*}
    \mathbb{P}_X\!\left( z\in \FFFF(X): 
    \frac{\tilde{\mathfrak{D}}(z)}{\kappa_D\sqrt{\log X \log\log X}} 
    < x\right) 
    = \frac{1}{\sqrt{2\pi}} \int_{-\infty}^x e^{-y^2/2}\, dy 
    + \LandauO\left(\frac{1}{(\log\log X)^{1-\epsilon}}\right).
\end{align*}
\end{thm}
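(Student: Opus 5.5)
We will use a dynamical approach: express $\tilde{\mathfrak{D}}$ as a Birkhoff-type sum along a complex continued fraction expansion, then prove a central limit theorem for that sum with a transfer operator. For $D\in\{2,7,11\}$ the field $K$ is Euclidean, and $I_D$ is a fundamental domain for translation by $\ringint$ (a rectangle for $D=2$, a hexagon for $D=7,11$). This gives a nearest-integer complex continued fraction map $T:I_D\to I_D$, $T(z)=\frac1z-\left[\frac1z\right]$, where $[w]\in\ringint$ is the lattice point whose translate of $I_D$ contains $w$. Each $z=a/c\in\FFFF(X)$ has a finite expansion $z=[0;a_1,\dots,a_n]$ with $a_j\in\ringint$ and $n\asymp\log|c|$. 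Iterating the reciprocity law $\mathfrak{D}(a,c)+\mathfrak{D}(c,a)=2iG_2(0)\Im\bigl(\frac ac+\frac ca+\frac1{ac}\bigr)$, together with $\tilde{\mathfrak{D}}(z+m)=\tilde{\mathfrak{D}}(z)$ for $m\in\ringint$ (the sum depends only on $a$ mod $c$), we expect a Hickerson/Barkan-type formula
\[
\tilde{\mathfrak{D}}(z)=\sum_{j=1}^{n}\phi(T^{j-1}z)+\LandauO(1),\qquad \phi(w)\approx \frac{2}{\sqrt{D}}\Im\Bigl(w+\frac1w\Bigr),
\]
so that the main contribution is $\sum_j\Im(a_j)$, with signs tracked by conjugation. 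The key structural difference from the classical case is that $\Im$ of the digits, rather than the positive digits themselves, enters. Hence $\phi$ is \emph{symmetric}: by the conjugation symmetry of $I_D$ its distribution is even. Its tail is also heavy, with $\mu(|\phi|>t)\asymp t^{-2}$, because the digit $a=1/w$ lies in a 2-dimensional lattice and $\mu(|a|>t)\asymp t^{-2}$.

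The probabilistic step is a CLT for $S_n=\sum_{j<n}\phi\circ T^j$ with respect to the invariant (Gauss-type) measure $\mu$, where $\phi$ is in the nonstandard domain of attraction of the normal law. Its truncated variance grows like $\int_{|\phi|\le t}\phi^2\,d\mu\asymp\log t$, which suggests the normalization $\sqrt{n\log n}$. We will proceed as follows.
\begin{enumerate}
\item Establish a spectral gap for the transfer operator $\Myoperator$ of $T$ on a Banach space of holomorphic or Lipschitz functions on a finite Markov partition $\Mpartition$ of $I_D$, using the known finite-range structure of these complex continued fraction maps.
\item Expand the perturbed operator $\Myoperator_t f=\Myoperator(e^{it\phi}f)$ and show that its leading eigenvalue satisfies $\lambda(t)=1-c\,t^2\log(1/|t|)+\LandauO(t^2)$. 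The linear term vanishes by symmetry, and the $t^2\log$ term is computed from the tail of $\phi$.
\item Apply Nagaev--Guivarc'h together with Berry--Esseen to get a CLT for $S_n/\sqrt{c\,n\log n}$ with error $\LandauO(1/\log n)$, which is the source of the $(\log\log X)^{-1+\epsilon}$ rate.
\end{enumerate}

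To pass from $\mu$-typical points to rationals, we will follow Baladi--Vallée. Consider the Dirichlet series $\sum_{a/c}|c|^{-2s}e^{it\tilde{\mathfrak{D}}(a/c)}$, which equals, up to boundary terms, $(I-\Myoperator_{s,t})^{-1}$ applied at $0$, where $\Myoperator_{s,t}$ carries weight $|z|^{2s}$. We need uniform estimates on this resolvent near $s=1$ and in vertical strips, via a Dolgopyat-type estimate, or at least enough to apply a Tauberian theorem with remainder. Since $n\sim c'\log X$ for most points, and $\log n\sim\log\log X$, the normalization becomes $\kappa_D\sqrt{\log X\log\log X}$. Summing the characteristic function over $|c|^2<X$ and applying Berry--Esseen smoothing then yields the stated estimate, uniformly in $x$.

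We expect the main obstacle to be the nonstandard expansion of $\lambda(s,t)$ jointly in $s$ and $t$, namely the $t^2\log(1/|t|)$ term. It must be controlled uniformly near $s=1$ so that the Tauberian step does not destroy the $(\log\log X)^{-1}$ rate. A second difficulty is making the error term in the reciprocity expansion genuinely $\LandauO(1)$ with the correct conjugation signs; the $\frac1{ac}$ terms must be absorbed without affecting the heavy-tailed part. Our first attempt at the eigenvalue expansion will split $\phi=\phi\mathbf 1_{|\phi|\le |t|^{-1}}+\phi\mathbf 1_{|\phi|>|t|^{-1}}$. We will treat the truncated part by a second-order perturbation, and show that the tail contributes $\LandauO(t^2)$ using the cylinder-measure estimate $\mu(|a_1|\approx R)\asymp R^{-2}\cdot R^{-2}\cdot R^{2}$.
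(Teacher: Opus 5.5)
Your outline is correct and has the same architecture as the paper. Both use a Hickerson-type formula; the paper quotes Ito's Theorem~3, which gives $\tilde{\mathfrak{D}}(a/c)=\frac{2}{\sqrt D}\sum_j(-1)^{j+1}\Im(a_j)+\LandauO(1)$. Both then use the Kim--Lee--Lim Markov partition and transfer operator, quasi-compactness near $(1,0)$ together with a Dolgopyat bound for large $|\tau|$, a Perron formula giving $\chi_X(t)=X^{2s_0(t)-2}(1+\LandauO(X^{-\delta}+|t|))$, and finally Berry--Esseen. Two steps are done differently, and there is one input you leave implicit.

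\emph{Alternating signs.} The paper composes two one-step operators with opposite twists, $\Aopera_{s,t}=\Myoperator^{(2)}_{s,t}\circ\Myoperator^{(1)}_{s,t}$. The price is the extra factor $(\mathbb{I}+\Myoperator^{(1)}_{s,t})$ in the generating series. Your conjugation trick yields a single operator. However, the inverse branches $w\mapsto 1/(\bar w+\alpha)$ of $z\mapsto\overline{\Tgaus(z)}$ are anti-holomorphic, so you need a piecewise $C^1$ space; squaring the operator instead is exactly the paper's choice. You should also take $\phi(w)=\frac{2}{\sqrt D}\Im[1/w]$ rather than $\Im(w+1/w)$. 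Then the twist is constant on each branch, and the Lasota--Yorke and $L^2$ estimates go through with unimodular phases that depend only on the branch.

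\emph{The $t^2\log|t|$ term.} The paper uses first-order perturbation, $\lambda(s,t)=\int e^{it\Psi}\,d\mu+(s-1)A+\LandauO(|s-1|^2+t^2)$. It then evaluates $\int(e^{it\psi_I}-1)\,d\mu$ by a Mellin--Barnes integral. The double pole at $s=2$, coming from $\Gamma(-s)$ and $\zeta(3-s)$, produces the logarithm and gives a secondary constant and an $\LandauO(t^{5/2})$ error as a bonus. Your truncation at $|\phi|=1/|t|$ is more elementary and suffices for the theorem. In fact $\Vert\Myoperator_t-\Myoperator\Vert\ll|t|$, since $\sum_\alpha|\alpha|^{-3}<\infty$. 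So first order already gives $\lambda(t)=\int e^{it\phi}\,d\mu+\LandauO(t^2)$, and the covariance terms of your second-order expansion never need to be analysed.

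\emph{The implicit input.} Both routes need an exact asymptotic $\mu(\{\psi_I=n\})\sim c_Dn^{-3}$ with a power-saving error, not merely $\asymp n^{-3}$; otherwise the normalizing constant is not determined. The paper gets this from the local constancy of the invariant density on the parts of $\Mpartition[2]$ adjacent to $0$, together with the allocation of the sets $h_\alpha(I_D)$ among those parts.

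Finally, your step~3 (Nagaev--Guivarc'h for $\mu$-typical points) is unnecessary. In the Dirichlet series route the factor $\log\log X$ comes from $\log(1/|t|)$ at $|t|\asymp(\log X\log\log X)^{-1/2}$, not from the typical length of the expansion.
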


As an immediate corollary we obtain the following result.
\begin{cor}\label{cor:ourcorollary}
\cref{eq:Itoconj} holds true.
\end{cor}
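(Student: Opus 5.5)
The corollary should follow from \cref{thm:maintheorem} by a soft comparison argument; no moment information is needed. The theorem gives a Gaussian limit for $\tilde{\mathfrak{D}}/(\kappa_D\sqrt{\log X\log\log X})$ on $\FFFF(X)$. A nondegenerate limit law forces a fixed positive proportion of points to have $|\tilde{\mathfrak{D}}|$ of size at least a constant times the normalizing factor, and that factor tends to infinity.

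The first step is to pass from the hexagonal domain $I_2$ to Ito's box $B_2$, and from the condition $1\le |b|^2<X$ to $|c|^2<X$. I would use the symmetries of $\tilde{\mathfrak{D}}$. It is invariant under translation $z\mapsto z+m$ for $m\in\ringint$, because $G_1$ is $L$-periodic and $\mathfrak{D}(a+mc,c)=\mathfrak{D}(a,c)$. It also satisfies $\tilde{\mathfrak{D}}(\bar z)=\pm\tilde{\mathfrak{D}}(z)$ and $\tilde{\mathfrak{D}}(-z)=\tilde{\mathfrak{D}}(z)$; these should follow from the definition of $G_k$, since $G_1$ is odd and $\overline{G_1(z)}=G_1(\bar z)$ for the lattice $\ZZ[\sqrt{-2}]$. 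For $D=2$, $I_2$ is the rectangle $[-\frac12,\frac12]\times[-\frac{1}{\sqrt2},\frac{1}{\sqrt2}]$, a fundamental domain for $\ZZ[\sqrt{-2}]$ up to boundary. Using $z\mapsto -z$, $z\mapsto\bar z$ and $z\mapsto z+1$, the set $\FFFF(X)\cap\{\Im z>0,\ \Re z\ge 0\}$ together with its translate of the part with $\Re z<0$ maps bijectively onto $\FFF_2(X)\cap B_2$, up to boundary points. These maps preserve denominators and $|\tilde{\mathfrak{D}}|$. Boundary points and points with $\Im z=0$ form a proportion $O(X^{-1/2})$ or so, by elementary lattice-point counting. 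Hence the empirical distribution of $|\tilde{\mathfrak{D}}|$ on $\FFF_2(X)\cap B_2$ equals that on $\FFFF(X)$ up to $o(1)$; more precisely, the upper half of $\FFFF(X)$ carries half the mass. Since $|\tilde{\mathfrak{D}}|$ is even under $z\mapsto\bar z$, the tail probabilities agree.

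Given this, fix $M>0$ and apply \cref{thm:maintheorem} with $x=\pm1$. The proportion of $z\in\FFF_2(X)\cap B_2$ with $|\tilde{\mathfrak{D}}(z)|\ge\kappa_2\sqrt{\log X\log\log X}$ is at least $2(1-\Phi(1))-o(1)\ge 0.3$ for large $X$. Since $|\tilde{\mathfrak{D}}|\ge0$, the average in \cref{eq:Itoconj} is at least $0.3\,\kappa_2\sqrt{\log X\log\log X}$, which tends to infinity. The only step requiring care is the symmetry bookkeeping matching $B_2$ with $I_2$. If the conjugation symmetry failed, I would instead rerun the proof of \cref{thm:maintheorem} with $I_2$ replaced by $B_2$, since only equidistribution of the starting points is used.
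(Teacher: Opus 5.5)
Your proposal is correct and follows the same route as the paper: translation invariance identifies $\FFF_2(X)\cap B_2$ with the upper half of $\FFFF(X)$, a reflection symmetry shows this half carries the same distribution of $|\tilde{\mathfrak{D}}|$ as all of $\FFFF(X)$, and \cref{thm:maintheorem} then gives a positive proportion of points with $|\tilde{\mathfrak{D}}|\gg\sqrt{\log X\log\log X}$. You simply make explicit the $x=\pm1$ step and the negligible boundary points, which the paper leaves implicit, and you derive the symmetries from the definition rather than citing Ito; one small slip is that, since $G_1$ is odd, $\tilde{\mathfrak{D}}(-z)=-\tilde{\mathfrak{D}}(z)$ rather than $+\tilde{\mathfrak{D}}(z)$ (and likewise $\tilde{\mathfrak{D}}(\bar z)=-\tilde{\mathfrak{D}}(z)$), but you only use invariance of $|\tilde{\mathfrak{D}}|$, so nothing breaks.
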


\begin{rem}
Although Ito's conjecture follows as a corollary from our main theorem, it appears inaccessible via the methods of \cite{Conrey,Girstmair}. This contrasts sharply with the one-dimensional case, where elementary circle method arguments suffice to obtain lower bounds - and even asymptotics - of the correct order of magnitude. Indeed, in higher dimensions, the contribution from the Farey points - which carry the main contribution in the $\mathfrak{s}$ case - is dominated by the error term. As the reader is encouraged to verify, a similar phenomenon also occurs when attempting to adapt the strategy of \cite{Conrey} to obtain an analogue of \eqref{eq:girstmairdouble} for $|\mathfrak{s}(h,k)|^{1/2}$.
\end{rem}

\begin{rem}
    Evidence that the sums $\tilde{\mathfrak{D}}$ exhibit different behaviour than their classical counterpart was also conjecturally known to Ito. Indeed, he conjectured that the \textit{restricted} average of these sums, namely 
    \begin{align*}
        \frac{1}{\#\FFF_2(X)}\sum_{z \in \FFF_2(X)\cap B_2} \tilde{\mathfrak{D}}\left(z \right)
    \end{align*} 
    is bounded, while for the classical case one has
    \begin{align*}
        \lim_{Q\to \infty} \frac{1}{\#\Omega_Q} \sum_{x\in \Omega_Q\cap [0, \frac{1}{2}]} \mathfrak{s}(x)=\infty.
    \end{align*}
    Ito's claim for the classical case has been established in a strong quantitative sense by the second named author, Sourmelidis and Technau in \cite{MST1} and \cite{MSTII}.
\end{rem}

\begin{figure}[H]
    \centering
    \includegraphics[width=0.8\linewidth]{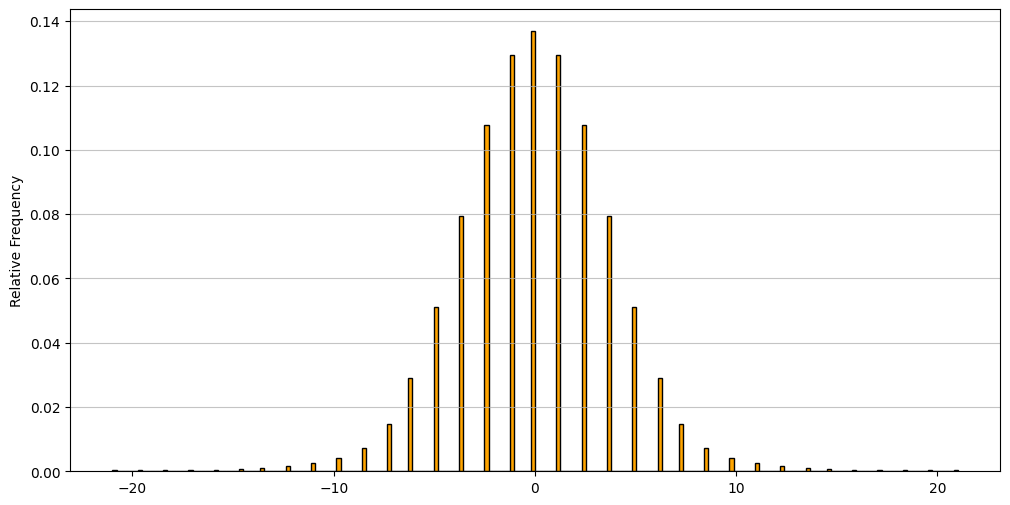}
    \caption{Plot of $\tilde{\mathfrak{D}}(a,c)/\sqrt{\log X\log\log X}+o(1)$ over the set $\FFFF(X)$ for $X=2500$ and $D=2$.}
    \label{fig:histogram}
\end{figure}

\subsection{Overview}
The proof of Vardi's result \cref{thm:Vardi} rests crucially on a link between Dedekind sums and Kloosterman sums. Thus, the spectral theory of automorphic forms comes into play in handling certain sums of Kloosterman sums (see also \cite{Sarnak-Goldfeld-sums}). On the other hand, relatively recently, \cref{thm:Vardi} was reproved, as the outcome of a remarkably general result, by Bettin and Drappeau \cite{BettinDrappeau-main}. Their approach --- which is dynamical in nature --- builds upon a well-established framework. We close this introductory section by presenting a minimal overview of this framework, which will also serve us in outlining the main steps in the proof of \cref{thm:maintheorem}. In doing so, we gambit generality for clarity.
\subsubsection{Quick overview of the literature}
Let $\Cgaus$ be the usual Gauss map and let $f:[0,1]\to \RR$ be a function of bounded variation that is not a coboundary, i.e is not of the form $g-g \circ \Cgaus+C$ for some function of bounded variation $g$ and some constant $C$. One can study the distribution of the quantity
\begin{align*}
    S_N(f, x):= \sum_{j=1}^N f(\Cgaus^j(x)),
\end{align*}
for $x$ chosen uniformly in $[0,1]$. It is possible to prove (see \cite{Broise}) that the quantity above has the following limiting distribution as $N\to\infty$: one has convergence towards a standard Gaussian distribution with an effective rate of convergence, in the following sense
\begin{align*}
    \mathbb{P}\left(\frac{S_N(f,x)-c(f)N}{\sigma_f\sqrt{N}}\leq y\right) = \Psi(y)+\mathcal{O}_f\left(\frac{1}{\sqrt{N}}\right),
\end{align*}
where
\begin{align*}
    \Psi(y):=\frac{1}{\sqrt{2\pi}}\int_{-\infty}^{y} e^{-t^2/2}\,dt,
\end{align*}
is the cumulative distribution function of a standard Gaussian. 
\par
\noindent
Turning now to the \textit{rational} case, namely the case where $x\in \Omega_N$ is sampled uniformly at random, one can consider the quantity
\begin{align}\label{eq:S-var-definition}
    S(f,x):=\sum_{j=1}^{r(x)} f(\Cgaus^{j}(x)),
\end{align}
where $r(x)$ is the smallest $r$ such that $T^{r+1}(x)=0$. In other words $x=[0; a_1, \dots, a_{r(x)}]$, where $a_j:=\lfloor 1/\Cgaus^{j-1}(x)\rfloor$ for $1\leq j\leq r(x)$. In this context, pioneering work has been carried out by Baladi and Vallée \cite{BV-Euclidean, BV-exponential} and Vallée \cite{Vall-euclidean-analysis}. Baladi and Vallée proved that if $f$ is \textit{of moderate growth} (cf.\ \cite{BV-Euclidean}), which essentially means $f(x)\ll \log(1/x)$, and \textit{constant by parts}, that is constant on each interval $[\frac{1}{n+1}, \frac{1}{n})$, for $n\in \NN_{>0}$, then a central limit theorem can be extracted. 

\begin{thm}[Baladi and Vallée, cf.\ {\cite[Theorem 4]{BV-Euclidean}}]\label{thm:Baladi-Vallee-steps}
The quantity $S(f,x)$, suitably normalized, has Gaussian limiting distribution, that is, for any $y\in \RR$ one has
\begin{align*}
    \mathbb{P}_Q\left(x\in \Omega_Q: \frac{S(f,x)-c(f)\log Q}{\sigma(f)\sqrt{\log Q}}\leq y\right) = \Phi(y)+ \mathcal{O}_f\left(\frac{1}{\sqrt{\log Q}}\right),
\end{align*}
where $c(f)$ and $\sigma(f)>0$ are explicit real constants.
\end{thm}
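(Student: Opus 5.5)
Statement: Baladi–Vallée CLT for additive costs over rational trajectories of the Gauss map, with cost f of moderate growth and constant on each interval $[\frac{1}{n+1},\frac{1}{n})$. The approach is the dynamical transfer-operator method.

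\textbf{Generating function and weighted operators.} For a complex parameter $s$ near $1$ and $w$ near $0$, I would introduce the weighted transfer operator
\begin{align*}
\mathbf{H}_{s,w}g(x)=\sum_{n\geq 1}\frac{e^{w f_n}}{(n+x)^{2s}}\,g\Bigl(\frac{1}{n+x}\Bigr),
\end{align*}
where $f_n$ is the value of $f$ on $[\frac{1}{n+1},\frac{1}{n})$. Because $f$ is constant by parts, iterating $\mathbf{H}_{s,w}$ $r$ times produces exactly the sum over depth-$r$ continued fractions weighted by $e^{wS(f,x)}$ and $|q|^{-2s}$. Summing over $r$ gives the Dirichlet series
\begin{align*}
F(s,w)=\sum_{x\in\QQ\cap(0,1]} e^{wS(f,x)}q(x)^{-2s}
\end{align*}
as $(I-\mathbf{H}_{s,w})^{-1}$ applied to an explicit function and evaluated at $0$, up to a harmless final-step operator. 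Moderate growth of $f$ ensures that $e^{wf_n}\ll n^{c|\Re w|}$. Hence $\mathbf{H}_{s,w}$ acts boundedly on $C^1[0,1]$ for $\Re s>\frac12+c|\Re w|$.

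\textbf{Spectral analysis.} At $(s,w)=(1,0)$ the operator is the Gauss density transfer operator. It is quasi-compact on $C^1$, with simple dominant eigenvalue $1$ and a spectral gap. Perturbation theory then gives an analytic dominant eigenvalue $\lambda(s,w)$ near $(1,0)$, so the pole of $F$ sits at $s=s(w)$ defined by $\lambda(s(w),w)=1$. The mean and variance come from derivatives: $c(f)$ and $\sigma(f)^2$ are expressed through $s'(0)$ and $s''(0)$. One must check that $\sigma(f)>0$, which is equivalent to $f$ not being a coboundary, i.e.\ $s(w)$ not being affine.

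\textbf{Tauberian step (main obstacle).} To extract $\sum_{q\le Q}$ with an error term uniform in $w$, I would apply Perron's formula. This needs $F(s,w)$ to continue to a strip $\Re s>1-\delta$ minus the pole, with polynomial bounds in $|\Im s|$. I would obtain this via Dolgopyat-type estimates: for $|\Im s|$ large, $\|\mathbf{H}_{s,w}^n\|\ll |\Im s|^{\xi}\gamma^n$ with $\gamma<1$, uniformly for small $w$. This uses the UNI (non-integrability) property of the Gauss map's inverse branches; it is the technically hardest part. On the compact part of the line $\Re s=1$, I would rule out eigenvalue $1$ away from $s=1$ by aperiodicity.

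\textbf{Conclusion.} These estimates give quasi-powers behaviour:
\begin{align*}
\mathbb{E}_Q\bigl[e^{wS}\bigr]=\exp\bigl(U(w)\log Q+V(w)\bigr)\bigl(1+O(Q^{-\delta'})\bigr),
\end{align*}
uniformly for small $w$, with $U(w)=2(s(w)-1)$ up to normalisation. Hwang's quasi-powers theorem, or equivalently Berry–Esseen applied to the characteristic function at $w=it$, then yields the CLT with error $O(1/\sqrt{\log Q})$.
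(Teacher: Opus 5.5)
\textbf{Overall verdict.} Your architecture is exactly the Baladi--Vallée method that the paper cites and sketches in its overview, and those steps are fine: the weighted operator $\mathbf{H}_{s,w}$, the Dirichlet series as a quasi-inverse evaluated at $0$, spectral gap and analytic perturbation near $(1,0)$, Dolgopyat bounds for large $|\Im s|$, the contour shift, and quasi-powers.

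\textbf{The gap: non-degeneracy of the variance.} You reduce $\sigma(f)>0$ to \emph{$f$ is not a coboundary}, and that reduction is false. In the rational setting you condition on $q(x)\le Q$, and $2\log q(x)$ is essentially the Birkhoff sum of $\log|\mathcal{T}'|$ along the orbit. So $S(f,\cdot)$ only fluctuates through the part of $f$ that is not proportional to $\log|\mathcal{T}'|$.

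Concretely, take $\log\lambda(s,w)$ to be the pressure of $-s\log|\mathcal{T}'|+wf$ and differentiate $\lambda(s(w),w)=1$ twice. This gives
$s''(0)=\sigma^2_\mu\bigl(f-s'(0)\log|\mathcal{T}'|\bigr)\big/\int\log|\mathcal{T}'|\,d\mu$,
where $\sigma^2_\mu$ is the asymptotic variance under the Gauss measure. Hence $\sigma(f)=0$ if and only if $f-\kappa\log|\mathcal{T}'|=g-g\circ\mathcal{T}$ for some constant $\kappa$ and some $g$ in the Banach space.

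Your criterion fails whichever convention you use for \emph{coboundary}:
\begin{itemize}
\item If coboundaries may include a constant, as in the paper, then $f\equiv1$ is one. Yet $f\equiv 1$ is the number of steps, Hensley's case, which the theorem covers. Here $\lambda(s,w)=e^{w}\lambda(s,0)$, so $s(w)$ solves $\lambda(s,0)=e^{-w}$. This is not affine, by strict convexity of $s\mapsto\log\lambda(s,0)$, so the variance is positive.
\item If coboundaries exclude the constant, then the criterion misses the possibility $f\sim\kappa\log|\mathcal{T}'|$ with $\kappa\neq0$.
\end{itemize}
Your \emph{i.e.\ $s(w)$ not affine} is the correct condition, but it is not equivalent to what precedes it.

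\textbf{How to close it.} Since you only write \emph{one must check}, the claim $\sigma(f)>0$ is currently unproved. For costs constant on cylinders, periodic orbits settle it.
\begin{itemize}
\item Suppose $f-\kappa\log|\mathcal{T}'|=g-g\circ\mathcal{T}$. Sum along the orbit of $x=[0;\overline{a_1,\dots,a_p}]$ and use $\prod_{j<p}\mathcal{T}^jx=(q_p+q_{p-1}x)^{-1}$. This gives $\sum_{j=1}^{p}f_{a_j}=2\kappa\log(q_p+q_{p-1}x)$.
\item The fixed points give $f_n=2\kappa\log\bigl(n+[0;\overline{n}]\bigr)$. Hence $f_1+f_2=2\kappa\log\frac{(1+\sqrt5)(1+\sqrt2)}{2}$.
\item The period-two point $[0;\overline{1,2}]=\sqrt3-1$ gives $f_1+f_2=2\kappa\log(2+\sqrt3)$.
\item Since $\frac{(1+\sqrt5)(1+\sqrt2)}{2}\neq 2+\sqrt3$, we get $\kappa=0$. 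The fixed-point relations then force $f_n=0$ for every $n$.
\end{itemize}
So $\sigma(f)>0$ for every digit cost $f\not\equiv0$; the statement tacitly excludes $f\equiv0$. Only with $U''(0)=2s''(0)\neq0$ established does Hwang's theorem, or Berry--Esseen, give the stated $O(1/\sqrt{\log Q})$ rate.
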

It is worth mentioning that Hensley \cite{Hensley} obtained the above theorem for the special case $f=1$, with a suboptimal error term $O(1/\log^{1/24} Q)$. This amounts to showing that the number of steps in the classical Euclidean algorithm, after normalization, converges in distribution to a Gaussian. \cref{thm:Baladi-Vallee-steps} above improves the error term to the optimal rate and extends the result to a wider class of division algorithms and functions $f$.
\par
\noindent
Bettin and Drappeau \cite{BettinDrappeau-main} extended the above result to a wider class of functions. First, they allowed a higher level of complexity in \eqref{eq:S-var-definition}: let $(f_i)_{1\leq i\leq k}$ be piecewise $C^1$ functions $[0,1]\to \RR$. Define the \textit{period law}
\begin{align*}
    \tick{f}(x):=\sum_{j=1}^k f_j(\Cgaus^{j-1}(x))
\end{align*}
and the quantity
\begin{align*}
    S(\tick{f}, x):=\sum_{j=1}^{r(x)} f_j(\Cgaus^{j-1}(x)),
\end{align*}
where the indices of $f_j$ are read modulo $k$. They obtained distributional results for $S(\tick{f}, x)$ for functions $(f_j)_{1\leq j\leq k}$ which are not necessarily constant by parts, and also not necessarily log-bounded. In particular, they proved that if two technical conditions are satisfied \cite[see conditions (3.7) and (3.8)]{BettinDrappeau-main}, then one can extract an asymptotic expansion for the characteristic function of $S(\tick{f}, \cdot)$ near $0$. The limiting distribution is then characterized in terms of $\tick{f}$, see \cite[Theorem 3.1]{BettinDrappeau-main}. This allowed them to consider quantities such as the sum of $\lambda$-th powers ($\lambda>0$) of the partial quotients
\begin{align*}
    x\mapsto \sum_{j=1}^{r(x)} a_j^\lambda,
\end{align*}
which corresponds to the selection $k=1$ and $f_1(x)=\lfloor 1/x \rfloor^\lambda$, as well as Dedekind sums, which correspond to the selection $f_1(x)=\lfloor 1/x \rfloor$, $f_2(x)=-f_1(x)$, $k=2$. In particular, they recovered \cref{thm:Vardi} with the improved exponent $1-\epsilon$ in the error term.

\begin{rem}
The results of Bettin--Drappeau \cite{BettinDrappeau-main} are more general 
than stated above: the authors work with  functions $f$ in H\"older spaces, allowing them to treat, for instance, the distribution of modular symbols associated with level~$1$, weight $k\geq 12$ cusp forms, recovering special cases of a theorem of Nordentoft \cite{Nordentoft} as well as a result of Constantinescu \cite[Theorem 1.4]{Constantinescu}. Their result was later employed by Drappeau and Nordentoft \cite{DrappeauNordentoft} for studying the distribution of central values of additive twists of Maaß forms; see also \cite[Section 3]{ConstantinescuNordentoftGafa} for a more detailed exposition of the results in \cite{BettinDrappeau-main} in relation with additive twists of $L$-series. In a similar direction, we mention the result of Lee and Sun \cite{LeeSun}, who reproved the main result of Petridis and Risager \cite{PetridisRisager} concerning the distribution modular symbols. Their approach, unlike the one originally employed in \cite{PetridisRisager}, is dynamical. Moreover, it was claimed in \cite[Remark 1.2]{Kimetall}, that dynamical methods like those used in \cite{Kimetall, LeeSun} can be used to study the distribution of Bianchi modular symbols in the hyperbolic 3-space, recovering the results of Constantinescu and Nordentoft \cite{ConstantinescuNordentoft}.
\end{rem}
\noindent
Recently, Kim, Lee and Lim considered the continued fraction expansion of a complex number $z$ with $\ZZ$ replaced by the ring of integers of a Euclidean imaginary quadratic number field, i.e.\ $\QQ(\sqrt{-D})$ for $D\in\{1,2,3,7,11\}$. Given $z\in \QQ(\sqrt{-D})\cap I_D$, we have $z=[0; \alpha_1, \dots, \alpha_{\ell(z)}]$ for some integer $\ell(z)\geq 1$, where we restrict to $D\in\{2,7,11\}$ for contextual reasons. Let $g:\mathcal{O}_D\to \RR_{\geq 0}$ be a function satisfying $g(\alpha)\ll \log |\alpha|$, which is precisely the \textit{moderate growth} condition of Baladi and Vallée. Define
\[
    S(g, z):=\sum_{j=1}^{\ell(z)} g(\alpha_j),
\]
for $z\in \mathbb{Q}(\sqrt{-D})\cap I_D$. They proved the following theorem.

\begin{thm}[Number of steps in the complex Euclidean algorithm, cf.\ {\cite[Theorem B]{Kimetall}}]\label{thm:Kimetall-analogue-Baladi-Vallée}
Let $\PP_X$ denote the uniform probability measure on $\FFFF(X)$. There are explicitly computable positive real constants $c(g)$ and $\sigma(g)$ such that for any fixed $x\in \RR$
\begin{align*}
    \PP_X\!\left( z\in \FFFF(X): 
    \frac{S(g, z)-c(g)\log X}{ \sigma(g) \sqrt{\log X}} 
    < x\right) 
    =\Phi(x)
    + \LandauO\left(\frac{1}{\sqrt{\log X}}\right).
\end{align*}
\end{thm}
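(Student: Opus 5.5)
We would follow the dynamical analysis of Baladi and Vallée behind \cref{thm:Baladi-Vallee-steps}, replacing the Gauss map $\Cgaus$ by the complex nearest-integer continued fraction map $\Tgaus$ on $I_D$. This map is $\Tgaus(z)=1/z-[1/z]_D$, where $[\cdot]_D$ denotes rounding to the nearest element of $\ringint$, and its inverse branches are $h_\alpha(z)=1/(\alpha+z)$ for admissible digits $\alpha$. Since the branches are complex-analytic, the Jacobian of $h_\alpha$ is $|h_\alpha'(z)|^2=|\alpha+z|^{-4}$.

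The first step is to introduce the weighted transfer operator
\[
\Myoperator_{s,w}\varphi(z)=\sum_{\alpha}e^{w g(\alpha)}\,|\alpha+z|^{-2s}\,\varphi\!\left(\frac{1}{\alpha+z}\right),
\]
acting on a Banach space $\BanachHolo$ of functions on $I_D$ that are $C^1$ on each element of a finite Markov partition $\Mpartition$. Unlike the real case, the complex map is not full-branched near the boundary of $I_D$, so the partition is needed to record which digits are admissible after a given cell. The moderate growth bound $g(\alpha)\ll\log|\alpha|$ guarantees that the operator is well defined and analytic for $\Re s>1-\delta$ and $|w|$ small. We would then establish the following properties near $(s,w)=(1,0)$:
\begin{itemize}
\item Positivity and mixing (checking that the digit graph on $\Mpartition$ is irreducible and aperiodic) give quasi-compactness.
\item There is a unique simple dominant eigenvalue $\lambda(s,w)$ with $\lambda(1,0)=1$, and a spectral gap.
\end{itemize}

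The second step is to encode the cost. For $z=a/c\in\FFFF(X)$, iterating the branches gives the relation
\[
\sum_{z}e^{wS(g,z)}|c|^{-2s}=\big(\text{boundary operator}\big)\circ(\myid-\Myoperator_{s,w})^{-1}[\mathbf 1](0),
\]
up to finitely many final-step and initial-cell operators. This expresses the Dirichlet series of the moment generating function as a quasi-inverse. Near $s=1$ the quasi-inverse has a simple pole at $s=\sigma(w)$, where $\sigma(w)$ is defined implicitly by $\lambda(\sigma(w),w)=1$; the residue is analytic in $w$.

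The third step, which is the main obstacle, is to obtain a pole-free vertical strip $\Re s\ge 1-\delta$ with polynomial bounds on $\|(\myid-\Myoperator_{s,w})^{-1}\|$ for $|\Im s|$ large, uniformly in small $w$. This is a Dolgopyat-type estimate. The first approach we would try is to adapt Baladi--Vallée's argument, which rests on the uniform nonintegrability (UNI) condition for the branches. Here $|\alpha+z|^{-2it}$ has phase $-2t\log|\alpha+z|$, and the key technical point is to verify UNI for these real-valued phases of complex branches on each cell of $\Mpartition$. Kim, Lee and Lim carry out exactly this adaptation.

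Given this strip, Perron's formula with a smoothing (or a Tauberian argument of Delange type with a power saving) yields
\[
\mathbb{E}_X\big[e^{wS(g,z)}\big]=\exp\!\big(U(w)\log X+V(w)\big)\big(1+O(X^{-\eta})\big)
\]
uniformly for small complex $w$, with $U(w)$ and $V(w)$ analytic, after normalizing by $\#\FFFF(X)$, which corresponds to $w=0$. Hwang's quasi-powers theorem then gives the Gaussian limit with error $O(1/\sqrt{\log X})$, where $c(g)=U'(0)$ and $\sigma(g)^2=U''(0)$.

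It remains to show $U''(0)>0$. The plan is to argue by contradiction: if $U''(0)=0$, then $g$ would be cohomologous to a constant, $g(\alpha)=c+\psi(z)-\psi(h_\alpha z)$. Evaluating this identity along periodic orbits, for instance the fixed points of $h_\alpha$ for large $\alpha$, we expect to derive a contradiction with the nonnegativity and moderate growth of $g$. We would need to handle separately the degenerate case $g\equiv\text{const}$, where the statement reduces to the number of steps.
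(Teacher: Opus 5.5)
Your architecture (twisted transfer operator on the Kim--Lee--Lim space over $\Mpartition$, the Dirichlet series as a quasi-inverse evaluated at $0$, a Dolgopyat bound via UNI, then Perron and Hwang's quasi-powers theorem) is the one the paper attributes to Kim--Lee--Lim; the paper itself only cites the result. The genuine gap is the last step, positivity of $\sigma(g)$, which is part of the statement.

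You import the non-degeneracy condition of the fixed-time CLT for $S_N(f,x)$ recalled in the introduction: variance zero iff cohomologous to a constant. In the rational setting the exponent is different. It is $U(w)=2(s(w)-1)$, where $\lambda(s(w),w)=1$; this is your $\sigma(w)$, written in the paper's normalization $J_\alpha^s$ (see the last paragraph). Set $\tilde g(z):=g(\boar{1/z})$ and $\ell(z):=-\log J_{\boar{1/z}}(\Tgaus z)=-4\log|z|$, and let $\sigma_\mu^2$ denote the asymptotic variance for $\mu$. Differentiating $\lambda(s(w),w)=1$ twice gives
\[
s''(0)=\frac{\sigma_\mu^2\bigl(\tilde g-\beta\ell\bigr)}{\int_{I_D}\ell\,d\mu},\qquad \beta=s'(0)=\frac{\int_{I_D}\tilde g\,d\mu}{\int_{I_D}\ell\,d\mu}.
\]
So $U''(0)=0$ means that $\tilde g$ is cohomologous to the positive multiple $\beta\ell$ of the log-Jacobian, not to a constant. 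Intuitively, the number of steps is random and the sampling conditions on the height, so what must be non-degenerate is $\tilde g$ after regressing out $\ell$. Your reduction therefore does not follow from the spectral theory. It also visibly gives nothing for $g\equiv1$: there $\tilde g$ is cohomologous to a constant, so no contradiction arises, yet the variance is positive. The case you postpone is exactly where your criterion is silent, and you give no other argument for it.

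Nor can sign and moderate growth supply the contradiction. Indeed $\beta\ell$ is itself positive and equals $4\beta\log|\alpha|+\LandauO(|\alpha|^{-1})$ on $O_\alpha$, so it has exactly the growth of an admissible cost. The obstruction is the nonlinearity of the branches:

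1. Summing $\tilde g=\beta\ell+u-u\circ\Tgaus$ over the orbit of a fixed point $z_{\fatalpha}=h_{\fatalpha}(z_{\fatalpha})$ gives $\sum_i g(\alpha_i)=-\beta\log J_{\fatalpha}(z_{\fatalpha})$.
2. Here $J_{\fatalpha}(z_{\fatalpha})=|\mu_+(M_{\fatalpha})|^{-4}$, where $M_\alpha=\bigl(\begin{smallmatrix}0&1\\1&\alpha\end{smallmatrix}\bigr)$, $M_{\fatalpha}$ is the corresponding product, and $\mu_+$ is the eigenvalue of largest modulus.
3. Take distinct large rational integers $a\neq b$. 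These are full branches, so the orbits of $(a)$, $(b)$ and $(a,b)$ are admissible. Using $\beta>0$, step 1 then forces $\mu_{ab}=\mu_a\mu_b$, where $\mu_a=\mu_+(M_a)$ and $\mu_{ab}=\mu_+(M_aM_b)$ are all positive reals.
4. Since $\mu_a-\mu_a^{-1}=a$ and $\mu_{ab}+\mu_{ab}^{-1}=2+ab$, this equality is equivalent to $(\mu_a-\mu_b)^2=0$, i.e.\ $a=b$, a contradiction.

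Nonnegativity is used only to ensure $\beta>0$ (for $g\not\equiv0$), and $g\equiv1$ is covered without separate treatment.

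Two smaller corrections. First, in the plane the Jacobian is $|h_\alpha'|^2=|\alpha+z|^{-4}$, so the weight must be $J_\alpha^s=|\alpha+z|^{-4s}$, with $\sum|c|^{-4s}$ and Perron kernel $X^{2s}$ as in the paper. With your $|\alpha+z|^{-2s}$ the operator diverges at $s=1$, since $\sum_{\alpha\in\ringint}|\alpha|^{-2}=\infty$, and the eigenvalue $1$ sits at $s=2$.

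Second, quasi-compactness comes from a Lasota--Yorke inequality (contraction ratio $\rho<1$, bounded distortion) together with Hennion's theorem, not from positivity and mixing. Positivity and mixing only give simplicity and dominance of the leading eigenvalue on the two-dimensional cells. The space must contain the cells of $\Mpartition[0]\cup\Mpartition[1]$ for evaluation at $0$ to make sense, and branches send two-dimensional cells only into two-dimensional ones. Hence the digit graph on all of $\Mpartition$ is not irreducible: the operator is block-triangular, and you must check separately that its lower-dimensional diagonal blocks have spectral radius $<1$ near $(1,0)$.
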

\subsubsection{Overview of the techniques}
All three results cited above, namely \cref{thm:Baladi-Vallee-steps}, \cref{thm:Kimetall-analogue-Baladi-Vallée}, and \cite[Theorem 3.1]{BettinDrappeau-main}, exploit the connection between the characteristic function of $x\mapsto S(f,x)$ -the expectation being taken with respect to $\PP_X$ or $\PP_Q$ -and a Dirichlet series, which in turn can be realized as a special instance of a two-parameter family of operators acting on a selected indicator function. To give a rough sketch, let $Y$ be some Banach space of functions over $[0,1]$ (as in \cite{BV-Euclidean, BettinDrappeau-main}) or over $I_D$ (as in \cite{Kimetall} and in our setting). Define the operator $Y\to Y$ by
\begin{align*}
    \Myoperator f(z):= \sum_{z_0:\, \Tgaus(z_0)=z} \frac{1}{J_{\Tgaus}(z_0)} f(z_0),
\end{align*}
where $\Tgaus$ is defined as in \eqref{def:Tgausmap} and $J_\Tgaus$ denotes its Jacobian. One can then define a two-parameter family of operators $\Myoperator_{s,w}$, where $(s,w)$ lies in some open subset of $\CC\times \CC$ or $\CC\times \RR$ and with $\Myoperator_{1,0}=\Myoperator$. Here and in the sequel, unless stated otherwise, we assume $w$ to be real and denote it by $t$. This family of operators allows one to link the characteristic function of $S(g,z)$, namely $\chi_S(t)=\ExpX[e^{itS(z)}]$, with a Dirichlet series
\begin{align*}
    H(s,t)=\sum_{v\in K} \frac{1}{\hfat(v)^{2s}} e^{it S(v)},
\end{align*}
where for $v=\frac{a}{b}\neq 0$, $v\in \FFFF$, we set $\hfat(v)=|b|$.
Roughly, one can reduce the understanding of $\chi_S(t)$ to that of the integral
\begin{align*}
    \int_{\sigma_0-i\infty}^{\sigma_0+i\infty} X^{2s} H(2s,t)\, ds,
\end{align*}
where $\sigma_0$ is chosen so that $H(2s,t)$ converges absolutely, via a standard contour integration. At this point, the rigid structure of $H(2s,t)$ comes into play. It can be shown that
\begin{align}\label{eq:basic-link-intro}
    H(2s,t)= \left(\mathbb{I}-\Myoperator_{s,t}\right)^{-1} \mathfrak{A}_{s,t}[\mathbf{1}_{I_D}](0),
\end{align}
where $\mathfrak{A}_{s,t}$ is another operator $Y\to Y$ which is, for the moment, of secondary importance. It is easy to see that $H(2s,t)$ is analytic in $s$ whenever $\Re(s)>1$. However, what is required here is a \textit{zero-free strip} $\Re(s)>1-\delta$ for some $\delta>0$, for all $|t|\leq t_0$ and some $t_0>0$. For a general Dirichlet series, such a statement is typically completely out of reach unconditionally. However, the rigid structure \eqref{eq:basic-link-intro} makes it possible to exploit the spectral properties of the operator as well as its contracting properties with respect to a family of norms. At this stage, the most challenging step is to ensure good decay for $H(2(\sigma+i\tau),t)$ for large $|\tau|$.  This can be obtained by exploiting ``decay of correlation'' as in the work of Baladi and Vallee \cite{BV-Euclidean}, later adapted to the system $(I_D, \Tgaus)$ by \cite{Kimetall}. After this, one is left with an asymptotic expression for $\chi_S(t)$, whose behavior is encoded by the spectral properties of the right-hand side of \eqref{eq:basic-link-intro}. For reasonable functions $S$, the final asymptotics for $\chi_S(t)$ can be extracted using a various combinations of tools from perturbation theory, the asymptotic analysis of integrals and integral transforms.
\subsubsection{Our direction}
The present paper sits at the intersection of \cite{BettinDrappeau-main} and \cite{Kimetall}. The work of Kim, Lee and Lim will serve us in a fundamental way when defining the transfer operator. As we will explain in the upcoming sections, the system $(I_D, \Tgaus)$ is more intricate than $([0,1), \Cgaus)$ as $\Cgaus$ is not \textit{full branch}. On the other hand, as in \cite{BettinDrappeau-main}, we will work with period functions of the shape $\Im \boar{\frac{1}{z}}$, which are not of moderate growth, as it grows as $\vert \alpha\vert$ instead of $\log\vert \alpha\vert$. This is one of the reasons Hwang's quasi-power theorem (see e.g.\ \cite[Theorem~7.1]{Kimetall}) cannot be applied here. The condition we will need, which will appear \textit{en passant}, is an analogue of \cite[condition~(3.7)]{BettinDrappeau-main}. Luckily for us, the functions we work with are constant on parts, and their influence on $\Aopera_{s,t}$ does not require us to enlarge the delicate function space of \cite{Kimetall}. Therefore, a condition analogue to \cite[ condition (3.8)]{BettinDrappeau-main} holds almost automatically. The first part of the paper, where we establish the meromorphic continuation of $H(2s,t)$, follows with moderate effort, as we are able to borrow several ingredients from \cite{Kimetall}. The second part, where, as in \cite{BettinDrappeau-main}, the asymptotic analysis of a certain oscillatory integral is required, is more subtle. There, the arithmetic of $\ringint$ and the intricate nature of the invariant probability measure $d\mu$ associated with the system $(I_D, \Tgaus)$ require extra work compared to \cite{BettinDrappeau-main}. The computationally elusive nature of $d\mu$, whose density is given by an integral expression over a fractal domain, is the reason we do not provide an explicit value for $\kappa_D$ in \cref{thm:maintheorem}.

\subsection{Structure of the paper}
This paper is structured as follows. In \cref{sec:continued-fraction}, we present the relevant background on the complex continued fraction expansion and its connection to Sczech sums. In the preparatory \cref{sec:prep-material}, we introduce some technical constructions, mostly borrowed from \cite{Kimetall, Nakada}, and describe the Banach space of functions $\BanachHolo$ on which the family of operators $\Aopera_{s,t}$ acts. In \cref{sec:transoperator}, we define the transfer operator, which is a twist of the operator introduced in \cite{Kimetall}; in the same section, we establish the connection between the characteristic function $\chi_S(t)$ (see \eqref{eq:charfunctions}) and the function $H(2s, t)$. The spectral analysis of the operator, together with the analytic continuation of $H(2s,t)$, is carried out in \cref{sect:spectral-theory}. Asymptotics for $\lambda(s,t)$ and the quantity $s_0(t)-1$ are worked out in \cref{sec:lambda-asypt}, \cref{sec:expansion-char-function} and \cref{sec:integral-asypmt}. Finally, we prove \cref{thm:maintheorem} in \cref{sec:proof-of-the-theorem}.

\section*{Acknowledgments}
Both authors thanks Sandro Bettin, Noy Soffer-Aranov and Christopher Aistleitner for useful comments on an earlier draft.
\par
Some parts of the present work were carried out while M.B. was at the Department of Mathematics, Università di Milano,
as the recipient of a postdoctoral fellowship funded by the ``MUR - Department of Excellence 2023-2027, CUP code G43C22004580005 - project code DECC23\_012\_DIP''.
\par
Some parts of the present work were carried out while P.M. was partially supported by the Austrian Science Fund (FWF), grant-doi 10.55776/P35322. This research was funded in whole or in part by the Austrian Science Fund (FWF) 10.55776/PAT4579123. For open access purposes, the authors have applied a CC BY public copyright license to any author accepted manuscript version arising from this submission. 
\section{Continued Fraction Expansion}\label{sec:continued-fraction}

We begin by defining the fundamental regions for the lattice of algebraic integers. We recall that the ring of integers of the fields $\mathbb{Q}(\sqrt{-D})$ for $D \in \{2, 7, 11\}$ is
\[
\mathcal{O}_{\mathbb{Q}(\sqrt{-D})} = 
\begin{cases}
\mathbb{Z}[\sqrt{-2}] & D = 2, \\[6pt]
\mathbb{Z}\left[\frac{1+\sqrt{-7}}{2}\right] & D = 7, \\[6pt]
\mathbb{Z}\left[\frac{1+\sqrt{-11}}{2}\right] & D = 11.
\end{cases}
\]
Now we consider the three sets $I_D$ mentioned in the introduction: the rectangular domain
\[
I_{2} := \bigl\{ z = x + iy : |x| \leq 1/2,\; |y| \leq 1/\sqrt{2} \bigr\}
\]
and the two hexagonal domains
\[
I_{D} := \Bigl\{ z = x + iy : |x| \leq 1/2,\; 
        \bigl| y \pm x/\sqrt{D} \bigr| \leq \tfrac{D+1}{4\sqrt{D}} \Bigr\},
        \qquad D \in \{7,11\},
\]
which are depicted in \cref{fig:hexagonal_regions}.
\begin{rem}
To obtain a strict fundamental domain, we must account for boundary identifications under translation by $\mathcal{O}$. We therefore define
\begin{align*}
    \widetilde{I}_2 &= I_2 \;\setminus\; \bigcup_{\alpha \in \{1,\ \sqrt{-2}\}} (I_2 + \alpha), \\
    \widetilde{I}_D &= I_D \;\setminus\; \bigcup_{\alpha \in \left\{1,\ \frac{1\pm\sqrt{-D}}{2}\right\}} (I_D + \alpha), \qquad D\in \{7,11\}.
\end{align*}
\end{rem}
\noindent
Let  $D$ be fixed. Notice that for any $z\in \mathbb{C}$ there is a unique element $[z] \in \ringint$ such that $z-[z]\in \widetilde{I}_D$. We now define the following analogue of the Gauss map, called the \textit{Hurwitz map} (see \cite{ItoDed, Nakada, Kimetall}) as
\begin{equation}\label{def:Tgausmap}
    \Tgaus: I_D \to I_D,\qquad
    \Tgaus(z) :=
    \begin{cases}
        \displaystyle\frac{1}{z} - \boar{\frac{1}{z}}, & z \in I_D \setminus \{0\},\\
        0, & z = 0.
    \end{cases}
\end{equation}
Let us define the digit sets
\begin{align*}
    O_\alpha := \Bigl\{ z \in I_D : \boar{\frac{1}{z}} = \alpha \Bigr\}.
\end{align*}
The set above is empty only for units. As we are considering only $D=2,7,11$, this happens only for $\alpha=\pm 1$.

\begin{Def}[Continued fraction expansion in $\mathbb{Q}(\sqrt{-D})$]
Let $z \in I_D$. We call 
\[
\Bigl\langle \boar{\tfrac{1}{z}},\; \boar{\tfrac{1}{\Tgaus(z)}},\; \boar{\tfrac{1}{\Tgaus^2(z)}},\; \dots \Bigr\rangle
\]
the continued fraction expansion of $z$. The entries 
\[
\alpha_j = \boar{\tfrac{1}{\Tgaus^{\,j-1}(z)}} \in \mathcal{O}_K \setminus \{\pm 1\}
\]
are called the \textit{partial quotients} of $z$. Moreover, the expansion is finite if and only if $z \in I_D \cap \mathbb{Q}(\sqrt{-D})$.
\end{Def}

For each $z\in K$ we denote by $\ell(z)$ the exponent $j$ with $\Tgaus^j(z)\neq 0$ and $\Tgaus^{j+1}(z)=0$, and call it the length of the continued fraction expansion of $z$.

\begin{figure}[H]
    \centering
    \begin{subfigure}{0.30\linewidth}
        \centering
        \includegraphics[width=\linewidth]{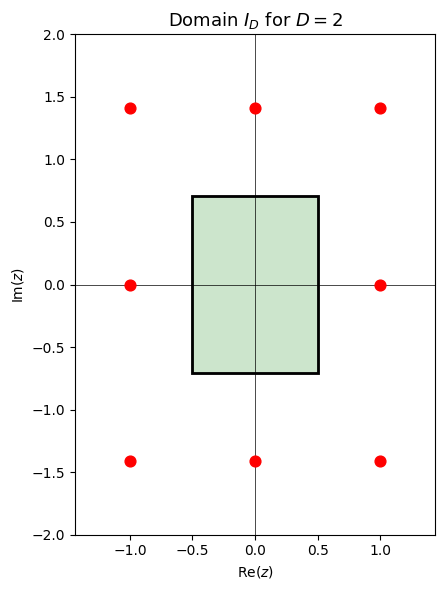}
        \label{fig:fundamental_region_d2}
    \end{subfigure}
    \hfill
    \begin{subfigure}{0.30\linewidth}
        \centering
        \includegraphics[width=\linewidth]{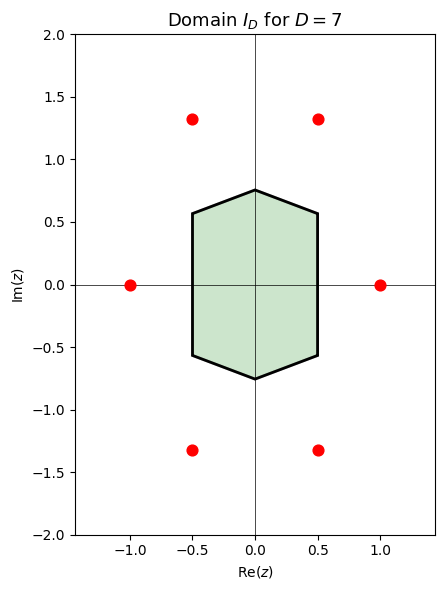}
        \label{fig:fundamental_region_d7}
    \end{subfigure}
    \hfill
    \begin{subfigure}{0.30\linewidth}
        \centering
        \includegraphics[width=\linewidth]{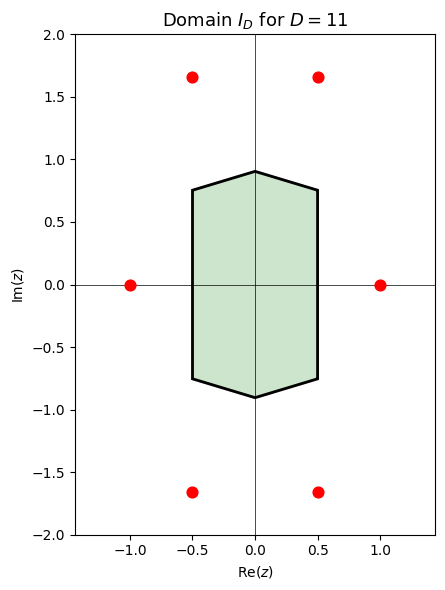}
        \label{fig:fundamental_region_d11}
    \end{subfigure}
    \caption{The fundamental domains $I_D$ for $D\in\{2,7,11\}$, 
    with algebraic integers marked in red (origin removed).}
    \label{fig:hexagonal_regions}
\end{figure}

\begin{rem}\label{rem:badalphas}
Plainly, the sets $O_\alpha$ above play the same role for the system $(I_D, \Tgaus)$ as the intervals $(\frac{1}{n+1}, \frac{1}{n}]$ do for $((0,1], \Cgaus)$. However, there are some important differences. For the Gauss map we have $\Cgaus((\frac{1}{n+1}, \frac{1}{n}])=(0,1]$ for all $n\geq 1$. On the other hand, as remarked in \cite{Kimetall}, there are $\alpha$ such that $\Tgaus(O_\alpha) \neq I_D$. These are precisely the $\alpha\in \ringint$ for which the translate $I_D+\alpha$ is not contained in the image of $h_\alpha: z\mapsto \frac{1}{z+\alpha}$ (see \cref{tab:nasty_algebraic_integers_D2} for $D=2$). Due to the shrinking property of $h_\alpha$ for $\alpha$ of large norm, the number of such elements is finite. This heavily affects the study of orbits of points $z\in I_D$ via dynamical methods. Indeed, a major difficulty that the authors of \cite{Kimetall} had to overcome was constructing a finite partition of $I_D$ and identifying an appropriate space of functions. We will return to this key point in the next sections.
\end{rem}

\begin{table}[H]
    \centering
    \setlength{\tabcolsep}{12pt}
    \renewcommand{\arraystretch}{1.5}
    \begin{tabular}{|c|c|}
        \hline
         $a$ & $b$ \\
        \hline
         $0$      & $\pm 1$ \\
         $\pm 1$  & $0,\ \pm 1$ \\
         $\pm 2$  & $0,\ \pm 1$ \\
        \hline
    \end{tabular}
    \caption{The algebraic integers $\alpha = a+ib\sqrt{2} \in \mathbb{Z}[\sqrt{-2}]$ 
    such that $f_\alpha(I_2)\not\subset I_2$ (for $D=2$).}
    \label{tab:nasty_algebraic_integers_D2}
\end{table}

\begin{Def}[Inverse branches]\label{defn:inversebranches}
For $\alpha \in \ringint\backslash \{\pm 1\}$ the maps $h_\alpha: \Tgaus O_\alpha\mapsto O_\alpha$ given by
\begin{align}
    h_\alpha(z):=\frac{1}{\alpha +z}
\end{align}
are called the inverse branches of $\Tgaus$. 
\end{Def}
In particular, for any non empty $O_\alpha$, the Hurwitz map $\Tgaus$ is a bijection from $O_\alpha$ to $\Tgaus O_\alpha$, given by $\Tgaus(z)=\frac{1}{z}-\alpha$. 
\begin{Def}[Depth of a branch, cf. \cite{Kimetall}]
Let $n\geq 1$, then given $\alpha_1, \dots, \alpha_n\in \ringint\backslash\{\pm 1\}$, define $h_{\tick{\alpha}}:=h_{\alpha_1}\circ \dots\circ  h_{\alpha_n}$. Define the sets $O_{\tick{\alpha}}=O_{(\alpha_1, \dots, \alpha_n)}$, inductively by
\begin{align*}
    &O_{(\alpha_1, \alpha_2)}=\{z\in O_{\alpha_1}: \Tgaus(z)\in O_{\alpha_2}\},\\
    &\vdots\\
    &O_{(\alpha_1, \dots, \alpha_n)}=\{z\in O_{\alpha_1}: \Tgaus(z)\in O_{(\alpha_2, \dots, \alpha_n)}\}.
\end{align*}
The map $h_{\tick{\alpha}}: \Tgaus^n O_{\tick{\alpha}}\mapsto O_{\tick{\alpha}}$ is a bijection. We call the index $n$ the depth of $\tick{\alpha}$ and write $\vert \tick{\alpha}\vert=n.$
\end{Def}
Let now $\tick{\alpha}=(\alpha_1, \dots, \alpha_n)$. Recall now that, for a function $f: \CC\to \CC$, we denote by $J_f$ the Jacobian determinant of $f$. The quantity
\begin{align}\label{eq:contractionratio}
    \rho:=\limsup_{n\to \infty} \sup_{\vert \tick{\alpha}\vert =n} \sup_{\substack{z\in \Tgaus O_{\tick{\alpha}}}} \vert J_{\tick{\alpha}}(z)\vert^{\frac{1}{n}}
\end{align}
is called \textit{the contraction ratio}, and is strictly smaller than 1 for all $D\in\{2,7, 11\}$, see \cite[Lemma 2.3]{Kimetall}.

\subsection*{Link with elliptic sums}
Let us consider two functions $\psi_1, \psi_2: I_D \to \mathbb{R}$. We extend these by periodicity modulo $2$, that is
\begin{align}\label{eq:psidefinitions}
\psi_j := \begin{cases} \psi_1 & \text{if } j \text{ is odd,} \\ \psi_2 & \text{if } j \text{ is even.} \end{cases}
\end{align}
For any given $z\in K\cap I_D$ define the quantity
\begin{align}\label{eq:total-cost}
    S(z):=\sum_{j=1}^{\ell(z)} \psi_j(\Tgaus^{j-1}(z))
\end{align}
and the \textit{period law}
\begin{align}\label{eq:periodeq}
\Psi(z) := \psi_1(z) + \psi_2\bigl(\Tgaus(z)\bigr).
\end{align}
We now proceed to link the above with our elliptic sums. Let $a_i \in \ringint$ and define a polynomial $P(a_0 , a_1 , \ldots , a_n)$ of $a_0 , a_1 , \ldots , a_n$ by the relations
\begin{align}
\begin{split}
    &P(a_0) = a_0, \\ 
    &P(a_0, a_1) = a_0 a_1 + 1,\\  
    &P(a_0, a_1, \ldots, a_m) = P(a_0, a_1, \ldots, a_{m-1})a_m + P(a_0, a_1, \ldots, a_{m-2}), \quad m \geq 2.
\end{split}
\label{eq:Pdef}
\end{align}
It is known (see \cite{ItoDed}) that
\begin{align*}
&P(a_0 , a_1 , \ldots , a_n) = a_0 P(a_1 , \ldots , a_n) + P(a_2 , \ldots , a_n), \\
&P(a_0 , a_1 , \ldots , a_n) = P(a_n , a_{n-1} , \ldots , a_0), \\
&P(a_0 , \ldots , a_n)P(a_1 , \ldots , a_{n-1}) - P(a_0 , \ldots , a_{n-1})P(a_1 , \ldots , a_n) = (-1)^{n+1}.
\end{align*}
Let now $a, c\in \ringint$ be two algebraic integers, $c\neq 0$. Then, since the field is Euclidean, we have
\[
\frac{a}{c} = \langle a_0, \dots, a_n \rangle = \frac{P(a_0, \ldots , a_n)}{P(a_1, \ldots , a_n)}.
\]
The following lemma is an analogue of the Barkan–Hickerson expression for Dedekind sums (see e.g. \cite[Lemma 4]{Conrey}).

\begin{lem}\label{lem:Ito-Hickerson}
Let $\frac{a}{c}=\langle a_0 , a_1 , \ldots , a_n \rangle$ for $a_i\in \ringint$. Then
\begin{multline*}
\mathfrak{D}(a, c) = G_2(0) \, I\Bigg( \frac{P(0, a_1 , \ldots , a_n)}{P(a_1 , \ldots , a_n)} + (-1)^{n+1} \frac{P(0, a_n , \ldots , a_1)}{P(a_n , \ldots , a_1)} \\
+ a_1 - a_2 + \cdots + (-1)^{n+1} a_n \Bigg),
\end{multline*}
where $I(z)=2i\Im(z)$ and $G_2(0)\not =0$ is the valued prescribed by the analytic continuation of the functions $G_2(z)$ in \eqref{eq:Gkintro}.
\end{lem}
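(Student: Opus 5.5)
We argue by induction on the length $n$ of the expansion, using the reciprocity law for $\mathfrak{D}$ together with its periodicity, just as in the classical Barkan--Hickerson derivation. Two basic properties of $\mathfrak{D}$ are needed. The first is \emph{periodicity}: $\mathfrak{D}(a+mc,c)=\mathfrak{D}(a,c)$ for every $m\in\ringint$. This holds because $G_1$ is $L$-periodic and $r\mapsto r$ ranges over $L/cL$, so $G_1\bigl(\frac{(a+mc)r}{c}\bigr)=G_1\bigl(\frac{ar}{c}+mr\bigr)=G_1\bigl(\frac{ar}{c}\bigr)$, using $mL\subset L$. The second is the \emph{reciprocity law} for coprime $a,c$,
\[
\mathfrak{D}(a,c)+\mathfrak{D}(c,a)=G_2(0)\, I\Bigl(\frac{a}{c}+\frac{c}{a}+\frac{1}{ac}\Bigr),
\]
which is exactly the law quoted in the introduction with $I(z)=2i\Im(z)$. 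We also need $\mathfrak{D}(a,1)=0$, since for $c=1$ the sum has the single term $r=0$ and $G_1(0)=0$ by oddness, and the symmetry $\mathfrak{D}(-a,c)=-\mathfrak{D}(a,c)$, again from the oddness of $G_1$.

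For the inductive step, write $a/c=\langle a_0,\dots,a_n\rangle$ with $a=P(a_0,\dots,a_n)$ and $c=P(a_1,\dots,a_n)$; these are coprime by the determinant identity. Periodicity gives
\[
\mathfrak{D}(a,c)=\mathfrak{D}(a-a_0c,c)=\mathfrak{D}\bigl(P(a_2,\dots,a_n),P(a_1,\dots,a_n)\bigr).
\]
Reciprocity then converts this into $-\mathfrak{D}\bigl(P(a_1,\dots,a_n),P(a_2,\dots,a_n)\bigr)$ plus an explicit term $G_2(0)I(\cdot)$, and the remaining sum corresponds to the shorter expansion $\langle a_1,\dots,a_n\rangle$. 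Iterating down to length zero, we arrive at
\[
\mathfrak{D}(a,c)=G_2(0)\, I\Bigl(\sum_{j=1}^{n}(-1)^{j+1}\Bigl(\frac{q_{j+1}}{q_j}+\frac{q_j}{q_{j+1}}+\frac{1}{q_jq_{j+1}}\Bigr)\Bigr)+(-1)^n\cdot 0,
\]
where $q_j=P(a_j,\dots,a_n)$ and we set $q_{n+1}=1$. Note that the reciprocity identity is linear over $\mathbb{R}$ through $I$, and the alternating signs arise from moving each $\mathfrak{D}$ to the other side. The terms coming from $a_0$ drop out by periodicity, consistent with the formula, which contains no $a_0$.

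The main obstacle is the telescoping algebra needed to match this sum with the stated closed form. Here we use $q_j=a_jq_{j+1}+q_{j+2}$, which gives $\frac{q_j}{q_{j+1}}=a_j+\frac{q_{j+2}}{q_{j+1}}$. We also need the determinant identity, which shows that $\frac{1}{q_jq_{j+1}}$ equals, up to sign, a difference of consecutive ratios $\frac{P(\cdot)}{q_j}$ built from the reversed continuants $P(0,a_n,\dots,a_j)$. The ratios $q_{j+2}/q_{j+1}$ and $q_{j+1}/q_j$ cancel in pairs with the alternating sign. The partial quotients $a_j$ survive as $a_1-a_2+\cdots+(-1)^{n+1}a_n$. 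The $1/(q_jq_{j+1})$ terms telescope into the two boundary terms $\frac{P(0,a_1,\dots,a_n)}{P(a_1,\dots,a_n)}$ and $(-1)^{n+1}\frac{P(0,a_n,\dots,a_1)}{P(a_n,\dots,a_1)}$. For these we use $P(0,a_1,\dots,a_n)=P(a_2,\dots,a_n)$ and the reversal symmetry of $P$, and we must keep careful track of boundary indices and signs. Before the general step, we would check the cases $n=1$ and $n=2$ directly to fix the sign conventions.
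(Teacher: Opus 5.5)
The paper does not prove this lemma itself: it quotes \cite[Theorem 3]{ItoDed} and only observes that Ito's extra hypothesis $P(a_m,\dots,a_n)\neq 0$ for all $m\le n$ is automatic for the expansions at hand. Your argument is a self-contained Barkan--Hickerson-type derivation, with $q_j=P(a_j,\dots,a_n)$. You use periodicity to replace $\mathfrak{D}(q_{j-1},q_j)$ by $\mathfrak{D}(q_{j+1},q_j)$, reciprocity to flip, iterate down to $\mathfrak{D}(q_n,1)=0$, and then telescope. This works: the iterated identity you display is right, and the telescoping reproduces the stated bracket exactly, not merely modulo reals.

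One bookkeeping correction. Using $q_j/q_{j+1}=a_j+q_{j+2}/q_{j+1}$ and $q_{n+2}=0$, the ratio terms by themselves telescope to $q_2/q_1=P(0,a_1,\dots,a_n)/P(a_1,\dots,a_n)$. Set $\tilde q_j=P(a_j,\dots,a_{n-1})$, so that $\tilde q_j/q_j=P(0,a_n,\dots,a_j)/P(a_n,\dots,a_j)$, $\tilde q_n=1$ and $\tilde q_{n+1}=0$. The determinant identity $q_j\tilde q_{j+1}-\tilde q_jq_{j+1}=(-1)^{n-j+1}$ then turns $\sum_j(-1)^{j+1}/(q_jq_{j+1})$ into $(-1)^{n+1}P(a_1,\dots,a_{n-1})/P(a_1,\dots,a_n)$ only. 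So the two boundary terms come from different pieces, not both from the $1/(q_jq_{j+1})$ terms.

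Also, $a=P(a_0,\dots,a_n)$ and $c=P(a_1,\dots,a_n)$ are only determined up to a common sign. The symmetry you need for this normalization is $\mathfrak{D}(-a,-c)=\mathfrak{D}(a,c)$, not $\mathfrak{D}(-a,c)=-\mathfrak{D}(a,c)$; it again follows from the oddness of $G_1$.

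What is genuinely missing is exactly the point the paper's one-line proof addresses. Your $j$-th step evaluates $\mathfrak{D}(q_{j+1},q_j)$ and applies the reciprocity law to the pair $(q_{j+1},q_j)$, for $1\le j\le n$. Both the definition of $\mathfrak{D}(\cdot,q_j)$ and the reciprocity law (stated for nonzero coprime arguments) require $q_j\neq 0$. For arbitrary $a_i\in\ringint$ this can fail: e.g.\ $a_{n-1}=1$, $a_n=-1$ gives $q_{n-1}=0$. So you must either carry Ito's hypothesis $P(a_m,\dots,a_n)\neq 0$, or verify it where the lemma is used.

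For the Hurwitz expansion it is automatic. Put $z_j=\Tgaus^{j}(a/c-a_0)$; downward induction from $q_{n+1}=1$, $q_n=a_n$ gives $q_{j+1}/q_j=z_{j-1}$. This is a nonzero point of $I_D$ for $j\le n$, and $I_D$ lies in the open unit disc (its vertices have modulus $\sqrt3/2$, $2/\sqrt7$, $3/\sqrt{11}$). Hence $|q_1|>|q_2|>\cdots>|q_{n+1}|=1$. With this added, and coprimality of consecutive $q_j$ from the determinant identity as you note, your proof is complete.
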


\begin{proof}
This is \cite[Theorem 3]{ItoDed}, where we dropped the condition $P(a_m, \dots, a_n)\not =0$ for all $m\leq n$, as this is automatic in our case, see \cite[pp. 202-203]{ItoDed}.
\end{proof}

\noindent
Due to the above lemma, we see that for any coprime pair, the normalized sum $\tilde{\mathfrak{D}}(a,c)$ given by
\begin{align*}
    \tilde{\mathfrak{D}}(a,c):=\frac{1}{i\sqrt{|D|}\, G_2(0)} \mathfrak{D}(a,c),
\end{align*}
is real. Specify
\[
\psi_1(z) = \frac{2}{\sqrt{D}}\Im\left(\boar{\frac{1}{z}}\right) 
\qquad\text{and}\qquad 
\psi_2(z) = -\frac{2}{\sqrt{D}}\Im\left(\boar{\frac{1}{z}}\right),
\]
and observe that 
\begin{align}\label{eq:dedekindapprox}
    S\left(\frac{a}{c}\right)= \tilde{\mathfrak{D}}\left(\frac{a}{c}\right)+\LandauO(1).
\end{align}
Therefore, we are interested only in the period function 
\begin{align}\label{eq:eqperiod}
\Psi(z)=\frac{2}{\sqrt{|D|}}\Im\left(\boar{\frac{1}{z}}-\boar{\frac{1}{\Tgaus(z)}}\right).
\end{align}
We now equip the set $\FFFF(X)$ with the uniform probability measure, and denote by $\ExpX$ the corresponding expectation. 
Let $t$ be a real variable. The characteristic function of the map $S: I_D \to \mathbb{R}$ with $z\mapsto S(z)$ is given by
\begin{align}\label{eq:charfunctions}
    \chi_{S, X}(t)=\chi_S(t) := \ExpX[e^{itS(z)}]=\frac{1}{\# \FFFF(X)} \sum_{z \in \FFFF(X)} e^{itS(z)}.
\end{align}
Our goal is to obtain an asymptotic evaluation of the above characteristic function as $|t| \to 0$.

\begin{rem}
In order to simplify the exposition, we restrict ourselves to the explicit map in \eqref{eq:eqperiod}. However, most of the discussion below works \textit{mutatis mutandis} for a wider class of functions. For example, $\psi_i(z)$ can be taken to be of the form $\Re\!\left(\boar{\frac{1}{z}}\right)^c$, or $\Im\!\left(\boar{\frac{1}{z}}\right)^c$ for some $c>0$. The value of $c$ influences the distribution sharply: as it will be clear in the forcoming sections, for $c>1$ one must expect some non Gaussian limiting distribution, while for $c<1$ one expect a Gaussian behavior.
\end{rem}

\section{Preparatory background }\label{sec:prep-material}
\subsection{A Markov partition for \texorpdfstring{$(I_D, \Tgaus)$}{(I\_D, T)}}
A major complication in studying the dynamics for the system $(I_D, \Tgaus)$ -carefully highlighted in \cite{Kimetall}- is that the map $\Tgaus$ is not always a full branch map, that is, we have \textit{some} $\alpha \in \ringint \backslash \{\pm 1 \}$ for which $\Tgaus O_{\alpha} $ is a proper subset of the interior of $\tilde{I}_D$. For the case $D=2$, those are listed in \cref{tab:nasty_algebraic_integers_D2}.
Concretely, this is of fundamental importance for a proper definition of a the density transformer and its spectral analysis. In the sequel, following \cite{Kimetall}, we will define the operator $\Myoperator_{s,t}$ for $(s, t)\in \CC\times \RR$ over a certain Banach space of functions $\mathscr{C}(\mathscr{P})$ by 
\begin{align*}
    \Myoperator_{s,t} f(z)
    := \sum_{z_0 \in \Tgaus^{-1}(z)} g_{s,t}(z_0) f(z_0)
    = \sum_{\alpha \in \ringint} (g_{s,t} f) \circ h_\alpha(z) \,
    1_{\Tgaus \mathcal{O}_\alpha}(z),
\end{align*}
where $g_{s,t}(z)$ is a factor that will be specified later.

To address the difficulties mentioned above, we draw from the work of Ei, Nakada and Natsui \cite{Nakada} and \cite{Kimetall} (which relies on \cite{Nakada}), where a finite Markov partition $\Mpartition$ of the set $I_D$ is constructed. We summarize the key ingredients of this construction below; the partition consists of singletons, line segments, circular arcs, and regions bounded by such, as illustrated in \cref{fig:partition-unlabeled}.

\begin{prop}[Markov partition for $I_D$]\label{prop:compatiblepartition}
For $D\in \{2,7,11\}$ there is a finite collection $\Mpartition$ of subsets $P$ of $I_D$ with $I_D = \bigcup_{P\in \Mpartition} P$ and the following properties:
\begin{enumerate}
    \item For any $\alpha\in \ringint \setminus \{\pm 1\}$, $O_\alpha$ is a disjoint union of elements of $\Mpartition$.
    \item For each inverse branch $h_\alpha$ and each $P\in \Mpartition$, exactly one of the following holds:
    \begin{itemize}
        \item There is a unique $Q = Q(P) \in \Mpartition$ such that $h_\alpha(P) \subset Q$;
        \item $h_\alpha(P) \cap I_D = \emptyset$.
    \end{itemize}
\end{enumerate}
Furthermore, the partition $\mathscr{P}$ can be split into the union $\bigcup_{i=0}^2 \mathscr{P}[i]$ of its $i$-dimensional parts for $i\in \{0,1,2\}$.
\end{prop}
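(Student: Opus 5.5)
The plan is to build $\Mpartition$ explicitly from the finitely many "bad" digit sets, then check the two properties by a finite verification, as in Ei--Nakada--Natsui \cite{Nakada}.

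First, collect all boundary curves that cut $I_D$. By \cref{rem:badalphas}, only finitely many $\alpha$ have $\Tgaus O_\alpha\neq \widetilde I_D$. For such $\alpha$, $\Tgaus O_\alpha=(\iota(\partial$-pieces of the translated unit-disc complements$)-\alpha)\cap I_D$, where $\iota(z)=1/z$. So each $\Tgaus O_\alpha$ is bounded by finitely many segments of $\partial I_D$ and arcs of circles $\{|z+\beta|=1\}$ with $\beta$ in a finite subset of $\ringint$. Let $\mathcal{C}_0$ be this finite family of curves.

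Next, close $\mathcal{C}_0$ under the dynamics. Define $\mathcal{C}_{k+1}=\mathcal{C}_k\cup\{\Tgaus(\gamma\cap O_\alpha):\gamma\in\mathcal{C}_k,\ \alpha\}$. The key claim is that this stabilises after finitely many steps. The images of circles and lines under $z\mapsto 1/z-\alpha$ are circles and lines, and for $|\alpha|$ large $O_\alpha$ is small and lies inside a single cell. Only the finitely many small $|\alpha|$ produce new curves. For $D=2,7,11$ one then checks, as \cite{Nakada} does case by case, that the new arcs are again arcs of circles $|z+\beta|=1$ or lines through lattice points already in the list. So the set of curves is finite and forward invariant: $\Tgaus(\gamma\cap O_\alpha)\subset\bigcup\mathcal{C}$ for every $\gamma$ and every $\alpha$.

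Now take $\Mpartition$ to be the cells of the arrangement $\mathcal{C}$. These are its vertices (the $0$-dimensional pieces, together with $0$), its open edges (segments and arcs, the $1$-dimensional pieces), and the connected components of the complement of the curves (the $2$-dimensional pieces). This gives $\Mpartition=\bigcup_{i=0}^{2}\Mpartition[i]$.

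Property (1) holds because $\partial O_\alpha\subset\iota^{-1}(\partial(I_D+\alpha))$, and these curves are included after refining $\mathcal{C}$ by pulling back $\partial I_D$ under the finitely many relevant branches. For large $\alpha$, $O_\alpha$ is a union of cells once the arcs $|1/z-\alpha|$ are incorporated. Here a caveat: the arrangement must contain each $\partial O_\alpha$. Since $\partial O_\alpha$ is the image of $\partial I_D$ under $h_\alpha$, we use \emph{backward} invariance for the digit boundaries and \emph{forward} invariance for the images. Both are finite because $\partial O_\alpha$ for distinct $\alpha$ consist of arcs of circles $|z-1/\beta|=\cdot$ accumulating only at $0$. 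Points near $0$ are handled by the Markov property of large digits, whose $O_\alpha$ are full cylinders, $\Tgaus O_\alpha=I_D$.

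Property (2): $h_\alpha$ is a homeomorphism onto its image. Fix $P$. If $h_\alpha(P)$ met two cells, or met both $I_D$ and its complement, then $h_\alpha(P)$ would cross a curve of $\mathcal{C}\cup\partial I_D$. Applying $\Tgaus=h_\alpha^{-1}$ on $O_\alpha$, $P$ would then be crossed by $\Tgaus$ of that curve, which lies in $\mathcal{C}$ by forward invariance. This contradicts $P$ being a cell. Dimensions are preserved, so $h_\alpha(P)$ sits in a unique $Q$ of the same dimension.

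The main obstacle is the finiteness and closure step: showing that iterating images of the bad boundary arcs produces only finitely many curves. This is genuinely arithmetic, and I would rely on the explicit computations of \cite{Nakada} and \cite[Section 3]{Kimetall} for each $D\in\{2,7,11\}$ rather than reprove them.
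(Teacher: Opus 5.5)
Your overall route is the one the paper takes by citation: the finite-range construction of Ei--Nakada--Natsui as packaged in \cite[Proposition 3.7]{Kimetall}, with the case-by-case closure check deferred to \cite{Nakada}. The genuine gap is in item (1). Read literally, item (1) cannot hold for any \emph{finite} $\Mpartition$. The sets $O_\alpha$ are pairwise disjoint and nonempty for all but finitely many $\alpha$, so each would need cells of its own. Your attempted fix, adding every $\partial O_\alpha$ to the arrangement, produces infinitely many curves. The remark that these curves accumulate only at $0$ does not make the set of cells finite. Indeed, in the actual partition the parts adjacent to $0$ (e.g.\ $P_1,\dots,P_4$ in \cref{lem:central-split-D-2}) each contain infinitely many $O_\alpha$. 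The correct property is that $\Tgaus O_\alpha$ is a union of cells. This is what \cite{Kimetall} establishes, and it is what the paper actually uses in \cref{sec:transoperator} (``the characteristic function of $\Tgaus O_\alpha$ is in $\mathscr{C}(\mathscr{P})$''). Your family $\mathcal{C}_0$ gives exactly this for the finitely many bad $\alpha$, and for all other $\alpha$ it is trivial because $\Tgaus O_\alpha=I_D$. So your closing remark about full cylinders near $0$ proves this corrected version, not the one you were aiming at. With (1) read this way, your argument for (2) is fine in outline: $h_\alpha(P)$ meeting both $I_D$ and its complement amounts to $P$ meeting both $\Tgaus O_\alpha$ and its complement.

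A second error is in the stabilisation step. You claim that for $|\alpha|$ large $O_\alpha$ lies inside a single cell, so only finitely many small $\alpha$ generate new curves. This fails for $D=7,11$. Up to boundary, $\Tgaus O_{-2}$ is $I_D$ minus the disc $|w-1|<1$, so the circle $\gamma=\{|w-1|=1\}$, which passes through $0$, belongs to $\mathcal{C}_0$. With $\iota(z)=1/z$, $\iota(\gamma)$ is the line $\Re z=\tfrac12$, which runs through the centres of the hexagons $I_D+\beta$ with $\Re\beta=\tfrac12$. Hence $\gamma$ crosses the interiors of infinitely many full cylinders $O_\beta$; the paper records the same phenomenon in the proof of \cref{prop:phi_Icontinuation}. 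Each such crossing contributes $\Tgaus(\gamma\cap O_\beta)=(\iota(\gamma)-\beta)\cap I_D$, which here is the segment $\{\Re z=0\}\cap I_D$. So finiteness of the forward closure does not come from large digits being inert. It comes from the inverted curves through $0$ being lines in lattice directions, whose translates meet $I_D$ in only finitely many distinct segments. That is part of the arithmetic input you take from \cite{Nakada}, but the justification you give for it is wrong and should be replaced by this one.
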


\begin{proof}
This is essentially Proposition 3.7 in \cite{Kimetall}; see also the discussion in Section 3 therein.
\end{proof}

\begin{figure}[H]
    \centering
    \includegraphics[width=1\linewidth]{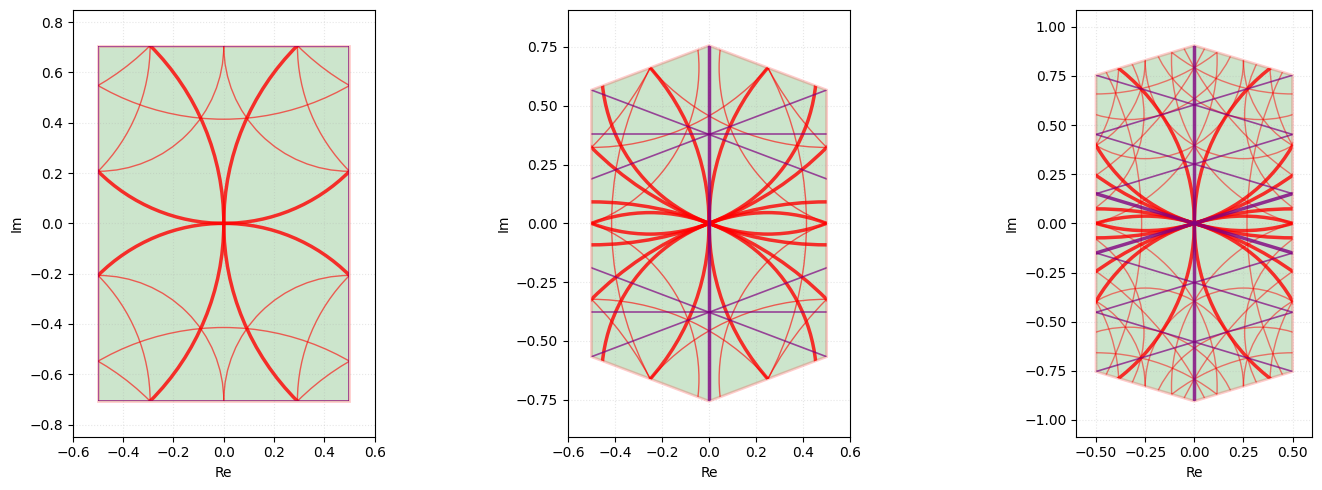}
    \caption{The partition $\Mpartition$ for $D=2,7,11$. The parts in $\Mpartition[2]$ are exactly the open subsets bounded by red arcs or purple lines. The plot was realized using the sets of lines and circles explicitly given in \cite{Nakada}}
    \label{fig:partition-unlabeled}
\end{figure}

In view of the above partition, the operator we are going to define will require, for computations, a certain understanding of the restrictions of the inverse branches to the various parts of $\mathscr{P}$. In what follows, we follow closely \cite[Section 2--4]{Kimetall}.

\medskip

By the proposition above, for any eligible $\alpha\in \ringint$ and any $P\in \mathscr{P}$, there is a unique $Q$ in $\mathscr{P}$ with $h_\alpha(P) \subset Q$ (whenever $h_\alpha(P) \cap I_D \neq \emptyset$). We denote the restriction of $h_\alpha$ to $P$ by $\langle\alpha\rangle_Q^P$. Now, for any pair of sets $P,Q \subset \mathscr{P}$ we define the set of maps 
\begin{align}
    B(P,Q):= \{ h: P \to Q \mid h = \langle \alpha\rangle_Q^P,\ \alpha \in \ringint \setminus \{\pm 1\} \}.
\end{align}
More generally, given $\tick{\alpha}=(\alpha_1, \dots, \alpha_n)$ and a part $P\in \Mpartition$, we consider the restriction of
\[
h_{\tick{\alpha}} = h_{\alpha_1} \circ h_{\alpha_2} \circ \cdots \circ h_{\alpha_n}
\]
to $P$ as 
\begin{align*}
    \langle \tick{\alpha}\rangle := h_{\tick{\alpha}}\big|_P.
\end{align*}
We denote the set of (local) inverse branches of depth $n$ from $P$ to $Q$ for $P, Q \in \Mpartition$ by
\begin{align}\label{eq:resbranch:dep:n}
   B^n(P,Q) := \{ \text{all maps of the form } \alphares \text{ of depth } n \text{ from } P \text{ to } Q \}.
\end{align}
We also define the sets 
\begin{align*}
    B^\star(P,Q) := \bigcup_{n\geq 1} B^n(P,Q), \qquad 
    B^\star := \bigcup_{Q,P\in \Mpartition} B^\star(P,Q).
\end{align*}

\subsection{The space of functions and its norms}
We now describe the space of functions on which the transfer operator will act.
\begin{Def}[Cf. \cite{Kimetall}]\label{def:spaceoffun}
    Let $\mathscr{C}(\mathscr{P})$ be the set of functions from $I_D$ to $\CC$ such that for any $P\in \mathscr{P}$, the function $f\times 1_P$ extends holomorphically to some open neighborhood U containing $\bar{P}$. 
\end{Def}

The space above has a nicer representation  as follows. We define the restriction map $res_\mathscr{P}$ as follows. First, for every $P\in \mathscr{P}$ we define $res_P(f)$ to be the restriction to $\bar{P}$ of the extension of $(f\times 1_P)$ in an open neighborhood of $P$. The latter is well defined by uniqueness of such an extension and defines an element in $\mathscr{C}(\bar{P})$. The map $res_\mathscr{P}$ is then defined by
\begin{equation}\label{split-space}
    \operatorname{res}_{\mathscr{P}}: \mathscr{C}(\mathscr{P}) \longrightarrow \bigoplus_{P \in \mathscr{P}} \mathscr{C}(\bar{P}), \qquad
    f \longmapsto \bigl( \operatorname{res}_P(f) \bigr)_{P \in \mathscr{P}}.
\end{equation}
which is a bijection, see \cite[Proposition 4.2]{Kimetall}.
\newline
Recall the decomposition $\Mpartition=\cup_{i\in {0,1,2}}\Mpartition[i]$, where $\Mpartition[i]$ is the collection of all open $i$-dimensional cells. Following \cite{Kimetall} we introduce some norms and semi-norms. Given any $P\in \mathscr{P}$, for $f\in \mathscr{C}(\bar{P})$ set
\begin{align}
    \Vert f_P\Vert_a:=\sup_{z\in P} \vert f(z)\vert.
\end{align}
Moreover, if $P\not \in \mathscr{P}[0]$, define the semi-norm
\begin{align}
    \Vert f_P\Vert_b=\sup_{z\in P} \sup_{v} \vert \partial_v f(z)\vert,
\end{align}
where the innermost supremum being taken over the set of all unit tangent vectors for the directional derivative. We set $\Vert f_P\Vert_b=0$ if $P$ is a singleton. For any parameter $t>0$ consider
\begin{align*}
    \Vert f_P \Vert_{c(t)}=\Vert f_P\Vert_c :=\Vert f_P\Vert_a+ \frac{1}{t} \Vert f_P\Vert_b,
\end{align*}
where we will sometimes omit the dependency on $t$.
\noindent To define a norm over the space $\mathscr{C}(\mathscr{P})$, define
\[
\Vert f\Vert_a:=\max_{P\in \Mpartition} \Vert f_P\Vert_a, \quad 
\Vert f\Vert_{b}:=\max_{P\in \Mpartition} \Vert f_P\Vert_b, \quad 
\Vert f\Vert_{c}:=\Vert f\Vert_a+\frac{1}{t}\Vert f\Vert_b.
\]

Finally, given any linear operator $A: \mathscr{C}(\mathscr{P}) \to \mathscr{C}(\mathscr{P})$ we will abuse notation and denote
\begin{align*}
\Vert A\Vert_\star=\sup_{\Vert f\Vert_\star=1} \Vert A f\Vert_\star,
\end{align*}
where $\star\in \{a,b,c\}$. It is possible to show that for any nonzero $t\not =0$ all $c-$norms are equivalent. Finally, we have the following proposition, ensuring we are dealing with a Banach space.
\begin{prop}
    The space $(\mathscr{C}(\mathscr{P}), \Vert \Vert_{c(1)})$ is a Banach space.
\end{prop}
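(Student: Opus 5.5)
The plan is to use the decomposition $\operatorname{res}_{\mathscr{P}}$ from \eqref{split-space}, which is a bijection of $\mathscr{C}(\mathscr{P})$ onto $\bigoplus_{P\in\mathscr{P}}\mathscr{C}(\bar P)$. Since $\Vert f\Vert_{c(1)}=\max_P\Vert f_P\Vert_a+\max_P\Vert f_P\Vert_b$ is equivalent to $\max_P\Vert f_P\Vert_{c(1)}$, and $\mathscr{P}$ is finite, completeness of $\mathscr{C}(\mathscr{P})$ reduces to completeness of each factor $\mathscr{C}(\bar P)$ with the norm $\Vert\cdot\Vert_a+\Vert\cdot\Vert_b$. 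A finite direct sum of Banach spaces with the max norm is Banach. Concretely, a Cauchy sequence $(f_n)$ in $\mathscr{C}(\mathscr{P})$ gives Cauchy sequences $(\operatorname{res}_P f_n)$ in each factor. If each of these has a limit $g_P$, then $f:=\operatorname{res}_{\mathscr{P}}^{-1}((g_P)_P)$ satisfies $f_n\to f$.

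We treat the factors by dimension. For $P\in\mathscr{P}[0]$, $\mathscr{C}(\bar P)\cong\CC$ with the absolute value, which is trivially complete. For $P$ of dimension $1$ or $2$, we take a Cauchy sequence $(f_n)$ in $\Vert\cdot\Vert_a+\Vert\cdot\Vert_b$. Uniform Cauchy-ness gives a continuous uniform limit $f$ on $\bar P$, and uniform convergence of the directional derivatives along $P$ gives a $C^1$ limit with $\Vert f_n-f\Vert_a+\Vert f_n-f\Vert_b\to0$. This is the standard argument: integrate $\partial_v f_n$ along segments or arcs inside $\bar P$, which are piecewise smooth regions bounded by lines and circular arcs, and pass to the limit.

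The remaining and main issue is to show that the limit $f$ still lies in $\mathscr{C}(\bar P)$, i.e.\ that $f$ extends holomorphically to an open neighborhood of $\bar P$. As literally defined, this is \emph{not} closed under sup plus $C^1$ norms, since the neighborhoods $U_n$ of holomorphic extension may shrink to $\bar P$. For example, $1/(z-z_n)$ with $z_n\to\partial P$ from outside can converge in $C^1(\bar P)$ to something not extending past $\bar P$.

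So the first thing to try is to follow \cite[Proposition 4.2]{Kimetall} and read $\mathscr{C}(\bar P)$ as it is used there. This means the closure, or equivalently the space of functions holomorphic on the interior of $P$ (in the $2$-dimensional case) or real-analytic-compatible along the cell (for arcs and segments) that are $C^1$ up to $\bar P$ in the above norms. With this reading the limit is holomorphic in the interior, by Weierstrass: uniform limits of holomorphic functions are holomorphic. The derivative bounds then pass to $\bar P$ by uniform convergence. This yields the Banach property.

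I expect this precise identification of the function space to be the only delicate step. Everything else is the finite-direct-sum argument and the classical completeness of $C^1(\bar P)$.
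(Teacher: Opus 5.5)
The paper gives no argument here; it simply cites \cite[Proposition 4.4]{Kimetall}. Your route is a genuinely different, self-contained proof. You transport the problem through $\operatorname{res}_{\mathscr{P}}$ to the finite sum $\bigoplus_P\mathscr{C}(\bar P)$ and then use completeness of $C^1$-up-to-the-boundary on each cell. It also exposes something the citation hides: with \cref{def:spaceoffun} read literally (holomorphic extension to \emph{some} neighbourhood of $\bar P$), the space is not closed for $\Vert\cdot\Vert_{c(1)}$, so the proposition is false as written.

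Your witness for this is wrong, though. If $z_n\to p\in\partial P$, then $\sup_{P}\vert 1/(z-z_n)\vert\to\infty$, so that sequence is not even bounded. A correct example: pick $p\in\bar P$ maximising $\Re(z\bar\nu)$ for a unit vector $\nu$, and set $z_n=p+\nu/n$. Let $g_n(z)=(z-z_n)^{3/2}$, with cut along $z_n+\RR_{\geq 0}\nu$, which misses $\bar P$. Then $g_n\to (z-p)^{3/2}$ in $\Vert\cdot\Vert_{c(1)}$ at rate $n^{-1/2}$, and the limit has a branch point at $p$.

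The repair you then adopt is not the right space. The weights $J_\alpha(z)^s=\vert z+\alpha\vert^{-4s}$ are not holomorphic. On a two-dimensional cell, $\Myoperator[1]=\sum_\alpha\vert z+\alpha\vert^{-4}$ (over the admissible $\alpha$) is real-valued and strictly subharmonic, since each term has Laplacian $16\vert z+\alpha\vert^{-6}>0$. Hence it is not holomorphic, and $\Myoperator_{s,t}$ and $\Aopera_{s,t}$ do not preserve ``holomorphic in the interior''. Note also that $\Vert\cdot\Vert_b$ takes a supremum over all directions, which is pointless for holomorphic functions. So the space the paper actually needs is the Baladi--Vall\'ee-type space of piecewise real $C^1$ functions.

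For that space, your second paragraph is already the whole proof. The $0$-cells give $\CC$. On $1$- and $2$-cells, a $\Vert\cdot\Vert_a+\Vert\cdot\Vert_b$-Cauchy sequence converges uniformly on $P$ together with its derivatives, and the limit is $C^1$ with uniformly continuous derivative, i.e.\ $C^1$ up to $\bar P$. If one insists on a $C^1$ extension to a neighbourhood of $\bar P$, one must add Whitney's extension theorem, which needs mild regularity of the cells. It is simpler to let $\mathscr{C}(\bar P)$ mean $C^1$ up to the boundary, which is all the norms see.

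You should therefore drop three things: the Weierstrass step; the unproved Mergelyan-type claim that the closure coincides with ``holomorphic inside, $C^1$ up to $\bar P$''; and the vague ``real-analytic-compatible'' clause for arcs, since the $C^1$-closure of real-analytic functions on an arc is all of $C^1$ of the arc anyway.
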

\begin{proof}
    This is \cite[Proposition 4.4]{Kimetall}.
\end{proof}

\section{Transfer operators, basic norm estimates and link to generating series}\label{sec:transoperator}
We proceed now in defining an appropriate two parameter family of operators acting on $\mathscr{C}(\mathscr{P})$. At this point, our analysis partially diverges from the approach used in \cite{Kimetall} in two directions. First, since we are not relying on Hwang quasi-power theorem, we will restrict to a real variable $t$. Secondly, our operator $\Aopera_{s,t}$ (see below) is a composition of two other operators, as in \cite{Bettin-Drappeau-integral}. 
\par
Let now $(s,t)\in \CC \times \RR$. For $z\in P$ and $P\in \Mpartition$ and $j=\{1,2\}$ set
\begin{align*}
    g_{s,t,j}(z):=e^{it \psi_j (z)} J_{[1/z]} (\Tgaus(z))^{s},
\end{align*}
where $\psi_j$ are defined as in \eqref{eq:psidefinitions} and $J_\alpha(z)$ denotes the determinant of the Jacobian of $h_\alpha(z)$:
\[
J_\alpha(z)=\vert h'_\alpha(z)\vert^2=\frac{1}{\vert z+\alpha\vert^4}>0,
\]
which is always nonzero.
Then we define the following operators 
\begin{align}
 \Myoperator_{s,t}^{(j)}[f](z)=\sum_{\alpha \in \ringint} ( g_{s,t, j} f)\circ h_\alpha (z) \cdot 1_{\Tgaus O_\alpha}(z).
 \end{align}
 Recall that by \cref{prop:compatiblepartition} and \cref{def:spaceoffun}, the characteristic function of $\Tgaus O_\alpha$ is in $\mathscr{C}(\mathscr{P})$ for any $\alpha \in \ringint\backslash\{\pm 1\}$. Then, for $z\in P$, these operators have the representation
\begin{align}\label{eq:basicop-rep}
    (\Myoperator_{s,t}^{(j)}[f](z))_P=\sum_{Q\in \Mpartition} \sum_{\alphares \in B(P, Q)} e^{it(\psi_j \circ \alphares(z))} J_{\tick{\alpha}}(z)^s (f_Q \circ \alphares (z)),
\end{align}
for $j\in\{1,2\}.$ We then define the composition of the two operators above:
\begin{align*}
    \Aopera_{s,t}[f](z):=\left(\Myoperator_{s,t}^{(2)} \circ \Myoperator_{s,t}^{(1)}\right)[f](z).
\end{align*}
Specify the following operator
\begin{align*}
\Myoperator[f](x):=\Myoperator_{1,0}[f](x),
\end{align*}
which will play a crucial role in the subsequent chapters. Notice that any fixed $P\in \Mpartition$ and $z\in P$, we have the explicit representation
\begin{align*}
    \Myoperator_{1,0}[f](z):=\sum_{Q\in \Mpartition} \sum_{\alphares \in B(P, Q)} \vert h_\alpha'(z)\vert^2 (f_Q \circ \alphares)(z).
\end{align*}

 \begin{prop}
   Let $\Re(s)>1/2$, $ t\in \RR$ and let $n\geq 1$ be an integer. Then the operators $\Myoperator_{s,t}^{(j)}$ and $\Aopera_{s,t}$ are well defined linear operators acting on $\mathscr{C}(\mathscr{P})$. Moreover, for any fixed $P\in \Mpartition$ and any $z\in P$, we have the explicit representation of the powers of the operator $\Myoperator$ as 
\begin{align*}
    (\Myoperator^n [f](z))_P=\sum_{Q} \sum_{\alphares \in B^n(P,Q)} {J_{\tick{\alpha}}(z)} (f_Q \circ \alphares  (z)).
\end{align*}
\end{prop}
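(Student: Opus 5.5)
We plan to argue in three steps: convergence of the defining series, preservation of the space $\mathscr{C}(\mathscr{P})$ via the Markov property of \cref{prop:compatiblepartition}, and induction on $n$ for the explicit formula for $\Myoperator^n$.

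\textbf{Convergence.} Fix $P\in\Mpartition$ and $z\in P$. In the representation \eqref{eq:basicop-rep}, each summand is $e^{it\psi_j(h_\alpha(z))}J_\alpha(z)^s f_Q(h_\alpha(z))$. The phase has modulus one since $t$ is real and $\psi_j$ is real valued. Moreover $|J_\alpha(z)^s|=|z+\alpha|^{-4\Re(s)}$. Since $z\in I_D$ is bounded, $|z+\alpha|\asymp|\alpha|$ for all but finitely many $\alpha$. The finitely many small $\alpha$ cause no trouble, because $|z+\alpha|$ is bounded below on $\overline{\Tgaus O_\alpha}$: here $h_\alpha(z)\in I_D$ is bounded. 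Hence the sum is dominated by $\|f\|_a\sum_{\alpha\in\ringint\setminus\{0\}}|\alpha|^{-4\Re(s)}$. This is a lattice sum in $\RR^2$, convergent exactly when $4\Re(s)>2$, i.e.\ $\Re(s)>1/2$. The convergence is uniform on a complex neighbourhood $U$ of $\bar P$, since $|w+\alpha|\asymp|\alpha|$ still holds there for large $\alpha$.

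\textbf{Preservation of $\mathscr{C}(\mathscr{P})$.} By \cref{prop:compatiblepartition}, for each $\alpha$ and $P$ either $h_\alpha(P)\cap I_D=\emptyset$, giving no contribution, or $h_\alpha(P)\subset Q$ for a unique $Q$. In the latter case $1_{\Tgaus O_\alpha}$ is identically $1$ on $P$, and $f\circ h_\alpha=f_Q\circ h_\alpha$ there. Now $h_\alpha$ is a Möbius map, holomorphic near $\bar P$ (its pole $-\alpha$ lies away from $\bar P$ whenever the branch is admissible). So $f_Q\circ h_\alpha$ extends holomorphically to a neighbourhood of $\bar P$, after shrinking the neighbourhood so that its image lies in the domain of the extension of $f_Q$. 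For the other two factors:
\begin{itemize}
\item The weight $J_\alpha(w)^s=((w+\alpha)(\bar w+\bar\alpha))^{-2s}$ is not holomorphic in $w$. Following \cite{Kimetall}, we replace $\bar w$ by the value at a conjugate-variable point, i.e.\ we work with the holomorphic extension of the restriction; the argument there applies verbatim.
\item The phase $\psi_j\circ h_\alpha$ is constant on $P$, since $[1/h_\alpha(z)]=\alpha$ on $\Tgaus O_\alpha$. Hence it only multiplies by a unimodular constant.
\end{itemize}
A uniform limit of holomorphic functions is holomorphic, so each $\Myoperator^{(j)}_{s,t}f$ lies in $\mathscr{C}(\mathscr{P})$. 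Linearity is clear. The composition $\Aopera_{s,t}=\Myoperator^{(2)}_{s,t}\circ\Myoperator^{(1)}_{s,t}$ is therefore well defined as well.

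\textbf{The formula for $\Myoperator^n$.} For $\Myoperator=\Myoperator_{1,0}$ we argue by induction on $n$; the case $n=1$ is \eqref{eq:basicop-rep}. Writing $\Myoperator^{n+1}=\Myoperator^n\circ\Myoperator$ and inserting the depth-one formula for $(\Myoperator f)_{Q'}$ gives
$$(\Myoperator^{n+1}f)_P=\sum_{Q'}\sum_{\langle\tick{\beta}\rangle\in B^n(P,Q')}J_{\tick{\beta}}\cdot\sum_{Q}\sum_{\langle\alpha\rangle\in B(Q',Q)}J_\alpha\circ\langle\tick{\beta}\rangle\cdot f_Q\circ\langle\alpha\rangle\circ\langle\tick{\beta}\rangle.$$
Two facts then finish the proof:
\begin{itemize}
\item The chain rule gives $J_{\tick{\beta}}\cdot(J_\alpha\circ h_{\tick{\beta}})=J_{(\tick{\beta},\alpha)}$, after ordering the composition to match $h_{\tick{\alpha}}=h_{\alpha_1}\circ\dots\circ h_{\alpha_n}$.
\item The pairs $(\langle\tick{\beta}\rangle,\langle\alpha\rangle)$ biject with $B^{n+1}(P,Q)$, again by the Markov property.
\end{itemize}
Absolute convergence justifies the rearrangement.

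The main obstacle is the holomorphic-extension step for the non-holomorphic weight $|h_\alpha'|^{2s}$. We would handle it exactly as in \cite[Section 4]{Kimetall} rather than reprove it.
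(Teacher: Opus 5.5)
Your outline follows the paper's proof with more detail: the lattice-sum bound $\sum_{\alpha}|\alpha|^{-4\Re(s)}$ for $\Re(s)>1/2$, the constancy of $\psi_j\circ h_\alpha$ on each depth-one branch, and a chain-rule induction for $\Myoperator^n$, which the paper simply delegates to \cite{Kimetall}. Your handling of the finitely many small $\alpha$ through $h_\alpha(z)\in I_D$ is an improvement. In your induction the composite branch is $h_\alpha\circ h_{\tick{\beta}}$, i.e.\ digit string $(\alpha,\tick{\beta})$ rather than $(\tick{\beta},\alpha)$; this is harmless.

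The step that fails is the one you flagged as the main obstacle, and deferring to \cite{Kimetall} cannot fix it. With $\mathscr{C}(\mathscr{P})$ defined literally through holomorphic extension, the assertion is false on two-dimensional cells. Take $f=1$ and $P\in\mathscr{P}[2]$. By the Markov property the set $A_P$ of digits contributing on $P$ does not depend on $z\in P$, so $(\Myoperator 1)_P(z)=\sum_{\alpha\in A_P}|z+\alpha|^{-4}$. This function is real-valued with Laplacian $\sum_{\alpha\in A_P}16|z+\alpha|^{-6}>0$. A real-valued holomorphic function is locally constant, so $\Myoperator 1\notin\mathscr{C}(\mathscr{P})$. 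On one-dimensional cells there is no issue, since along a segment or circular arc $\bar w$ is itself a holomorphic function of $w$. Treating $\bar w$ as an independent complex variable proves real-analyticity on each cell, which is membership in a different space.

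The paper's proof passes over this same point in silence, so the defect lies in the definition of $\mathscr{C}(\mathscr{P})$, not in your strategy. The space has to be read as a piecewise-$C^1$ space, which is what the directional-derivative seminorm $\Vert\cdot\Vert_b$ and the Lasota--Yorke estimates are built for. Under that reading your argument is complete once you replace ``a uniform limit of holomorphic functions is holomorphic'' by uniform convergence of the termwise-differentiated series. That convergence follows from $|\partial_v(J_\alpha^s)|\le 4|s|\,|z+\alpha|^{-4\Re(s)-1}$ and $|\partial_v(f_Q\circ h_\alpha)|\le\Vert f\Vert_b\,|z+\alpha|^{-2}$, both summable over the lattice for $\Re(s)>1/2$.
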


\begin{proof}
Given a fixed $P \in \Mpartition$, to prove the convergence of the series in \eqref{eq:basicop-rep} notice that for any $t$ and any $s$ with $\Re(s)>1/2$ we have
\begin{align*}
      \sum_{Q\in \Mpartition} \sum_{\alphares \in B(P, Q)}\vert h_\alpha'(z)\vert^{2s}\ll \sum_{\alpha \in \ringint} \frac{1}{\vert z+\alpha\vert^{4s}},
\end{align*}
which is finite. Since the partition has only finitely many parts, this holds globally. Let  $P\in \Mpartition$ be a part, and let $\alphares\in B(P,Q)$. For any $Q\in \Mpartition$, one has $f_Q\in \mathscr{C}(\mathscr{P})$. 
For $j=1$ and any $\alphares\in B(P,Q)$ we have 
\[
\exp\left(it(\psi_j\circ \alphares (z))\right)=\exp\left(it \frac{2}{\sqrt{D}}\Im\left(\alpha\right)\right).
\]
This follows by the fact that, for a depth 1 inverse branch $h_\alpha$, we have $\boar{\frac{1}{h_\alpha (z)}}=\alpha$ for any $z\in I_D$. Whence, the exponential factor does not depend upon $z$, but only on $\alphares$. The same holds also for $j=2$. Therefore, these operators are well defined. 
The last assertion follows at once by standard computations and the definition of $B^n(P,Q)$ (in particular, this follows immediately if one notices that our $\Myoperator$ matches the operator $\Myoperator_{s, t}$ with $s=1, w=0$ in \cite{Kimetall}).
\end{proof}

Our next goal is to express the two parameter family of operators  $\Aopera_{s,t}$ as a perturbation of the operator $\Myoperator$. This is a standard practice in the application of the thermodinamical formalism to distributional problems (see \cite{Broise}) and will be crucial in our analysis. Define the factor
\begin{align}\label{eq:gst}
    g_{s,t}(z):=e^{it\Psi(z)} T^{s-1}(z),
\end{align}
where $\Psi$ is as in \eqref{eq:eqperiod} and $T$ is defined as $T(z)=\vert z\Tgaus(z)\vert^4$.

\begin{prop}\label{prop:reduction}
For $\Re(s)>\frac{1}{2}$ and $t\in \RR$, the operator $\Aopera_{s,t}$ can be rewritten as $f\mapsto \Myoperator^2[g_{s,t} f]$.
\end{prop}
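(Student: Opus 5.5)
We prove the identity by unfolding both sides as sums over depth-two inverse branches and matching the summands term by term. Fix $P\in\Mpartition$ and $z\in P$.

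\emph{Unfolding $\Aopera_{s,t}$.} By \eqref{eq:basicop-rep} applied twice,
\begin{align*}
\Aopera_{s,t}[f](z)=\sum_{\beta}\sum_{\alpha}\mathbf{1}_{\Tgaus O_\beta}(z)\,\mathbf{1}_{\Tgaus O_\alpha}(h_\beta(z))\, e^{it\psi_2(h_\beta(z))}e^{it\psi_1(h_\alpha h_\beta(z))}\, J_\beta(z)^s J_\alpha(h_\beta z)^s\, f(h_\alpha h_\beta(z)).
\end{align*}
The index pairs $(\alpha,\beta)$ here range exactly over the admissible depth-two branches $\langle(\alpha,\beta)\rangle$ with $z\in\Tgaus^2O_{(\alpha,\beta)}$. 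Writing $x=h_\alpha h_\beta(z)$, we have $\Tgaus(x)=h_\beta(z)$ and $\Tgaus^2(x)=z$.

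\emph{Unfolding $\Myoperator^2[g_{s,t}f]$.} By the formula for $\Myoperator^n$ with $n=2$,
\begin{align*}
\Myoperator^2[g_{s,t}f](z)=\sum_{\langle(\alpha,\beta)\rangle} J_{(\alpha,\beta)}(z)\, g_{s,t}(x)\, f(x),
\end{align*}
and this runs over the same index set. It therefore suffices to check, for each admissible pair, that
\begin{align*}
J_\beta(z)^s J_\alpha(h_\beta z)^s\, e^{it(\psi_1(x)+\psi_2(\Tgaus x))}=J_{(\alpha,\beta)}(z)\, e^{it\Psi(x)}\, T(x)^{s-1}.
\end{align*}

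\emph{Phases.} Since $\Psi(x)=\psi_1(x)+\psi_2(\Tgaus(x))$ by \eqref{eq:periodeq}, the exponential factors agree identically.

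\emph{Jacobians.} Because the maps are holomorphic, the chain rule gives $J_{(\alpha,\beta)}(z)=J_\alpha(h_\beta z)J_\beta(z)$, so the claim reduces to
\[
J_{(\alpha,\beta)}(z)^{s-1}=T(x)^{s-1}.
\]
Compute the two factors separately. We have $J_\alpha(h_\beta z)=|h_\alpha(\Tgaus x)|^4=|x|^4$. Similarly $J_\beta(z)=|h_\beta(z)|^4=|\Tgaus(x)|^4$. Hence
\[
J_{(\alpha,\beta)}(z)=|x\,\Tgaus(x)|^4=T(x).
\]

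\emph{Powers and convergence.} Both $J_{(\alpha,\beta)}(z)$ and $T(x)$ are positive reals, so $T^{s-1}=T^sT^{-1}$ with the principal branch and no branch issue arises. Absolute convergence for $\Re(s)>1/2$ follows from the proof of the preceding proposition, which justifies the rearrangement into a single sum over depth-two branches.

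\emph{Main obstacle.} The delicate point is the bookkeeping that makes the two index sets coincide: the indicator conditions for the non-full branches in the composition $\Myoperator^{(2)}_{s,t}\circ\Myoperator^{(1)}_{s,t}$ must match $B^2(P,\cdot)$. We would handle this by appealing to \cref{prop:compatiblepartition}(2) and to the definition of $O_{(\alpha,\beta)}$.
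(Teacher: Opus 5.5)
Your proposal is correct and is essentially the paper's own argument. You unfold $\Aopera_{s,t}=\Myoperator^{(2)}_{s,t}\circ\Myoperator^{(1)}_{s,t}$ as a sum over depth-two branches, match the phases through the period law $\Psi(x)=\psi_1(x)+\psi_2(\Tgaus(x))$, and match the weights via $J_{(\alpha,\beta)}(z)=|x|^4\,|\Tgaus(x)|^4=T(x)$ with $x=h_\alpha h_\beta(z)$, exactly as the paper does. The index-set bookkeeping you flag as the main obstacle is in fact automatic if you unfold $\Myoperator^2$ directly as $\Myoperator\circ\Myoperator$, since $\Myoperator^{(1)}_{s,t}$, $\Myoperator^{(2)}_{s,t}$ and $\Myoperator$ all carry the same indicators $\mathbf{1}_{\Tgaus O_\alpha}$.
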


\begin{proof}
    Let $f \in \BanachHolo$ and let $z \in P$ for some $P \in \mathscr{P}$. By definition we have
    \begin{align*}
        \Myoperator_{s,t}^{(j)}[f](z) = \sum_{Q \in \Mpartition} \sum_{\alphares \in B(P, Q)} 
        e^{it(\psi_j \circ \alphares(z))} \, 
        \vert h_{\tick{\alpha}}'(z) \vert^{2s} \, 
        (f_Q \circ \alphares(z)).
    \end{align*}
    Since for $\Re(s) > 1/2$ the sum converges absolutely, we compute
    \begin{align*}
        \Aopera_{s,t}[f](z)
        &= \sum_{Q', Q \in \Mpartition} \,
        \sum_{\substack{\langle \tick{\alpha_1}\rangle \in B(P,Q') \\ 
                        \langle \tick{\alpha_2}\rangle \in B(Q', Q)}}  
        (\star \star) \, 
        \vert h'_{\tick{\alpha_1}}(h_{\tick{\alpha_2}}(z)) \vert^{2s} \,
        \vert h_{\tick{\alpha_2}}'(z) \vert^{2s} \,
        \bigl(f_{Q'} \circ \langle \tick{\alpha_1}\rangle \circ \langle \tick{\alpha_2}\rangle(z)\bigr),
    \end{align*}
    where 
    \[
        (\star \star) = \exp\Bigl(it\bigl[(\psi_2 \circ \tick{\alpha_2})(z)  
                      + (\psi_1 \circ \tick{\alpha_1} \circ \tick{\alpha_2})(z)\bigr]\Bigr).
    \]
    Write $\tick{\alpha} = (\alpha_1, \alpha_2)$, so that $\alphares = (h_{\alpha_1} \circ h_{\alpha_2})_P$.
    Notice that $\boar{\frac{1}{\bullet}} \circ h_{\alpha_2}(z) = \alpha_2$ trivially. 
    On the other hand, for any $z \in P$, we have $h_{\alpha_2}(z) \in Q' \subset I_D$. 
    Therefore, $\boar{\frac{1}{\bullet}} \circ h_{\alpha_1} \circ h_{\alpha_2}(z) = \alpha_1$ 
    for any $z \in P$. Now, using the chain rule, the definition of $B^n(P, Q)$ \eqref{eq:periodeq} 
    and \eqref{eq:eqperiod}, we have
    \begin{align*}
        \bigl(\Aopera_{s,t}[f](z)\bigr)_P
        = \sum_{Q \in \Mpartition} \sum_{\alphares \in B^2(P,Q)}  
        e^{it \Psi \circ \alphares (z)} \, 
        \vert h_{\tick{\alpha}}'(z) \vert^{2s} \,
        \bigl(f_Q \circ \tick{\alpha}(z)\bigr),
    \end{align*}
    which coincides with $\Myoperator^2[g_{s,t} f]$, since $\tick{\alpha} = (\alpha_1, \alpha_2)$ and 
    \[
    T\circ \alphares (z)=\vert \alpha_1\circ\alpha_2 (z)\vert^4 \vert \alpha_2 (z)\vert^4= J_{\alpha_1}(\alpha_2(z))J_{\alpha_2}(z).
    \]
\end{proof}

\begin{rem}\label{rem:quicknote}
    We remark here a subtle issue. In the above we used the fact that 
    \[
        \boar{\frac{1}{\bullet}} \circ h_{\alpha_1} \circ h_{\alpha_2}(z) = \alpha_1,
    \]
    for $z \in P$. This does \textit{not} hold true for all $\alpha$ if we do not restrict 
    the inverse branch $h_{\alpha_2}$ to $P$. Indeed, there are finitely many $\alpha \in \ringint$, like those in \cref{tab:nasty_algebraic_integers_D2}, such that 
    $h_{\alpha}(I) \not\subset I_D$, and therefore, it can happen that for some $\beta \neq \alpha_1$ 
    we have $\bigl[\frac{1}{\bullet}\bigr] \circ h_{\alpha_1} \circ h_{\alpha_2}(z) = \beta$.
    
    We also point out a key feature that will significantly ease our exposition later: 
    given any inverse branch $\alphares = (h_{(\alpha_1, \alpha_2)})_P \in B^2(P, Q)$, the composition 
    \[
        \exp\!\bigl(it (\Psi \circ h_{\tick{\alpha}})(z)\bigr)
        = \exp\!\Bigl( it \bigl[ (\psi_1 \circ h_{\tick{\alpha}})(z)
                               + (\psi_2 \circ \Tgaus \circ h_{\tick{\alpha}})(z) \bigr] \Bigr)
        = \exp\!\Bigl( it \,\frac{2}{\sqrt{D} } \, 
                      \Im(\alpha_1 - \alpha_2) \Bigr)
    \]
    depends only on $\tick{\alpha}$, not on $z \in P$.
\end{rem}

As we will see in \cref{sect:spectral-theory}, the point $\Aopera_{1,0}$ exhibits special spectral properties (cf. \cref{prop:quasi-comp-swfamily}), and the same holds for all points in a sufficiently small neighborhood of $\Aopera_{1,0}$. The next lemma provides continuous bounds for the two-parameter family $\Aopera_{s,t}$. This result is a straightforward adaptation of \cite[Lemma 5.3]{BettinDrappeau-main} to our setting. However, given its frequent use in the subsequent chapters, we include a sketch of its proof for completeness.

\begin{lem}[Continuous bounds for the perturbation]\label{lem:3-step-lemma}
    Let $\delta$ be sufficiently small. There is a sufficiently small $t_0 > 0$ such that the following estimates hold:
    \begin{itemize}
        \item For $\sigma \geq 1-\delta$ and $|t| \leq t_0$, we have
        \begin{align}\label{eq:contbound-first}
            \Vert \Aopera_{s,t}-\Aopera_{s, 0}\Vert_a \ll_\delta |t|.
        \end{align}
        \item For $1-\delta < \sigma \leq 1$ and $\tau \in \mathbb{R}$, we have
        \begin{align}\label{eq:contbound-second}
            \Vert \Aopera_{\sigma+i\tau, 0}-\Aopera_{1+i\tau, 0}\Vert_a \ll |\sigma-1|.
        \end{align}
        \item Given $\tau_1, \tau_2 \in \mathbb{R}$, we have
        \begin{align}\label{eq:contbound-third}
            \Vert \Aopera_{1+i\tau_1, 0}-\Aopera_{1+i\tau_2, 0}\Vert_a \ll |\tau_1-\tau_2|.
        \end{align}
    \end{itemize}
\end{lem}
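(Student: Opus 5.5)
We start from the explicit representation obtained in the proof of \cref{prop:reduction}: for $f\in\BanachHolo$, $P\in\Mpartition$ and $z\in P$,
\begin{align*}
    \bigl(\Aopera_{s,t}[f](z)\bigr)_P=\sum_{Q\in\Mpartition}\sum_{\alphares\in B^2(P,Q)} e^{it\frac{2}{\sqrt D}\Im(\alpha_1-\alpha_2)}\,J_{\tick{\alpha}}(z)^{s}\,(f_Q\circ\alphares)(z),
\end{align*}
where, by \cref{rem:quicknote}, the phase depends only on $\tick{\alpha}=(\alpha_1,\alpha_2)$. All three bounds are then reduced to estimating a series of scalar differences of the summands, multiplied by $\Vert f\Vert_a$, and summed over depth-two branches. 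The single analytic input is the Jacobian bound
\begin{align*}
J_{\tick{\alpha}}(z)=J_{\alpha_1}(h_{\alpha_2}z)J_{\alpha_2}(z)\asymp |\alpha_1|^{-4}|\alpha_2|^{-4},
\end{align*}
uniformly in $z\in P$. This holds because $|z+\alpha|\asymp|\alpha|$ for $z\in I_D$ and $\alpha\neq 0,\pm1$ in the relevant range; the finitely many small $\alpha$ only contribute $O(1)$. In particular $\sum_{\tick{\alpha}}\sup_z J_{\tick{\alpha}}(z)^{\sigma}\ll \bigl(\sum_{\alpha}|\alpha|^{-4\sigma}\bigr)^2<\infty$ for $\sigma>1/2$.

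For \eqref{eq:contbound-first} we use $|e^{itx}-1|\le |t||x|$ with $|x|\ll |\alpha_1|+|\alpha_2|$. The difference is then bounded by
\begin{align*}
|t|\,\Vert f\Vert_a\sum_{\alpha_1,\alpha_2}\frac{|\alpha_1|+|\alpha_2|}{|\alpha_1|^{4\sigma}|\alpha_2|^{4\sigma}}.
\end{align*}
This converges when $4\sigma-1>2$, i.e.\ $\sigma>3/4$, since the sum over the rank-two lattice $\ringint$ of $|\alpha|^{-\beta}$ converges for $\beta>2$. It therefore holds for $\sigma\ge1-\delta$ with $\delta<1/4$, uniformly in $\sigma$ and $\tau$, because only $|J^{s}|=J^{\sigma}$ enters. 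This is exactly where the non-moderate growth of $\Psi$ (it is linear in $|\alpha|$) costs something, and it forces $\delta$ to be small. The restriction $|t|\le t_0$ is not even needed for this crude bound.

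For \eqref{eq:contbound-second} and \eqref{eq:contbound-third} we write $J^{s}=e^{s\log J}$ and apply the mean value theorem in $s$ along the segment. This gives
\begin{align*}
|J^{s}-J^{s'}|\le |s-s'|\,|\log J|\,J^{\min(\Re s,\Re s')}.
\end{align*}
Here $|\log J_{\tick{\alpha}}(z)|\ll 1+\log|\alpha_1|+\log|\alpha_2|$. The resulting series $\sum(1+\log|\alpha_1\alpha_2|)|\alpha_1\alpha_2|^{-4(1-\delta)}$ converges, with room to spare, for $\delta<1/2$. In \eqref{eq:contbound-second} the modulus of $s-s'$ is $|\sigma-1|$, and in \eqref{eq:contbound-third} it is $|\tau_1-\tau_2|$ with real part $1$. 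Both bounds are uniform in $\tau$ because the imaginary part only contributes unimodular factors $J^{i\tau}$.

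The only delicate point I expect is justifying the uniform comparison $J_{\alpha_1}(h_{\alpha_2}z)\asymp|\alpha_1|^{-4}$ when the depth-two branch is restricted to parts of the Markov partition. For the finitely many exceptional digits (e.g.\ those in \cref{tab:nasty_algebraic_integers_D2}), $h_{\alpha_2}z$ still lies in $I_D$ by \cref{prop:compatiblepartition}, so $|h_{\alpha_2}z+\alpha_1|$ is bounded below for $\alpha_1\neq0,\pm1$. Thus these digits affect only constants. Taking the supremum over $P$ and over $\Vert f\Vert_a=1$ concludes all three estimates.
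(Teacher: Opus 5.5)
Your proposal is correct and follows the paper's proof essentially step by step: the depth-two representation from \cref{prop:reduction} with its $z$-independent phase, the bound $|e^{itx}-1|\le|t||x|$ with $|x|\ll|\alpha_1|+|\alpha_2|$, the mean value estimate for $J^{s}$ carrying an extra factor $|\log J|$, and convergence of the resulting lattice sums over $\ringint$. The only difference is that you prove the comparison $J_{\tick{\alpha}}(z)\asymp|\alpha_1|^{-4}|\alpha_2|^{-4}$ directly, using $h_{\alpha_2}z\in I_D$ and a lower bound on $|w+\alpha|$ for non-units. The paper instead cites the bounded distortion property of \cite{Kimetall}. You also state the convergence condition $\sigma>3/4$, hence $\delta<1/4$, explicitly.
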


\begin{proof}
Let $z \in P$ with $P \in \Mpartition$. By the bounded distortion property (cf. \cite[Proposition 2.4 and Proposition 3.10]{Kimetall}), for any $z \in I_D$ we have $|J_{\tick{\alpha}}(0)| \asymp |J_{\tick{\alpha}}(z)| = |J_{\alpha_1}(0)| \, |J_{\alpha_2}(0)|$ uniformly on all $\tick{\alpha}=(\alpha_1, \alpha_2) \in B^2(P, Q)$ and all $P,Q$. Therefore, we estimate
\begin{align*}
    \Vert \Aopera_{s,t}-\Aopera_{s, 0}\Vert_a 
    &\ll \sup_{\Vert f \Vert_a=1} \frac{\Vert \Myoperator^2[(g_{s,t}-g_{s,0}) f] \Vert_a}{\Vert f\Vert_a} \\
    &\ll \sum_{Q \in \Mpartition} \sum_{{\tick{\alpha}} \in B^2(P,Q)} 
    |J_{{\tick{\alpha}}}(0)|^\sigma 
    \| \exp(it \Psi(\tick{\alpha}(z))) - 1 \|_a.
\end{align*}
Since $z$ is contained in the bounded domain $I_D$, we have 
\[
\exp(it \Psi(\tick{\alpha}(z))) - 1 \ll |t| \sup_{z \in P} |\Psi \circ \tick{\alpha} (z)\vert.
\] 
Therefore
\begin{align*}
    \Vert \Aopera_{s,t}-\Aopera_{s, 0}\Vert_a 
    &\ll |t| \sum_{ Q \in \Mpartition} \sum_{{\tick{\alpha}} \in B^2(P,Q)} 
    |J_{\tick{\alpha}}(0)|^\sigma
    \sup_{z \in P} |\Psi \circ \tick{\alpha} (z)|.
\end{align*}
From the definition of $\Psi$ we see that 
\begin{align*}
    \| \Psi|_{\tick{\alpha}(P)} \|_a 
    &\leq \| \psi_1|_{\alpha(P)} \|_a 
    + \| (\psi_2 \circ \Tgaus)|_{\alpha(P)} \|_a \\
    &\leq |\alpha_1| + |\alpha_2|.
\end{align*}
Therefore, the innermost sum is 
\begin{align*}
    &\ll \sum_{ \tick{\alpha_1}, \tick{\alpha_2} \in B(P,Q)} 
    |J_{\tick{\alpha_1}}(0) J_{\tick{\alpha_2}}(0)|^\sigma
    \bigl( |\alpha_1| + |\alpha_2| \bigr) 
    \ll \sum_{\alpha' \in \ringint} \frac{1}{|\alpha'|^{3\sigma}}.
\end{align*}
This proves \eqref{eq:contbound-first}. Turning to \eqref{eq:contbound-second}, we proceed similarly and obtain analogous bounds. We may assume without loss of generality that $\sigma \leq 1$.
\begin{align*}
    \Vert \Aopera_{\sigma+i\tau,0}-\Aopera_{1+i\tau, 0}\Vert_a
    &\ll \sum_{Q \in \Mpartition} \sum_{{\tick{\alpha}} \in B^2(P,Q)} 
    J_{\tick{\alpha}}(0) \| (g_{\sigma+i\tau, 0}-g_{1+i\tau, 0}) \circ \tick{\alpha} (z) \|_a \\
    &\ll |\sigma-1| 
    \sum_{Q \in \Mpartition} \sum_{{\tick{\alpha}} \in B^2(P,Q)} 
    J_{\tick{\alpha}}(0) \bigl( 1+ \log J_{\tick{\alpha}}(0) \bigr) \\
    &\ll |\sigma-1| 
    \sum_{\alpha_1, \alpha_2 \in \ringint} 
    \frac{1}{(\alpha_1\alpha_2)^{5}} 
    \bigl( 1 + \log|\alpha_1\alpha_2| \bigr).
\end{align*}
Here we used the bounded distortion property again. The last expression converges. Finally, for \eqref{eq:contbound-third}, restricting to $P$, we have:
\begin{align*}
    \Vert \Aopera_{1+i\tau_1,0}-\Aopera_{1+i\tau_2,0}\Vert_a 
    &\ll \sum_{Q \in \Mpartition} \sum_{{\tick{\alpha}} \in B^2(P,Q)} 
    J_{\tick{\alpha}}(0) 
    \| J_{\tick{\alpha}}(z)^{i\tau_1} - J_{\tick{\alpha}}(z)^{i\tau_2} \|_a\\
    &= \sum_{P \in \Mpartition} \sum_{{\tick{\alpha}} \in B^2(P,Q)} 
    J_{\tick{\alpha}}(0) 
    \| J_{\tick{\alpha}}(z)^{i(\tau_1-\tau_2)} - 1 \|_a\\
    &\ll |\tau_1-\tau_2| 
    \sum_{P \in \Mpartition} \sum_{{\tick{\alpha}} \in B^2(P,Q)} 
    J_{\tick{\alpha}}(0) \bigl( 1 + \log |J_{\tick{\alpha}}(0)| \bigr)\\
    &\ll |\tau_1-\tau_2|,
\end{align*}
where we again used the bounded distortion property.
\end{proof}

\subsection{The generating series}
Let now $v\in K \cap I_D$. We have $v=\frac{a}{b}$ for some $a, b\in \ringint$ and we set an height on $K$ by $\hfat: K\to \RR$ by $\hfat(0)=0$ and $\hfat(v)=\vert b\vert $ otherwise. We then form the series
\begin{align}
    H(s,t)=\sum_{\substack{v\in K}} \frac{1}{\hfat(v) ^{2s}} e^{it S(v)}.
\end{align}
Now, for any $v=\frac{a}{b}\in K$, we have $v=\langle 0, \alpha_1, \dots, \alpha_{\ell(v)}\rangle$ for some $\alpha_i\in \ringint$. A simple computation shows that  
\begin{align}
    v=h_{\alpha_1} \circ \dots \circ h_{\ell(v)} (0)= h_{{\tick{\alpha}}}(0),
\end{align}
with $\tick{\alpha}=(\alpha_1, \dots, \alpha_{\ell(v)})$. The following lemma relates the height of $v$ to the Jacobian of the (restriction of) the branch induced by $\tick{\alpha}$. We recall that the origin 0 belongs to $\Mpartition[0]$, and is therefore a legitimate cell of the partition (see also \cite[Section 8]{Kimetall}).
\begin{lem}
    Let ${\tick{\alpha}}\in B^n(0, Q)$ and let $v=h_{\tick{\alpha}} (0)$ and $v=\frac{a}{b}$. Then we have $J_{\tick{\alpha}}(0)^s=\vert b\vert^{4s}$.
\end{lem}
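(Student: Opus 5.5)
The plan is to write the branch $h_{\tick{\alpha}}$ as a single Möbius transformation and read off both $v$ and the Jacobian at $0$ from its matrix. Each $h_{\alpha}(z)=\frac{1}{\alpha+z}$ is the Möbius map attached to the matrix $\bigl(\begin{smallmatrix} 0 & 1\\ 1 & \alpha\end{smallmatrix}\bigr)$, which has determinant $-1$. Hence $h_{\tick{\alpha}}=h_{\alpha_1}\circ\dots\circ h_{\alpha_n}$ is attached to the product
\begin{align*}
M_n:=\begin{pmatrix} 0 & 1\\ 1 & \alpha_1\end{pmatrix}\cdots\begin{pmatrix} 0 & 1\\ 1 & \alpha_n\end{pmatrix}=\begin{pmatrix} p_{n-1} & p_n\\ q_{n-1} & q_n\end{pmatrix},
\end{align*}
where $p_j,q_j\in\ringint$ and $\det M_n=(-1)^n$. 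The recursion $M_n=M_{n-1}\bigl(\begin{smallmatrix} 0 & 1\\ 1 & \alpha_n\end{smallmatrix}\bigr)$ gives $q_n=q_{n-2}+\alpha_n q_{n-1}$. This identifies $q_n$ with a continuant $P(\alpha_1,\dots,\alpha_n)$, and similarly $p_n$ with $P(0,\alpha_1,\dots,\alpha_n)$, in the notation of \eqref{eq:Pdef}; this is a routine induction.

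\textbf{Step 1: $b=q_n$ up to a unit.} From the matrix we get $h_{\tick{\alpha}}(z)=\frac{p_{n-1}z+p_n}{q_{n-1}z+q_n}$, so $v=h_{\tick{\alpha}}(0)=p_n/q_n$. The determinant identity $p_{n-1}q_n-p_nq_{n-1}=\pm1$ shows that $p_n$ and $q_n$ generate the unit ideal. Since $\ringint$ is a PID (indeed Euclidean), any reduced representation $v=a/b$ satisfies $b=\varepsilon q_n$ for a unit $\varepsilon$. In $\ringint$ with $D\in\{2,7,11\}$ the only units are $\pm1$, so $|b|=|q_n|$. I would take for granted that the height $\hfat$ is defined through a reduced representation, which is implicit in the paper.

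\textbf{Step 2: the Jacobian at $0$.} For a Möbius map of determinant $\pm1$ we have $h_{\tick{\alpha}}'(z)=\frac{\pm1}{(q_{n-1}z+q_n)^2}$. Hence
\begin{align*}
J_{\tick{\alpha}}(0)=|h_{\tick{\alpha}}'(0)|^2=|q_n|^{-4}=|b|^{-4}.
\end{align*}
One can check this against the chain rule: it gives $J_{\tick{\alpha}}(0)=\prod_j J_{\alpha_j}(\cdot)$, which is consistent with the definition $J_\alpha(z)=|z+\alpha|^{-4}$. Restricting $h_{\tick{\alpha}}$ to the cell $\{0\}\in\Mpartition[0]$, that is, viewing it as an element of $B^n(0,Q)$, does not change its value or its derivative at $0$. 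Raising to the power $s$ then gives $J_{\tick{\alpha}}(0)^s=|b|^{-4s}$, equivalently $J_{\tick{\alpha}}(0)^{-s}=|b|^{4s}$.

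\textbf{Main obstacle and a caveat.} There is no real obstacle: the only point needing care is the coprimality and unit argument in Step 1. I note, however, that the computation yields the exponent $-4s$. The statement as printed should therefore be read as $J_{\tick{\alpha}}(0)^{s}=|b|^{-4s}$. This is the form needed to match $\hfat(v)^{-2s}$ in $H(s,t)$ with $J^{s/2}$ and hence with the operator at parameter $s$ for $H(2s,t)$.
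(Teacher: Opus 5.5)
Your proof is correct. The paper gives no argument of its own here and simply cites \cite[Lemma 8.1]{Kimetall}; your Möbius-matrix computation is the standard self-contained proof behind that citation. It reads $v=p_n/q_n$ off the product matrix, uses $\det=\pm1$ for coprimality, and computes $h_{\tick{\alpha}}'(0)=\pm q_n^{-2}$.

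Your caveat about the exponent is also right. With the paper's own normalization $J_\alpha(z)=|z+\alpha|^{-4}$, the identity must read $J_{\tick{\alpha}}(0)^s=|b|^{-4s}$. This is already visible at depth one, where $v=1/\alpha$ and $J_\alpha(0)=|\alpha|^{-4}$. It is also the form \eqref{eq:gen-series-link} requires, since $H(2s,t)$ carries the weight $\hfat(v)^{-4s}$. The printed $|b|^{4s}$ is therefore a sign typo.

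A minor point: you do not need the unit group to be $\{\pm1\}$. Every unit of an imaginary quadratic field has absolute value $1$, so $|b|=|q_n|$ holds regardless.
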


\begin{proof}
    See \cite[Lemma 8.1]{Kimetall}.
\end{proof}
For $\Re(s)>1$, define the series
\begin{align}
\left(\myid+\Myoperator_{s, t}^{(1)}\right)\left( \myid-\Aopera_{s,t}\right)^{-1}[1](0),
\end{align}
where $1$ here denotes the indicator function of $I_D$, which is in $\BanachHolo$. By the same computations as in \cite{BettinDrappeau-main, Kimetall, Morris, BV-Euclidean} we find that for $\Re(s)>1$, we have the equality
\begin{align}\label{eq:gen-series-link}
\left(\myid+\Myoperator_{s,t}^{(1)}\right)\left( \myid-\Aopera_{s,t}\right)^{-1}[1](0)= H(2s, t)=\sum_{n\geq 1} \frac{a_{t,n}}{n^{2s}},
\end{align}
where
\begin{align*}
    a_{n, t}:=\sum_{\substack{v\in \FFFF\\ \hfat(v)^2=n}} e^{it S(v)}.
\end{align*}

\begin{rem}
    The Dirichlet series in \eqref{eq:gen-series-link} converges absolutely for $\Re(s)>1$. This can be seen by noticing that $\vert a_{n, w}\vert \ll n \times r_D(n)$, where $r_D(n)$ is the number of representations of $n$ as $a^2+Db^2$, where here (depending of $D$) $a,b$ can be half integers. So we get $\vert a_{n,w}\vert\ll n^{1+\epsilon}$. 
\end{rem}

\section{Meromorphic continuation for the generating series}\label{sect:spectral-theory}

In this section, as is customary in this type of approach (see \cite{BV-Euclidean, BettinDrappeau-main, Kimetall}), our goal is to obtain the analytic continuation of $H(2s,t)$ as well as sufficient decay in $\tau$, where $s = \sigma + i\tau$, in a strip $\sigma > 1 - \delta$ for some $\delta > 0$. The argument splits into two main parts.
The first part establishes the continuation for small values of $|\tau|$ and any $|t| \leq t_0$ (for some $t_0 > 0$). This requires only perturbation theory of operators and the continuity of the family $\Aopera_{s,t}$ with respect to the norms introduced previously.
The second part, where a pointwise estimate for large values of $\tau$ is established, requires more work. In particular, one employs ideas of Dolgopyat \cite{Dolgopyat}, which were subsequently adapted to the number-theoretic setting by Baladi and Vallée \cite{BV-Euclidean}.
\\
\par
We recall that for the space $\BanachHolo$, one has the decomposition (see \eqref{split-space})
\[
\BanachHolo=\mathscr{C}(\Mpartition[0])\cup \mathscr{C}(\Mpartition[1])\cup \mathscr{C}(\Mpartition[2]).
\]
We begin with the following lemma, which  states that the operator $\Aopera_{s,t}$ is quasi-compact for $(s,t)$ in a neighborhood of $(1,0)$.
\begin{prop}[A Ruelle-Perron-Frobenius theorem for $\Aopera_{s,t}$]\label{prop:quasi-comp-swfamily}
There exist parameters $\delta, \tau_0, t_0>0$ such that, for all $(s,t)$ satisfying
$\lvert 1-\sigma\rvert \leq \delta$, $\lvert \tau\rvert \leq \tau_0$, and $\lvert t\rvert \leq t_0$,
the operator $\Aopera_{s,t}$ is quasi-compact on the space $\BanachHolo$.
Within this region, there exist a simple, dominant eigenvalue $\lambda(s,t)$,
a rank-one operator $\myprojector_{s, t}$, and an operator $\mynihl_{s,t}$ such that
\[
\Aopera_{s,t} = \lambda(s,t)\,\myprojector_{s, t} + \mynihl_{s, t}.
\]
Moreover, the orthogonality relations
\[
\mynihl_{s, t}\myprojector_{s, t} = \myprojector_{s, t}\mynihl_{s, t} = 0
\]
hold. The operator $\myprojector_{s, t}$ is the projection onto the one-dimensional
eigenspace associated with $\lambda(s,t)$, and the residual spectral radius satisfies
\[
\varrho_{s,t}=rad(\mynihl_{s,t}) < \lvert \lambda(s,t)\rvert -\epsilon,
\]
for some $\epsilon=\epsilon(\delta, t_0, \tau_0)>0$ depending only on $\delta, \tau_0, t_0.$ In particular, we have
\begin{align*}
    \Aopera_{s,t}^n = \lambda(s,t)^n\myprojector_{s, t}^n+\mynihl^n_{s, t}.
\end{align*}
\end{prop}

\begin{proof}
By \cite[Theorem 4.8]{Kimetall}, the operator $\Aopera_{1,0}=\Myoperator^2$ is quasi compact. Then, the result above follows using \cite[Theorem 2.6]{Kloeckner-effective}.
\end{proof}
We also require some further properties of the eigenfunctions and eigenmeasures associated with $\Aopera_{s,t}$. We have the following analogue of \cite[Theorem 4.8]{Kimetall}. 
\begin{lem}\label{lem:spectralp2}
Let $(\delta, \tau_0, t_0)$ be as in \cref{prop:quasi-comp-swfamily}. Then, for the operator $\Aopera_{\sigma, t}$:
\begin{itemize}
    \item The eigenfunction $\psi_{\sigma, t}=(\psi_{\sigma, t,2}, \psi_{\sigma, t, 1}, \psi_{\sigma, t, 0})$ is positive, in the sense that $\psi_{\sigma, t, j}>0$ for all $j\in \{0,1,2\}$.
    \item The associated dual operator $\Aopera^\star_{\sigma,t}$ has a unique eigenmeasure $\nu_{\sigma, t}=(\nu_{\sigma, t,2}, \nu_{\sigma, t,1}, \nu_{\sigma, t,0})$ satisfying $\Aopera_{\sigma,t}^\star \nu_{\sigma, t}=\lambda(\sigma, t)\, \nu_{\sigma, t}$. In particular,  $\nu_{1,0,2}$ agrees with the usual two-dimensional Lebesgue measure.
\end{itemize}
\end{lem}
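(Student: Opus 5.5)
The plan is a perturbative argument starting from the unperturbed point $(\sigma,t)=(1,0)$, where everything is already known, and transferring both bullets to the whole parameter box of \cref{prop:quasi-comp-swfamily}. There are four steps: the base point, continuity of the spectral data, positivity for $t\neq 0$, and uniqueness of the eigenmeasure.

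\emph{Base point.} At $(1,0)$ we have $\Aopera_{1,0}=\Myoperator^2$. By \cite[Theorem 4.8]{Kimetall}, $\Myoperator$ has a dominant simple eigenvalue $1$, a strictly positive eigenfunction $\psi_{1,0}$ (componentwise on $\Mpartition[2],\Mpartition[1],\Mpartition[0]$), and the dual eigenmeasure, whose two-dimensional component is Lebesgue measure. The last fact holds because $\Myoperator$ is the Perron--Frobenius operator of $\Tgaus$ for Lebesgue measure, i.e. $\int \Myoperator f\,dx=\int f\,dx$. Hence the same data serve $\Aopera_{1,0}$: its eigenvalue is $1$, its eigenfunction is $\psi_{1,0}$, and $\nu_{1,0,2}$ is Lebesgue measure. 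This gives the last assertion of the lemma directly. By compactness of each $\bar P$, the positive eigenfunction satisfies $\min_j\inf_{P}\psi_{1,0,j}\geq c_0>0$.

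\emph{Continuity of the spectral data.} By \cref{lem:3-step-lemma}, the map $(\sigma,t)\mapsto\Aopera_{\sigma,t}$ is Lipschitz in operator norm near $(1,0)$. I would also check the analogous bound in the $c$-norm. The $b$-seminorm is harmless because the twist $e^{it\Psi\circ\alphares}$ is constant on each branch (\cref{rem:quicknote}) and the Jacobian factor $J_{\tick\alpha}^{\sigma}$ is handled by bounded distortion. Combining this with the spectral gap of \cref{prop:quasi-comp-swfamily}, the Riesz projection
\[
\myprojector_{\sigma,t}=\frac{1}{2\pi i}\oint_{|\zeta-1|=r}(\zeta-\Aopera_{\sigma,t})^{-1}\,d\zeta
\]
depends continuously on $(\sigma,t)$. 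I would then normalize by setting $\psi_{\sigma,t}:=\myprojector_{\sigma,t}[\psi_{1,0}]$, which yields $\|\psi_{\sigma,t}-\psi_{1,0}\|_a\ll|\sigma-1|+|t|$. Shrinking $\delta$ and $t_0$ so that this is below $c_0/2$ keeps every component of $\psi_{\sigma,t}$ at distance more than $c_0/2$ from $0$, on the side of $\psi_{1,0}$.

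\emph{Positivity for $t\neq0$.} For real $\sigma$ and $t=0$ the operator has positive weights, so it preserves real functions. Its eigenfunction is therefore real, and by the previous step it is positive. For $t\neq 0$ the main obstacle is that the weights $e^{it\frac{2}{\sqrt D}\Im(\alpha_1-\alpha_2)}$ are complex, so realness of $\psi_{\sigma,t}$ is not automatic. I would try a symmetry argument first. The reflection $z\mapsto\bar z$ preserves $I_D$, $\Mpartition$ and $\Tgaus$, and it maps the branch $h_\alpha$ to $h_{\bar\alpha}$, which reverses the phase. Pairing the branches $\tick\alpha$ and $\bar{\tick\alpha}$ in $\Aopera_{\sigma,t}$, I would aim to show that $\Aopera_{\sigma,t}$ commutes with the antilinear involution $f\mapsto\overline{f(\bar{\cdot})}$, and then test whether the composition with $z\mapsto -z$ (under which $\Psi$ changes sign) forces the eigenfunction to be real-valued. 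If that route fails, I would instead deduce positivity from the explicit series $\psi_{\sigma,t}=\lim_n\lambda(\sigma,t)^{-n}\Aopera^n_{\sigma,t}[\psi_{1,0}]$ together with the continuity bound above. This is the step I expect to require the most care.

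\emph{Eigenmeasure.} Uniqueness of $\nu_{\sigma,t}$ follows from simplicity of $\lambda(\sigma,t)$: the dual operator $\Aopera^\star_{\sigma,t}$ has the same isolated simple eigenvalue, with eigenvector $\nu_{\sigma,t}=\myprojector^\star_{\sigma,t}\nu_{1,0}$. This also gives continuity in $(\sigma,t)$, and $\nu_{1,0,2}$ is Lebesgue measure as established at the base point.
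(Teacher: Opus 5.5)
\textbf{There is a genuine gap at the step you flagged: positivity for $t\neq 0$.} Neither of your two routes can close it, because for $t\neq0$ the eigenfunction is genuinely complex-valued.

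\emph{The symmetry route.} Each of $Rf(z)=f(\bar z)$, $Nf(z)=f(-z)$ and $Cf=\bar f$ conjugates $\Aopera_{\sigma,t}$ into $\Aopera_{\sigma,-t}$. For $R$ and $N$ this is because they flip the sign of $\Psi$; for $C$ it is because $\sigma$ is real. So the symmetries of $\Aopera_{\sigma,t}$ you can build from them are $RN$, $CR$ and $CN$, never $C$ alone. By simplicity of $\lambda(\sigma,t)$ they give $\lambda(\sigma,t)\in\RR$. With the normalization $\int\psi_{\sigma,t}\,dx\,dy=1$ they also give $\psi_{\sigma,t}(z)=\overline{\psi_{\sigma,t}(\bar z)}=\overline{\psi_{\sigma,t}(-z)}$. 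This makes the imaginary part odd under the reflections, not zero.

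\emph{The fallback.} The limit $\lim_n\lambda(\sigma,t)^{-n}\Aopera^n_{\sigma,t}[\psi_{1,0}]$ is just $\myprojector_{\sigma,t}[\psi_{1,0}]$ again. Positivity of such a limit would need a cone of positive functions preserved by $\Aopera_{\sigma,t}$, and the complex weights $e^{itc(\tick{\alpha})}$ destroy that.

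\emph{The imaginary part is already nonzero at first order.} Put $u(z)=\frac{2}{\sqrt D}\Im\boar{\frac1z}$, so that $\Psi=u-u\circ\Tgaus$. Differentiate $\Aopera_{1,t}\psi_{1,t}=\lambda(1,t)\psi_{1,t}$ at $t=0$, using three facts: $\partial_t\lambda(1,0)=i\int\Psi\,d\mu=0$ (invariance of $\mu$), $\int u\,d\mu=0$ (conjugation symmetry), and $\Myoperator[(f\circ\Tgaus)\,g]=f\,\Myoperator[g]$. One gets
\[
\partial_t\psi_{1,t}\big|_{t=0}=-i\,(\myid+\Myoperator)^{-1}\Myoperator[u\,\psi_{1,0}].
\]
This is $i$ times a real function, and it vanishes only if $\mathbb{E}_\mu\bigl[u\mid\Tgaus z\bigr]=0$ almost surely. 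There is no reason for that: for $\Im\Tgaus z>0$ the conditional weights $|\Tgaus z+\alpha|^{-4}$ favour digits with $\Im\alpha<0$. So the first bullet cannot be expected to hold for $t\neq0$, and no refinement of your two routes will establish it. Likewise, for $t\neq0$ the object $\nu_{\sigma,t}$ is only a complex eigenfunctional, unique up to scaling as you correctly argue.

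\textbf{What does work, and how it compares with the paper.} For $t=0$ your argument is complete, and it is a different route from the paper's. The paper's proof just reruns the positive-operator argument of \cite[Theorem 4.8]{Kimetall} at each real parameter. That argument relies on nonnegative weights, so it too only covers $t=0$. You instead transport positivity from the base point, using $c$-norm continuity of the Riesz projection, and you get realness from commutation with $C$. The $b$-seminorm check you sketch does go through: the twist is constant on branches, and $|c(\tick{\alpha})|\ll|\alpha_1|+|\alpha_2|$ is summable against $J_{\tick{\alpha}}^{\sigma}$. Your route is more economical but only local, which is all that is needed.

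For $t\neq0$, what your continuity step actually proves is $\Re\psi_{\sigma,t,j}\geq c_0/2$ on every part. That is the right substitute: it makes $\psi_{\sigma,t}$ nonvanishing, which is what dividing by it in $\tilde{\Aopera}_{s,t}$ requires. Cleaner still is to normalize $\tilde{\Aopera}_{s,t}$ by the $t=0$ data $(\lambda(\sigma,0),\psi_{\sigma,0})$, as in \cite{BV-Euclidean}. Then only the $t=0$ positivity, which you have proved, is ever used.
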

\begin{proof}
    This follows along the same lines of \cite[ Theorem 4.8]{Kimetall}. 
\end{proof}

From now on, in virtue of the above propositions, for $(s, t)$ in a neighborhood of $(1,0)$, we will call the function $(s,t)\to \lambda(s,t)$ the \textit{leading eigenvalue}. With this at hand we can state the main result of this section

\begin{prop}[The meromorphic continuation of $H(2s, t)$]\label{prop:genseriesameromorphiccontinuation}
There are parameters $\delta, \tau_0, t_0>0$ such that for all $\vert t\vert \leq t_0$, the function $s\mapsto H(s,t)$ has a meromorphic continuation to the right half plane
\begin{align}
    S_\delta=\{ s\in \CC,\; s=\sigma+i\tau,\; \sigma\geq 1-\delta\}.
\end{align}
Possible poles are located in the rectangular region delimited by $\vert \tau\vert \leq \tau_0$ and satisfy the condition $\lambda(s,t)=1$. Moreover, the function 
\begin{align}\label{eq:gen_series_mero_cont}
    s\mapsto H(2s,t)-\frac{\lambda(s,t)}{1-\lambda(s,t)}\left(\myid+ \Myoperator_{s, t}^{(1)}\right) \myprojector_{s, t}[1](0)
\end{align}
is holomorphic in $\Re(s)>1-\delta$, $\vert \tau\vert\leq \tau_0$. Additionally, the function $s\mapsto H(2s, t)$ is bounded by
\begin{align*}
  H(2s, t)\ll \vert \tau\vert^{C\vert \sigma-1\vert}
\end{align*}
in the strip $\Re(s)>1-\delta, \vert \tau\vert>\tau_0$, the  constant $C$ depending only over $(\delta, \tau_0, t_0).$
\end{prop}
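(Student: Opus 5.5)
The plan is to start from the identity \eqref{eq:gen-series-link}, valid for $\Re(s)>1$, and treat the two regimes $|\tau|\le\tau_0$ and $|\tau|>\tau_0$ separately.

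\textbf{Small $|\tau|$.} Here I use \cref{prop:quasi-comp-swfamily}. For $|1-\sigma|\le\delta$, $|\tau|\le\tau_0$ and $|t|\le t_0$, write $\Aopera_{s,t}^n=\lambda(s,t)^n\myprojector_{s,t}+\mynihl_{s,t}^n$. Summing the Neumann series for $\Re(s)>1$ gives
\[
(\myid-\Aopera_{s,t})^{-1}=\myid+\frac{\lambda(s,t)}{1-\lambda(s,t)}\myprojector_{s,t}+\sum_{n\ge1}\mynihl_{s,t}^n .
\]
The last sum converges in operator norm because the residual spectral radius is at most $|\lambda|-\epsilon$. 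Since $\lambda(1,0)=1$ by \cref{lem:spectralp2} and continuity, we get $|\lambda|<1+\epsilon/2$ after shrinking the parameters, so $\varrho_{s,t}<1$. Analyticity of $s\mapsto\Aopera_{s,t}$ (the series defining it converge locally uniformly for $\Re(s)>1/2$), combined with Kato perturbation theory, shows that $\lambda$, $\myprojector$ and $\mynihl$ are holomorphic in $s$. The bounds of \cref{lem:3-step-lemma} give the continuity needed to keep the spectral gap uniform. Applying $(\myid+\Myoperator^{(1)}_{s,t})$ and evaluating at $0$, a bounded functional on $\BanachHolo$ since $\{0\}\in\Mpartition[0]$, shows that \eqref{eq:gen_series_mero_cont} is holomorphic. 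The only singularities of $H(2s,t)$ in this rectangle are therefore at the zeros of $1-\lambda(s,t)$.

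\textbf{Large $|\tau|$.} This is the main obstacle. I need a Dolgopyat-type estimate: there exist $\gamma<1$ and $C$ such that, for $|\tau|\ge\tau_0$, $|\sigma-1|\le\delta$ and $|t|\le t_0$,
\[
\|\Aopera_{s,t}^n\|_{c(|\tau|)}\ll |\tau|^{C|\sigma-1|}\,\gamma^n \quad\text{for } n\ge C\log|\tau| .
\]
For $t=0$ this is essentially proved in \cite{Kimetall}, following \cite{BV-Euclidean}, for $\Myoperator_{s}$ on $\BanachHolo$ with the $c(|\tau|)$-norm. I would transfer it to $\Aopera_{s,0}=\Myoperator_s^2$. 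To include $t\neq0$ I would use \cref{rem:quicknote}: the twist $e^{it\Psi\circ\alphares}$ is constant on each branch. It therefore does not change the derivative part of the Lasota--Yorke inequality, and since it has modulus one it only helps in the $L^2$ cancellation argument. A uniform nonintegrability (UNI) condition for the complex system is needed here, and I would take it from \cite{Kimetall}.

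From this bound, the Neumann series gives $\|(\myid-\Aopera_{s,t})^{-1}\|\ll \log|\tau|\cdot|\tau|^{C|\sigma-1|}$, where the first $C\log|\tau|$ terms are bounded by the a priori Lasota--Yorke estimate $\|\Aopera^n_{s,t}\|_c\ll|\tau|^{\ldots}$. Enlarging $C$ absorbs the logarithm. Together with the bounded operator $\myid+\Myoperator^{(1)}_{s,t}$, this yields $H(2s,t)\ll|\tau|^{C|\sigma-1|}$ and holomorphy for $|\tau|>\tau_0$.

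The remaining risk lies in checking that the polynomial growth in the Lasota--Yorke constant for $\sigma<1$ is of size $|\tau|^{O(|\sigma-1|)}$. The route is to bound $\sum_{\fatalpha}J_{\fatalpha}^{\sigma}$ via the contraction ratio $\rho$ in \eqref{eq:contractionratio}, which gives a factor $\rho^{-2n(1-\sigma)}$; at $n\asymp\log|\tau|$ this is exactly $|\tau|^{O(|\sigma-1|)}$.
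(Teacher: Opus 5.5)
Your structure matches the paper's where the two overlap. Near $(1,0)$ you sum the splitting of \cref{prop:quasi-comp-swfamily} as a Neumann series. For large $|\tau|$ you use a Dolgopyat bound as in \cref{prop:LV-bound}, and the twist is harmless because $e^{itc(\tick{\alpha})}$ is constant on each branch and has modulus one.

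\textbf{The gap: intermediate $\tau$.} The decomposition $\Aopera_{s,t}=\lambda(s,t)\myprojector_{s,t}+\mynihl_{s,t}$ with $\lambda$ close to $1$ is perturbative around $(1,0)$, so $\tau_0$ is small. The $L^2$/van der Corput mechanism behind the Dolgopyat estimate (\cref{prop:L2}) gives decay only for $|\tau|\geq\tau_1$ with $\tau_1$ large, since it needs $n\asymp\log|\tau|$ large and its gain is a negative power of $|\tau|$. You assert $\|\Aopera^n_{s,t}\|_{c(|\tau|)}\ll|\tau|^{C|\sigma-1|}\gamma^n$ for all $|\tau|\geq\tau_0$. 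For bounded $|\tau|$ this just says that $\Aopera_{s,t}$ has spectral radius $<1$, and neither Lasota--Yorke nor UNI gives that. Yet it is exactly what excludes poles of $H(2s,t)$ with $\tau_0<|\tau|<\tau_1$, which the proposition asserts.

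The missing ingredient is aperiodicity on the line $\sigma=1$: for $\tau\neq0$, $\Aopera_{1+i\tau,0}$ has no spectrum on the unit circle. This is the paper's \cref{lem:twist-on-the-line}, following Vall\'ee. A peripheral eigenvalue would force lattice-type relations among the values $\tau\log J_{\tick{\alpha}}$ at fixed points of the branches, and these fail. Then the continuity bounds of \cref{lem:3-step-lemma} and compactness of $[\tau_0,\tau_1]$ give a uniform resolvent bound for $|\sigma-1|\leq\delta$, $|t|\leq t_0$. This is the paper's lemma on intermediate values of $\tau$ in \cref{sect:spectral-theory}. For $t\neq0$ you need this step even if you quote a $t=0$ statement from \cite{Kimetall}.

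\textbf{The $|\tau|^{\LandauO(|\sigma-1|)}$ growth.} Your justification here runs backwards. The contraction ratio gives an \emph{upper} bound $J_{\tick{\alpha}}\ll(\rho')^{n}$ for any $\rho'>\rho$. For $\sigma<1$ this bounds $J_{\tick{\alpha}}^{\sigma-1}$ only from \emph{below}. No lower bound on $J_{\tick{\alpha}}$ exists, because the digits range over all of $\ringint$.

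The fix is to compare with the real untwisted operator: $\sum_{Q}\sum_{\tick{\alpha}\in B^{2n}(P,Q)}J_{\tick{\alpha}}^{\sigma}(z)=(\Aopera_{\sigma,0}^n[1])_P(z)\ll\lambda(\sigma,0)^n$. Analyticity of the leading eigenvalue at real $\sigma$ near $1$ gives $\lambda(\sigma,0)=1+\LandauO(|\sigma-1|)$. Hence you get $|\tau|^{\LandauO(|\sigma-1|)}$ at $n\asymp\log|\tau|$, which is how the factor $L_{\sigma,t}^n$ is controlled in the proof of \cref{prop:LV-bound}.

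Finally, the $\log|\tau|$ coming from the first $C\log|\tau|$ Neumann terms cannot be absorbed into $|\tau|^{C|\sigma-1|}$ by enlarging $C$, since the exponent vanishes as $\sigma\to1$. What your argument, like the paper's, actually yields is $|\tau|^{\beta}$ for a small fixed $\beta>0$. That is all the contour shift in \cref{prop:asymptotic:expansion} uses.
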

\cref{prop:quasi-comp-swfamily}  almost suffices to prove the first part of the statement. The extra ingredient required is some control for the norm $\Vert \Aopera_{s,t}\Vert_a$ for $\tau \in [\tau_0, \tau_1]$ and all $\vert t\vert \leq t_0$ and \cref{lem:3-step-lemma}.
\subsection{Intermediate values of \texorpdfstring{$\tau$}{tau}}
We first need a short lemma.
\begin{lem}\label{lem:twist-on-the-line}
        Given any $\tau\not=0$ and $A$ being either $\Aopera_{1+i\tau, 0}$ or $\Myoperator_{1+i\tau, 0}$ we have $\Vert A\Vert_a<1$.
    
\end{lem}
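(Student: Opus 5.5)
Since $g_{1+i\tau,0}$ and $|J_{\tick\alpha}(z)^{1+i\tau}|$ are governed by $J_{\tick{\alpha}}$, the plan is to compare $A$ with the positive operator $\Myoperator^2$ (respectively $\Myoperator$) and show that the oscillation of $J^{i\tau}$ forces a strict drop in the sup norm. First, for $f\in\BanachHolo$ with $\Vert f\Vert_a=1$, $P\in\Mpartition$ and $z\in P$, the representation \eqref{eq:basicop-rep} gives
\begin{align*}
\bigl|(Af)_P(z)\bigr| \le \sum_{Q\in\Mpartition}\sum_{\alphares\in B^n(P,Q)} J_{\tick\alpha}(z)\,\bigl|f_Q\circ\alphares(z)\bigr| \le (\Myoperator^n 1)(z),
\end{align*}
with $n=2$ for $\Aopera$ and $n=1$ for $\Myoperator_{1+i\tau,0}$. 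The first step is therefore to establish $\Myoperator 1\le 1$ pointwise on $I_D$, i.e.\ $\Vert\Myoperator\Vert_a\le 1$. This follows because $\sum_{\alpha}J_\alpha(z)1_{\Tgaus O_\alpha}(z)$ is the density of the pushforward of Lebesgue measure restricted to the sets $O_\alpha$, which are disjoint in $I_D$. Alternatively, in the normalisation of \cite{Kimetall}, one uses that $\nu_{1,0}$ is Lebesgue and $\lambda(1,0)=1$; then $\Myoperator^2 1\le 1$ follows.

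Next, the goal is to show that the supremum is not attained, i.e.\ to obtain a strict inequality with a uniform gap. Suppose $\Vert A\Vert_a=1$. Quasi-compactness (\cref{prop:quasi-comp-swfamily} applied at the point $(1+i\tau,0)$, or rather the Lasota--Yorke structure from \cite{Kimetall}) lets one reduce to an eigenvalue statement: the spectral radius of $A$ on $\BanachHolo$ is at most $1$, and if it equals $1$ there is a peripheral eigenfunction $f$ with $Af=e^{i\theta}f$. Applying the triangle inequality together with $\Myoperator^n|f|\ge|f|$ and the positivity and uniqueness of the invariant density from \cref{lem:spectralp2}, one forces equality in the triangle inequality. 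This yields the relation $J_{\tick\alpha}(z)^{i\tau}f_Q(\alphares z)=e^{i\theta}f_P(z)$ for every branch $\tick\alpha$. Evaluating at the cell $0\in\Mpartition[0]$ gives $|b|^{4i\tau}=e^{i\theta}c$ for all denominators $b$ of depth-$n$ branches. In particular, branches $h_\alpha$, $h_\beta$ with $|\alpha|^2$ and $|\beta|^2$ distinct rational integers (for instance $|\alpha|^2=2$ and $3$ when $D=2$) give $\tau\log(|\alpha|^2/|\beta|^2)\in \pi\ZZ/2$ for infinitely many independent ratios. This is impossible for $\tau\neq0$, which is the standard ``non-arithmeticity'' argument of \cite{BV-Euclidean}.

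The main obstacle is upgrading this spectral-radius statement to the operator sup norm $\Vert A\Vert_a<1$ itself. Here I would use the explicit cell $\{0\}$ and the fact that the terms $J_\alpha(z)^{i\tau}$ for small $\alpha$ carry a genuine phase spread. For each $z$, the sum contains two branches whose weights are bounded below uniformly (by bounded distortion, \cite[Proposition 2.4]{Kimetall}) and whose phases $\tau\log J$ differ by an amount bounded away from $0\bmod 2\pi$. This holds for a fixed $\tau$, though not uniformly in $\tau$, which the statement does not require. A strict loss $\epsilon(\tau)>0$ in the triangle inequality then follows by compactness of $\bar P$ and finiteness of $\Mpartition$. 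If the pointwise phase separation fails at isolated points, I would fall back on replacing $\Vert A\Vert_a$ by the spectral radius. I expect that this is what is actually needed downstream for \cref{prop:genseriesameromorphiccontinuation}.
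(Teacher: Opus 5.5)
Your first step is false, and paragraphs 1 and 3 fail with it. The pushforward argument only gives $\int_{I_D}\Myoperator 1\,dx\,dy=\operatorname{vol}(I_D)$. The sets $O_\alpha$ are disjoint, but their images $\Tgaus O_\alpha$ (equal to $I_D$ for all but finitely many $\alpha$) overlap, so nothing controls $\Myoperator 1$ pointwise. In fact, Lebesgue measure is the dual eigenmeasure ($\int\Myoperator f=\int f$), so a bound $\Myoperator 1\le 1$ would force $\Myoperator 1=1$ a.e. The constants would then be the leading eigenfunction. But $\psi_{(1,0,2)}(z)=\int_{P^\dagger}|z-w|^{-4}\,dw$ is strictly subharmonic ($\Delta_z|z-w|^{-4}=16|z-w|^{-6}$), hence not constant. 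Concretely, as $z\to 0$, $\Myoperator 1(z)$ tends to $\sum|\alpha|^{-4}$ over all but a few $\alpha\in\ringint\setminus\{0,\pm1\}$, which is roughly $1.2$ for $D=2$ (compare $\Myoperator 1(0)=\zeta(2)$ for the Gauss map). The same reasoning, using simplicity of the eigenvalue $1$ of $\Aopera_{1,0}$, shows $\Myoperator^2 1\not\le 1$. Since $\Vert\Myoperator_{1+i\tau,0}-\Myoperator\Vert_a\ll|\tau|$ (proved exactly like \eqref{eq:contbound-third}), you get $\Vert A\Vert_a>1$ for all small $\tau\neq 0$. So in the unweighted sup norm the strict inequality is not provable by any method. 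Paragraph 3 also has an independent flaw. $\Vert A\Vert_a$ is a supremum over $f$, and $f$ can be chosen to undo the phases $J_{\tick{\alpha}}(z)^{i\tau}$ (freely so when the points $\alphares(z)$ lie in different cells). A spread of phases among the Jacobians therefore yields no loss in the triangle inequality by itself.

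The argument the paper cites (Vall\'ee) is presumably spectral: $A$ has no eigenvalue of modulus one, so its spectral radius is $<1$ and $\myid-A$ is invertible. That is what the intermediate-$\tau$ step actually needs. Alternatively, pass to the normalized operator $\psi_{1,0}^{-1}A(\psi_{1,0}\,\cdot)$, for which your first step does hold with $\le 1$. Your paragraph 2 is the right skeleton, but its decisive step does not follow. Equality in the triangle inequality gives, with $|f|=c\,\psi_{1,0}$ and $u=f/|f|$, the relation $J_{\tick{\alpha}}(z)^{i\tau}u(\alphares(z))=e^{i\theta}u(z)$. At $z=0$ the unknown value of $u$ at the rational point $h_{\tick{\alpha}}(0)$ remains, so you do not get that $|b|^{4i\tau}$ is constant. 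The standard repair is to evaluate at the fixed point $z_\alpha$ of a branch mapping a cell into itself, which eliminates $u$. For depth-one branches with $|\alpha|$ large (use $h_\alpha\circ h_\alpha$ for $\Aopera$), this gives $|z_\alpha|^{4i\tau}=e^{i\theta}$, i.e.\ $\log|z_\alpha|\in\frac{\theta}{4\tau}+\frac{\pi}{2\tau}\ZZ$. This is impossible, because $\log|z_\alpha|=-\log|\alpha|+\LandauO(|\alpha|^{-2})$ has nonzero differences of size $\asymp 1/|\alpha|$ (take $\alpha$ and $\alpha+1$ with $\Re\alpha\asymp|\alpha|$). So the spectral-radius fallback in your last sentence is not optional. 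It is the correct formulation of the lemma, and the eigenvalue argument with this repair is the proof.
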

\begin{proof}
    The proof follows in the same lines as \cite[p. 476]{Vall-euclidean-analysis}.
\end{proof}

We can now prove the following lemma.
\begin{lem}[Intermediate values of $\tau$]
Given any two $\tau_0, \tau_1$ with $\tau_0\leq \tau_1$, there is a $\delta>0$ and $t_0>0$ such that for any triple $(t, \sigma, \tau)$ with $\vert t\vert\leq t_0$, $\sigma\geq 1-\delta$ and $\tau\in [\tau_0, \tau_1]$ we have
\begin{align*}
\Vert \Aopera_{s,t} \Vert_a\leq 1-\delta.
\end{align*}
\end{lem}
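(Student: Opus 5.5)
The plan is a compactness-plus-continuity argument built on \cref{lem:twist-on-the-line} and \cref{lem:3-step-lemma}. I assume $\tau_0>0$, since the interval $[\tau_0,\tau_1]$ must avoid $\tau=0$, where $\Aopera_{1,0}=\Myoperator^2$ has norm $1$.

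First, for each $\tau\in[\tau_0,\tau_1]$, \cref{lem:twist-on-the-line} gives $\Vert\Aopera_{1+i\tau,0}\Vert_a<1$. By \eqref{eq:contbound-third}, the map $\tau\mapsto\Aopera_{1+i\tau,0}$ is Lipschitz in the $a$-operator norm, so $\tau\mapsto\Vert\Aopera_{1+i\tau,0}\Vert_a$ is continuous. On the compact interval $[\tau_0,\tau_1]$ it therefore attains its maximum, which we call $1-3\eta$ with $\eta>0$.

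Second, move off the line $\sigma=1$ and away from $t=0$ using the triangle inequality:
\begin{align*}
\Vert\Aopera_{s,t}\Vert_a\le \Vert\Aopera_{1+i\tau,0}\Vert_a+\Vert\Aopera_{\sigma+i\tau,0}-\Aopera_{1+i\tau,0}\Vert_a+\Vert\Aopera_{s,t}-\Aopera_{s,0}\Vert_a .
\end{align*}
For $1-\delta\le\sigma\le1$, \eqref{eq:contbound-second} bounds the middle term by $C|\sigma-1|\le C\delta$, uniformly in $\tau$. \eqref{eq:contbound-first} bounds the last term by $C_\delta|t|\le C_\delta t_0$. We first fix $\delta$ small enough that $C\delta\le\eta$, which also makes $\delta$ admissible in \cref{lem:3-step-lemma}. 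We then choose $t_0$ with $C_\delta t_0\le\eta$. This gives $\Vert\Aopera_{s,t}\Vert_a\le1-\eta$ in the range $1-\delta\le\sigma\le1$, and shrinking $\delta$ to $\min(\delta,\eta)$ yields the stated bound $1-\delta$.

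Third, the range $\sigma>1$ needs a monotonicity argument. Each weight satisfies $|J_{\tick{\alpha}}(z)^{s}|=J_{\tick{\alpha}}(z)^{\sigma}$. Bounded distortion gives $J_{\tick{\alpha}}\asymp J_{\alpha_1}(0)J_{\alpha_2}(0)$, and this is at most $1$ except for finitely many small $\alpha$. So raising $\sigma$ only shrinks the weights, up to bounded factors. Rather than relying on this, I would redo the proof of \eqref{eq:contbound-second} for $\sigma\in[1,\sigma_1]$, which gives $\Vert\Aopera_{\sigma+i\tau,0}-\Aopera_{1+i\tau,0}\Vert_a\ll\sigma-1$ there as well. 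For large $\sigma$, I would bound $\Vert\Aopera_{s,t}\Vert_a\le\Vert\Aopera_{\sigma,0}\Vert_a$ directly; this is a sum of $J^\sigma$ terms that tends to $0$ as $\sigma\to\infty$, after checking that the finitely many branches with $J\ge1$ are harmless.

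The main obstacle is exactly this uniformity in $\sigma\ge1-\delta$ including $\sigma\to\infty$, and if needed the statement would be read as $1-\delta\le\sigma\le1+\delta$. The compactness step itself is routine once \cref{lem:twist-on-the-line} is granted. That lemma is the genuinely nontrivial input: it is a strict inequality obtained from the non-arithmeticity of the lengths $\log J_{\tick{\alpha}}$, following Vallée.
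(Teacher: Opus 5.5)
Your Steps 1--2 are exactly the paper's argument. The paper also gets continuity of $\tau\mapsto\Vert\Aopera_{1+i\tau,0}\Vert_a$ from \eqref{eq:contbound-third}, uses \cref{lem:twist-on-the-line} with compactness of $[\tau_0,\tau_1]$ to obtain a uniform gap, and then uses \eqref{eq:contbound-first}--\eqref{eq:contbound-second} to pass to $\sigma$ near $1$ and small $|t|$. Your extra points (that $\tau_0>0$ is needed, and what happens for $\sigma>1$) go beyond the paper, whose proof only treats $\sigma$ close to $1$. Your fallback reading $|\sigma-1|\le\delta$ is the right one: for $\Re(s)>1$ the series $H(2s,t)$ already converges absolutely, so only a strip around $\sigma=1$ is ever used.
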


\begin{proof}
    The proof follows the same lines as that of \cite[Lemma 6.3]{BettinDrappeau-main}. Consider the map $\tau \mapsto \Vert \Aopera_{1+i\tau, 0}\Vert_a$. By \cref{lem:3-step-lemma}, this map is continuous. By \cref{lem:twist-on-the-line} and continuity, we deduce the existence of an $\epsilon=\epsilon(\tau_0, \tau_1)>0$ such that $\Vert \Aopera_{1+i\tau, 0}\Vert_a<1-\epsilon$. Using  \cref{lem:3-step-lemma} again, we conclude.    
\end{proof}

\subsection{Bounds for the large values of \texorpdfstring{$\tau$}{tau}}
Let us normalize the operator $\Aopera$ as
\[
(\tilde{\Aopera}_{s,t}[f])_P = \frac{(\Aopera_{s,t}[\psi_{\sigma, t} f])_P}
                                   {\lambda(\sigma, t)(\psi_{\sigma, t})_P}.
\]
Let $\tilde{\varrho}_{\sigma, t}$ denote the spectral radius of the operator
$\frac{1}{\lambda(\sigma, t)} \Aopera_{\sigma,t} - \myprojector_{\sigma, t}$. The dual operator $\tilde{\Aopera}_{\sigma, t}^\star$  fixes the probability measure 
\begin{align}\label{def:mu-prob-meas}
\mu_{\sigma,t}:=\psi_{\sigma, t} \nu_{\sigma,t}.
\end{align}
The goal of this section is to establish the following bound.

\begin{prop}[Dolgopyat-type estimate]\label{prop:LV-bound}
Let $\delta, t_0 > 0$ be sufficiently small. Then there exist constants $C,  \tau_1 > 0$ and a sufficiently small positive constant $\beta$ such that for any $n = \lfloor C \log |\tau| \rfloor$,
\[
\bigl\Vert (I - \Aopera_{s,t})^{-1} \bigr\Vert_{c(\tau)} \ll |\tau|^{\beta},
\]
for any $s = \sigma + i\tau$ with $|\sigma-1| \leq \delta$, $|\tau| \geq \tau_1$ and $|t| \leq t_0$. 
\end{prop}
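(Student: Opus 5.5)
The plan follows the Dolgopyat--Baladi--Vallée scheme. The aim is to reduce everything to a single $L^2(\mu_{\sigma,t})$ contraction estimate for the normalized operator $\tilde{\Aopera}_{s,t}$. First we establish a Lasota--Yorke inequality in the $c(\tau)$-norm. Differentiating the branch representation of $\Aopera_{s,t}^n$ (the twist $e^{it\Psi\circ\alphares}$ is locally constant by \cref{rem:quicknote}, so it contributes no derivative), and using bounded distortion together with the contraction ratio $\rho<1$ from \eqref{eq:contractionratio}, we aim for
\begin{align*}
\Vert \tilde{\Aopera}_{s,t}^n f\Vert_b \ll |s|\,\Vert f\Vert_a + \rho^{n}\Vert f\Vert_b, \qquad \Vert \tilde{\Aopera}_{s,t}^n f\Vert_a\ll \Vert f\Vert_a,
\end{align*}
uniformly for $|\sigma-1|\le\delta$ and $|t|\le t_0$. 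This follows the computations in \cite[Section 4]{Kimetall}, with the perturbation in $(\sigma,t)$ controlled as in \cref{lem:3-step-lemma}. In the $c(\tau)$-norm this gives $\Vert\tilde{\Aopera}^n_{s,t}\Vert_{c(\tau)}\ll 1$.

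The core step is the $L^2$ estimate: for $n_0=\lfloor C_0\log|\tau|\rfloor$ and $\Vert f\Vert_{c(\tau)}\le 1$, we want
\begin{align*}
\int |\tilde{\Aopera}_{s,t}^{n_0} f|^2\, d\mu_{\sigma,t}\ll |\tau|^{-\gamma}
\end{align*}
for some $\gamma>0$. Expanding the square by Cauchy--Schwarz gives a double sum over pairs of branches $\tick{\alpha},\tick{\beta}$ of depth $n_0$. Each pair carries the oscillating factor $\exp\bigl(i\tau(\log J_{\tick{\alpha}}-\log J_{\tick{\beta}})\bigr)$ times $t$-twists of modulus one. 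The twists do not depend on $z$ (\cref{rem:quicknote}), so they play no role in the oscillation. We split pairs into close pairs, where $|\partial(\log J_{\tick\alpha}-\log J_{\tick\beta})|$ is small on some cell, and far pairs.
\begin{itemize}
\item For close pairs we use the UNI (uniform non-integrability) property of the Hurwitz system: the measure of close pairs is $\ll \eta$ at scale $\eta$. We take this from the Dolgopyat-type analysis in \cite{Kimetall}, which already verified UNI for $(I_D,\Tgaus)$ with the partition of \cref{prop:compatiblepartition}.
\item For far pairs, a van der Corput/integration-by-parts argument on each 2-cell gains a factor $(|\tau|\eta)^{-1}$. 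The 0- and 1-dimensional cells carry measure zero for $\nu_{1,0,2}$ and are handled via the $a$-norm bound.
\end{itemize}
Choosing $\eta=|\tau|^{-1/2}$ balances the two contributions.

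We then convert back to the sup-norm. By the Lasota--Yorke step and a further $n_0$ iterations, the $L^2$ bound upgrades to $\Vert \tilde{\Aopera}^{2n_0}_{s,t}f\Vert_a\ll |\tau|^{-\gamma'}\Vert f\Vert_{c(\tau)}$. The argument is: sup-norm is controlled by a local $L^1$ average plus a Lipschitz correction, and the correction is small thanks to the factor $1/\tau$ in the $c(\tau)$-norm. Combining with the $b$-norm bound gives $\Vert\tilde{\Aopera}^{n}_{s,t}\Vert_{c(\tau)}\le |\tau|^{-\gamma''}$ for $n=\lfloor C\log|\tau|\rfloor$ (a multiple of $n_0$).

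Undoing the normalization costs a factor $\lambda(\sigma,t)^n=|\tau|^{C\log\lambda(\sigma,t)}$. This is $\le|\tau|^{\gamma''/2}$ once $\delta,t_0$ are small, since $\lambda$ is continuous with $\lambda(1,0)=1$ by \cref{prop:quasi-comp-swfamily}. Hence $\Vert\Aopera_{s,t}^{n}\Vert_{c(\tau)}\le 1/2$ for $|\tau|\ge\tau_1$. Writing
\begin{align*}
(I-\Aopera_{s,t})^{-1}=\Bigl(\sum_{k<n}\Aopera_{s,t}^k\Bigr)(I-\Aopera_{s,t}^n)^{-1},
\end{align*}
we get $\Vert(I-\Aopera_{s,t})^{-1}\Vert_{c(\tau)}\ll n\max_{k<n}\Vert\Aopera_{s,t}^k\Vert_{c(\tau)}$. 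The Lasota--Yorke bound, renormalized, gives $\Vert\Aopera^k_{s,t}\Vert_{c(\tau)}\ll\lambda(\sigma,t)^k\ll|\tau|^{\beta/2}$, so the whole quantity is $\ll\log|\tau|\cdot|\tau|^{\beta/2}\ll|\tau|^\beta$ with $\beta$ small after shrinking $\delta$.

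The main obstacle is the UNI/cancellation step for the non-full-branch Hurwitz system. We must check that the non-full branches (e.g.\ those in \cref{tab:nasty_algebraic_integers_D2}) and the lower-dimensional cells do not destroy the separation estimate. Our first attempt is to import \cite[Section 6--7]{Kimetall} verbatim, observing that our twist only multiplies each branch by a unimodular constant, so every estimate there carries over uniformly in $|t|\le t_0$.
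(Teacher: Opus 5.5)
Your overall architecture matches the paper's: a Lasota--Yorke inequality in the $c(\tau)$-norm, the $L^2(\mu)$ cancellation estimate (which the paper isolates as \cref{prop:L2}), an upgrade to the sup-norm after a further $\asymp\log|\tau|$ iterations, and the summation $(I-\Aopera_{s,t})^{-1}=\bigl(\sum_{k<n}\Aopera_{s,t}^k\bigr)(I-\Aopera_{s,t}^n)^{-1}$, with the $|\tau|^{\beta}$ loss coming from $\lambda^n$. The paper defers this last step to Lemma 6.4 of Bettin--Drappeau.

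The difference, and the weak point, is the upgrade. If you apply ``local average plus Lipschitz correction'' to the iterate $h=\tilde{\Aopera}^{2n_0}_{s,t}f$ itself, it fails. By Lasota--Yorke, $\Vert h\Vert_b$ can be as large as $|s|\asymp|\tau|$, so averaging over a disc of radius $r$ only gives $|h(z)|\ll|\tau|^{-\gamma/2}r^{-1}+r|\tau|$, which is never small. The factor $1/\tau$ in the norm is what \emph{permits} $\Vert h\Vert_b\approx|\tau|$; it is not what rescues you. The averaging must instead happen inside the extra iterations:
\begin{itemize}
\item Set $g=\tilde{\Aopera}^{n_0}_{s,t}f$ and bound $|\tilde{\Aopera}^{n_0}_{s,t}g(z)|$ by $\sum_{\tick{\alpha}}w_{\tick{\alpha}}(z)\,|g\circ\alphares(z)|$.
\item Replace each $|g\circ\alphares(z)|$ by its mean over the cylinder $h_{\tick{\alpha}}(P)$. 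This costs $\ll\rho^{n_0}\Vert g\Vert_b\ll\rho^{n_0}|\tau|$.
\item Use bounded distortion ($J_{\tick{\alpha}}(z)\asymp\vol(h_{\tick{\alpha}}(P))$) and disjointness of cylinders to bound the main term by $\int|g|\,dx\,dy\ll(\int|g|^2\,d\mu)^{1/2}$.
\item For $\sigma\neq1$ the weights $J_{\tick{\alpha}}^{\sigma}$ are not comparable to volumes. You also need Cauchy--Schwarz against $\sum J_{\tick{\alpha}}^{2\sigma-1}$, which produces exactly the paper's factor $L_{\sigma,t}^{n_0}$.
\end{itemize}
Repaired this way, it is a valid and more elementary alternative to the paper's step on the two-dimensional cells. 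It uses only bounded distortion and contraction, not mixing.

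The genuine gap is the lower-dimensional cells. $\Vert\cdot\Vert_{c(\tau)}$ is a maximum over all $P\in\Mpartition$. Moreover, $H(2s,t)$ is actually read off at the $0$-cell $\{0\}$, where the ``cylinders'' $h_{\tick{\alpha}}(\{0\})$ are single rational points. Cylinder averaging says nothing there, and treating these cells ``via the $a$-norm bound'' gives boundedness, not decay. What you need at $z=0$ is that the normalized sum $\sum_{\tick{\alpha}}J_{\tick{\alpha}}(0)\,|g(h_{\tick{\alpha}}(0))|^2$ equals $\int|g|^2\,d\mu$ up to a small error, which is an equidistribution statement for rationals.

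The paper's route handles all cells uniformly. With $g=\tilde{\Aopera}^{n_2}_{s,t}f$, Cauchy--Schwarz gives $|\tilde{\Aopera}^{n_1}_{s,t}g|^2\le L^{n_1}_{\sigma,t}\,\tilde{\Aopera}^{n_1}_{1,0}(|g|^2)$ pointwise on every cell. The spectral-gap representation \eqref{eq:equilibrium-rep} on the whole of $\BanachHolo$ then turns the right side into $\int|g|^2\,d\mu+\LandauO\bigl(\varrho_{1,0}^{n_1}|\tau|\,\Vert f\Vert_{c(\tau)}^2\bigr)$, lower-dimensional cells included. \cref{prop:L2} and $n_1\asymp n_2\asymp\log|\tau|$ then finish. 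You should either adopt this step, or prove separately that the lower-dimensional block of the operator is strictly contracting and that branches landing in two-dimensional cells can be thickened to honest cylinders.

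A smaller slip in your $L^2$ step: in two dimensions, integration by parts for far pairs at separation $\eta$ costs $|\tau|^{-1}(\eta^{-1}+\eta^{-2})$, because the divergence of $\nabla\phi/|\nabla\phi|^2$ has size $\eta^{-2}$. So $\eta=|\tau|^{-1/2}$ does not balance. Take $\eta=|\tau|^{-1/3}$, or $\kappa=\rho^{\gamma n}$ with small $\gamma$ as in the paper.
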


The key step in proving \cref{prop:LV-bound} is to establish the following inequality.

\begin{prop}[$L^2$ bound]\label{prop:L2}
Let $\delta, t_0$ be as in \cref{prop:LV-bound}, let $s=\sigma+i\tau$ with $\lvert \tau\rvert \geq \tau_1$ for some sufficiently large $\tau_1$. Specify the measure
\begin{align}\label{eq:defofmu}
    d\mu:=\psi_{1,0,2} \,dx\,dy.
\end{align}
Then, there are constants $K,\beta>0$, such that the bound
\begin{align*}
    \int_I \lvert \widetilde{\Aopera}_{s,t}^n f\rvert^2 \,d\mu \ll \frac{\lVert f \rVert_{c(\tau)}}{\tau^\beta}
\end{align*}
holds, provided $n=\lfloor K\log \lvert \tau\rvert \rfloor$.
\end{prop}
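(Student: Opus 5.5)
The plan is the Dolgopyat--Baladi--Vallée scheme, in the form adapted to $(I_D,\Tgaus)$ by Kim, Lee and Lim. Since the twist $e^{it\Psi\circ\alphares}$ depends only on the branch $\tick{\alpha}$ and not on $z$ (see \cref{rem:quicknote}), the $t$-dependence is a harmless unimodular constant on each branch. It therefore never interferes with the cancellation mechanism, which comes from the phases $J_{\tick{\alpha}}(z)^{i\tau}$.

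First, by Cauchy--Schwarz with respect to the normalized weights $\lambda(\sigma,t)^{-n}\psi_{\sigma,t}^{-1}J_{\tick{\alpha}}^{\sigma}\psi_{\sigma,t}\circ\tick{\alpha}$, I will bound $|\widetilde{\Aopera}^n_{s,t}f|^2$ pointwise by a double sum over pairs of depth-$2n$ branches $(\tick{\alpha},\tick{\beta})\in B^{2n}(P,Q)\times B^{2n}(P,Q')$. The summand is
\[
w_{\tick{\alpha}}(z)\,w_{\tick{\beta}}(z)\,\exp\bigl(i\tau\,\Phi_{\tick{\alpha},\tick{\beta}}(z)\bigr)\,(f\circ\tick{\alpha})(z)\,\overline{(f\circ\tick{\beta})(z)},
\]
where $\Phi_{\tick{\alpha},\tick{\beta}}=\log J_{\tick{\alpha}}-\log J_{\tick{\beta}}$ and the weights $w$ absorb the ratios $\sigma$ versus $1$ and the twist, which is uniform in $|t|\le t_0$ by \cref{lem:3-step-lemma}. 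Integrating against $d\mu$, I split the pairs into two classes according to a parameter $\eta=|\tau|^{-\gamma}$.

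\emph{Close pairs} are those with $\inf_{P}\bigl|\nabla\Phi_{\tick{\alpha},\tick{\beta}}\bigr|<\eta$. I bound them trivially by $\|f\|_a^2$ times the $\mu\otimes$weight mass of such pairs. The \textbf{UNI condition} for the Hurwitz system, established in \cite{Kimetall} (Sections 5--7), states that this mass is $\ll \eta^{c}$ up to a factor $\rho^{2n}$ from the contraction ratio in \eqref{eq:contractionratio}. I expect this to be the main obstacle. One has to check that the UNI statement of Kim--Lee--Lim, proved for their operator with a complex $w$, transfers to the composed operator $\Aopera_{s,t}$ acting through depth-$2$ branches. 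It also has to hold with the positive density $\psi_{1,0,2}$ of \eqref{eq:defofmu}, and uniformly over the finitely many parts of $\Mpartition[2]$; the lower-dimensional parts have $\mu$-measure zero and are discarded. I would try to deduce it directly from their version by grouping branches in pairs and using bounded distortion.

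\emph{Far pairs} are those with $|\nabla\Phi_{\tick{\alpha},\tick{\beta}}|\ge\eta$ on $P$. For these I use a van der Corput / integration-by-parts argument in the direction of $\nabla\Phi$ on the two-cell $P$. This requires second-derivative bounds $\|\nabla^2\Phi\|\ll1$, which follow from the bounded distortion property of \cite[Proposition 3.10]{Kimetall}. It also requires derivative bounds on $f\circ\tick{\alpha}$, which are $\ll\rho^{2n}\|f\|_b\le\rho^{2n}|\tau|\,\|f\|_{c(\tau)}$; this is exactly why the $c(\tau)$-norm is used. The oscillatory integral then gains $(|\tau|\eta)^{-1}$, up to boundary terms controlled by the piecewise-analytic boundary of $P$.

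Collecting the two contributions, the bound is
\[
\|f\|_{c(\tau)}^2\Bigl(\eta^{c}+\frac{1+\rho^{2n}|\tau|}{|\tau|\eta}\Bigr),
\]
up to a factor $e^{O(\delta n)}$ arising from the passage $\sigma\neq1$ (\cref{lem:3-step-lemma}). Choosing $n=\lfloor K\log|\tau|\rfloor$ with $K$ large enough that $\rho^{2n}\le|\tau|^{-2}$ and $\eta=|\tau|^{-1/2}$, and then taking $\delta$ small relative to $K$, gives a bound $|\tau|^{-\beta}$. The statement is written with $\|f\|_{c(\tau)}$ rather than its square; I would either normalize $\|f\|_{c(\tau)}\le1$, which suffices for \cref{prop:LV-bound}, or note that homogeneity gives the squared form.
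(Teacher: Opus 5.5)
Your scheme is the paper's. You expand $|\widetilde{\Aopera}^n_{s,t}f|^2$ into a double sum over pairs of depth-$2n$ branches, with the twist a unimodular constant per pair. You split the pairs at a threshold for $\Delta(\tick{\alpha},\tick{\beta})$, use UNI on the close pairs and a van der Corput estimate on the far ones, and take $n\asymp\log|\tau|$.

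\textbf{The gap is the far-pair bound, which is wrong as you state it.} Integrating by parts along $V=\nabla\Phi/|\nabla\Phi|^2$ gives
\[
\int_P e^{i\tau\Phi}R\,dx\,dy=\frac{1}{i\tau}\Bigl(\int_{\partial P}e^{i\tau\Phi}R\,(V\cdot n)\,ds-\int_P e^{i\tau\Phi}\bigl(\nabla R\cdot V+R\,\mathrm{div}\,V\bigr)\,dx\,dy\Bigr),
\]
and $\mathrm{div}\,V$ is of size $\|\nabla^2\Phi\|/|\nabla\Phi|^2$. Bounded distortion gives only $\|\nabla^2\Phi\|\ll 1$, not smallness. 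Harmonicity of $\Phi$ does not help either: writing $\Phi=\Re F$ with $F$ holomorphic, $V$ corresponds to $1/F'$ and $\mathrm{div}\,V=-2\Re(F''/F'^2)$. So the far pairs contribute
\[
\|f\|_{c(\tau)}^2\,\frac{1+\rho^{2n}|\tau|}{|\tau|}\Bigl(\frac{1}{\eta}+\frac{1}{\eta^2}\Bigr),
\]
which is the paper's $C_1/\kappa+C_2/\kappa^2$, not $(|\tau|\eta)^{-1}$. With your choice $\eta=|\tau|^{-1/2}$ the $\eta^{-2}$ term is of order $1$, and you get no decay at all.

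The repair is to take $\eta=|\tau|^{-\gamma}$ with $0<\gamma<1/2$; this is the paper's $\kappa=\rho^{\gamma n}$ with $\gamma$ small. It yields $|\tau|^{-\min(c\gamma,\,1-2\gamma)}$ once $K$ is large enough that $\rho^{2n}|\tau|\le 1$ and the depth $2n$ lies in the range where UNI holds at scale $\eta$.

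\textbf{Smaller points.}
Your first step is an exact expansion of the modulus squared, not Cauchy--Schwarz. A genuine Cauchy--Schwarz there would discard the phases $J_{\tick{\alpha}}^{i\tau}$ that carry all the cancellation; the paper uses Cauchy--Schwarz only afterwards, in the proof of \cref{prop:LV-bound}.

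The UNI transfer you single out as the main obstacle is not one. The depth-$n$ branches of $\Aopera_{s,t}$ are exactly the depth-$2n$ branches of $\Myoperator$, and $\psi_{1,0,2}$ is bounded above and below on each part of $\Mpartition[2]$. So Kim--Lee--Lim's statement applies verbatim, as the paper assumes.

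Besides the factor $e^{O(\delta n)}$ you track, the twisted normalisation costs $(\lambda(\sigma,0)/|\lambda(\sigma,t)|)^{2n}=e^{O(t_0 n)}$. So $t_0$, too, must be small relative to $K$ and $\beta$.

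Finally, you are right that the right-hand side should be $\|f\|_{c(\tau)}^2$; that is what both your argument and the paper's actually produce.
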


The above bound is an analogue of \cite[Theorem 6.1]{Kimetall}. Its proof follows an established strategy initiated by Dolgopyat \cite{Dolgopyat}, which has been adapted to number-theoretic problems by several authors (see e.g.\ \cite{BettinDrappeau-main}, \cite{Kimetall}, \cite{Vall-euclidean-analysis}). Many other steps follow \textit{mutatis mutandis} from the corresponding analysis in \cite[Chapters~5--6]{Kimetall}. However, for the benefit of the reader, we have decided to include them here, restricting ourselves to the meaningful modifications. Most of the lemmas playing a key role in these estimates, such as the contraction property, the UNI property, and the Van der Corput lemma, are unaffected by the twist in our operator.
\begin{lem}[Lasota--Yorke type inequality]\label{lem:lasotayorke}
Let $(s,t)$ satisfy $\lvert s-1\rvert \leq \delta$ and $\lvert t\rvert\leq t_0$ for some pair of sufficiently small positive numbers $(\delta, t_0)$. Then, for $f\in \BanachHolo$ and some constants $C_{\delta, t_0}, C'_{\delta, t_0}>0$, we have:
\begin{align}
&\|\Aopera_{\sigma,t}^n f\|_{c(1)} \leq C_{\delta, t_0} \bigl(\lvert s\rvert\,\|f\|_a + \rho^n \|f\|_{c(1)}\bigr),\label{eq:LS-nonnormalized}\\
&\|\tilde{\Aopera}_{s, t}^n f\|_{c(1)} \leq C'_{\delta, t_0} \bigl(\lvert s\rvert\,\|f\|_a + \rho^n \|f\|_{c(1)}\bigr),\label{eq:LS-normalized}\\
&\|\tilde{\Aopera}_{1,0}^n f\|_a = \int_{I_D} f\,d\mu_{1,0} + \LandauO\bigl(\varrho_{1,0}^n \|f\|_{c(1)}\bigr).\label{eq:equilibrium-rep}
\end{align}
where $\varrho_{1,0}$ is the spectral radius of $\Aopera_{1,0}-\myprojector_{1,0}$.
\end{lem}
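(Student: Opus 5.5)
The plan is to follow the proof of the corresponding inequality in \cite[Section 4--5]{Kimetall} (itself modelled on \cite{BV-Euclidean}), feeding in the observation of \cref{rem:quicknote}: on every depth-two branch $\alphares\in B^2(P,Q)$ the twist $e^{it\Psi\circ\alphares}$ is a unimodular constant. I would first iterate \cref{prop:reduction} to get, for $z\in P$,
\begin{align*}
(\Aopera_{s,t}^n f)_P(z)=\sum_{Q\in\Mpartition}\sum_{\alphares\in B^{2n}(P,Q)} e^{itc(\tick{\alpha})}\,J_{\tick{\alpha}}(z)^{s}\,(f_Q\circ\alphares)(z),
\end{align*}
with $c(\tick{\alpha})\in\RR$ independent of $z$. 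Absolute convergence for $\sigma\ge 1-\delta>1/2$ follows from the bounded distortion estimate $J_{\tick{\alpha}}(z)\asymp J_{\tick{\alpha}}(0)$ of \cite[Proposition 3.10]{Kimetall}, as in the proof of \cref{lem:3-step-lemma}.

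For the $a$-norm, I would bound each term by $|J_{\tick{\alpha}}(z)^s|\,\|f\|_a$. Summing gives $\|\Aopera^n_{s,t}f\|_a\le \Myoperator_{\sigma,0}^{2n}[1]\,\|f\|_a$. For $|\sigma-1|\le\delta$ the quantity $\|\Myoperator^{2n}_{\sigma,0}\|_a$ is $\ll \lambda(\sigma,0)^n$, which is bounded uniformly in $n$ after shrinking $\delta$ (or after using the normalized operator). For the $b$-seminorm, I would differentiate term by term along a unit tangent vector $v$:
\begin{align*}
\partial_v\bigl(J_{\tick{\alpha}}^{s}\,f_Q\circ\alphares\bigr)=s\,J_{\tick{\alpha}}^{s}\,\frac{\partial_v J_{\tick{\alpha}}}{J_{\tick{\alpha}}}\,f_Q\circ\alphares+J_{\tick{\alpha}}^{s}\,(Df_Q\circ\alphares)\,h_{\tick{\alpha}}'\,v.
\end{align*}
The twist contributes nothing to the derivative, since it is constant on the branch. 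The first term is controlled by the distortion bound $|\partial_v\log J_{\tick{\alpha}}|\ll1$ uniformly in depth (\cite[Proposition 2.4]{Kimetall}), which produces $|s|\,\|f\|_a$. The second term carries the factor $|h'_{\tick{\alpha}}|=J_{\tick{\alpha}}^{1/2}\ll\rho^{n}$, with $\rho<1$ the contraction ratio \eqref{eq:contractionratio} (up to renaming $\rho$ for depth $2n$), and produces $\rho^n\|f\|_b$. Combining the two gives \eqref{eq:LS-nonnormalized}. The one real subtlety is that for $\Re s<1$ the sum $\sum J^{\sigma}$ grows like $\lambda(\sigma,0)^n$, so either the constant absorbs this for bounded $\lambda$, or it is cleaner to prove the normalized version first.

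For \eqref{eq:LS-normalized}, I would apply the same computation to $\tilde{\Aopera}_{s,t}$. Division by $\lambda(\sigma,t)^n(\psi_{\sigma,t})_P$ is harmless for two reasons. First, $\psi_{\sigma,t}$ is positive and bounded away from zero by \cref{lem:spectralp2}, with bounded $b$-norm depending continuously on $(\sigma,t)$ by \cref{prop:quasi-comp-swfamily}, so multiplying by it and dividing by it changes norms by constants and adds a term $\ll\|f\|_a$. Second, $\tilde{\Aopera}_{\sigma,0}^n[1]=1$, which makes the $a$-bound uniform in $n$. A phase of modulus one cannot increase the sums, so $|t|\le t_0$ is harmless.

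Finally, \eqref{eq:equilibrium-rep} would come from the spectral decomposition in \cref{prop:quasi-comp-swfamily} at $(1,0)$, where $\lambda=1$. After normalization the rank-one projector becomes $f\mapsto\int f\,d\mu_{1,0}$, with $\mu_{1,0}$ from \eqref{def:mu-prob-meas}. The remainder is $\tilde{\mynihl}^n$, whose $c(1)$-norm is $\ll\varrho_{1,0}^n$, up to an arbitrary $\epsilon$ loss absorbed by slightly enlarging $\varrho$. Since the $a$-norm is dominated by the $c(1)$-norm, this gives the claim (read as a uniform statement in $z$). I expect the main obstacle to be the uniform distortion bounds for the restricted branches of the non-full-branch Hurwitz map. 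These are exactly what \cite{Kimetall} supplies, so here the work reduces to checking that the twist, being branchwise constant, does not interfere.
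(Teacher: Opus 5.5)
Your proposal follows the paper's own argument essentially step for step. It uses the branchwise-constant twist from \cref{rem:quicknote}, splits $\partial_v$ into a distortion term giving $|s|\,\|f\|_a$ and a contraction term giving $\rho^n\|f\|_b$, adds the extra $\partial_v\psi_{\sigma,t}$ terms for the normalized operator, and obtains \eqref{eq:equilibrium-rep} from the spectral decomposition at $(1,0)$ as in \cite[Lemma 5.1]{Kimetall}.

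Two corrections to the subtlety you raise; the paper's sketch passes over both in the same way.

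\emph{The non-normalized bound.} Since $T<1$ on $I_D$, you have $\lambda(\sigma,0)>1$ for every fixed $\sigma<1$. Shrinking $\delta$ therefore does not make $\lambda(\sigma,0)^n$ bounded in $n$. So \eqref{eq:LS-nonnormalized} with $n$-independent constants only holds for $\sigma\ge 1$, or with an extra factor $\lambda(\sigma,0)^n$.

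\emph{The normalized bound.} The uniform-in-$n$ content is \eqref{eq:LS-normalized}. There, your domination $|\Aopera^n_{s,t}g|\le\Aopera^n_{\sigma,0}|g|$ together with $\tilde{\Aopera}^n_{\sigma,0}[1]=1$ gives an $n$-uniform bound only if you normalize by $\lambda(\sigma,0)$ and $\psi_{\sigma,0}$. With the paper's normalization by $\lambda(\sigma,t)$ and $\psi_{\sigma,t}$, a factor $\bigl(\lambda(\sigma,0)/|\lambda(\sigma,t)|\bigr)^n$ remains.
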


\begin{proof}[Sketch of proof of \cref{lem:lasotayorke}]
We may focus on the $\Vert \cdot\Vert_b$ factor, as the statement is trivial for the $\Vert \cdot\Vert_a$ norm. For the non-normalized operator, we see that for $P\in \Mpartition[1]\cup \Mpartition[2]$ we have
\begin{align}
    (\Aopera_{\sigma, t}^n f)_P(z)=\sum_Q \sum_{ \tick{\alpha} \in H^{2n}(P,Q)} e^{itc(\tick{\alpha})} \lvert J_{\tick{\alpha}}(z)\rvert^\sigma f_Q \circ \tick{\alpha}(z),
\end{align}
where
\begin{align}\label{eq:c-alpha-starstar}
    c(\tick{\alpha})
    := \bigl[ \Psi \circ (\tick{\alpha}_1 \circ \cdots \circ \tick{\alpha}_n) 
                     + \Psi \circ (\tick{\alpha}_2 \circ \cdots \circ \tick{\alpha}_n) 
                     + \cdots 
                     + \Psi \circ \tick{\alpha}_n \bigr],
\end{align}
for $\tick{\alpha}_i\in B^2(Q_i, Q_{i+1})$ and $\tick{\alpha}=\tick{\alpha}_1 \circ \cdots \circ \tick{\alpha}_n\in B^{2n}(P,Q)$, see also \cref{rem:quicknote}.  At this point, we recall that, by our choice of the period function $\Psi$, the value of $\Psi\circ \tick{\alpha}(z)$ does not depend on $z$ for $z\in P$. Estimating $\lvert\partial_v (\Aopera_{\sigma,w}^n f)_P\rvert$ reduces to controlling the following two terms:
\begin{align*}
&(\Sigma_1)=\left\lvert \sum_{Q\in\mathcal{P}}\sum_{\tick{\alpha}}
   \lvert\sigma\rvert\,\lvert J_{\tick{\alpha}}\rvert^{\sigma-1}\lvert\partial_v J_{\tick{\alpha}} (z)\rvert
   \cdot (f)_Q\circ\tick{\alpha} \right\rvert \\
&(\Sigma_2)=\left\lvert \sum_{Q\in\mathcal{P}}\sum_{\tick{\alpha}} 
   \lvert J_{\tick{\alpha}}\rvert^\sigma (\partial_v (f_Q\circ\tick{\alpha})(z) \right\rvert,
\end{align*}
where the inner sum is taken over $\tick{\alpha}\in\mathcal{H}^{2n}(P,Q)$. The sum in $(\Sigma_1)$ is bounded by $M\lvert \sigma\rvert \Vert f\Vert_a$, where $M$ is the distortion constant in \cite[Proposition 2.4]{Kimetall}, while the sum in $(\Sigma_2)$ is bounded by $\rho^n \Vert f\Vert_b$, for $\rho$ the contraction ratio from \eqref{eq:contractionratio}. Gathering everthing together (and taking supremum over all tangent unit vectors and over all parts) gives~\eqref{eq:LS-nonnormalized}. For the normalized operator $\tilde{\Aopera_{s,t}}$, set
\[
A(\delta, t_0)=\sup_{\sigma, t}\bigl( \Vert \psi_{\sigma, t} \Vert_b \Vert \psi_{\sigma, t}^{-1}\Vert_a\bigr).
\]
Then the term $|\partial_v (\Aopera_{s,t}^n f)_P|$ consists of three terms:
\begin{align*}
&\lambda(\sigma,t)^{-n}\cdot\frac{\partial_v(\psi_{\sigma,t})_P}{(\psi_{\sigma,t})_P^2}
   \sum_{Q\in\mathcal{P}}\sum_{\tick{\alpha}} e^{itc(\tick{\alpha})}
   |J_{\tick{\alpha}}|^s \cdot (\psi_{\sigma,t}\cdot f)_Q\circ\tick{\alpha} \qquad &\text{(I)},\\
&\lambda(\sigma,t)^{-n}(\psi_{\sigma,t})_P
   \sum_{Q\in\mathcal{P}}\sum_{\tick{\alpha}}  e^{itc(\tick{\alpha})}
   |s|\,|J_{\tick{\alpha}}|^{s-1}|\partial_v \tilde{J}_{\tick{\alpha}}|
   \cdot (\psi_{\sigma,t}\cdot f)_Q\circ\tick{\alpha} \qquad &\text{(II)},\\
&\lambda(\sigma,t)^{-n}(\psi_{\sigma,t})_P\sum_{Q\in\mathcal{P}}\sum_{\tick{\alpha}}  e^{itc(\tick{\alpha})}
   |J_{\tick{\alpha}}|^s (f\cdot\partial_v\psi_{\sigma,t} + \psi_{\sigma,t}\cdot\partial_v f)_Q\circ\tick{\alpha} \qquad &\text{(III)}.
\end{align*}
One obtains
\begin{align*}
   \vert \mathrm{I}\vert  \leq A(\delta, t_0) \Vert  \tilde{\Aopera}^n_{\sigma, t} \vert f\vert\Vert_a\leq A(\delta, t_0) \Vert \vert f\vert \Vert_a.
\end{align*}
For term~(II) one obtains the bound
\begin{align*}
    \mathrm{(II)}\leq M \vert s\vert  \Vert \vert f\vert\Vert_a,
\end{align*}
and for the last term one has
\begin{align*}
\ll \rho^n\|f\|_0 + \rho^n\|f\|_1.
\end{align*}
For \eqref{eq:equilibrium-rep}, assume that the eigenfunction and measure are normalised, i.e., $\int_I \psi_{\sigma,t}\,\nu_{\sigma,t}=1$. The spectral theorem gives
\begin{align*}
    \Aopera_{\sigma, t}^n =\lambda(\sigma, t)^n \myprojector_{\sigma, t}+\mynihl^n_{\sigma, t},
\end{align*}
and \eqref{eq:equilibrium-rep} then follows along the same lines as in \cite[Lemma 5.1]{Kimetall}.
\end{proof}

\begin{proof}[Sketch of proof of \cref{prop:L2}]
We have 
\[  \int_I |\widetilde{\Aopera}_{s,t}^n f|^2 d\mu= \lambda(\sigma,t)^{-2n}\sum_{P \in \Mpartition[2]} 
\int_P  \left(\psi_{\sigma,t}^{-2}\right)_P |{\Aopera}_{s,t}^n (\psi_{\sigma,t}  f))_P|^2 dxdy  \]
 by definition. Expanding, we have
\begin{align}\label{L2:expansion}
    I_P := \sum_{Q \in \Mpartition[2]} \sum_{\substack{\tick{\alpha}, \tick{\beta}\in B^{2n}(P,Q)}}  
        \int _P e^{it(c(\tick{\alpha})-c(\tick{\beta}))} e^{i\tau \phi_{\tick{\alpha}, \tick{\beta}}} R^\sigma_{\tick{\alpha},\tick{\beta}} \, dx\,dy.
\end{align}
where
\begin{align*}
    R_{\tick{\alpha},\tick{\beta}}^\sigma &:= \left(\psi_{\sigma,t}^{-2}\right)_P |J_{\tick{\alpha}}|^\sigma |J_{\tick{\beta}}|^\sigma  \cdot (\psi_{\sigma,t} \cdot f)_Q \circ \langle \tick{\alpha} \rangle \cdot (\psi_{\sigma,t} \cdot \bar{f})_Q \circ \langle \tick{\beta} \rangle, \\
    \phi_{\tick{\alpha}, \tick{\beta}}&:=\log|J_{\tick{\alpha}}|-\log|J_{\tick{\beta}}|.
\end{align*}
and $c(\tick{\alpha})$ is as in \eqref{eq:c-alpha-starstar}.
From here, the estimation runs almost verbatim with respect to \cite[Section 6]{Kimetall}. For $\tick{\alpha},  \tick{\beta} \in B^n(P,Q)$, and any $n\in \NN_{>0}$, define the distance 
\begin{align*}
\Delta(\tick{\alpha},\tick{\beta}) := \inf_{(x,y) \in P} 
\Vert \left(\partial_z\phi_{\tick{\alpha},\tick{\beta}}(x,y),\partial_{\bar z} \phi_{\tick{\alpha},\tick{\beta}}(x,y)\right)\Vert,
\end{align*}
where $\partial_z$ and $\partial_{\bar z}$ respectively denote the derivative in $z=x+iy$ and $\bar z=x-iy$. Decompose $I_P$ as $I_P = I_{P,1}+I_{P,2}$ where
\begin{align*}
    I_{P,1} &:=\sum_{Q \in \Mpartition[2]} \sum_{\substack{\Delta(\tick{\alpha},\tick{\beta}) \leq \kappa\\ \tick{\alpha}, \tick{\beta} \in B^{2n}(P,Q)}} e^{it(c(\tick{\alpha})-c(\tick{\beta}))} \int_P 
     e^{i\tau \phi_{\tick{\alpha}, \tick{\beta}}} R_{\tick{\alpha},\tick{\beta}}^\sigma dxdy
\end{align*}
and
\begin{align*}
    I_{P,2} &:=\sum_{Q \in \Mpartition[2]} \sum_{\substack{\Delta(\tick{\alpha},\tick{\beta}) > \kappa\\ \tick{\alpha}, \tick{\beta} \in B^{2n}(P,Q)}} e^{it(c(\tick{\alpha})-c(\tick{\beta}))} \int_P
     e^{i\tau \phi_{\tick{\alpha}, \tick{\beta}}} R_{\tick{\alpha},\tick{\beta}}^\sigma dxdy,
\end{align*}
for some parameter $\kappa>0$ to be defined later. Notice that by the choice of $\Psi$ in \eqref{eq:eqperiod}, $c(\tick{\alpha})$ is real, so that $\vert e^{it c(\tick{\alpha})}\vert=1$. Set now $\kappa=\rho^{\gamma n}$, where $\rho$ is the contraction ratio in  \eqref{eq:contractionratio}, $\gamma\in[0,1]$ is some small parameter that will select later. Following the same steps in \cite[Subsection 6.2]{Kimetall}, we obtain
\begin{align*}
    \vert I_{P,1}\vert &\ll \Vert f\Vert_a^2 \sum_{Q \in \Mpartition[2]} \sum_{\substack{\Delta(\tick{\alpha},\tick{\beta}) \leq \kappa\\ \tick{\alpha}, \tick{\beta} \in B^{2n}(P,Q)}}\int_P 
     J_{\tick{\alpha}}^\sigma J_{\tick{\beta}}^\sigma dxdy\\
     &\ll\Vert f\Vert_a^2  \rho^{\gamma n}.
\end{align*}
For  $I_{P,2}$, following the discussion in \cite{Kimetall} and applying a suitable Van der Corput type estimate, one obtains
\begin{align*}
    I_{P,2}\ll \Vert f\Vert_{c(\tau)}^2 \frac{(1+\tau\rho^n)}{\tau}\left( \frac{C_1}{\kappa}+\frac{C_2}{\kappa^2}\right)=\Vert f\Vert_{c(\tau)}^2 \frac{(1+\tau\rho^n)}{\tau}\left( \frac{C_1}{\rho^{2\gamma n}}+\frac{C_2}{\rho^{4\gamma n}}\right),
\end{align*}
where $C_1=\text{len}(\partial P)+\vol(P)$ and $C_2$ is proportional to $\vol(P)$,  $2n$ is the depth of the branches appearing in $I_{P}$. To conclude, take $n\approx \lfloor \log \vert \tau \vert \rfloor$ and $\gamma<\frac{1}{4}\log \frac{1}{\rho}$.
\end{proof}

We can now prove the Dolgopyat-type estimate.
\begin{proof}[Proof of \cref{prop:LV-bound}]
By Cauchy-Schwarz
\begin{align*}
\vert \widetilde{\Aopera}^n_{s,t} f\vert^2&= 
\frac{1}{\lambda(\sigma, t)^{2n} (\psi_{\sigma, t})^2_P} 
\left(\sum_{Q\in \Mpartition} \sum_{ \fatalpha\in B^{2n}(P,Q)} J_{\fatalpha}^\sigma \cdot (\psi_{\sigma, t} f) \circ \alphares \right)^2 \\
&\leq 
\frac{1}{\lambda(\sigma, t)^{2n} (\psi_{\sigma, t})^2_P} 
\left(\sum_{Q\in \Mpartition} \sum_{\fatalpha\in B^{2n}(P,Q)} J_{\fatalpha}^{2\sigma-1}\right) 
\left(\sum_{Q\in \Mpartition} \sum_{\fatalpha\in B^{2n}(P,Q)} J_{\fatalpha} \cdot \bigl(\vert \psi_{\sigma, t} f\circ \alphares\bigr\vert^2\right),
\end{align*}
where we used again the fact that $c(\fatalpha)$ has modulus $1$.  We recognize the second sum as $\Aopera_{1,0}(\vert \psi_{\sigma, t} f\vert^2)_P$. By \eqref{eq:equilibrium-rep} we see
\begin{align}
    \frac{1}{\Vert \psi_{\sigma,t}^2\Vert_a}\Vert \widetilde{\Aopera}^n_{1,0} \vert \psi_{\sigma, t} f\vert \Vert_a\leq  \int_I \vert  f\vert^2 dxdy + \LandauO\left(\varrho_{1,0}^n \Vert f\Vert_{c(1)}\right).
\end{align}\label{eq:LV-bound-projection-reduction}
For the first sum, we notice that it can be rewritten as
\begin{align*}
    \left(\frac{\lambda(2\sigma-1, 0)}{\lambda(\sigma, t)^2}\right)^n \frac{1}{\psi_{\sigma,t}^2} (\widetilde{\Aopera}^n_{2\sigma-1, 0}( \psi_{2\sigma-2, 0}^{-1}))_P.
\end{align*}
By taking the supremum norm, and using the fact the operator is normalized, we have
\[
\left\vert\frac{\lambda(2\sigma-1, 0)}{\lambda(\sigma, t)^2}\right\vert
\left\Vert\frac{1}{\psi_{\sigma,t}^2} \cdot \widetilde{\Aopera}_{2\sigma-1, 0}\!\bigl(\psi_{2\sigma-2, 0}^{-1}\bigr)\right\Vert_a
\leq L_{\sigma, t}.
\]
where $L_{\sigma,t}$  satisfises $L_{1,0}=1$ (by the normalization) and is continuous in the parameters $\sigma, t$.  Thus,
\[
\left\vert\frac{\lambda(2\sigma-1, 0)}{\lambda(\sigma, t)^2}\right\vert^n
\Bigl\Vert\frac{1}{\psi_{\sigma,t}^2} \cdot \widetilde{\Aopera}^n_{2\sigma-1, 0}
\bigl(\psi_{2\sigma-2, 0}^{-1}\bigr)\Bigr\Vert_a
\le L^n_{\sigma, t}.
\]
Therefore, we have
\begin{align*}
    \Vert \widetilde{\Aopera}^n_{s,t} f \Vert_a^2 &\leq L_{\sigma,t}^{n} \Vert \widetilde{\Myoperator}^{2n} |f|^2 \Vert_a.
\end{align*}
Now, write $n = n_1 + n_2$, and plug $\widetilde{\Aopera}_{s,t}^{n_2} f$ instead of $f$ in the above, use \cref{prop:L2}, equation \eqref{eq:equilibrium-rep} and then equation \eqref{eq:LS-normalized}. This gives
\begin{align*}
    \Vert \widetilde{\Aopera}^{n_1 + n_2}_{s,t} f \Vert_a^2 &\leq C_{\delta, t}
    L_{\sigma,t}^{n_1} 
    \left( \int_I |\widetilde{\Aopera}^{n_2}_{s,t} f|^2 dxdy 
    + \varrho_{1,0}^{n_1} |\tau| \left( \Vert f \Vert_a 
    + \rho^{n_2} \Vert f \Vert_{c(\tau)} \right) \right)\\
    &\leq C_{\delta, t}
    L_{\sigma,t}^{n_1} 
    \left( \int_I |\widetilde{\Aopera}^{n_2}_{s,t} f|^2 dxdy 
    + \varrho_{1,0}^{n_1} |\tau| \Vert f \Vert_{c(\tau)} \right).
\end{align*}
Using now \cref{prop:L2} with $n_2=\lfloor K\log \vert \tau \vert \rfloor $ for some $\beta, K>0$ small enough, we have
\begin{align*}
    \Vert \widetilde{\Aopera}^{n_1 + n_2}_{s,t} f \Vert_a^2 \ll C_{\delta, t}  L_{\sigma,t}^{n_1} 
   \Vert  f \Vert_{c(\tau)} \left( \frac{1}{\vert \tau\vert^{\beta}}
    + \varrho_{1,0}^{n_1} |\tau|  \right).
\end{align*}
At this point, notice that $\tau = e^{n_2/K + \LandauO(1)}$ by our selection. 
Let us define 
\[L(\delta, t_0) := \sup_{\substack{|\sigma-1| \leq \delta, |t| \leq t_0}} L_{\sigma,t}.\] 
We want to ensure we can select parameters such that $L^{n_1} \ll |\tau|^{-\frac{\beta}{2}}$ 
and $|\tau| \ll \varrho_{1,0}^{-n_1/2}$. To this end, let $n_2 = C n_1$ for some parameter $C$. 
Then the above is equivalent to the following two inequalities:
\[
\begin{cases}
    \displaystyle\frac{1}{2}\log L(\delta, t_0) - \frac{C\beta}{2K} \leq 0,\\[8pt]
    \displaystyle\frac{1}{2} \log \varrho_{1,0} + \frac{C}{K} \leq 0.
\end{cases}
\]
By quasi-compactness, the residual spectrum  $\rho$ satisfises $\varrho<1$. Without loss of generality, $L(\delta, t_0)>1$. Then set $C=\frac{2K \log L(\delta,t_0)}{\beta}$. The inequalities are readily verified provided we take $\delta, t_0$ small enough such that $\log L(\delta, t_0)$ is small enough. This proves that if $\delta, t_0$ are sufficiently small and $n=\lfloor K'\log \tau\rfloor$ we have
\begin{align*}
    \Vert \widetilde{\Aopera}^{n_1 + n_2}_{s,t} f \Vert_a^2 \ll 
   \tilde{\rho}^n  \Vert f \Vert_{c(\tau)},
\end{align*}
for some $\tilde{\rho}\in (0,1)$. From here, one can conclude following the same lines as \cite[Lemma 6.4]{BettinDrappeau-main}.
\end{proof}

\subsection{Proof of Proposition~\ref{prop:genseriesameromorphiccontinuation}}
\begin{proof}
    We have the representation
    \begin{align*}
        H(2s,t)= \left(\myid+\Myoperator_{s, t}^1\right) \sum_{n\geq 0} \Aopera^n_{s, t} [1] (0)
    \end{align*}
    which is initially valid for $\Re(s)>1$. Now, by \cref{prop:quasi-comp-swfamily}, we know that there are parameters $\delta, \tau_0, t_0>0$ and operators $\myprojector_{s, t}$ and $\mynihl_{s, t}$ for which, for any $f\in \BanachHolo$
    \begin{align}\label{eq:somequationtoreference}
        \Aopera_{s,t}^n[f](z)=\lambda(s,t)^n \myprojector_{s, t}[f](z)+\mynihl_{s, t}^n [f](z).
    \end{align}
    Now, we can choose the $\delta$ such that the spectral radius of $\mynihl_{s, t}$ is less than $1-\delta$. We take now the sum over $n$ in \eqref{eq:somequationtoreference} to obtain
    \begin{align*}
        H(2s, t)= \left(\myid+\Myoperator_{s, t}^{(1)}\right)
        \left(\myid + \frac{\lambda(s,t)}{1-\lambda(s,t)} \myprojector_{s, t} +\sum_{n\geq 1} \mynihl_{s, t}^n \right) [1] (0).
    \end{align*}
    Plainly, $\left(\myid+\Myoperator_{s, t}^{1}\right) 
        \left(\myid +\sum_{n\geq 1} \mynihl_{s, t}^n \right) [1] (0)$ 
    is analytic in $\Re(s)>1-\delta$ and $\vert t\vert\leq t_0$. 
    This proves \eqref{eq:gen_series_mero_cont}. 
    The proof of the upper bound for $H(2s,t)$ follows by the property of norms and  \cref{prop:LV-bound}.
\end{proof}

\section{Isolating singularities of \texorpdfstring{$H(2s,t)$}{H(2s,t)}}\label{sec:lambda-asypt}
In the following section we gain further information on the behavior of the leading eigenvalue $\lambda(s,t)$ around the point $(1,0)$.  This section is analogous to \cite[Section 7]{BettinDrappeau-main} and pplays a key role later in the proof of the asymptotic expansion of $\chi_S(t)$, see \cref{prop:asymptotic:expansion}. 
\begin{lem}[Asymptotic expansion for $\partial_s \lambda(s,t)$]\label{lem:partial_lambda_asymptotic}
For any sufficiently small $\delta>0$, there is some positive $t_0>0$, such that whenever $|s - 1| \leq \delta$ and $|t| \leq t_0$, the following asymptotic holds:
\begin{align*}
    \partial_s \lambda(s,t)= \int_{I_D} \log  T  d\mu +\LandauO(\delta),
\end{align*}
where $T$ is the factor in \eqref{eq:gst} and $d\mu=\psi_{(1,0,2)} \,dx\,dy$ is the measure in \eqref{eq:defofmu}.
\end{lem}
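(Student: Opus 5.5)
The plan is to compute $\partial_s\lambda(1,0)$ exactly by first-order perturbation theory, and then to show that $\partial_s\lambda$ is Lipschitz in $(s,t)$ near $(1,0)$. Both rest on \cref{prop:quasi-comp-swfamily} and the representation $\Aopera_{s,t}[f]=\Myoperator^2[g_{s,t}f]$ of \cref{prop:reduction}.

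\emph{Step 1: analyticity.} On a neighbourhood of $(1,0)$ the map $s\mapsto \Aopera_{s,t}$ is holomorphic in operator norm, for each fixed $t$. To see this, differentiate $g_{s,t}=e^{it\Psi}T^{s-1}$ termwise. Its $s$-derivative is $(\log T)\,g_{s,t}$, and on each branch $\log T\circ\tick{\alpha}\asymp \log|\alpha_1\alpha_2|$ by bounded distortion. The branch sums with weight $J_{\tick{\alpha}}^{\sigma}(1+\log J_{\tick{\alpha}})$ converge for $\sigma>1/2$, exactly as in the proof of \eqref{eq:contbound-second}. The same estimates hold in the $b$-seminorm, so they hold in the $\Vert\cdot\Vert_{c(1)}$ norm. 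Since $\lambda(s,t)$ is a simple isolated eigenvalue with a uniform spectral gap, Kato's perturbation theory shows that $\lambda$, $\myprojector_{s,t}$, the eigenfunction and the eigenmeasure are holomorphic in $s$ and continuous in $t$. Moreover, the dependence on $t$ is Lipschitz, since by \eqref{eq:contbound-first} the operator changes by $\LandauO(|t|)$, and the Riesz projector formula transfers this to all spectral data. Some care is needed here, because \eqref{eq:contbound-first} is stated only in the $a$-norm. I would redo it in the $c$-norm. This is harmless because $e^{it\Psi\circ\tick{\alpha}}$ is constant on each part (\cref{rem:quicknote}), so it does not create derivative terms. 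This step is where I expect the real work: making sure the perturbation bounds hold in the Banach norm in which the spectral gap lives.

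\emph{Step 2: the derivative at $(1,0)$.} Normalise the eigendata so that $\nu_{s,t}[\psi_{s,t}]=1$. Differentiating $\Aopera_{s,t}\psi_{s,t}=\lambda\psi_{s,t}$ in $s$ and pairing with $\nu_{s,t}$ gives the Hellmann--Feynman identity
\begin{align*}
\partial_s\lambda(s,t)=\nu_{s,t}\big[(\partial_s\Aopera_{s,t})\psi_{s,t}\big]=\nu_{s,t}\big[\Myoperator^2[(\log T)\,g_{s,t}\,\psi_{s,t}]\big].
\end{align*}
At $(1,0)$ we have $g_{1,0}=1$ and $\lambda(1,0)=1$. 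By \cref{lem:spectralp2}, $\nu_{1,0}$ is Lebesgue measure on the two-dimensional parts, which are the only parts carrying mass, and Lebesgue measure is invariant under the dual of $\Myoperator$ because $\Myoperator$ is the Perron--Frobenius operator. Hence
\begin{align*}
\partial_s\lambda(1,0)=\int_{I_D}(\log T)\,\psi_{1,0,2}\,dx\,dy=\int_{I_D}\log T\,d\mu.
\end{align*}
Note that $\log T$ is unbounded near $0$, but it is integrable against $d\mu$ because $\psi_{1,0,2}$ is bounded.

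\emph{Step 3: the error term.} Since $s\mapsto\lambda(s,t)$ is holomorphic and uniformly bounded on a slightly larger disc, Cauchy's estimates show that $\partial_s\lambda$ is Lipschitz in $s$ on $|s-1|\le\delta$. Step 1 also shows that $\partial_s\lambda$ is Lipschitz in $t$. This gives
\begin{align*}
\partial_s\lambda(s,t)-\partial_s\lambda(1,0)=\LandauO(|s-1|+|t|).
\end{align*}
Finally, choose $t_0\le\delta$; then the error is $\LandauO(\delta)$, which is the statement of the lemma.
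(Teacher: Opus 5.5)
Your argument is correct and is essentially the paper's. The paper likewise differentiates the eigen-equation and pairs it with Lebesgue measure, which is $\Myoperator^{\star}$-invariant and coincides with $\nu_{1,0}$ on the two-dimensional parts, to get $\partial_s\lambda(1,0)=\int_{I_D}\log T\,d\mu$, and then bounds the deviation by perturbation theory. The paper pairs with the fixed Lebesgue measure at every $(s,t)$, via $\int\Aopera_{s,t}f=\int g_{s,t}f$, and stops at $\Vert\psi_{s,t}-\psi_{1,0}\Vert_a\ll\delta$. Your Hellmann--Feynman pairing with $\nu_{s,t}$, combined with Cauchy estimates for $\lambda$ in $s$ and the Lipschitz dependence on $t$, handles the $\LandauO(\delta)$ term more cleanly and more completely.
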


\begin{proof}
Let $\psi_{s, t}\in \BanachHolo$ be the eigenfunction of $\Aopera_{s,t}$ associated with the eigenvalue $\lambda(s,t)$. Therefore
\begin{align}\label{eq:eigenwertequation}
\Aopera_{s,t} \psi_{s, t}=\lambda(s, t) \psi_{s, t}.
\end{align}
Notice that for any function $f\in \BanachHolo$ we have
\[
\int_I \Aopera_{s,t}(f)(x,y)\,dx\,dy = \int_I \Myoperator^2[g_{s, t} f] (x,y) \,dx\,dy = \int_I (g_{s, t}f)(x,y) \,dx\,dy.
\]
Thus for $f=\psi_{s, t}$ in the above equation, we have
\[
\lambda(s,t) \int_I \psi_{s, t}(x,y) \,dx\,dy =\int_I (g_{s, t}\psi_{s, t})(x,y) \,dx\,dy.
\]
By differentiating the above equation with respect to $s$ we obtain
\[
\partial_{s}\lambda(s,t) \int_I \psi_{s, t} (x,y) \,dx\,dy = 
\int_I \Bigl( (\log T(z))g_{s, t} \psi_{s, t} 
+ (g_{s, t} - \lambda(s,t))\,\partial_{s}\psi_{s, t} \Bigr) (x,y)\,dx\,dy.
\]
Setting $(s,t)=(1,0)$ we get $g_{1,0}=1$ and $\lambda(1,0)=1$; hence
\begin{align*}
    \partial_{s}\lambda(s,t)\vert_{(s=1, t=0)} \int_I \psi_{1,0} \,dx\,dy= \int_{I} \log T \psi_{1,0} \,dx\,dy =\int_I \log T \psi_{(1, 0, 2)} \,dx\,dy.
\end{align*}
Since $\myprojector_{s, t}\psi_{s, t}=\psi_{s, t}$ using \cite[Theorem 2.6]{Kloeckner-effective}, we get  $\myprojector_{s, t}=\myprojector_{1,0}+\LandauO_{s, t}\bigl(\Vert \Aopera_{s,t}-\Aopera_{1,0} \Vert_a\bigr)$. Therefore, by using \cref{lem:3-step-lemma}, we have
\begin{align*}
    \Vert \psi_{s, t}-\psi_{1,0}\Vert_a\ll \vert s-1\vert + \vert t\vert \ll \delta,
\end{align*}
provided $t$ is selected accordingly. 
\end{proof}

\begin{lem}
\label{lem:delta-1}
Let $\tau>0$ be a parameter.  There is a sufficiently small $t_0>0$ such that the (unique) function $s_0$ defined implicitly by $s_0(0)=1$ and $\lambda(s_0(t), t)=1$, satisfies
\begin{align*}
    \vert s_0(t)-1\vert \leq \tau,
\end{align*}
provided $\vert t\vert \leq t_0.$
\end{lem}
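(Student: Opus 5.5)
The plan is to obtain $s_0$ from the implicit function theorem applied to $F(s,t):=\lambda(s,t)-1$ near $(1,0)$, and then to get the bound $|s_0(t)-1|\leq\tau$ from continuity of $s_0$. First, holomorphy. By \cref{prop:quasi-comp-swfamily}, $\lambda(s,t)$ is a simple isolated eigenvalue of $\Aopera_{s,t}$ for $|s-1|\leq\delta$ and $|t|\leq t_0$. The map $s\mapsto\Aopera_{s,t}$ is holomorphic in operator norm: each term of the series defining $\Aopera_{s,t}$ is entire in $s$, and for $\Re(s)>1/2$ the convergence is locally uniform. The map $t\mapsto\Aopera_{s,t}$ is continuous by \eqref{eq:contbound-first}. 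Kato's perturbation theory, in the form of \cite[Theorem 2.6]{Kloeckner-effective} already used above, then shows that $\lambda$ is holomorphic in $s$ and jointly continuous in $(s,t)$. At $(1,0)$ we have $\lambda(1,0)=1$, because $\Aopera_{1,0}=\Myoperator^2$ preserves Lebesgue integrals and hence has spectral radius $1$ with positive eigenfunction (\cref{lem:spectralp2}).

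Second, nondegeneracy. By \cref{lem:partial_lambda_asymptotic},
\[
\partial_s\lambda(s,t)=\int_{I_D}\log T\,d\mu+\LandauO(\delta).
\]
Since $T(z)=|z\Tgaus(z)|^4$, and $|z|<1$ and $|\Tgaus(z)|<1$ on $I_D$, we get $\log T<0$ almost everywhere. Hence $c_0:=\int_{I_D}\log T\,d\mu$ is strictly negative, and it is finite because $\log T$ has only logarithmic singularities while $\psi_{1,0,2}$ is bounded. Shrinking $\delta$, we obtain $|\partial_s\lambda|\geq |c_0|/2$ on the whole polydisc. This makes $s\mapsto\lambda(s,t)$ injective on $|s-1|\leq\delta$ for each fixed $t$. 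So the zero of $\lambda(\cdot,t)-1$, if it exists, is unique.

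Third, existence, continuity and the bound. Since $t$ is real and we only know continuity in $t$, I would use Rouché's theorem or the argument principle rather than the classical implicit function theorem. Fix $\tau\leq\delta$. On the circle $|s-1|=\tau$, the lower bound on $\partial_s\lambda$ (together with a bound on $\partial_s^2\lambda$ from Cauchy estimates) gives
\[
|\lambda(s,0)-1|\geq |c_0|\tau/4.
\]
By \eqref{eq:contbound-first} and continuity of the spectral projection, $|\lambda(s,t)-\lambda(s,0)|\ll|t|$ uniformly on that circle. Choosing $t_0\ll\tau|c_0|$, Rouché's theorem gives exactly one zero $s_0(t)$ inside $|s-1|<\tau$, and this zero coincides with the unique zero from the previous step. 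Uniqueness with $s_0(0)=1$ then pins down the function, and the bound $|s_0(t)-1|\leq\tau$ follows.

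The main obstacle is the second step: checking that $\int\log T\,d\mu$ is finite and nonzero, and that the $\LandauO(\delta)$ error in \cref{lem:partial_lambda_asymptotic} is uniform in $t$. Finiteness needs integrability of $\log|z|$ and of $\log|\Tgaus z|$ against $\mu$. The second is handled by $\Tgaus$-invariance, since $\mu$ is the invariant density of $\Myoperator$. The rest is routine.
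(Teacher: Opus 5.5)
Your proposal is correct and follows essentially the same route as the paper, which defers to the implicit-function/Rouch\'e-type argument of \cite[Lemma 7.2]{BettinDrappeau-main} and \cite[Lemma 4.10]{Kimetall}: \cref{lem:partial_lambda_asymptotic} supplies the nondegeneracy of $\partial_s\lambda$, and holomorphy in $s$ together with the continuity in $t$ from \eqref{eq:contbound-first} does the rest. One small repair: a lower bound $|\partial_s\lambda|\geq|c_0|/2$ alone does not give injectivity, but the stronger bound $|\partial_s\lambda-c_0|\leq|c_0|/2$ on the convex disc, which the same lemma provides, does, since then $\lambda(s_1,t)-\lambda(s_2,t)=\int_{s_2}^{s_1}\partial_s\lambda(u,t)\,du\neq0$ whenever $s_1\neq s_2$.
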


\begin{proof}
    Follows \textit{mutatis mutandis} along the lines of \cite[Lemma 4.10]{Kimetall} or \cite[Lemma 7.2]{BettinDrappeau-main}. The hypotheses are satisfied by Lemma \eqref{lem:partial_lambda_asymptotic}.
\end{proof}

\section{The asymptotic expansion of the characteristic function}\label{sec:expansion-char-function}
Now, we extract  an asymptotic expansion for the characteristic function $\chi_S$ for small $t$. 

\begin{prop}[Asymptotic expansion of the characteristic function]\label{prop:asymptotic:expansion}
    There are parameters $\delta, t_0>0$ such that for $|t| \leq t_0$ the following holds
\begin{align*}
    \chi_{X}(t) =X^{2s_0(t)-2}  \bigl(1+ \LandauO\bigl( \frac{1}{X^{\delta}} + |t\vert \bigr)\bigr),
\end{align*}
where $s_0$ is the function in \cref{lem:delta-1}.
\end{prop}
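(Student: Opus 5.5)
We prove the statement with a truncated Perron formula followed by a contour shift, in the spirit of \cite[Section 8]{BettinDrappeau-main} and \cite[Section 8]{Kimetall}. The Dirichlet series \eqref{eq:gen-series-link} gives
\begin{align*}
\#\FFFF(X)\,\chi_X(t)=\sum_{n< X} a_{n,t}.
\end{align*}
Since $|a_{n,t}|\ll n^{1+\epsilon}$, a smoothed (or truncated, with height $U=X^{\eta}$) Perron formula applies. Writing the series in the variable $2s$, it gives
\begin{align*}
\sum_{n<X}a_{n,t}=\frac{1}{2\pi i}\int_{1+\epsilon-iU}^{1+\epsilon+iU} H(2s,t)\,\frac{X^{2s}}{s}\,ds+\LandauO\bigl(X^{2+\epsilon}U^{-1}\bigr).
\end{align*}
The same formula at $t=0$ yields $\#\FFFF(X)$, so $\chi_X(t)$ will be a ratio of two such contour integrals.

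Next we shift the contour to $\Re(s)=1-\delta$, using \cref{prop:genseriesameromorphiccontinuation}.

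\textbf{Region $|\tau|\le\tau_0$.} Here $H(2s,t)$ equals the polar part
\begin{align*}
\frac{\lambda(s,t)}{1-\lambda(s,t)}\left(\myid+\Myoperator^{(1)}_{s,t}\right)\myprojector_{s,t}[1](0)
\end{align*}
plus a holomorphic remainder. By \cref{lem:partial_lambda_asymptotic}, $\partial_s\lambda(s,t)$ is close to $\int\log T\,d\mu\neq 0$. Together with \cref{lem:delta-1}, this shows that $\lambda(s,t)=1$ has exactly one solution $s_0(t)$ in the box, and that it is a simple zero. The residue is therefore
\begin{align*}
-\frac{\lambda(s_0,t)}{\partial_s\lambda(s_0,t)}\left(\myid+\Myoperator^{(1)}_{s_0,t}\right)\myprojector_{s_0,t}[1](0)\,\frac{X^{2s_0(t)}}{s_0(t)}=:c(t)\,X^{2s_0(t)}.
\end{align*}

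\textbf{Region $|\tau|>\tau_0$.} The polynomial bound $H(2s,t)\ll|\tau|^{C|\sigma-1|}$ controls the horizontal segments. It also shows that the vertical segment at $\Re(s)=1-\delta$ contributes $\LandauO\bigl(X^{2-2\delta}U^{C\delta}\bigr)$. Choosing $\eta$ small and then shrinking $\delta$ makes every error $\LandauO\bigl(X^{2s_0(t)-\delta'}\bigr)$; for this we need $\Re s_0(t)$ close to $1$, which \cref{lem:delta-1} provides.

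Dividing by the $t=0$ case, where $s_0(0)=1$, gives
\begin{align*}
\chi_X(t)=\frac{c(t)}{c(0)}\,X^{2s_0(t)-2}\bigl(1+\LandauO(X^{-\delta'})\bigr).
\end{align*}
It remains to show $c(t)/c(0)=1+\LandauO(|t|)$. This follows because $\lambda$, $\partial_s\lambda$, $\myprojector_{s,t}$ and $\Myoperator^{(1)}_{s,t}$ are Lipschitz in $(s,t)$. The Lipschitz bounds come from \cref{lem:3-step-lemma} and the perturbation result \cite[Theorem 2.6]{Kloeckner-effective}. We also need $|s_0(t)-1|\ll|t|$. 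The implicit function theorem gives this, provided $\partial_t\lambda$ is $\LandauO(1)$; that in turn follows from \eqref{eq:contbound-first}. In addition, $c(0)\neq0$, since $\lambda(1,0)/\partial_s\lambda(1,0)\neq0$ and the positivity in \cref{lem:spectralp2} gives $\myprojector_{1,0}[1](0)>0$.

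The main obstacle I expect is the Lipschitz control in $t$. Because $\Psi$ is not of moderate growth, $\lambda(s,t)$ is not differentiable in $t$ at $t=0$ in the usual sense. Only first-order bounds of the type \eqref{eq:contbound-first} are available, and these rely on $\sum|\alpha|^{1-4\sigma}$ converging for $\sigma$ near $1$. For this reason I would aim only for an $\LandauO(|t|)$ error and would not attempt a finer expansion. A secondary technical point is making the Perron truncation uniform in $t$: the bound from \cref{prop:genseriesameromorphiccontinuation} is uniform for $|t|\le t_0$, so this should go through.
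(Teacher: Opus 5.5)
Your proposal is correct and follows essentially the paper's route. Both arguments use Perron/Mellin inversion of $H(2s,t)$, shift the contour to $\Re(s)=1-\delta$ via \cref{prop:genseriesameromorphiccontinuation}, pick up a single simple pole at $s_0(t)$, and evaluate its residue with \cref{lem:3-step-lemma} and the perturbation bounds of \cite{Kloeckner-effective}. The differences are cosmetic: the paper uses a smooth weight $f(n/X)$ where you use a truncated Perron formula, and it normalizes with the count $\#\FFFF(X)=C'X^2+\LandauO(X)$ where you divide by the $t=0$ case. Your explicit Lipschitz control of $\partial_s\lambda$, together with $|s_0(t)-1|\ll|t|$, fills in the step the paper dismisses with ``taking $\delta,t_0$ small enough''. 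That bound follows from $\lambda(s_0(t),t)-\lambda(s_0(t),0)=\LandauO(|t|)$ and $\partial_s\lambda(1,0)\neq0$, so no differentiability in $t$ is needed.
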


\begin{proof}
We follow \cite{BettinDrappeau-main}. We define the smoothed quantity
\[
A(t) := \sum_{n \leq X} a_{t,n} \, f\!\left(\frac{n}{X}\right),
\]
where \(f: \mathbb{R}_{\geq 0} \to [0,1]\) is a smooth non-negative function satisfying:
\begin{enumerate}
    \item $f \equiv 1$ on $[0,1]$,
    \item $f(x) \leq \mathbf{1}_{[0, 1+1/W^4]}(x)$, for $W>0$ to be specified later,
    \item $\left\|f^{(k)}\right\|_{\infty} \ll_{k} W^{4k}$ for all $k \geq 0$.
\end{enumerate}
Let \(\hat{f}(s)\) denote the Mellin transform of \(f\). By Mellin inversion,
\begin{align*}
    f\!\left(\frac{n}{X}\right) 
    &= \frac{1}{\pi i} \int_{\sigma_0 - i\infty}^{\sigma_0 + i\infty} 
       \left(\frac{X}{n}\right)^{2s} \hat{f}(2s) \, ds,
\end{align*}
for some \(\sigma_0> 2\). Whence
\begin{align*}
    A(t) = \frac{1}{\pi i} \int_{\sigma_0 - i\infty}^{\sigma_0 + i\infty} X^{2s} \, H(2s, t) \, \hat{f}(2s) \, ds.
\end{align*}

By \cref{prop:genseriesameromorphiccontinuation}, there exist constants $\delta, t_0 > 0$ such that for $|t| \leq t_0$, the function $s \mapsto H(2s,t)$ extends meromorphically to the half-plane $\Re(s) \geq 1 - \delta$. By choosing $\delta$ and $t_0$ sufficiently small, $H(2s,t)$ has only one simple pole, located at $s = s_0(t)$. Using the residue theorem and the decay estimates from \cref{prop:genseriesameromorphiccontinuation}, we can shift the contour of integration from $\Re(s) = \sigma_0$ to $\Re(s) = 1 - \delta$. By \cref{lem:delta-1},  and provided $\delta$ and $t_0$ are small enough, this gives
\begin{align}\label{eq:perron1}
    A(t) = 2 \, \underset{s = s_0(t)}{\mathrm{Res}}\; 
    \Bigl[ X^{2s} H(2s,t) \hat{f}(2s) \Bigr]
    \;+\; \frac{1}{\pi i} \int_{1-\delta - i\infty}^{1-\delta + i\infty} X^{2s} H(2s,t) \hat{f}(2s) \, ds.
\end{align}
 Using the smoothing properties of $f$ listed above and \cref{prop:genseriesameromorphiccontinuation}, we can estimate the integral on the right side above as $\ll_k X^{2(1-\delta)} W^{4k}$ for some integer $k$ large enough. The residue part is
\begin{align}\label{eq:residuaintermediate}
    -\frac{X^{2s_0(t)}\hat{f}(2s_0(t))}{\partial_s \lambda(s_0(t),t)}\left(\myid+\Myoperator_{s_0(t),t}^{(1)}\right) \myprojector_{s_0(t), t}[1](0).
\end{align}
At this point we use the following approximation, \cite[see Theorem 1.6]{Kloeckner-effective}:
\begin{align*}
    \myprojector_{s, t}=\myprojector_{1,0}+ \LandauO\left(\Vert \Aopera_{s,t}-\Aopera_{1,0}\Vert_a\right),
\end{align*}
together with \cref{lem:3-step-lemma}, to obtain
\begin{align}
    \Vert \Myoperator_{s, t}^{(1)}-\Myoperator\Vert_a \ll \vert t\vert.
\end{align}
Since
\begin{align*}
    \Myoperator_{1, 0} = \myprojector_{1,0}+ \mynihl_{1,0}\,, \quad 
    \myprojector_{1,0}\mynihl_{1,0}=\mynihl_{1,0}\myprojector_{1,0}=0,
\end{align*}
we get 
\begin{align*}
    \left(\mathbb{I}+\Myoperator_{s_0(t), t}^{(1)} \right) \myprojector_{s_0(t), t}[1](0)
    &= \myprojector_{1,0}[1](0)+\myprojector_{1,0}^2 [1](0) 
       + \LandauO\bigl( |t| \bigr)\\
    &= 2\myprojector_{1,0}[1](0) + \LandauO\bigl( |t|\bigr).
\end{align*}
Now, set $C=2\myprojector_{1,0}[1](0)$, and notice that this, being an integral of a positive function with respect to the measure $\mu$, is positive. Notice now that for $\delta, t_0$ sufficiently small, we have
\begin{align*}
    \hat{f}(2s_0)=\frac{1}{2s_0}+ \int_1^{1+\frac{1}{W^4}} f(x)x^{2s_0(t)-1} dx=\frac{1}{2s_0}\left(1+\LandauO\left(\frac{1}{W^4}\right)\right) =\frac{1}{2}\left(1+\LandauO(\frac{1}{W^4}+ \vert t\vert)\right).
\end{align*}
Notice now that $\sharp \FFFF(X)=C'X^2+\LandauO(X)$ and that by smoothing properties above, we have
\[
A(t)=\sum_{v\in \FFFF(X)} e^{it S(v)}+ \LandauO\left(X^{1+\epsilon} \left(1+\frac{X}{W^4}\right)\right).
\]
Selecting now $W=X^{\frac{2\delta}{4k+1}}$, using the expression for $\partial_s \lambda(s,t)$ from \cref{lem:partial_lambda_asymptotic} and normalizing, we get
\begin{align*}
    \chi_{X}(t) =X^{2s_0(t)-2}
    \biggl(1+ \LandauO\bigl( \frac{1}{X^{\delta}} + |t| + \vert 2s_0(t)-2\vert \bigr)\biggr).
\end{align*}
The result follows by taking $\delta, t_0$ small enough.
\end{proof}

We now aim to extract an asymptotic for $s_0(t)$ at small $t$. We prove the following result which gives a first step in this direction and reduces the problem to estimating an oscillatory integral.

\begin{lem}\label{lem:s0_asymptotic}
Let $|t| \leq t_0$ for some sufficiently small $t_0 > 0$. Then the following asymptotic holds:
\begin{align*}
    s_0(t) - 1 = \frac{1}{A} \sum_{P\in P[2]}\int_P (1-e^{it\Psi(z)}) \psi_{(1,0,2)}(z) \,dx\,dy + \LandauO\left( t^2 \right),
\end{align*}
where
\begin{align*}
    A = \sum_{P\in P[2]}\int_P \log T(z) \,\psi_{(1,0,2)}(z)  \,dx\,dy>0.
\end{align*}
\end{lem}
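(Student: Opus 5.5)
We use the same integral identity that drove \cref{lem:partial_lambda_asymptotic}. For every $f\in\BanachHolo$ one has $\int_{I_D}\Aopera_{s,t}f\,dx\,dy=\int_{I_D}g_{s,t}f\,dx\,dy$, because $\Myoperator$ preserves Lebesgue measure. Applying this to the eigenfunction $\psi_{s,t}$ gives
\[
\lambda(s,t)\int_{I_D}\psi_{s,t}\,dx\,dy=\int_{I_D}g_{s,t}\,\psi_{s,t}\,dx\,dy .
\]
We evaluate at $s=s_0(t)$, where $\lambda=1$, and subtract the left side from the right. This yields the exact relation
\[
0=\int_{I_D}\bigl(e^{it\Psi}\,T^{s_0(t)-1}-1\bigr)\psi_{s_0(t),t}\,dx\,dy .
\]
Only the two-dimensional cells contribute, since $\nu_{1,0,2}$ is Lebesgue measure.

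\textbf{Step 1: split the integrand.} We write
\[
e^{it\Psi}T^{s_0-1}-1=(e^{it\Psi}-1)+(T^{s_0-1}-1)+(e^{it\Psi}-1)(T^{s_0-1}-1).
\]
We then Taylor-expand $T^{s_0-1}-1=(s_0-1)\log T+\LandauO\bigl(|s_0-1|^2(\log T)^2 T^{|s_0-1|}\bigr)$.

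\textbf{Step 2: bound the error terms.} By \cref{lem:delta-1}, $|s_0(t)-1|$ is as small as we like. The weight $\log T$ grows only logarithmically in $|\alpha_1\alpha_2|$ on each branch. Integrals of $|\log T|^k\,T^{\epsilon}$ against $\psi\,dx\,dy$ therefore converge, by the same bounded-distortion sums used in \cref{lem:3-step-lemma}.

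For the cross term we use $|e^{it\Psi}-1|\le |t||\Psi|$. On the image of a depth-2 branch, $|\Psi|\ll|\alpha_1|+|\alpha_2|$, and the Lebesgue area of that cylinder is $\asymp|\alpha_1|^{-4}|\alpha_2|^{-4}$. Hence $\int|\Psi|\,|\log T|\,\psi<\infty$, and the cross term is $\LandauO(|t|\,|s_0-1|)$. Crucially, this same computation shows $\int|e^{it\Psi}-1|\psi=\LandauO(|t|)$.

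\textbf{Step 3: replace the eigenfunction.} By the Kloeckner perturbation bound together with \cref{lem:3-step-lemma}, $\|\psi_{s_0,t}-\psi_{1,0}\|_a\ll|s_0-1|+|t|$. Replacing $\psi_{s_0,t}$ by $\psi_{1,0}$ in the term $(e^{it\Psi}-1)$ costs $\LandauO(|t|(|t|+|s_0-1|))$. In the term $(s_0-1)\log T$ it costs $\LandauO(|s_0-1|(|t|+|s_0-1|))$.

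\textbf{Step 4: solve for $s_0-1$.} Collecting terms gives
\[
(s_0-1)\bigl(A+\LandauO(|t|+|s_0-1|)\bigr)=\int(1-e^{it\Psi})\psi_{1,0,2}+\LandauO(t^2).
\]
Positivity $A>0$ follows as in \cref{lem:partial_lambda_asymptotic}: $A=\partial_s\lambda(1,0)$, and $T=|z\Tgaus z|^4$ is, on average, large, being the inverse Jacobian of $\Myoperator^2$, so this is essentially the entropy. The relation first gives the bootstrap $|s_0-1|\ll|t|$, using the $\LandauO(|t|)$ bound on the main integral from Step 2. Substituting back then yields the stated formula, with error $\LandauO(t^2)$.

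\textbf{Main obstacle.} The hardest part is the non-moderate growth of $\Psi$: the first moment $\int|\Psi|\psi$ is finite only barely, with cylinder areas $|\alpha|^{-4}$ summed against $|\alpha|$ over a two-dimensional lattice. The cross term and the replacement of $\psi_{s_0,t}$ must therefore be handled with $|e^{it\Psi}-1|\le\min(2,|t\Psi|)$ rather than a naive quadratic bound. If $\int|\Psi|\psi$ turned out to be marginally divergent, I would instead keep the main term as an honest integral of $(1-e^{it\Psi})$, which is all the statement requires. I would then bound the cross and replacement errors by $|s_0-1|\int\min(2,|t\Psi|)\,|\log T|\,\psi$, which is still $o(|t|)$.
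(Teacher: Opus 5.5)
Your argument is correct in substance, but it is organised differently from the paper's proof.

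\textbf{Comparison with the paper.} The paper invokes Kloeckner's first-order eigenvalue perturbation formula, $\lambda(s,t)=1+\phi^\star\bigl((\Aopera_{s,t}-\Myoperator^2)\tick{\phi}\bigr)+\LandauO\bigl(\Vert\Aopera_{s,t}-\Myoperator^2\Vert_a^2\bigr)$, where $\phi^\star$ is the Lebesgue integral over the two-dimensional cells. It expands $g_{s,t}$ to first order in $s-1$, which gives $\lambda(s,t)=\int e^{it\Psi}\,d\mu+(s-1)A+\LandauO(|s-1|^2+t^2)$ on a whole neighbourhood of $(1,0)$. Only then does it set $s=s_0(t)$.

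You instead evaluate the exact identity $\lambda(s,t)\int\psi_{s,t}=\int g_{s,t}\psi_{s,t}$ (the one from \cref{lem:partial_lambda_asymptotic}) directly at $s=s_0(t)$, and perturb only the eigenfunction. The mechanism is the same, since Lebesgue measure is the fixed eigenfunctional. Your version, however, makes the quadratic error explicit, as $\Vert\psi_{s_0,t}-\psi_{1,0}\Vert_a\int|e^{it\Psi}-1|$ plus moment bounds, rather than inheriting it from a black-box remainder. You also carry out the bootstrap $|s_0(t)-1|\ll|t|$. The paper needs this too, to turn $\LandauO(|s_0-1|^2)$ into $\LandauO(t^2)$, but leaves it implicit. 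The paper's route gives slightly more, namely an expansion of $\lambda(s,t)$ itself.

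\textbf{The sign of $A$.} Your justification of $A>0$ is wrong. Since $\sup_{I_D}|z|$ equals $\sqrt{3}/2$, $2/\sqrt{7}$, $3/\sqrt{11}$ for $D=2,7,11$, we have $T(z)=|z|^4|\Tgaus(z)|^4<1$ everywhere. $T$ is the reciprocal of the Jacobian of $\Tgaus^2$ (equivalently $T\circ h_{\tick{\alpha}}=J_{\tick{\alpha}}<1$), not the Jacobian itself. Hence $\log T<0$ pointwise, and $A=\partial_s\lambda(1,0)$ is minus the entropy of $\Tgaus^2$ with respect to $\mu$, so $A<0$.

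The positivity asserted in the statement is therefore a sign slip; the paper gives no argument for it. Since $\int(1-\cos(t\Psi))\,d\mu>0$ is of order $t^2\log(1/|t|)$, $A>0$ would force $\Re s_0(t)>1$ for small $t\neq0$. That is incompatible with $|\chi_X(t)|\le 1$ and \cref{prop:asymptotic:expansion}. Likewise, the later formula $2s_0(t)-2=\kappa_D t^2\log|t|+\LandauO(t^2)$ with $\kappa_D>0$ requires $A<0$.

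Your derivation only uses $A\neq0$, which holds because $\log T<0$ strictly and $\psi_{(1,0,2)}>0$. So the asymptotic itself is correctly proved. The same correction turns your Taylor remainder factor into $T^{-|s_0-1|}$. This is still integrable, since $\sum_{\alpha_1,\alpha_2}|\alpha_1\alpha_2|^{-4+4\epsilon}\log^2(2+|\alpha_1\alpha_2|)<\infty$.

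\textbf{The ``main obstacle''.} This is misplaced. The first moment of $\Psi$ converges comfortably: it reduces to $\sum_\alpha|\alpha|^{-3}$ over a rank-two lattice. It is the second moment, $\sum_\alpha|\alpha|^{-2}$, that diverges logarithmically. That is exactly why the main term is of size $t^2\log|t|$ and an $\LandauO(t^2)$ error is acceptable.
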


\begin{proof}
    Recall the decomposition 
    \begin{align*}
        \mathscr{C}(\Mpartition)=\mathscr{C}(\Mpartition[0])\oplus \mathscr{C}(\Mpartition[1])\oplus \mathscr{C}(\Mpartition[2])
    \end{align*}
    of the space of functions on which the family $\Aopera_{s,t}$ acts (see \cite[proof of Theorem 4.8]{Kimetall}). Let us denote by $\Myoperatordual$ the dual of the operator $\Myoperator$. It is possible to show (see \cite[Theorem 4.8]{Kimetall}) that to the eigenfunction $\tick{\phi}=(\phi_2, \phi_1, \phi_0)$ - which is exactly $\psi_{1,0}=(\psi_{1,0,0}, \psi_{1,0, 1}, \psi_{1,0,2})$ from \cref{prop:quasi-comp-swfamily} with $\lambda(1,0)=1$ - we have an associated eigenfunctional $\phi^\star: \mathscr{C}(\mathscr{P}) \to \CC$ (essentially induced by the eigenmeasure in \cref{lem:spectralp2}). Explicitly, this functional is given by
    \begin{align*}
        f=(f_0, f_1, f_2)\mapsto \phi^\star(f)=\sum_{P \in P[2]}\int_P f_2 \,dx\,dy,
    \end{align*}
    with $\Myoperatordual \phi^\star = \phi^\star$. Now, recall that 
    $g_{s, t}(z)$ is an analytic function in $s$. We therefore have the expansion
    \begin{align*}
        g_{s, t}(z)=e^{it\Psi(z)}+(s-1)\left(\partial_s g_{s, t}(z)\right)_{s=1}+ \LandauO\left(|s-1|^2\right).
    \end{align*}
    Now, by Theorem 2.6 in \cite{Kloeckner-effective}, we obtain    
    \begin{align*}
        \lambda(s, t)=\lambda(1, 0) + (\phi^\star \circ (\Aopera_{s,t}-\Myoperator^2)) (\tick{\phi}) + \LandauO\left(\Vert \Aopera_{s, t} -\Myoperator\Vert_{a}^2\right).
    \end{align*}
    Since $\lambda(1, 0)=1$, we see that $\phi^\star\circ \Myoperator^2 (\tick{\phi})= \phi^\star (\tick{\phi})=1$, by normalization. Thus we have
    \begin{align*}
        \lambda( s, t)= (\phi^\star \circ \Aopera_{s, t}) (\tick{\phi}) + \LandauO\left(\Vert \Aopera_{s, t}-\Myoperator^2\Vert_{a}^2\right).
    \end{align*}
   By \cref{prop:reduction} and basic computations, we obtain
    \begin{align*}
        \Aopera_{s,t}(\tick{\phi})&=\Myoperator^2( g_{s, t} \tick{\phi})\\
        &=\Myoperator^2[e^{it\Psi(z)}\tick{\phi}]+ (s-1)\Myoperator^2[\left(\partial_s g_{s, t}\right)\big|_{s=1} \tick{\phi}]+ \LandauO\left(|s-1|^2 \|\tick{\phi}\|_a\right),
    \end{align*}
    with
    \begin{align*}
        g_{s, t}(z)= e^{it\Psi(z)} \exp\left((s-1) \log T(z)\right).
    \end{align*}
    Thus
    \begin{align*}
        (s-1)\Myoperator^2[\partial_s g_{ s,t }\big|_{s=1} \tick{\phi}]&=(s-1)\Myoperator^2[ \log T(z) e^{it\Psi(z)} \tick{\phi}]\\
        &=(s-1) \Myoperator^2[\log T(z)\tick{\phi}]+ \LandauO\left( \|\tick{\phi}\|_a |s-1| |t|\right),
    \end{align*}
where in the last line we used that, by  \cref{lem:3-step-lemma},
\begin{align*}
    \Myoperator^2[e^{it\Psi(z)} \tick{\phi}]&=\Myoperator^2[\tick{\phi}]- \Myoperator^2[(1-e^{it\Psi(z)}) \tick{\phi}]=\tick{\phi} \left(1+ \LandauO\left(|t| \right)\right).
\end{align*}
This leads to 
\begin{align}
    \lambda(s, t) &= \phi^\star\!\left(\Myoperator^2[e^{it\Psi(z)} \tick{\phi}]\right) + (s-1) A \nonumber \\
    &\quad+ \LandauO\Bigl(|s-1|^2 \|\tick{\phi}\|_a + \|\tick{\phi}\|_a |s-1||t| 
           + \Vert \Aopera_{s,t}-\Myoperator^2\Vert_{a}^2\Bigr) \nonumber\\
    &= \sum_{P\in P[2]} \int_{P} e^{it\Psi(z)} \psi_{(1,0,2)}(z) \,dx\,dy + (s-1) A + \LandauO\bigl(|s-1|^2 + |t|^2\bigr)\nonumber.
\end{align}
Since $\lambda(s_0(t), t)=1$, we conclude by taking $s=s_0(t)$ in the above.
\end{proof}

\section{Asymptotic expansion of oscillatory integrals}\label{sec:integral-asypmt}
In view of \cref{lem:s0_asymptotic}, we need to understand 
\begin{align*}
\sum_{P\in P[2]}\int_P \bigl(e^{it\Psi(z)}-1\bigr) \psi_{(1,0,2)}(z) \,dx\,dy
\end{align*}
for small $t$, where we recall that the period function $\Psi$ is given by
\begin{align*}
\Psi(z)=\frac{2}{\sqrt{\vert D\vert}}\Im\left(\boar{\frac{1}{z}}-\boar{\frac{1}{\Tgaus(z)}}\right).
\end{align*}

The goal of this section is to establish the following asymptotic expansion.
\begin{prop}\label{prop:evaloftheintegral}
As $t\to 0$, the following asymptotic expansion holds:
\begin{align*}
\int_{I_D} \bigl(e^{it\Psi(z)}-1\bigr) \psi_{(1,0,2)}(z) \,dx\,dy = C_D t^2 \ln \vert t\vert + C_D' t^2 +  \LandauO\bigl(t^{\frac{5}{2}}\bigr),
\end{align*}
where $C_D, C_D'$ are real constants, with $C_D>0$, depending on $D\in \{2, 7, 11\}.$
\end{prop}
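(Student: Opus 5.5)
The plan is to rewrite the integral as a sum over pairs of partial quotients. Since $\Psi$ is constant on each two-step cylinder (cf.~\cref{rem:quicknote}), for $z\in O_{(\alpha_1,\alpha_2)}$ we have $\Psi(z)=\frac{2}{\sqrt D}\Im(\alpha_1-\alpha_2)$. Hence
\begin{align*}
\int_{I_D}\bigl(e^{it\Psi}-1\bigr)\,d\mu=\sum_{\alpha_1,\alpha_2}\bigl(e^{it\frac{2}{\sqrt D}\Im(\alpha_1-\alpha_2)}-1\bigr)\,\mu\bigl(O_{(\alpha_1,\alpha_2)}\bigr),
\end{align*}
where $d\mu=\psi_{(1,0,2)}\,dx\,dy$ as in \eqref{eq:defofmu}.

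The first step is to use invariance. Since $\Myoperator^{\star}$ fixes Lebesgue measure (\cref{lem:spectralp2}), $\mu$ is $\Tgaus$-invariant. Writing $O_{(\alpha_1,\alpha_2)}=O_{\alpha_1}\cap\Tgaus^{-1}O_{\alpha_2}$, bounded distortion gives
\begin{align*}
\mu\bigl(O_{(\alpha_1,\alpha_2)}\bigr)=\bigl(c+\LandauO(|\alpha_1|^{-1}+|\alpha_2|^{-1})\bigr)\,|\alpha_1|^{-4}|\alpha_2|^{-4}.
\end{align*}
This holds for $|\alpha_i|$ large, since for such $\alpha$ the cylinder is essentially full branch. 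More precisely, $h_{\alpha_1}$ maps $O_{\alpha_2}$ (of area $\asymp|\alpha_2|^{-4}$) into a region near $1/\alpha_1$ with Jacobian $|\alpha_1|^{-4}(1+\LandauO(|\alpha_1|^{-1}))$. The constant $c$ equals $\psi_{(1,0,2)}(0)^{2}$ up to a normalisation, because the density near $0$ is roughly $\psi_{(1,0,2)}(0)$. The finitely many non-full-branch $\alpha$ from \cref{rem:badalphas} only affect terms with bounded $\alpha_i$, and those contribute $\LandauO(t^2)$ after expanding $e^{ix}-1=ix-x^2/2+\LandauO(|x|^3)$.

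The second step is to extract the logarithm. The mean $\int\Psi\,d\mu$ converges absolutely, since $\sum|\alpha|\cdot|\alpha|^{-4}<\infty$ over a two-dimensional lattice. Moreover it vanishes, by the symmetry $z\mapsto\bar z$: this symmetry preserves $I_D$ and $\ringint$, and maps $\mu$ to itself because the eigenfunction is unique and so conjugation-invariant, while $\Psi(\bar z)=-\Psi(z)$. So the linear term in $t$ drops out. The second moment diverges logarithmically: $\sum_{|\alpha|\le R}|\Im\alpha|^2|\alpha|^{-4}\asymp\log R$. I therefore split at $|\alpha_1|,|\alpha_2|\le 1/|t|$ versus larger.
\begin{itemize}
\item In the small range, I use the expansion $-\frac{t^2}{2}\Psi^2+\LandauO(|t\Psi|^3)$. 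Summing the main term against $|\alpha_1\alpha_2|^{-4}$ gives $-\frac{t^2}{2}\cdot\frac{4}{D}\cdot c\cdot 2\,(\text{lattice density})\cdot\pi\log(1/|t|)+\LandauO(t^2)$. The cross term $\Im\alpha_1\Im\alpha_2$ cancels by the symmetry $\alpha_2\mapsto-\bar\alpha_2$. The cubic error is $\ll |t|^3\sum_{|\alpha|\le 1/|t|}|\alpha|^{3-4}\ll t^2$.
\item In the large range, $|e^{ix}-1|\le2$ and $\sum_{|\alpha|>1/|t|}|\alpha|^{-4}\ll t^2$.
\end{itemize}
Together these give $C_D t^2\ln|t|+\LandauO(t^2)$ with $C_D>0$, since $\ln|t|<0$ and the main term is negative. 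This also fixes the sign convention.

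The main obstacle is upgrading the $\LandauO(t^2)$ remainder to $C_D't^2+\LandauO(t^{5/2})$. To do this I would handle the tail with Poisson summation / Euler–Maclaurin on the lattice $\ringint$. Concretely, I would write the sum as $c\sum_{\alpha_1,\alpha_2}F(t\alpha_1,t\alpha_2)\,t^{8}/|t\alpha_1 t\alpha_2|^4$ with $F$ fixed, and compare it with the corresponding integral. The lattice-versus-integral discrepancy and the distortion corrections of relative size $|\alpha|^{-1}$ then contribute $t^2\cdot\LandauO(|t|^{1/2})$ after a dyadic decomposition with a smoothing cutoff at scale $|t|^{-1/2}$. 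The $t^{5/2}$ exponent suggests exactly this balance, between truncation at $|\alpha|\approx|t|^{-1/2}$ and Taylor error. The constant $C_D'$ collects the convergent parts: the bounded-$\alpha$ contributions involving the exact $\mu(O_{\boldsymbol\alpha})$, and the oscillatory tail integral $\int(\cos(\frac{2}{\sqrt D}\Im w)-1)|w|^{-4}\,dw$ regularised. Its explicit value is not needed.
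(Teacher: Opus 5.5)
Your route differs from the paper's. The paper never sums over two-step cylinders. It first treats the one-step function $\psi_I$ through the Mellin--Barnes representation of \cref{prop:integral-first-step}, the continuation $F(\eta,s)=h(s)+C\cos(\frac{s\pi}{2}(1-\eta))\zeta(3-s)$ of \cref{prop:phi_Icontinuation}, and the double pole of $\Gamma(-s)F(0,s)$ at $s=2$. It then passes to $\Psi$ via $e^{i(a-b)}-1=(e^{ia}-1)+(e^{-ib}-1)+(e^{ia}-1)(e^{-ib}-1)$, using $\Tgaus$-invariance of $\mu$ for the middle term and an $\LandauO(t^2)$ bound for the product term \eqref{eq:twist}. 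Your truncation at $|\alpha_i|\le 1/|t|$ with a second-order Taylor expansion is more elementary and makes the source of the logarithm transparent. Your small- and large-range error bounds and the vanishing of the linear term are fine. However, two steps do not hold as written.

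First, as written your argument does not determine the coefficient of $t^2\log|t|$.
\begin{itemize}
\item The diagonal term $-\frac{2t^2}{D}\sum_{|\alpha_1|\le 1/|t|}(\Im\alpha_1)^2\sum_{\alpha_2}\mu(O_{(\alpha_1,\alpha_2)})$ involves all $\alpha_2$, including bounded and non-full-branch ones, where your product formula is not claimed. Even for large $\alpha_2$, the relative error $\LandauO(|\alpha_2|^{-1})$ sums over $\alpha_2$ to something bounded but not small. Both effects contribute at exact order $t^2\log(1/|t|)$, so your claim that bounded digits cost only $\LandauO(t^2)$ holds only when both digits are bounded.
\item $c$ is not a single constant. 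By \cref{lem:asymptotic-measure} the density is $\kappa_{P,D}+\LandauO(|z|)$ on each part $P$ abutting $0$, with $\kappa_{P,D}$ depending on $P$ (two distinct values for $D=7,11$). This regularity comes from $\psi_{(1,0,2)}(z)=\int_{P^\dagger}|z-w|^{-4}\,dw$, not from bounded distortion. Also, cylinder measures are linear in the density, not quadratic as in your $c=\psi_{(1,0,2)}(0)^2$.
\item The repair is to use marginals: $\sum_{\alpha_2}\mu(O_{(\alpha_1,\alpha_2)})=\mu(O_{\alpha_1})$ and, by invariance, $\sum_{\alpha_1}\mu(O_{(\alpha_1,\alpha_2)})=\mu(O_{\alpha_2})$. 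The diagonal terms then reduce to one-step level sets with $\mu(V_n)=\mu(V_{-n})=Cn^{-3}+\LandauO(n^{-4})$ (\cref{lem:measureVn}, \cref{lem:711Vnmeasure}, \cref{rem:symmetry-remark}). This is exactly what the paper's three-term decomposition achieves.
\item The cross term does not cancel. The map $\alpha_2\mapsto-\bar\alpha_2$ preserves $\Im\alpha_2$, and the symmetries $z\mapsto -z,\bar z,-\bar z$ act on both digits at once. So $\sum\Im\alpha_1\Im\alpha_2\,\mu(O_{(\alpha_1,\alpha_2)})$ survives; it is absolutely convergent, hence harmless at $\LandauO(t^2)$, but it enters $C_D'$.
\end{itemize}

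Second, the upgrade to $C_D't^2+\LandauO(t^{5/2})$ is only heuristic, and the suggested mechanism is off. The exponent $\frac52$ in \cref{prop:evalintegralfinal} simply reflects placing the vertical contour at $\Re s=2+\delta$ with $\delta=\frac12$; it is not a truncation balance. As shown above, distortion errors in a product formula act at order $t^2\log(1/|t|)$, not $t^{5/2}$. Note that the paper's own proof of this proposition also stops at $C_Dt^2\ln|t|+\LandauO(t^2)$ for $\Psi$: the refined form is proved only for $\psi_I$, and only the weaker form is used in \cref{sec:proof-of-the-theorem}.

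Your elementary method can nonetheless close this gap after the decomposition:
\begin{itemize}
\item Use the expansion $\sum_{n\ge1}(\cos(nt)-1)n^{-3}=\frac{t^2}{2}\log|t|-\frac34t^2+\LandauO(t^4)$ together with the $\LandauO(n^{-4})$ remainder. This gives $2\sum_{n\ge1}(\cos(nt)-1)\mu(V_n)=Ct^2\log|t|+C't^2+\LandauO(|t|^3)$.
\item Expand the product term to second order in Taylor, using only the uniform bound $\mu(O_{(\alpha_1,\alpha_2)})\ll|\alpha_1\alpha_2|^{-4}$. This gives $t^2\int\psi_I\,(\psi_I\circ\Tgaus)\,d\mu+\LandauO(|t|^3\log(1/|t|))$, up to the constant factor relating $\Psi$ to $\psi_I-\psi_I\circ\Tgaus$.
\end{itemize}
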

The proof is given at the end of this section, after the necessary preparation.

\subsection{Some facts concerning the measure \texorpdfstring{$\mu$}{mu} and the partition \texorpdfstring{$\Mpartition$}{P}}\label{subsec:measure}

Recall that the invariant measure $\mu$, whose density is $\psi_{(1,0,2)}(z)$, is, by construction, a probability measure that is absolutely continuous with respect to the Lebesgue measure. 
Given $z \in I$, we define the set
\begin{align}\label{eq:fractalset}
V(z)^\dagger \;:=\; 
\overline{
\bigcup_{n \geq 1}
\Bigl\{ -\frac{Q_n(z_n)}{Q_{n-1}(z_n)} \;\Big\vert\; z_n \in I,\; \Tgaus^n(z_n) = z \Bigr\}
}
\;\cup\; \{\infty\},
\end{align}
where $Q_n(z)$ denotes the denominator of the $n$-th convergent to $z$, and the overline denotes the closure of the set. 
It can be proved (see \cite[Proposition 5.3]{Nakada}) that if $z\in P$ for some $P\in\Mpartition[2]$, then $V(z)^\dagger$ depends only on the element $P\in \Mpartition$ containing $z$, not on $z$ itself. 
Then, for $z\in P$, the density for the measure $\mu$ is given by 
\begin{align*}
\psi_{(1,0,2)}(z)=\int_{P^\dagger} \frac{1}{\vert z-w\vert^4} \,dw.
\end{align*}
where $P^\dagger=V^\dagger(z)$.
This measure is absolutely continuous with respect to the Lebesgue measure and defines a probability measure on $I_D$ (see \cite[Sections 3--4]{Kimetall} and \cite[Sections 4--5]{Nakada}). 

Let us now define the functions
\begin{align}
(\psi_R(z), \psi_I(z)) = 
\begin{cases}
\displaystyle \left( \Re\boar{\frac{1}{z}},\; \dfrac{1}{\sqrt{D}} \Im\boar{\frac{1}{z}} \right), & \text{if } D = 2, \\[4pt]
\displaystyle \left( \Re\boar{\frac{1}{z}},\; \dfrac{2}{\sqrt{D}} \Im\boar{\frac{1}{z}} \right), & \text{if } D = 7 \text{ or } D = 11.
\end{cases}
\label{eq:psi_def}
\end{align}
We aim to understand the sets of points $z$ for which $\psi_I(z)=n$ for some $n\in \mathbb{Z}$. 
Therefore, we define the level sets
\begin{align}\label{defn:Vn}
V_n = \{ z \in I_2 : \psi_I(z) = n \},
\end{align}
and decompose them further as
\begin{align}\label{defn:Vrn}
V_{r,n} = \begin{cases}
    \{ z \in I_2 : \psi_I(z) = n,\ \psi_R(z) = r \} & n,r\in \mathbb{Z}, \quad D=2, \\
    \{ z \in I_2 : \psi_I(z) = n,\ \psi_R(z) = r+ \frac{n}{2} \} & n,r\in \mathbb{Z}, \quad D=7,11.
\end{cases}
\end{align}
Observe that for any $(m,n)$ and $r \neq r'$, the interiors satisfy $\operatorname{Int}(V_{r,n}\cap V_{r',m})=\emptyset$. 
We now define
\begin{align}\label{eq:Vnpmsplit}
V^+_n=\bigcup_{r>0} V_{r,n},\qquad V^{-}_n=\bigcup_{r<0} V_{r,n}.
\end{align}
It is easy to see that $V_n=V_{0,n}\cup V^{-}_n\cup V^+_n$.

\subsection{Restricting to relevant parts}
We now have to understand the volumes of the sets $V_{r,n}$ defined above. To this end, it will be important for us to understand the parts $P\in \Mpartition[2]$ that have the origin as a boundary point. Until now, no important case by case distinction upon $D$ was necessary. However, we must now understand how the sets $V_{r,n}$ split among the different pieces $P\in \Mpartition$. In the following, the analysis for the case $D=2$ is simpler than in the other two cases. 
\begin{lem}[The rectangular case: $D=2$]\label{lem:central-split-D-2}
    There is an $n_0\geq 1$ and a list $P_{1 \leq i \leq 8}$ of elements in $\Mpartition[2]$ such that for $\max(|r|, \vert n\vert) \geq n_0$, the interior of the sets $V_{r,n}$ is contained in exactly one of these two-dimensional parts. If these parts are labeled as in \cref{fig:D=2-central-split}, then all $(P_i)_{i\leq 4}$ have the same $\mu$-measure $C_2$, where
    \begin{align*}
        C_2 := \mu(P_1) = \int_{I_2} \left(\int_{P_1^\dagger} \frac{\chi_{P_1}(z)}{\vert z - w \vert^4} \, dw\right)\, dx\,dy.
    \end{align*}
    More precisely, for positive integers $r,n$ with $\max(r,n)>n_0$, we have $V_{r,-n} \subseteq P_1$, $V_{-r,-n} \subseteq P_2$, $V_{-r,n} \subseteq P_3$, and $V_{r,n} \subseteq P_4$. Moreover, for all $r,n$ with $n \geq n_0$, we have $\mu(V_{r,n}) = \mu(V_{|r|, |n|})$. For the remaining four parts, we have, for $n>n_0$, $V_{n,0}\subset P_5$, $V_{0, -n}\subset P_6$, $V_{-n,0}\subset P_6$ and $V_{ 0, n}\subset P_8$. We also have $\mu(P_5)=\mu(P_7)$ and $\mu(P_6)=\mu(P_8)$.
\end{lem}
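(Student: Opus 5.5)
We will argue geometrically, by locating the cells $O_\alpha$ with $|\alpha|$ large and then using the symmetries of the system. For $D=2$ we have $\psi_I(z)=\Im[1/z]/\sqrt2$ and $\psi_R(z)=\Re[1/z]$, so that $V_{r,n}=O_{\alpha}$ with $\alpha=r+in\sqrt{2}$. Hence the statement concerns where the digit sets $O_\alpha$ with $|\alpha|$ large sit inside $\Mpartition[2]$.

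\textbf{Localisation near the origin.} By \cref{rem:badalphas} and \cref{tab:nasty_algebraic_integers_D2}, only finitely many $\alpha$ fail to satisfy $h_\alpha(I_2)\subset I_2$. For $|\alpha|\geq n_0$ with $n_0$ large, $O_\alpha=h_\alpha(I_2)$ is therefore a full cell, namely a small curvilinear rectangle near $1/\alpha$ whose diameter is $\asymp|\alpha|^{-2}$. The partition of \cite{Nakada} is built from finitely many lines and circles. Near $0$, the boundaries of the two-dimensional parts adjacent to the origin are segments of the coordinate axes $\Re z=0$ and $\Im z=0$ together with circles through $0$. Under $z\mapsto 1/z$, these circles become lines of the form $\Re w=\pm c$ or $\Im w=\pm c'$ in the $w=1/z$ plane. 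From the explicit list in \cite{Nakada} (cf. \cref{fig:partition-unlabeled}), we read off that a neighbourhood of $0$ is cut into finitely many sectors: four open quadrant-type parts $P_1,\dots,P_4$, and four thin parts $P_5,\dots,P_8$ which are the images of the strips $|\Im w|<c'$ and $|\Re w|<c$ around the axes.

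\textbf{Containment.} Since these boundary curves correspond to half-integral lines in the $w$-plane, each cell $h_\alpha(I_2)=1/(\alpha+I_2)$ lies in exactly one sector. The sector is determined by the signs of $r$ and $n$ when both are nonzero; when one of them vanishes, the cell lies in one of the strips. Because $1/(r+in\sqrt2)$ has imaginary part of sign $-\sgn(n)$, the assignment $V_{r,-n}\subset P_1$ and so on follows from an explicit check of signs. The main step I expect to be delicate is this one: verifying, from Nakada's explicit circles, that no boundary curve of $\Mpartition$ other than those through $0$ enters a neighbourhood of $0$, and that the curves through $0$ invert to lines of the form $\alpha+\partial I_2$ rather than cutting through translates. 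I would first check this directly against the finite list of circles in \cite{Nakada} for $D=2$, shrinking the neighbourhood (that is, enlarging $n_0$) as needed. The $\max(|r|,|n|)\geq n_0$ condition then ensures $O_\alpha$ lies inside this neighbourhood.

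\textbf{Measure equalities.} These come from symmetry. The maps $z\mapsto\bar z$ and $z\mapsto -z$ preserve $I_2$ and $\ringint$, and they commute with $\Tgaus$ (using $[\bar w]=\overline{[w]}$ and $[-w]=-[w]$ off the boundary). They also map $\Mpartition$ to itself, since Nakada's construction is symmetric. By uniqueness of the absolutely continuous invariant density (\cref{lem:spectralp2}), $\psi_{(1,0,2)}$ is invariant under both maps. Conjugation sends $V_{r,n}$ to $V_{r,-n}$ and negation sends $V_{r,n}$ to $V_{-r,-n}$. This gives $\mu(V_{r,n})=\mu(V_{|r|,|n|})$; it also permutes $P_1,\dots,P_4$ transitively, giving the common value $C_2$, and gives $\mu(P_5)=\mu(P_7)$ and $\mu(P_6)=\mu(P_8)$.
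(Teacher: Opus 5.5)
Your proposal is correct. It has the same skeleton as the paper's proof: identify $V_{r,n}=h_\alpha(I_2)$ with a small cell near $0$, place it in one of the eight parts adjacent to the origin, and obtain the measure identities from the symmetries $z\mapsto -z$, $z\mapsto\bar z$.

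\textbf{Containment.} Your argument is sharper than the paper's ``trivial calculation''. The boundary circles through $0$ come from the digits $\pm2,\pm\sqrt{-2}$ of \cref{tab:nasty_algebraic_integers_D2}. They are $|z\mp1|=1$ and $|z\pm i/\sqrt2|=1/\sqrt2$, and they invert to the lines $\Re w=\pm\frac12$, $\Im w=\pm\frac1{\sqrt2}$. These are edges of the tiling $\ringint+I_2$, so each tile $\alpha+I_2$ lies in exactly one sector. The sector is fixed by the signs of $r,n$ and by whether one of them vanishes.

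\textbf{One correction.} Drop the coordinate axes from your list of boundaries near $0$. They cannot be boundaries: $V_{0,n}$ and $V_{n,0}$ are symmetric about the imaginary, resp.\ real, axis, and the lemma puts each inside a single part. Your later description of $P_5,\dots,P_8$ as images of the full strips $|\Re w|<\frac12$, $|\Im w|<\frac1{\sqrt2}$ is the right one.

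\textbf{Symmetry of the density: where your route differs.} The paper uses no uniqueness statement. It writes $\psi_{(1,0,2)}(z)=\int_{V(z)^\dagger}|z-w|^{-4}\,dw$ and derives $V(-z)^\dagger=-V(z)^\dagger$, $V(\bar z)^\dagger=\overline{V(z)^\dagger}$ from $\Tgaus(-z)=-\Tgaus(z)$, $\Tgaus(\bar z)=\overline{\Tgaus(z)}$. It then concludes by a change of variables in $w$.

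You instead deduce $\psi_{(1,0,2)}\circ\sigma=\psi_{(1,0,2)}$ from uniqueness of the absolutely continuous invariant density. This is softer and does not need the explicit natural-extension formula. However, the uniqueness should be sourced properly:
\begin{itemize}
\item \cref{lem:spectralp2} concerns the dual eigenmeasure, not uniqueness of the fixed density.
\item The space $\BanachHolo$, as literally defined via holomorphic extensions, is not stable under $z\mapsto\bar z$.
\end{itemize}
The cleanest fix avoids the function space altogether. Since $\sigma$ preserves Lebesgue measure and commutes with $\Tgaus$ almost everywhere, $\sigma_*\mu$ is an absolutely continuous $\Tgaus$-invariant probability measure. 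Hence $\sigma_*\mu=\mu$ by ergodicity of $(I_2,\Tgaus,\mu)$.

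Either route also needs $\sigma$ to permute the parts of $\Mpartition$ in order to get $\mu(P_1)=\dots=\mu(P_4)$, $\mu(P_5)=\mu(P_7)$ and $\mu(P_6)=\mu(P_8)$. You state this explicitly; the paper uses it tacitly.
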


\begin{proof}
The sets $V_{r,n}$ are the images of $I_2$ under the map $h_\alpha(z)$ with $\alpha=r+in\sqrt{2}$. Thus $V_{r,n}$ is contained in a disk of radius $\frac{1}{\sqrt{r^2+2n^2}}$. Moreover, the map is proper on a disk containing $I_2$, thus the boundary of $V_{r,n}$ is $f(\partial I_2)$. For positive $r,n$ with $\max(r,n)>n_0$, a trivial calculation shows that the $V_{r,n}$ are contained in $P_4$ (see \cref{fig:D=2-central-split}). The allocation of the other cases follows in the same way. For the equality of measures: using the definition in \eqref{eq:fractalset}, we see that for $V_{r,n}\subset P_1$
\[
\mu(V_{r,n})=\int_{I_2}\left(\int_{V(z)^\dagger} \frac{\chi_{V_{r,n}} (z)}{\vert z-w\vert^4} \,dw\right) dx\, dy= \int_{I_2} \left(\int_{P_1^\dagger} \frac{\chi_{V_{r,n}} (z)}{\vert z-w\vert^4} \,dw\right) dx\,dy.
\]
Now, notice that $\Tgaus(-z)=-\Tgaus(z)$ and $\Tgaus(\bar{z})=\overline{\Tgaus(z)}$. Then, $V(z)^\dagger=-V(-z)^\dagger$ and $\overline{V(z)^\dagger}=V(\bar{z})^\dagger$, from which we deduce $\mu(V_{r,n})=\mu(V_{\vert r\vert, \vert n\vert})$. The other equalities follows along the same lines. 
\end{proof}

\begin{rem}\label{rem:symmetry-remark}
    For later use, notice that given any $n\not=0$, $\psi_I$ is \textit{symmetric} for the level sets $V_n$ above, that is, we have $\mu(V_n)=\mu(V_{-n})$. To see this, assume without loss of generality that $V_n$ is fully contained inside a single $P\in \Mpartition$ (otherwise decompose $V_n$ among the finitely many parts $P_i\in \Mpartition[2]$ it intersects with), use the definition of $V(z)^\dagger$ in \eqref{eq:fractalset} and proceed as in the above proof.
\end{rem}
\begin{figure}[tp]
    \centering
    \includegraphics[width=0.45\linewidth]{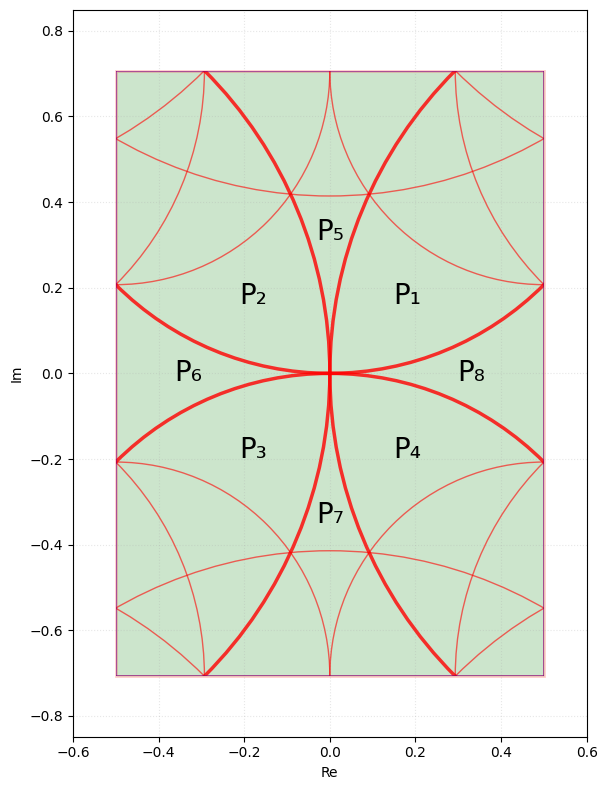}
    \caption{Labeling of the elements $(P_i)_{1\leq i\leq 8}$ in the $D=2$ case, as in \cref{lem:central-split-D-2}.}
    \label{fig:D=2-central-split}
\end{figure}
\begin{rem}\label{rem:remakron711tricky}
As we will show later, the contribution of the parts $V_{r,n}$ with $rn=0$ is negligible, thus only the parts $(P_i)_{1\leq i\leq 4}$ are relevant in the case $D=2$, and all of them have the same $\mu$-measure. This is not the case for $D=7$ or $D=11$. Indeed, as illustrated in \cref{fig:D=117-central-split}, in these cases there are two types of relevant parts: those labeled $(P_i)_{1\leq i\leq 4}$ and those labeled $(P_i)_{5\leq i\leq 6}$. Moreover, unlike in the $D=2$ case, the presence of many extra ``cuspidal'' parts makes a precise description of the allocation of $V_{r,n}$ into the parts $(P_i)_{i\leq 8}$ tedious. This will play a role in the proof of \cref{prop:phi_Icontinuation}. Indeed, we will first handle the case $D=2$ using the clean allocation of the sets $V_{r,n}$ from \cref{lem:central-split-D-2} and then  explain the necessary modifications for the remaining two cases.
\end{rem}

\begin{figure}[tp]
    \centering
    \includegraphics[width=1\linewidth]{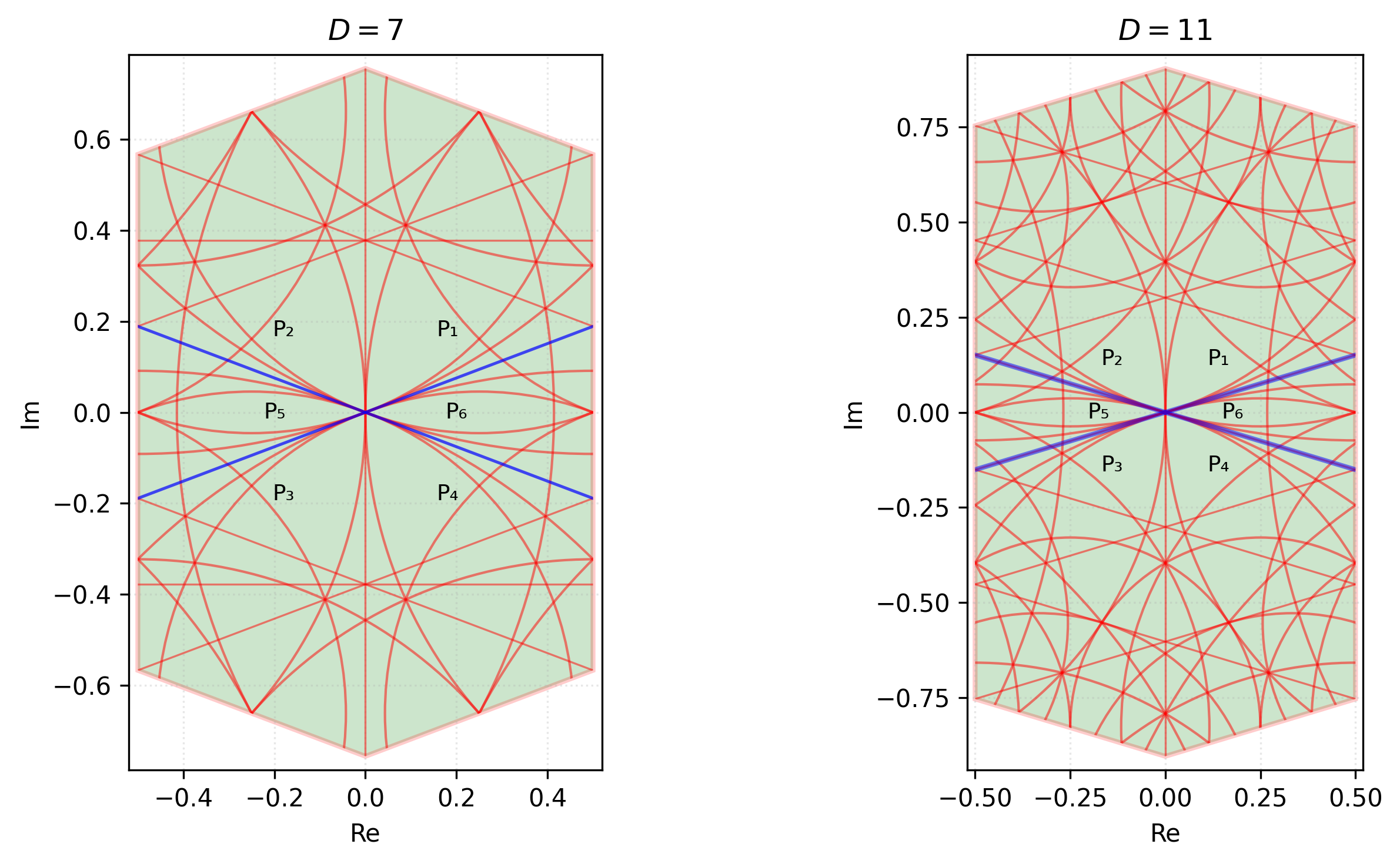}
    \caption{Central subdivisions into relevant parts $(P_i)_{1\leq i\leq 6}$, with auxiliary lines depicted in blue.}
    \label{fig:D=117-central-split}
\end{figure}

\subsection{Technical preliminaries}
For our purposes, we aim to compute the quantity
\[
\int_{I_D} \bigl(e^{it\psi(z)}-1\bigr) \psi_{(1,0,2)}(z) \,dx\,dy = \sum_P \int_P \bigl( e^{it\psi(z)} - 1 \bigr) \psi_{(1,0,2)}(z) \,dx\,dy,
\]
for real-valued functions $\psi: I_D \to \mathbb{R}$. Following \cite{Bettin-Drappeau-integral}, let $\eta\in [0,1]$ be some small parameter. Define then the quantity
\begin{align}\label{eq:Feta}
    F(\eta, s):=\int_{I_D} w_{(\eta, s)}(z) \,d\mu(z)=\sum_P \int_P w_{\eta, s}(z) \psi_{(1,0,2)}(z) \,dx\,dy, 
\end{align}
where
\begin{align*}
w_{\eta, s}(z):=\mathbf{1}_{\psi(z)\neq 0} \, \vert \psi(z)\vert^s \exp\left(-s\frac{\pi i}{2}(1-\eta)\sgn(\psi(z))\right).    
\end{align*}
Furthermore, let $c>0$ be a real number for which the integral
\begin{align}\label{eq:c-admissible}
    \int_{I_D} \bigl( \vert \psi(z)\vert^c+\vert \psi(z)\vert^{-c}\bigr) \mathbf{1}_{\psi(z)\neq 0} \,d\mu(z) 
\end{align}
is finite. We call any such $c$ \textit{admissible}.

\begin{lem}\label{lem:2admissible}
Let $\psi=\psi_I$. Then any positive value $c<2$ is admissible in \eqref{eq:c-admissible}.
\end{lem}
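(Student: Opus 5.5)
The integral in \eqref{eq:c-admissible} splits into a negative-moment part and a positive-moment part, and I will treat them separately. Both rest on two facts: the density $\psi_{(1,0,2)}$ is bounded on $I_D$, and $\psi_I$ is integer valued.

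\textbf{Negative moments.} Since $\psi_I$ takes values in $\ZZ$, on the set $\{\psi_I\neq 0\}$ we have $|\psi_I|\geq 1$. Hence $|\psi_I|^{-c}\leq 1$ there, and this part of the integral is at most $\mu(I_D)=1$ for every $c>0$.

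\textbf{Boundedness of the density.} For $z\in P\in\Mpartition[2]$ the density is $\psi_{(1,0,2)}(z)=\int_{P^\dagger}|z-w|^{-4}\,dw$. By the construction of Ei--Nakada--Natsui \cite{Nakada}, the fractal set $P^\dagger$ stays at positive distance from $\bar P$. I will cite this fact rather than reprove it; it is also the input behind the positivity and boundedness of the eigenfunction in \cref{lem:spectralp2}. Alternatively, since $\psi_{1,0}\in\BanachHolo$ and each component extends holomorphically to a neighbourhood of $\bar P$, boundedness follows directly. Either way, $d\mu\ll dx\,dy$ with a bounded density, so it suffices to bound Lebesgue measures.

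\textbf{Positive moments.} Write the remaining part as
\[
\int |\psi_I|^c\,d\mu \ll \sum_{n\neq 0}|n|^c\,\mathrm{Leb}(V_n), \qquad V_n=\bigcup_r V_{r,n}.
\]
Each $V_{r,n}$ is contained in $h_\alpha(I_D)$ with $\alpha=r+in\sqrt{2}$, or the analogous lattice point when $D=7,11$. Since $|h_\alpha'(z)|^2=|z+\alpha|^{-4}\asymp|\alpha|^{-4}$ for $|\alpha|$ large, we get $\mathrm{Leb}(V_{r,n})\ll(r^2+n^2)^{-2}$. Summing over $r$ gives
\[
\mathrm{Leb}(V_n)\ll\sum_r (r^2+n^2)^{-2}\ll |n|^{-3}.
\]
The finitely many small $\alpha$ contribute only a bounded amount. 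Therefore $\sum_{n\neq0}|n|^{c}\mathrm{Leb}(V_n)\ll\sum_{n\ge1} n^{c-3}$, which converges precisely when $c<2$.

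The same computation shows that the threshold $c<2$ is sharp. The only step that is not routine is the boundedness of the density, and I will settle it by citing the separation property of $P^\dagger$ from \cite{Nakada} or the regularity of $\psi_{1,0}$ in $\BanachHolo$.
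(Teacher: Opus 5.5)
Your proof is correct and follows essentially the same route as the paper. Both use the integer-valuedness of $\psi_I$ and the bound $\mu(V_{r,n})\ll \operatorname{vol}(h_\alpha(I_D))\ll |\alpha|^{-4}$, then sum over $r$ to get $\mu(V_n)\ll |n|^{-3}$, and conclude from the convergence of $\sum_n n^{c-3}$ for $c<2$. You are slightly more careful than the paper on two points: it leaves the negative-moment part implicit, and it appeals only to absolute continuity of $\mu$ where what is actually needed, and what you justify, is boundedness of the density $\psi_{(1,0,2)}$.
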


\begin{proof}
By definition, regardless of $D$, $\psi$ takes only integer values. Let $|n|\geq n_D$ for some sufficiently large $n_D$. The contribution of all $z\in I_D$ for which $\psi(z)\leq n_D$ is bounded (since $\mu(z)$ is a probability density on $I_D$) and therefore does not affect the convergence of the sum. Assume now that $D=2$. For any $(r,n)$ with $|n| > n_2$, the sets $V_{r, n}$ are the image of $I_2$ under the map $z\mapsto \frac{1}{z+\alpha}$ for $\alpha = r + in\sqrt{2}$. Now, since $\mu(z)$ is absolutely continuous with respect to the Lebesgue measure, we have
\[
\mu(V_{r,n})\ll \operatorname{vol}(V_{r,n}) = \int_{V_{r,n}} 1 \,dx\,dy = \int_{I_2}\frac{1}{\vert z + \alpha\vert^4} \,dx\,dy \ll \frac{1}{\vert \alpha\vert^4}=\frac{1}{(n^2+2r^2)^2}.
\]
Thus, we conclude
\[
\int_{I_2} \vert \psi_I(z)\vert^c \,d\mu(z) \ll \sum_{|n| \geq n_2} |n|^c \, \mu(V_n) \ll \sum_{n\geq n_2} \frac{1}{n^{1+\epsilon}}<\infty.
\]
The proof in the cases $D=7, 11$ follows along the same lines.
\end{proof}

Let now $\psi$ be a function $I_D\to \RR$ with some admissible $c>0$. Define, for $\epsilon\in [0, \eta']$, $\eta'>0$, the quantities
\begin{align*}
J_+(\epsilon)=\int_{I_+} e^{(-\epsilon+i)t\psi(z)} \,d\mu(z),\qquad J_-(\epsilon)=\int_{I_{-}} e^{(\epsilon+i)t\psi(z)} \,d\mu(z),
\end{align*}
where  $I_{-}:=\{z\in I_D: \psi(z)<0\}$, $I_0:=\{z\in I_D: \psi_I(z)=0\}$ and $I_{+}:=\{z\in I_D: \psi(z)>0\}$. Then we have the following technical proposition.
\begin{prop}\label{prop:integral-first-step}
Let $H(2, 2+\delta)$ be a Hankel contour (see e.g. \cite{Tenenbaum}) around the point $2$, with thickness $\delta<1$, tracing around $2$ clockwise, and let $F(\eta,s)$ be given as in \eqref{eq:Feta}. Then, for $\psi=\psi_I$, we have
\begin{align*}
J_+(\varepsilon)+J_{-}(\varepsilon)+\mu(\{x\in I_D: \psi_I(x)=0\})&= 1 + F(\eta, 1)t\\
&+\frac{1}{2\pi i} \int_{H(2,2+\delta)} \Gamma(-s)F(\eta, s)t^{s}|1 + i\varepsilon|^{s} \,ds \\
&+  \frac{1}{2\pi i}\int_{\mathrm{Re}(s)=2+\delta} \Gamma(-s)F(\eta, s)t^{s}|1 + i\varepsilon|^{s} \,ds,
\end{align*}
where $\eta=\arctan (\varepsilon)\leq \varepsilon\leq \eta'$.
\end{prop}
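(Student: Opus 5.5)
The plan is to use the Cahen--Mellin representation of the exponential, $e^{-x}=\frac{1}{2\pi i}\int_{(c)}\Gamma(-s)\,(-x)^{s}\,ds$ in its rotated form. We write $e^{-u}=\frac{1}{2\pi i}\int_{\Re(s)=c_0}\Gamma(s)u^{-s}\,ds$ for $\Re u>0$ and $0<c_0$. We apply it with $u=(\varepsilon\mp i)t|\psi(z)|$ on $I_\pm$, so that $e^{(-\varepsilon+i)t\psi}=e^{-u}$ with $u=t\psi(\varepsilon-i)$. Writing $\varepsilon - i=|1+i\varepsilon|e^{-i\frac{\pi}{2}(1-\eta)}$ with $\eta=\arctan\varepsilon$, and similarly with the conjugate phase on $I_-$, the change of variables $s\mapsto -s$ gives, pointwise on $I_+\cup I_-$,
\begin{align*}
e^{(\mp\varepsilon+i)t\psi(z)}=\frac{1}{2\pi i}\int_{\Re(s)=-c_0}\Gamma(-s)\,t^{s}|1+i\varepsilon|^{s}\,|\psi(z)|^{s}e^{-s\frac{\pi i}{2}(1-\eta)\sgn\psi(z)}\,ds .
\end{align*}
The integrand in $z$ is exactly $w_{\eta,s}(z)$ times the $z$-independent factor. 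We then integrate over $I_+\cup I_-$ against $d\mu$ and interchange the two integrals by Fubini. This is justified on $\Re(s)=-c_0$ for any admissible $c_0<2$ (\cref{lem:2admissible}). The reason is that $|\Gamma(-s)|$ decays like $e^{-\frac{\pi}{2}|\Im s|}$, which dominates the growth $e^{\frac{\pi}{2}(1-\eta)|\Im s|}$ of the phase factor since $\eta>0$, and $\int|\psi|^{-c_0}\mathbf 1_{\psi\ne0}d\mu<\infty$. This yields $J_++J_-=\frac{1}{2\pi i}\int_{\Re s=-c_0}\Gamma(-s)F(\eta,s)t^s|1+i\varepsilon|^s\,ds$.

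Next we shift the contour to the right, to $\Re(s)=2+\delta$. The function $F(\eta,s)$ is holomorphic for $-2<\Re s<2$ by admissibility, and $F(\eta,0)=\mu(I_+\cup I_-)$. Crossing the pole of $\Gamma(-s)$ at $s=0$ produces the residue $F(\eta,0)=1-\mu(I_0)$, which accounts for the term $\mu(\{\psi_I=0\})$ moved to the left side. Crossing the pole at $s=1$ produces $F(\eta,1)t$, where we use $|1+i\varepsilon|$ only to higher order, or absorb it if the statement intends it. The sign conventions are handled by the clockwise orientation. The horizontal segments vanish because of the exponential decay noted above, uniform in $\Re s$ on compacts, since $|F(\eta,s)|\le\int|\psi|^{\Re s}d\mu$.

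The main obstacle is the line $\Re(s)=2$. There $F(\eta,s)$ is no longer given by an absolutely convergent integral, because $\sum_n n^{2}\mu(V_n)$ with $\mu(V_n)\asymp n^{-3}$ diverges logarithmically, and $\Gamma(-s)$ also has a pole at $s=2$. We therefore cannot simply cross $\Re s=2$. Instead we deform the contour so that it wraps around the point $2$ along the Hankel contour $H(2,2+\delta)$ and then continues along $\Re(s)=2+\delta$. For this to make sense we must know that $F(\eta,s)$ continues analytically to a slit neighbourhood of $s=2$, and beyond, to the half-plane up to $\Re s=2+\delta$ away from the cut. We plan to obtain this from the structure $\mu(V_n)=\sum_r\mu(V_{r,n})$ with $\mu(V_{r,n})$ given, via \cref{lem:central-split-D-2}, by smooth functions of $\alpha=r+in\sqrt D$ of size $|\alpha|^{-4}$. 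An Euler--Maclaurin or Poisson-summation expansion of $\sum_r$ gives $\mu(V_n)=c\,|n|^{-3}+O(|n|^{-4})$. The series $\sum_n n^{s}\mu(V_n)$ then equals $c\,\zeta(3-s)$ plus a function holomorphic for $\Re s<3$. This gives a simple pole at $s=2$, whose interaction with the pole of $\Gamma(-s)$ generates the $t^2\log t$ term later. If one prefers to avoid the full continuation at this stage, we will record the identity with the right-hand integrals interpreted via this continuation, which is precisely what the statement asserts.
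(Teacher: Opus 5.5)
Your proposal is correct and follows the paper's own route: the Mellin--Barnes representation of the exponential with $\Gamma(-s)$, Fubini on a line $\Re(s)=-c_0$, a rightward shift collecting the residues at $s=0,1$, and a clockwise loop around $s=2$. That loop is justified by the continuation $F(\eta,s)=h(s)+C_D\cos\bigl(\frac{s\pi}{2}(1-\eta)\bigr)\zeta(3-s)$, which the paper proves separately in \cref{prop:phi_Icontinuation} by the same Poisson-summation argument you sketch; it is this continuation, not the bound $|F|\le\int|\psi|^{\Re s}\,d\mu$, that controls the horizontal segments with $2\le\Re(s)\le 2+\delta$.

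The only slip is cosmetic: the orientation of the loop around $2$ has nothing to do with the $s=1$ term, whose actual contribution is $-F(\eta,1)\,t\,|1+i\varepsilon|$, and the phase identity needs $\eta=\frac{2}{\pi}\arctan\varepsilon$. Both mismatches are already present in the statement itself, and they disappear as $\varepsilon\to0^+$ because $F(0,1)=0$.
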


\begin{proof}
By \cite[Eq 17.43.1, Eq 8.327.1]{Grad-Riz-Tables}, we have
\begin{align*}
    e^{-y}=\frac{1}{2\pi i}\int_{-c/2-i\infty}^{-c/2+i\infty} \Gamma(-s) |y|^s e^{s \arg(y)} \,ds,
\end{align*}
provided $\Re(y)>0$. With $y=t\psi(z)(i-\varepsilon)$, we get
\begin{align*}
J_+(\varepsilon)+J_-(\varepsilon)=\frac{1}{2\pi i}\int_{-c/2-i\infty}^{-c/2+i\infty} \Gamma(-s) F(\eta, s) |1+i\varepsilon|^s t^s \,ds,
\end{align*}
where $\eta = \arctan \varepsilon \leq \varepsilon \leq \eta_{0}$. 
Now we shift the contour forward to $\mathrm{Re}(s) = 2 + \delta$, obtaining
\begin{align}\label{eq:contourshift}
J_+(\varepsilon) + J_-(\varepsilon) = -(R_0+R_1) &+ \frac{1}{2\pi i} \int_{H(2,2+\delta)} \Gamma(-s)F(\eta, s)t^{s}|1 + i\varepsilon|^{s} \,ds \\
&+ \frac{1}{2\pi i} \int_{\mathrm{Re}(s)=2+\delta} \Gamma(-s)F(\eta,s)t^{s}|1 + i\varepsilon|^{s} \,ds,\nonumber
\end{align}
where $R_0$ and $R_1$ are the residues at $s=0$ and $s=1$. Using the Taylor expansion at $s=0$,
\[
\Gamma(-s) = -\frac{1}{s} - \gamma +\LandauO(s),
\]
we get 
\[
R_0=-\mu\bigl( \{ z\in I: \psi_I(z)\neq 0 \} \bigr).
\] 
Similarly, using the expansion at $s=1$,
\[
\Gamma(-s) = \frac{1}{s-1} + (\gamma - 1) + \LandauO(s-1),
\]
we see that the residue at $s=1$ is given by $F(\eta, 1)t$. Now, notice that since $\mu(I_D)=1$, we have
\begin{align*}
J_+(\varepsilon)+J_{-}(\varepsilon)+\mu(\{x\in I_D: \psi_I(x)=0\})&= 1 + F(\eta, 1)t\\
&+ \frac{1}{2\pi i} \int_{H(2,2+\delta)} \Gamma(-s)F(\eta, s)t^{s}|1 + i\varepsilon|^{s} \,ds \\
&+ \frac{1}{2\pi i} \int_{\mathrm{Re}(s)=2+\delta} \Gamma(-s)F(\eta, s)t^{s}|1 + i\varepsilon|^{s} \,ds,
\end{align*}
as claimed.
\end{proof}

\subsection{Concluding the proof of Proposition~\ref{prop:evaloftheintegral}}
In order to evaluate the two integrals in \cref{prop:integral-first-step}, we must understand the analytic continuation of $F(\eta, s)$. We show that $F(0, s)$ has a simple pole at $s = 2$. To prove this, we need more precise control on the measure $\mu$, as absolute continuity with respect to the Lebesgue measure is not sufficient.

\begin{lem}\label{lem:asymptotic-measure}
Let $D\in \{2,7,11\}$ be fixed and let $U\subset P$ for $P\in \mathcal{P}[2]$. Assume that the diameter of the set $U$ satisfies $\operatorname{diam}(U)<1/2$. Then we have
\[
\mu(U) = \kappa_{P, D} \,\operatorname{vol}(U) \bigl(1 + \LandauO(\operatorname{diam}(U))\bigr),
\]
where $\kappa_{P,D}= \int_{P^\dagger} \frac{1}{\vert w\vert^4} \,dw$ is a constant depending only on $P$.
\end{lem}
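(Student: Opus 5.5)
The natural route is to start from the explicit density formula of \cref{subsec:measure}: for $z\in P$ with $P\in\Mpartition[2]$ we have $\psi_{(1,0,2)}(z)=\int_{P^\dagger}|z-w|^{-4}\,dw$. Since $P^\dagger$ depends only on $P$, we get
\[
\mu(U)=\int_U\int_{P^\dagger}\frac{dw}{|z-w|^4}\,dx\,dy .
\]
The plan is to compare $|z-w|^{-4}$ with a fixed reference value. Since $\kappa_{P,D}$ is stated as $\int_{P^\dagger}|w|^{-4}\,dw$, the reference point should be the origin. This matches the use of the lemma, where the sets $U=V_{r,n}$ shrink to $0$ and lie in parts $P$ having $0$ on their boundary. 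So I would first reduce to the case $U\subset P$ with $0\in\overline{P}$, and with $U$ inside a ball around $0$ of radius $\ll\operatorname{diam}(U)$.

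If $U$ is far from $0$, the statement should be read with the reference point replaced by a point $z_U\in U$, giving the constant $\int_{P^\dagger}|z_U-w|^{-4}\,dw$. I would note this, but only the origin-adjacent case is needed later, so I would carry out that case.

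The key geometric input is a uniform separation of $P^\dagger$ from $\overline{P}$. Concretely, I would show there is $c_D>0$ such that $|w|\ge c_D$ for all $w\in P^\dagger$, and more generally $\operatorname{dist}(P^\dagger,\overline P)\ge c_D$. From \eqref{eq:fractalset}, the points $-Q_n(z_n)/Q_{n-1}(z_n)$ are ratios of consecutive convergent denominators. For the Hurwitz-type algorithm with $D\in\{2,7,11\}$, one has $|Q_n/Q_{n-1}|\ge 1+c$, or at least that these ratios avoid a neighbourhood of $I_D$; this is the content of the construction in \cite{Nakada}, where $V(z)^\dagger$ is a compact set disjoint from $I_D$. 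I would cite \cite[Section 5]{Nakada} and \cite[Sections 3--4]{Kimetall} for the fact that $P^\dagger$ is a closed subset of $\widehat{\CC}$ at positive distance from $I_D$. This is the step I expect to be the main obstacle: the fractal description makes the separation non-obvious, and one must make sure the bound is uniform, including near $\infty\in P^\dagger$, which contributes nothing since the integrand decays like $|w|^{-4}$.

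Granted the separation, the estimate is routine. For $z\in U$ with $|z|\le\operatorname{diam}(U)<1/2$ (after shrinking the constant if needed, using $c_D$) and $w\in P^\dagger$, we have
\[
\frac{|w|^4}{|z-w|^4}=\Bigl|1-\frac{z}{w}\Bigr|^{-4}=1+\LandauO\Bigl(\frac{|z|}{c_D}\Bigr).
\]
Hence $|z-w|^{-4}=|w|^{-4}\bigl(1+\LandauO(\operatorname{diam}U)\bigr)$ uniformly in $w$. Integrating over $w\in P^\dagger$ gives $\psi_{(1,0,2)}(z)=\kappa_{P,D}\bigl(1+\LandauO(\operatorname{diam}U)\bigr)$, where $\kappa_{P,D}$ is finite because $|w|\ge c_D$ and $\int_{|w|\ge c_D}|w|^{-4}\,dw<\infty$. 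Integrating over $z\in U$ then yields $\mu(U)=\kappa_{P,D}\operatorname{vol}(U)\bigl(1+\LandauO(\operatorname{diam}U)\bigr)$.

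Finally, the hypothesis $\operatorname{diam}(U)<1/2$ should be compatible with $c_D$. If $c_D<1/2$ for some $D$, the implied constant simply absorbs the factor $1/c_D$, provided $|z|/c_D$ stays bounded below $1$. Otherwise I would state the lemma for $\operatorname{diam}(U)$ sufficiently small, which suffices for its application to the sets $V_{r,n}$ with large $|\alpha|$.
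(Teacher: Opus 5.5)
Your proof is correct and is essentially the paper's: both start from $\psi_{(1,0,2)}(z)=\int_{P^\dagger}|z-w|^{-4}\,dw$, expand the kernel about $z=0$ uniformly in $w\in P^\dagger$ (the paper uses the double power series in $z,\bar z$ under $|z|<|w|$, you use $|1-z/w|^{-4}=1+\LandauO(|z|)$ with $|w|\ge c_D$), and then integrate over $U$. You are right that $U$ must be close to $0$, since the paper silently replaces $\LandauO(|z|)$ by $\LandauO(\operatorname{diam}U)$; however, the better repair is an error term $\LandauO(\sup_{z\in U}|z|)$ rather than your hypothesis $U\subset B(0,C\operatorname{diam}U)$, because that hypothesis fails for the sets $V_{r,n}=h_\alpha(I_D)$ used later (they have diameter $\asymp|\alpha|^{-2}$ but lie at distance $\asymp|\alpha|^{-1}$ from $0$), whereas your pointwise bound directly gives the $1+\LandauO(|\alpha|^{-1})$ that \cref{lem:measureVn} actually needs.
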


\begin{proof}
Since $U\subset P$, we have 
\[
\mu(U)=\int_{U} 1 \cdot \psi_{(1,0,2)}(z) \,dx\,dy,
\]
where the density is given by $\psi_{(1,0,2)}(z)=\int_{P^\dagger} \frac{1}{\vert z-w\vert^4} \,dw$. We now use Taylor expansion on the integrand $\frac{1}{|z-w|^4}$ appearing in the density around $z=0$:
\[
\frac{1}{|z-w|^4} = \frac{1}{|w|^4} + \frac{2}{|w|^4} \left( \frac{z}{w} + \frac{\bar{z}}{\bar{w}} \right) + \sum_{\substack{n,m \ge 0 \\ n+m \ge 2}} \frac{(n+1)(m+1)}{w^{n+2} \bar{w}^{m+2}} z^n \bar{z}^m.
\]
Now, since 
\[
\frac{4}{|w|^4} \Re\left( \frac{z}{w} \right) = \LandauO\left(\frac{\vert z\vert}{\vert w\vert^5}\right)
\]
and 
\[
\left\vert \sum_{\substack{n,m \ge 0 \\ n+m \ge 2}} \frac{(n+1)(m+1)}{w^{n+2} \bar{w}^{m+2}} z^n \bar{z}^m \right\vert
\ll \frac{1}{\vert w\vert^4} \sum_{r=2}^\infty (r+1)^3 \left(\frac{\vert z\vert}{\vert w\vert}\right)^r
\ll \frac{\vert z\vert^2}{\vert w\vert^6}
\quad \text{for } \vert z\vert < \vert w\vert,
\]
we obtain 
\[
\psi_{(1,0,2)}(z) = \int_{P^\dagger} \frac{1}{\vert w\vert^4} \bigl(1 + \LandauO(\vert z\vert)\bigr) \,dw = \kappa_P + \LandauO(\vert z\vert).
\]
Therefore,
\[
\mu(U) = \int_{U} \bigl(\kappa_P + \LandauO(\operatorname{diam}(U))\bigr) \,dx\,dy = \kappa_P \, \operatorname{vol}(U) \bigl(1 + \LandauO(\operatorname{diam}(U))\bigr).
\]
\end{proof}

The following lemma gives us an asymptotic evaluation of the measure of the level sets $V_n$ for the case $D=2$. 

\begin{lem}[The measure of the level sets $V_n$]\label{lem:measureVn}
Let $D\in \{2,7, 11\}$ and let $n\geq n_D$. Then, for the measure of the sets $V_n$ in \eqref{defn:Vn}, we have:
\[
\mu(V_{n})= \kappa_{P,2}\frac{\pi}{4\sqrt{2} n^3}+ \LandauO\left(\frac{1}{n^4}\right),
\]
where $\kappa_{P,2}$ is as in \cref{lem:asymptotic-measure}.
\end{lem}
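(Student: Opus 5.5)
The plan is to write $V_n$ as a disjoint union (up to null boundaries) of the sets $V_{r,n}$ over $r\in\mathbb{Z}$. I would estimate each $\mu(V_{r,n})$ with \cref{lem:asymptotic-measure} and then sum over $r$ by comparison with an integral. I carry out the case $D=2$ in full and treat $D=7,11$ as modifications at the end.

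\emph{Step 1 (geometry of a single piece).} As in the proof of \cref{lem:central-split-D-2}, for $\alpha=r+in\sqrt{2}$ we have $V_{r,n}=h_\alpha(I_2)$, intersected with $I_2$. For $n\ge n_D$ the branch $h_\alpha$ is full, since the exceptional $\alpha$ in \cref{tab:nasty_algebraic_integers_D2} have $|n|\le 1$. Hence $V_{r,n}$ is contained in a disc of radius $\ll 1/|\alpha|$, and by the change of variables $w=h_\alpha(z)$,
\[
\operatorname{vol}(V_{r,n})=\int_{I_2}\frac{dx\,dy}{|z+\alpha|^4}=\frac{\operatorname{vol}(I_2)}{|\alpha|^4}\Bigl(1+\LandauO\bigl(|\alpha|^{-1}\bigr)\Bigr),
\]
using $|z+\alpha|^{-4}=|\alpha|^{-4}(1+\LandauO(|z|/|\alpha|))$ on the bounded set $I_2$.

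\emph{Step 2 (allocation into parts and the constant $\kappa$).} For $n\ge n_0$, \cref{lem:central-split-D-2} places every $V_{r,n}$ with $r>0$ in $P_4$ and every one with $r<0$ in $P_3$ (the $n<0$ case is symmetric via \cref{rem:symmetry-remark}). The single term $r=0$ lies in $P_8$ and contributes only $\LandauO(n^{-4})$, so the fact that $\kappa_{P_8,2}$ may differ is harmless. By the symmetry $\Tgaus(-\bar z)=-\overline{\Tgaus(z)}$ used in that lemma, $P_3^\dagger$ is the reflection of $P_4^\dagger$, so $\kappa_{P_3,2}=\kappa_{P_4,2}=:\kappa_{P,2}$. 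Applying \cref{lem:asymptotic-measure} with $U=V_{r,n}$ and $\operatorname{diam}(U)\ll|\alpha|^{-1}$ gives
\[
\mu(V_{r,n})=\kappa_{P,2}\,\frac{\operatorname{vol}(I_2)}{(r^2+2n^2)^2}+\LandauO\Bigl(\frac{1}{(r^2+2n^2)^{5/2}}\Bigr).
\]

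\emph{Step 3 (summing over $r$).} The error terms sum to $\sum_r (r^2+2n^2)^{-5/2}\ll n^{-4}$. For the main term, set $f(r)=(r^2+2n^2)^{-2}$. Since $\sum_r f(r)=\int_{\mathbb{R}}f+\LandauO(\int|f'|)$ and $\int|f'|\ll n^{-4}$, it remains to evaluate
\[
\int_{\mathbb{R}}\frac{dr}{(r^2+2n^2)^2}=\frac{\pi}{2(2n^2)^{3/2}} .
\]
Multiplying by $\operatorname{vol}(I_2)$ and $\kappa_{P,2}$ gives a main term $c\,\kappa_{P,2}\,n^{-3}$. The exact numerical constant depends on the normalisation of $\operatorname{vol}(I_2)$ and of $\psi_I$ in \eqref{eq:psi_def}; I would recheck it against the claimed $\pi/(4\sqrt2)$. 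I note that the direct computation above gives $\sqrt2\cdot\pi/(4\sqrt2 n^3)=\pi/(4n^3)$, so the bookkeeping of the factor $\sqrt2$ needs care.

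\emph{Step 4 ($D=7,11$) and the main obstacle.} For $D=7,11$ the same argument runs with $\alpha=r+\tfrac n2+\tfrac{in\sqrt D}{2}$ and the hexagon volume. The main obstacle is the allocation step. By \cref{rem:remakron711tricky}, for fixed large $n$ the pieces $V_{r,n}$ may fall in different relevant parts, namely $P_1,\dots,P_4$ versus the cuspidal parts $P_5,P_6$, which carry different $\kappa$. I would first show that, for fixed large $n$, all but $\LandauO(1)$ values of $r$ lie in parts with a common $\kappa$, using the auxiliary lines of \cref{fig:D=117-central-split}, which are rays through $0$. I would then bound the exceptional $r$ by $\LandauO(n^{-4})$ each. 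If this fails, the result holds with $\kappa$ replaced by a weighted average over the relevant parts, where the weights are the angular proportions of the ray $\{\operatorname{Im}\alpha=n\}$ seen through $h_\alpha$.
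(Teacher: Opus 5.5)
For $D=2$ your argument is the paper's. You split $V_n$ into the pieces $V_{r,n}$ and allocate them with \cref{lem:central-split-D-2}, the $r=0$ piece costing $\mathcal{O}(n^{-4})$. You then apply \cref{lem:asymptotic-measure}, with the volume computed through the Jacobian of $h_\alpha$, and sum over $r$. The paper uses Poisson summation where you compare with the integral via the total variation of $f$; this is immaterial at the level $\mathcal{O}(n^{-4})$. You also justify, via $z\mapsto-\bar z$, the equality $\kappa_{P_3,2}=\kappa_{P_4,2}$, which the paper uses tacitly.

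Your suspicion about the factor $\sqrt2$ is well founded, and the slip is in the paper rather than in your bookkeeping. Its proof asserts $\operatorname{vol}(V_{r,n})=|\alpha|^{-4}+\mathcal{O}(|\alpha|^{-8})$, dropping the factor $\operatorname{vol}(I_2)=\sqrt2$. So the correct main term is $\kappa_{P,2}\,\pi/(4n^3)$, exactly as you computed. This is harmless downstream, since \cref{prop:phi_Icontinuation} only uses $\mu(V_n)=Cn^{-3}+\mathcal{O}(n^{-4})$ for some $C>0$.

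For $D=7,11$ the displayed formula, with $\kappa_{P,2}$ and $\pi/(4\sqrt2)$, is specific to $D=2$, and the paper's proof of this lemma only treats that case. The other two cases go through \cref{lem:711Vnmeasure} and the allocation argument in the proof of \cref{prop:phi_Icontinuation}. There your primary plan would fail.

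Write $\alpha=n(u+i\sqrt D/2)$ with $u=(r+n/2)/n$. The pieces with $|u|\ge D/2$, i.e.\ $|2r+n|\ge nD$, have $1/\alpha$ in the wedge $|y|\le|x|/\sqrt D$ and lie in $P_5$ or $P_6$. For each $n$ there are infinitely many such $r$. Together they carry mass $\asymp n^{-3}$, a fixed positive fraction of $\mu(V_n)$, so they are not $\mathcal{O}(1)$ exceptional pieces of size $\mathcal{O}(n^{-4})$.

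So two different constants genuinely enter, and your fallback is what is needed, except that the weights are not angular proportions. They are $n^{-3}\int_{|u|\le D/2}(u^2+D/4)^{-2}\,du$ and $n^{-3}\int_{|u|\ge D/2}(u^2+D/4)^{-2}\,du$, the $|\alpha|^{-4}$-mass of the two ranges of $r$, attached to $\kappa_{P_1,D}$ and $\kappa_{P_6,D}$ respectively. This is precisely the paper's $A(n)$, which incidentally omits the analogous factor $\operatorname{vol}(I_D)=\sqrt D/2$. The pieces meeting part boundaries or the thin cuspidal parts can then be discarded essentially as you propose; the paper does this through its lunar-area comparison.
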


\begin{proof}
Let $n\geq n_0$, for $n_0$ defined as in \cref{lem:central-split-D-2}. By the same lemma, all $V_{r,n}$ with $nr\neq 0$ lie fully in some $P_i$ for $i\in \{1,\dots, 4\}$. We also have $\mu(V_n)=\mu(V_n^{-}\cup V_n^+)+\LandauO\left(\frac{1}{n^4}\right)$ uniformly. By \cref{lem:asymptotic-measure} for $D=2$ we have
\[
\mu(V_{r, n})=\kappa_{P,2} \operatorname{vol}(V_{r,n})\bigl(1+\LandauO(\operatorname{diam}(V_{r,n}))\bigr),
\]
with $\operatorname{diam}(V_{r,n})\ll \frac{1}{|r+in\sqrt{2}|}$. Now, writing $\alpha=r+in\sqrt{2}$, we have
\[
\operatorname{vol}(V_{r,n})=\int_{I} \frac{1}{|z+\alpha|^4} \,dx\,dy=\frac{1}{|\alpha|^4}+ \LandauO\left(\frac{1}{|\alpha|^8}\right).
\]
Thus,
\[
\mu(V_n)=\sum_{r\in \mathbb{Z}} \frac{\kappa_{P, 2}}{(r^2+2n^2)^2}\left(1+\LandauO\left(\frac{1}{|n|}\right)\right).
\]
Now, by Poisson summation,
\begin{align*}
\sum_{r \in \mathbb{Z}} \frac{1}{(r^2 + 2n^2)^2} 
&= \int_{-\infty}^{\infty} \frac{1}{(x^2 + 2n^2)^2}\, dx + \sum_{k \neq 0} \int_{-\infty}^{\infty} \frac{e^{2\pi i k x}}{(x^2 + 2n^2)^2}\, dx \\
&= \frac{\pi}{4\sqrt{2} n^3} + \LandauO\left(\frac{1}{n^6}\right).
\end{align*}
Therefore, for $|n|>n_0$ we obtain
\[
\mu(V_{n})= \kappa_{P,2}\frac{\pi}{4\sqrt{2} n^3}+ \LandauO\left(\frac{1}{n^4}\right).
\]
\end{proof}

\begin{rem}
 As mentioned in \cref{rem:remakron711tricky}, for the cases $D=7, 11$, we encounter some extra complications, which makes a clean description of the allocation of the sets $V_{r,n}$ rather difficult. In order to streamline our argument, we introduce the following lemma, where $A(n)$ is essentially $\mu(V_n)$ in disguise.
\end{rem}

\begin{lem}[Asymptotics for $V_n$-sets in the $D=7,11$]\label{lem:711Vnmeasure}
Let $D\in \{7, 11\}$, let $n\geq n_D$. Define the quantity
\begin{align*}
A(n) = \kappa_{P_1, D} \sum_{\substack{r\in\mathbb{Z} \\ \left| \frac{n}{2r+n} \right| \ge \frac{1}{D}}} 
\frac{1}{\bigl[(r + n/2)^2 + n^2 D/4\bigr]^2}
+ \kappa_{P_6, D} \sum_{\substack{r \in \mathbb{Z} \\ -\frac{1}{D} \le \frac{n}{2r+n} \le \frac{1}{D}}} 
\frac{1}{\bigl[(r + n/2)^2 + n^2 D/4\bigr]^2}.
\end{align*}
Then, there is some $C_D>0$ such that
\begin{align*}
A(n)=\frac{C_D}{n^3}+ \LandauO\left(\frac{1}{n^6}\right).
\end{align*}
\end{lem}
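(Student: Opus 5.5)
The plan is to write each of the two sums in $A(n)$ as a Riemann sum of a scaled rational function and compare it with the corresponding integral. Set $x=r+n/2$, so that $2r+n=2x$ and the constraint $\left|\frac{n}{2r+n}\right|\geq \frac1D$ becomes $|x|\leq \frac{Dn}{2}$. The second sum runs over the complementary range $|x|\geq \frac{Dn}{2}$; the endpoint $2r+n=0$ can only occur in the first sum, where it is harmless. In both sums $x$ ranges over the shifted lattice $\mathbb{Z}+\frac n2$, and the summand is
\[
g_n(x)=\frac{1}{(x^2+n^2D/4)^2}.
\]
Substituting $x=ny$ gives $g_n(ny)=n^{-4}(y^2+D/4)^{-2}$. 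The expected main term is therefore
\[
A(n)\approx \frac{1}{n^3}\Bigl(\kappa_{P_1,D}\int_{|y|\le D/2}\frac{dy}{(y^2+D/4)^2}+\kappa_{P_6,D}\int_{|y|\ge D/2}\frac{dy}{(y^2+D/4)^2}\Bigr)=:\frac{C_D}{n^3}.
\]
Here $C_D>0$ because both $\kappa$'s are positive integrals of positive functions over sets of positive measure, as in \cref{lem:asymptotic-measure}. Both integrals are elementary (arctangent plus rational terms), but their explicit values are not needed.

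The error analysis goes as follows. The two pieces should not be handled separately by naive Poisson summation, since sharp cutoffs at $|x|=Dn/2$ produce boundary errors of size $g_n(Dn/2)\asymp n^{-4}$. Instead, I would write
\[
A(n)=\kappa_{P_6,D}\sum_{x\in\mathbb{Z}+n/2} g_n(x)+(\kappa_{P_1,D}-\kappa_{P_6,D})\sum_{\substack{x\in \mathbb{Z}+n/2\\ |x|\le Dn/2}} g_n(x).
\]
The full sum is handled exactly as in \cref{lem:measureVn}. Poisson summation over the shifted lattice introduces only phases $e^{\pi i kn}$, and the Fourier transform of $g_n$ decays like $e^{-\pi\sqrt D |k| n}$. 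So the full sum equals $\int_{\mathbb{R}} g_n+\LandauO(e^{-cn})$, which is even stronger than $\LandauO(n^{-6})$.

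For the truncated sum I would apply Euler--Maclaurin on the interval $[-Dn/2,Dn/2]$. The endpoints $\pm Dn/2$ lie in $\mathbb{Z}+n/2$ exactly when $D$ is odd, which is the case for $D=7,11$, so the endpoints are lattice points. Euler--Maclaurin then gives the integral, plus the boundary term $g_n(Dn/2)$ (the two endpoint half-weights combined by symmetry), plus derivative corrections $g_n^{(2j-1)}$ at the endpoints. These corrections are of size $n^{-4-(2j-1)}$: $n^{-5}$, $n^{-7}$, and so on. Their odd-order derivative terms cancel by the symmetry $g_n(-x)=g_n(x)$, since $g_n'$ is odd and the contributions from the two endpoints carry opposite signs and cancel. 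The remainder is $\LandauO\bigl(\int |g_n^{(2k)}|\bigr)=\LandauO(n^{-3-2k})$.

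This leaves one obstacle: the endpoint term $g_n(Dn/2)\asymp n^{-4}$ is not $\LandauO(n^{-6})$. I will check whether it is absorbed by the convention for the equality case in the two index ranges. As written, with $\ge$ in the first sum and $\le$ in the second, both endpoints are counted in both sums. This suggests the intended weighting is $\frac12$ from each side, in which case the trapezoidal weights are exactly reproduced and the endpoint mismatch disappears. Concretely, the plan is to interpret the double-counted boundary terms as splitting the endpoint mass, so that $A(n)$ becomes a symmetric trapezoidal sum; Euler--Maclaurin then gives the main term with odd-derivative cancellation. If that interpretation fails, the honest error is $\LandauO(n^{-4})$. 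This still suffices for the downstream use, namely the $t^2\log|t|$ asymptotic, since only the $n^{-3}$ main term and a summable error are needed.
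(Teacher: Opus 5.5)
\paragraph{The gap.} The step that fails is your claim that the odd-order Euler--Maclaurin terms cancel by symmetry. For even $g_n$ the derivatives $g_n^{(2j-1)}$ are odd. So on $[-b,b]$, with $b=Dn/2$, the boundary term is $g_n^{(2j-1)}(b)-g_n^{(2j-1)}(-b)=2g_n^{(2j-1)}(b)$: the two endpoints add rather than cancel. These terms only telescope away for a sum over the whole line, which is why Poisson summation works for your complete sum.

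Since $\pm b\in\mathbb{Z}+n/2$ for odd $D$, Euler--Maclaurin actually gives
\begin{align*}
\sum_{\substack{x\in\mathbb{Z}+n/2\\ |x|\le b}} g_n(x)&=\int_{-b}^{b}g_n(x)\,dx+g_n(b)+\frac{1}{6}g_n'(b)+\LandauO\Bigl(\frac{1}{n^{7}}\Bigr),\\
\sum_{\substack{x\in\mathbb{Z}+n/2\\ |x|\ge b}} g_n(x)&=\int_{|x|\ge b}g_n(x)\,dx+g_n(b)-\frac{1}{6}g_n'(b)+\LandauO\Bigl(\frac{1}{n^{7}}\Bigr),
\end{align*}
where $g_n(b)=\frac{16}{D^2(D+1)^2n^4}$ and $g_n'(b)=-\frac{128}{D^2(D+1)^3n^5}$. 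Take the index ranges exactly as written; both contain $r=\frac{(D-1)n}{2}$ and $r=-\frac{(D+1)n}{2}$. With your constant $C_D$ this gives
\begin{align*}
A(n)=\frac{C_D}{n^3}+\frac{16\,(\kappa_{P_1,D}+\kappa_{P_6,D})}{D^2(D+1)^2\,n^4}-\frac{64\,(\kappa_{P_1,D}-\kappa_{P_6,D})}{3D^2(D+1)^3\,n^5}+\LandauO\Bigl(\frac{1}{n^7}\Bigr).
\end{align*}
The $n^{-4}$ coefficient is strictly positive, so the $\LandauO(n^{-6})$ in the lemma is false as stated. Under your half-weight reading the $n^{-4}$ term disappears, but the $n^{-5}$ term survives unless $\kappa_{P_1,D}=\kappa_{P_6,D}$. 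Nothing in the setup gives you that equality, and if it held the two-type split of $A(n)$ would be pointless. So your reinterpretation does not rescue the statement, and neither can any other argument.

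\paragraph{What can be proved and used.} The true statement, which your fallback identifies, is $A(n)=C_Dn^{-3}+\LandauO(n^{-4})$. This already follows from monotonicity of $g_n$ on $[0,\infty)$, with no Euler--Maclaurin needed. It is also all that is used later. In \cref{prop:phi_Icontinuation} one only needs $\sum_n n^{s}\bigl(A(n)-C_Dn^{-3}\bigr)$ to be holomorphic for $\Re s<3$, and this holds because $\sum_n n^{\Re s-4}$ converges there. The $D=2$ analogue, \cref{lem:measureVn}, is itself only proved with error $\LandauO(n^{-4})$.

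\paragraph{Comparison with the paper.} The paper's proof gets $\LandauO(n^{-6})$ by applying Poisson summation to each sharply truncated sum as if it were a complete lattice sum. That ignores exactly the endpoint terms displayed above. Your objection to that route is therefore correct, but your repair of it is not. Drop the cancellation claim and the reinterpretation, and state and prove the $\LandauO(n^{-4})$ version.
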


\begin{proof}
By Poisson summation, we have
\begin{align*}
\sum_{\substack{r \in \mathbb{Z} \\ |2r + n| \ge nD}} 
\frac{1}{\bigl[(r + n/2)^2 + n^2 D/4\bigr]^2}
&= \frac{1}{n^3} \int_{|u| \ge D/2} \frac{du}{(u^2 + D/4)^2} \;+\; \LandauO\left(\frac{1}{n^6}\right) \\
&= \frac{1}{n^3} \left( \frac{4\pi - 8\arctan\sqrt{D}}{D^{3/2}} - \frac{8}{D(D+1)} \right) + \LandauO\left(\frac{1}{n^6}\right).
\end{align*}
Similarly,
\begin{align*}
\sum_{\substack{r\in\mathbb{Z} \\ |2r+n|\le nD}} 
\frac{1}{\bigl((r+n/2)^2 + n^2 D/4\bigr)^2}
&= \frac{1}{n^3} \int_{-D/2}^{D/2} \frac{du}{(u^2 + D/4)^2} + \LandauO\left(\frac{1}{n^6}\right)\\
&=\frac{1}{n^3} \left(\frac{8}{D(D+1)} + \frac{8}{D^{3/2}} \arctan\sqrt{D}\right) + \LandauO\left(\frac{1}{n^6}\right).
\end{align*}
So we conclude that 
\begin{align*}
    A(n)=\frac{C_D}{n^3}+ \LandauO\left(\frac{1}{n^6}\right),
\end{align*}
for some $C_D>0$.
\end{proof}

\begin{lem}[Analytic continuation of the function $F$]\label{prop:phi_Icontinuation}
Let $\psi=\psi_I$. Then the function $F(\eta, s)$ in \eqref{eq:Feta} is analytic for any fixed $\eta$ and $\Re(s)<3$, with the sole exception of a simple pole at $s=2$. In particular, there exist a $C_D>0$ such that we have
\[
F(\eta, s)=h(s)+ C_D\cos\left(\frac{s\pi}{2} (1-\eta)\right) \zeta(3-s),
\]
for a function $h(s)$ analytic for $\Re(s)<3$. $F(0, s)$ is therefore analytic in $\Re(s)<3$ with the exception of a simple pole at $s=2$.
\end{lem}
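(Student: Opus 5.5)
The plan is to write $F(\eta,s)$ as a sum over the level sets $V_n$, $n\neq 0$. Since $\psi_I$ takes integer values,
\[
F(\eta,s)=\sum_{n\neq 0}\mu(V_n)\,|n|^s\exp\Bigl(-s\tfrac{\pi i}{2}(1-\eta)\sgn(n)\Bigr).
\]
By \cref{rem:symmetry-remark} we have $\mu(V_n)=\mu(V_{-n})$. Pairing $n$ with $-n$ therefore gives
\[
F(\eta,s)=2\sum_{n\geq 1}\mu(V_n)\,n^s\cos\Bigl(\tfrac{s\pi}{2}(1-\eta)\Bigr).
\]
The whole problem then reduces to an asymptotic for $\mu(V_n)$ with a summable error. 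We split the sum at $n_D$. The finitely many terms with $1\le n<n_D$ give an entire function of $s$, which goes into $h(s)$.

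For $n\geq n_D$ the next step is to show that
\[
\mu(V_n)=\frac{c_D}{n^3}+E(n),\qquad E(n)\ll \frac{1}{n^4},
\]
for some constant $c_D>0$. For $D=2$ this is exactly \cref{lem:measureVn}, with $c_2=\kappa_{P,2}\pi/(4\sqrt2)$. For $D=7,11$ the allocation argument of \cref{lem:central-split-D-2} has to be redone with the parts $(P_i)_{i\le 6}$ of \cref{fig:D=117-central-split}. The sets $V_{r,n}$ are the images $h_\alpha(I_D)$ with $\alpha=r+\frac n2+\frac{n\sqrt{-D}}{2}$. For large $|\alpha|$, such a set lies in one part, and which part is determined by the direction of $\alpha$, namely the slope condition $|n/(2r+n)|\gtrless 1/D$. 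Applying \cref{lem:asymptotic-measure} on each $V_{r,n}$ then gives $\mu(V_{r,n})=\kappa_{P,D}|\alpha|^{-4}(1+O(|\alpha|^{-1}))$, so $\mu(V_n)=A(n)+O(n^{-4})$. \cref{lem:711Vnmeasure} supplies $A(n)=C_D/n^3+O(n^{-6})$.

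Two kinds of terms have to be handled separately. The terms with $rn=0$ are not in the generic position. The finitely many $\alpha$ of small norm where the partition is not fine enough are also exceptional. Both sit inside $O(n^{-4})$: the only contribution here is from $|r|\gg n$, which is $\sum_{|r|\gg n}r^{-4}\ll n^{-3}$ times a small factor. This needs care, and I would bound it through the diameter factor $O(|\alpha|^{-1})$ instead.

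Substituting the asymptotic for $\mu(V_n)$ gives
\[
2\sum_{n\ge n_D}\mu(V_n)\,n^s=2c_D\Bigl(\zeta(3-s)-\sum_{n<n_D}n^{s-3}\Bigr)+2\sum_{n\ge n_D}E(n)\,n^s .
\]
The last sum converges absolutely and uniformly on compact subsets of $\Re(s)<3$, because $|E(n)n^s|\ll n^{\Re(s)-4}$. It is therefore analytic there, and multiplying it by the entire cosine factor keeps it in $h(s)$. This yields the stated form, with the constant $C_D$ of the lemma equal to $2c_D$. Since $\zeta(3-s)$ has its only pole at $s=2$, and $\cos(\pi(1-\eta))\ne0$ for small $\eta$ (for $\eta=0$ it equals $-1$), the pole is simple and nonzero.

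The main obstacle is the uniform error $E(n)\ll n^{-4}$ for $D=7,11$. There the boundary between the parts $P_1,\dots,P_4$ and $P_5,P_6$ passes through the family $V_{r,n}$. A bounded number of $V_{r,n}$ per $n$ (those near the slope $1/D$) may straddle two parts. Each of these has measure $\ll n^{-4}$, so their total is still $O(n^{-4})$. I would handle them by crude bounds rather than an exact allocation.
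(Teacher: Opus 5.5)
Your proposal is correct and follows essentially the paper's route: split $F(\eta,s)$ over the level sets $V_{\pm n}$, insert $\mu(V_n)=c_D n^{-3}+O(n^{-4})$, and read off $h(s)+C_D\cos\bigl(\tfrac{s\pi}{2}(1-\eta)\bigr)\zeta(3-s)$ with the error series absolutely convergent for $\Re(s)<3$; the asymptotic comes from \cref{lem:measureVn} for $D=2$, and from \cref{lem:711Vnmeasure} plus a crude bound on the misallocated $V_{r,n}$ for $D=7,11$. The differences are cosmetic: you pair $n$ with $-n$ via the exact symmetry of \cref{rem:symmetry-remark}, and you bound the straddling and cuspidal $V_{r,n}$ by counting boundedly many per row instead of the paper's lunar-area comparison (the count holds because the boundary circles through $0$ become lines under $z\mapsto 1/z$ while the $V_{r,n}$ become lattice translates of $I_D$), and your worry about $rn=0$ terms is moot for $n\geq n_D$, where only the single set $V_{0,n}$, of measure $O(n^{-4})$, occurs.
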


\begin{proof}
We begin with the $D=2$ case. We have
\begin{align*}
F(\eta, s)&=\exp\left(-s\frac{\pi i}{2}(1-\eta)\right) \sum_{n\geq 1} n^s \times \mu(V_n)\\
&\quad + \exp\left(s\frac{\pi i}{2}(1-\eta)\right) \sum_{n\geq 1} n^s \times \mu(V_{-n}),
\end{align*}
provided $\Re(s)<2$. Since omitting a finite number of terms in the sums above does not alter the analytic properties of $F(\eta, s)$, we can restrict ourselves to $|n|>n_0$ for some large $n_0$. Using now \cref{lem:measureVn} we obtain
\begin{align*}
F(\eta, s)&=\exp\left(-s\frac{\pi i}{2}(1-\eta)\right) \sum_{n\geq 1} n^s \mu(V_n)+ \exp\left(s\frac{\pi i}{2}(1-\eta)\right) \sum_{n\geq 1} n^s\mu(V_{-n})\\
&=H_1(s)+ \left(\exp\left(-s\frac{\pi i}{2}(1-\eta)\right)+\exp\left(s\frac{\pi i}{2}(1-\eta)\right)\right) \frac{\kappa_{P,2}\pi}{4\sqrt{2}} \sum_{n\geq n_0} n^{s-3}\\
&=H_2(s)+ \frac{\kappa_{P,2}\pi}{4\sqrt{2}} \left(\exp\left(-s\frac{\pi i}{2}(1-\eta)\right)+\exp\left(s\frac{\pi i}{2}(1-\eta)\right)\right)  \zeta(3-s)\\
&=H_2(s)+\frac{\kappa_{P,2}\pi}{2\sqrt{2}} \cos\left(\frac{s\pi}{2} (1-\eta)\right) \zeta(3-s),
\end{align*}
where the functions $H_1(s)$ and $H_2(s)$ account for the finite number of terms we omitted and the error term in the expansion for $\mu(V_n)$. In particular, both are analytic for $\Re(s)<3$. Therefore, $F(0, s)$ is a meromorphic function in the half-plane $\Re(s)<3$, with a simple pole at $s=2$.
\\
\par
Now we move to the cases $D \in \{7, 11\}$. The caveat here is that a given cell $V_{r,n}$ may intersect both a region $P \in \{ P_1, \dots, P_4 \}$ and a region $P \in \{ P_5, P_6 \}$ simultaneously. This could affect the application of \cref{lem:asymptotic-measure} as done above via \cref{lem:measureVn}. Since we do not wish to classify which $V_{r,n}$ falls into which of the many regions $P \in \Mpartition[2]$ with $0 \in \partial P$, we show that it suffices to consider only the pairs $(r,n)$ for which $V_{r,n}$ lies in one of the four regions determined by cutting $I_D$ along the lines of slope $\pm 1/\sqrt{D}$, weighting them according to the constants $\kappa_{P_1, D}$ and $\kappa_{P_6, D}$, in the sense of \cref{lem:asymptotic-measure}, see \cref{fig:D=117-central-split} for a visual support. This allows us to replace $\mu(V_n)$ with the $A(n)$ from \cref{lem:711Vnmeasure}.

Let us consider the three lines $l_1, l_2, l_3$ given by $l_1: y=-\frac{1}{\sqrt{D}} x$, $l_2: y=\frac{1}{\sqrt{D}} x$, and $l_3: x=0$ (again, use \cref{fig:D=117-central-split} for visual support). These lines are the tangents to the opposing circles whose arcs define the parts $P_1,\dots,P_6$. Notice that $P_1$ and $P_3$ lie between $l_2$ and $l_3$, and $P_2, P_4$ lie between $l_1$ and $l_3$. Similarly, $P_5$ and $P_6$ lie between $l_1$ and $l_3$. Let now $\alpha=r+n\left(\frac{1+\sqrt{-D}}{2}\right)$. Then, for $n\geq n_D$, we have $V_{r,n}=h_\alpha(I)$ and
\[
V_n=\bigcup_{r\in \mathbb{Z}} V_{r,n}.
\]
Note that (the interior of) $V_{r,n}$ may not be contained in a single $P\in \Mpartition$. Our first step is to bound the contribution from those $V_{r,n}$ not fully contained in a single $P_1,\dots,P_6$. Let us define the complement $B:= B_{1/|n_D|}(0)\setminus \bigcup_{i\leq 6} P_i$ and 
\[
a_n:=\sum_{\substack{r\in \mathbb{Z}\\ V_{r,n}\cap B \neq \emptyset}} \mu(V_{r,n}\cap B).
\]
Then we consider the sum
\begin{align*}
    H_3(s)=\sum_{n\geq n_D} a_n n^s.
\end{align*}
By the absolute continuity of the measure $\mu$ with the Lebesgue measure, a comparison with the lunar area integral
\begin{align*}
    \int_0^{1/n_D} \bigl(R-\sqrt{R^2-t^2}\bigr) t^{s} \,dt,
\end{align*}
where $R$ is the radius of the circles meeting the origin in \cref{fig:D=117-central-split}, shows that the series above converges absolutely and uniformly for $\Re(s)<3$, so $H_3(s)$ is analytic for $\Re(s)\leq 3$. A similar argument works for the contribution from those $V_{r,n}$ (partially) contained in some cuspidal $P_i$ for some $P_i\in \Mpartition$ (see \cref{fig:D=117-central-split}). Therefore, up to modifying $H_3(s)$ slightly to $H_4(s)$, we can assume that all $V_{r,n}$ with $-\frac{1}{D}\leq \frac{n}{2r+n}\leq \frac{1}{D}$ lie inside $P_5$ or $P_6$ and that all the others lie inside one of the four remaining $P_i$. 
Hence, using \cref{lem:asymptotic-measure}, we conclude 
\begin{align*}
F(\eta, s)&=\exp\left(-s\frac{\pi i}{2}(1-\eta)\right) \sum_{n\geq 1} n^{s} \mu(V_n) + \exp\left(s\frac{\pi i}{2}(1-\eta)\right) \sum_{n\geq 1} n^{s} \mu(V_{-n})\\
&=H_4(s)+ \left(\exp\left(-s\frac{\pi i}{2}(1-\eta)\right)+\exp\left(s\frac{\pi i}{2}(1-\eta)\right)\right)  \sum_{n>n_D} n^{s} A(n),
\end{align*}
where
\begin{align*}
A(n) = \kappa_{P_1, D} \sum_{\substack{r\in\mathbb{Z} \\ \left| \frac{n}{2r+n} \right| \ge \frac{1}{D}}} 
\frac{1}{\bigl[(r + n/2)^2 + n^2 D/4\bigr]^2}
+ \kappa_{P_6, D} \sum_{\substack{r \in \mathbb{Z} \\ -\frac{1}{D} \le \frac{n}{2r+n} \le \frac{1}{D}}} 
\frac{1}{\bigl[(r + n/2)^2 + n^2 D/4\bigr]^2}.
\end{align*}
Thus, by an application of \cref{lem:711Vnmeasure}, we have
\begin{align*}
F(\eta, s)&=\exp\left(-s\frac{\pi i}{2}(1-\eta)\right) \sum_{n\geq 1} n^{s} \mu(V_n) + \exp\left(s\frac{\pi i}{2}(1-\eta)\right) \sum_{n\geq 1} n^{s} \mu(V_{-n})\\
&=H_4(s)+ 2\cos\left(\frac{s\pi}{2}(1-\eta)\right) C_D  \sum_{n>n_D} n^{s-3},
\end{align*}
for some $C_D>0$, which concludes the proof.
\end{proof}

\begin{prop}[The evaluation of the integrals]\label{prop:evalintegralfinal}
There exist real constants $C_D, C_D'$, with $C_D>0$, such that the following asymptotic expansion 
    \begin{align*}
\int_I \left(e^{it\psi_I(z)}-1\right) d\mu(z)&= C_D \, t^2 \ln \vert t\vert + C'_D t^2 + \LandauO\left(t^{\frac{5}{2}}\right),
\end{align*}
holds.
\end{prop}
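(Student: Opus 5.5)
The plan is to feed \cref{prop:phi_Icontinuation} into the Mellin--Barnes representation of \cref{prop:integral-first-step}, and then let $\varepsilon\to0^+$. Take $\psi=\psi_I$. For $t>0$ we have, by dominated convergence,
\[
J_+(\varepsilon)+J_-(\varepsilon)+\mu(I_0)\;\longrightarrow\;\int_{I_D} e^{it\psi_I}\,d\mu \qquad (\varepsilon\to 0^+),
\]
since $|e^{(\mp\varepsilon+i)t\psi_I}|\le 1$ on $I_\pm$. The left-hand side of the target identity is therefore the limit, minus $1$, of the right-hand side of \cref{prop:integral-first-step}. The case $t<0$ follows by complex conjugation, because $\psi_I$ is real. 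This only changes $\ln|t|$ in the final answer, not the constants.

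\textbf{Step 1: the linear term.} We need the coefficient $F(\eta,1)$ of $t$ to vanish as $\eta\to0$. From the proof of \cref{prop:phi_Icontinuation},
\[
F(\eta,1)=e^{-\frac{\pi i}{2}(1-\eta)}\sum_{n\ge1} n\,\mu(V_n)+e^{\frac{\pi i}{2}(1-\eta)}\sum_{n\ge1} n\,\mu(V_{-n}).
\]
By \cref{rem:symmetry-remark} we have $\mu(V_n)=\mu(V_{-n})$, so this equals $2\cos(\tfrac{\pi}{2}(1-\eta))\sum_{n\ge1} n\,\mu(V_n)$. The series converges because $1<2$ is admissible (\cref{lem:2admissible}). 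Hence $F(\eta,1)=O(\eta)$, and the linear term is $o(t)$, uniformly for $t$ in compacts, as $\varepsilon\to0$.

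\textbf{Step 2: the Hankel contour around $s=2$.} By \cref{prop:phi_Icontinuation}, near $s=2$ we can write
\[
F(\eta,s)=h(s)+C\cos\bigl(\tfrac{s\pi}{2}(1-\eta)\bigr)\zeta(3-s),
\]
with $h$ analytic, and also $\zeta(3-s)=-\tfrac{1}{s-2}+\gamma+O(s-2)$. The Gamma factor has its own pole: $\Gamma(-s)=\tfrac{1}{2}\bigl(\tfrac{1}{s-2}+c_0\bigr)+O(s-2)$. So for $\eta=0$ the integrand $\Gamma(-s)F(0,s)t^s$ has a double pole at $s=2$, since $\cos(\pi)=-1\neq0$.

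Since everything is meromorphic there, I would shrink the Hankel contour to a small circle and compute the residue. Expanding $t^s=t^2\bigl(1+(s-2)\ln t+\dots\bigr)$, the double pole produces $\tfrac{C}{2}\,t^2\ln t$ plus a multiple of $t^2$. This fixes $C_D:=C/2>0$ up to sign bookkeeping; the orientation of the clockwise contour must be checked, and I expect that check to give the positive sign.

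For $\eta>0$ the pole structure is the same, with coefficients depending continuously on $\eta$. The factor $|1+i\varepsilon|^s\to1$ and the coefficients converge, so the limit is legitimate. The constant $C'_D$ collects $h(2)$, $\gamma$, the Gamma constant, and the $\partial_s$ of the cosine.

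\textbf{Step 3: the remainder line $\Re(s)=2+\delta$.} This is the main obstacle. We need the remaining integral to be $O(t^{5/2})$, so we take $\delta=\tfrac12$. This requires $F(\eta,s)$ to be analytic on $\Re s=\tfrac52$; that holds by \cref{prop:phi_Icontinuation}, since $\tfrac52<3$.

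We also need a growth bound in $\Im s$ that beats $\Gamma(-s)\ll e^{-\pi|\Im s|/2}|\Im s|^{-3}$. The difficulty is that $\cos(\tfrac{s\pi}{2}(1-\eta))$ grows like $e^{\pi(1-\eta)|\Im s|/2}$. This is exactly where $\eta>0$ (i.e. $\varepsilon>0$) is essential: the product decays like $e^{-\pi\eta|\Im s|/2}$, but not uniformly as $\eta\to0$. Beyond that we have $\zeta(3-s)\ll|\Im s|^{1/2+\epsilon}$ and $h(s)\ll e^{\pi(1-\eta)|\Im s|/2}$, the latter from the $O(n^{-4})$-error series in $\mu(V_n)$.

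I would try to recover uniformity in $\eta$ by splitting $h$ into the same two exponential factors times Dirichlet series of the shape $\sum b_n n^{s}$, with $b_n\ll n^{-4}$, which are bounded on $\Re s=\tfrac52$. After pairing with $\Gamma(-s)$, each term is $\ll|\Im s|^{-3+1/2+\epsilon}$, which is integrable uniformly in $\eta$. Hence the line integral is $\ll t^{5/2}$ uniformly in $\varepsilon$. Letting $\varepsilon\to0$ and subtracting $1$ then gives the claim.
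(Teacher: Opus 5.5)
Your proposal is correct and follows the paper's own argument. The shared steps are the Mellin--Barnes identity of \cref{prop:integral-first-step}, the vanishing of the linear term via \cref{rem:symmetry-remark}, the residue of the double pole at $s=2$ coming from \cref{prop:phi_Icontinuation}, and a bound on the line $\Re(s)=\tfrac52$ that is uniform in $\eta$. Your explicit control of $h(s)$ on that line is in fact more careful than the paper's one-line Stirling estimate.

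The one thing to fix is the sign you left open. In fact $\Gamma(-s)=-\frac{1}{2(s-2)}+\frac12\bigl(\frac32-\gamma\bigr)+\LandauO(s-2)$, so the residue is $\frac{t^2}{2}\bigl(-h(2)+\frac32 C-C\ln t\bigr)$, exactly as in the paper. The clockwise orientation then turns this into $+\frac{C}{2}t^2\ln t$, giving $C_D=C/2>0$. This is the sign it must have, since $\Re\int_{I_D}(e^{it\psi_I}-1)\,d\mu\le 0$ while $t^2\ln|t|<0$ for small $t$.
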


\begin{proof}
By \cref{prop:integral-first-step}, we have
\begin{align}
J_+(\varepsilon)+J_{-}(\varepsilon)+\mu(\{x\in I_D: \psi_I(x)=0\})&= 1 + F(\eta, 1)t  \notag \\
&+\frac{1}{2\pi i} \int_{H(2,2+\delta)} \Gamma(-s)F(\eta, s)t^{s}|1 + i\varepsilon|^{s} \,ds \label{eq:someq1} \\
&+  \frac{1}{2\pi i}\int_{\mathrm{Re}(s)=2+\delta} \Gamma(-s)F(\eta, s)t^{s}|1 + i\varepsilon|^{s} \,ds. \label{eq:someq2}
\end{align}
For $\Re(s)<2$ we have the representation of $F(\eta, s)$ as 
\begin{align*}
F(\eta, s)&=\int_{I_{+}}|\psi(z)|^s \exp\left(-s\frac{\pi i}{2}(1-\eta)\right) d\mu(z)+ \int_{I_{-}}|\psi(z)|^s \exp\left(s\frac{\pi i}{2}(1-\eta)\right) d\mu(z)\\
&= \exp\left(-s\frac{\pi i}{2}(1-\eta)\right) \sum_{n\geq 1} n^s \times \mu\bigl(\{z\in I: \psi_I(z)=n\}\bigr)\\
&\quad+  \exp\left(s\frac{\pi i}{2}(1-\eta)\right) \sum_{n\geq 1} n^s \times \mu\bigl(\{z\in I: \psi_I(z)=-n\}\bigr).
\end{align*}
Trivially, we have 
\begin{align*}
    \int_{I_D} e^{it\psi_I(z)} d\mu(z)= \left(\int_{I_0}+\int_{I_{-}}+\int_{I_{+}}\right)e^{it\psi_I(z)} d\mu(z).
\end{align*}
Notice that by dominated convergence, we have 
\begin{align*}
    \lim_{\varepsilon\to 0^+} J_{\pm}(\varepsilon)= \int_{I_\pm} e^{it\psi_I(z)} d\mu(z).
\end{align*}
Therefore, we only need to handle $F(\eta,s)$ for $\eta=\arctan(\varepsilon)\leq \varepsilon\leq \eta'$. By \cref{rem:symmetry-remark} and since $\exp\left(\pm \frac{\pi i}{2}\right)=\pm i$, we conclude $F(0, 1)=0$. Select now $\delta=\frac{1}{2}$.  By the Stirling approximation for large $\tau$ and the standard bound $\zeta(\frac{1}{2}+i\tau)\ll (1+\tau)^{\frac{1}{6}}$ (see e.g \cite{Montgomery-Vaughan-Multnumbertheory, Grad-Riz-Tables}), we have
\[
|\Gamma(-s) F(\eta, s)| \ll |\tau|^{-\frac{17}{6} + \varepsilon} e^{-\frac{\pi}{2}\eta |\tau|}, \qquad \tau \to \pm\infty.
\]
So the second integral in \eqref{eq:someq2} is $\ll t^{\frac{5}{2}}$, uniformly in $\eta\geq 0$.
By \cref{prop:phi_Icontinuation} and the residue theorem for \eqref{eq:someq1}, we find
\begin{align*}
\operatorname{Res}_{s=2} \left[ \Gamma(-s) t^s F(0,s) \right]&= \frac{t^2}{2} \left[ -h(2) + \frac{3}{2} C_D - C_D \log t \right].
\end{align*}
Whence, passing to the limit $\epsilon\to 0^+$ we have
\begin{align*}
\int_I \left(e^{it\phi(z)}-1\right) d\mu(z)= \frac{C_D}{2} t^2 \log t + C_D't^2+ \LandauO\left(t^{\frac{5}{2}}\right),
\end{align*}
from which the statement follows upon adjusting the constants $C_D, C'_D$.
\end{proof}

We are now ready to prove \cref{prop:evaloftheintegral}.

\begin{proof}[Proof of \cref{prop:evaloftheintegral}]
We present the proof for the case $D=2$; the other two cases are nearly identical.
First, let $\alpha=r+in\sqrt{2}$. Then, recalling \eqref{defn:Vn} and absolute continuity of $\mu$, we have
\begin{align}\label{eq:intbound}
    \int_{I_2}\vert e^{it\psi_I(z)}-1\vert d\mu(z)\ll \sum_{\substack{n\in \ZZ} }\vert e^{itn}-1\vert \mu(V_n)\ll t.
\end{align}
Consider
\begin{align}\label{eq:twist}
    \int_{I_2} \bigl(e^{-it\psi_I(\Tgaus(z))}-1\bigr)\bigl(e^{it\psi_I(z)}-1\bigr) \,d\mu(z).
\end{align}
Recall that by \cref{rem:badalphas}, there are finitely many $\alpha\in \ringint$ such that $\Tgaus O_\alpha \not =I_2$. Therefore, we split the integral as
\begin{align*}
    \eqref{eq:twist}=\Bigl(\sum_{\substack{\alpha\in \ringint\\ \Tgaus O_\alpha \not = I_2}} + \sum_{\substack{\alpha\in \ringint\\ \Tgaus O_\alpha = I_2}}\Bigr)\int_{O_\alpha} \left(e^{-it\psi_I(\Tgaus(z))}-1\right)\left(e^{itn}-1\right) d\mu(z).
\end{align*}
We begin with the finite sum. Using $\vert e^{itn}-1\vert \ll \vert tn\vert$, the absolute continuity of $\mu$ the change of variables $z'=\Tgaus(z)$ and \eqref{eq:intbound}, we estimate
\begin{align*}
    \ll \sum_{\substack{\alpha\in \ringint\\ \Tgaus O_\alpha \not = I_2}} \vert tn\vert \int_{O_\alpha} \vert e^{-it\psi_I(\Tgaus(z))}-1\vert dxdy\ll \sum_{\substack{\alpha\in \ringint\\ \Tgaus O_\alpha \not = I_2}} \vert tn\vert \int_{I_2} \vert e^{-it\psi_I(z')}-1\vert dxdy\ll t^2.
\end{align*}
all the $\alpha$ in the second sum, we have $\Tgaus O_\alpha=I_2$. Therefore, taking the substitution $z'=\frac{1}{z}-\alpha$ and \eqref{eq:intbound}, we have
\begin{align*}
    \sum_{\substack{\alpha\in \ringint\\ \Tgaus O_\alpha = I_2}}\int_{O_\alpha} \left(e^{-it\psi_I(\Tgaus(z))}-1\right)\left(e^{itn}-1\right) d\mu(z)\ll \int_{I_2} \vert e^{it\psi_I(z)}-1\vert dxdy \times \sum_{n\geq 1} \frac{\vert e^{itn}-1\vert}{n^4}\ll t^2.
\end{align*}
Thu, we conclude that $\eqref{eq:twist}\ll t^2$.
Now notice that, by invariance of the measure together with \cref{prop:evalintegralfinal}, we have
\begin{align*}
    \int_{I_2} \bigl(e^{it\psi_I(\Tgaus(z))}-1\bigr) \,d\mu(z)
    = \int_{I_2} \bigl(e^{it\psi_I(z)}-1\bigr) \,d\mu(z)
    = C_D \, t^2 \ln|t| + C_D' t^2 + \mathcal{O}\!\left( t^{3-\epsilon} \right),
\end{align*}
and we therefore conclude that 
\begin{align*}
    \int_{I_2} \bigl(e^{it\Psi(z)}-1\bigr) \,d\mu(z) = C_D \, t^2 \ln|t| + \mathcal{O}\!\left( t^{2} \right).
\end{align*}
\end{proof}

\section{Proof of \cref{thm:maintheorem}}\label{sec:proof-of-the-theorem}
To conclude the proof of our main theorem, we require the following version of the Berry--Esseen inequality from \cite[Theorem 7.16]{Tenenbaum}.
\begin{lem}\label{lem:Berry-Eessen}
Let $F, G$ be two distribution functions with respective characteristic
functions $f, g$. Suppose that $G$ is differentiable and that $G'$ is bounded
on $\RR$. Then for all $T>0$,
\begin{equation*}
\|F-G\|_\infty
\le 16\frac{\|G'\|_\infty}{T} + 6\int_{-T}^{T}\frac{|f(\tau)-g(\tau)|}{|\tau|}\,d\tau.
\end{equation*}
\end{lem}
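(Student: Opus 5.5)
The plan is the classical smoothing argument: convolve $\Delta:=F-G$ with a Fejér-type kernel whose Fourier transform has compact support in $[-T,T]$. On the Fourier side the smoothed difference is controlled by $\int_{-T}^{T}|f-g|/|\tau|$. On the physical side the smoothing can only lose a quantity of size $\|G'\|_\infty/T$, because $F$ is nondecreasing and $G$ is Lipschitz. Throughout I assume $\int_{-T}^{T}|f(\tau)-g(\tau)|/|\tau|\,d\tau<\infty$, since otherwise there is nothing to prove. Write $M:=\|G'\|_\infty$ and $\eta:=\|\Delta\|_\infty$, and note that $\Delta$ is bounded, of bounded variation, and tends to $0$ at $\pm\infty$.

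\textbf{Step 1 (Fourier side).} Take the kernel
\[
K_T(x)=\frac{1-\cos(Tx)}{\pi T x^2},
\]
which is a nonnegative probability density whose Fourier transform is $k_T(u)=(1-|u|/T)_+$. Since $d\Delta=dF-dG$ has Fourier–Stieltjes transform $f-g$, the function $\Delta*K_T$ is the primitive of a signed measure with transform $(f-g)k_T$, and it vanishes at $-\infty$. Integrating by parts in the inversion formula gives
\[
(\Delta*K_T)(x)=\frac{1}{2\pi}\int_{-T}^{T}e^{-iux}\,\frac{f(u)-g(u)}{-iu}\,k_T(u)\,du .
\]
The integrand is integrable because $f(0)=g(0)=1$ and by the finiteness assumption. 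Since $|k_T|\le 1$, this yields $\sup_x|(\Delta*K_T)(x)|\le \frac{1}{2\pi}\int_{-T}^{T}\frac{|f-g|}{|u|}\,du$. I would justify the interchange and inversion by first smoothing further with a Gaussian of variance $\to0$ and then passing to the limit.

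\textbf{Step 2 (physical side).} Suppose first that $\eta$ is approached by positive values of $\Delta$. By right continuity there is $x_0$ with $\Delta(x_0)\ge \eta-\epsilon$. For $s\ge 0$ the monotonicity of $F$ and the Lipschitz bound on $G$ give $\Delta(x_0+s)\ge \Delta(x_0)-Ms$. Put $h=\eta/(2M)$ and evaluate the convolution at $x_0+h$:
\[
(\Delta*K_T)(x_0+h)=\int \Delta(x_0+h-y)K_T(y)\,dy .
\]
For $|y|\le h$ we have $\Delta(x_0+h-y)\ge \eta/2-\epsilon$. Elsewhere we only use $\Delta\ge-\eta$. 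The tail estimate $\int_{|y|>h}K_T\le \frac{4}{\pi T h}$ then gives
\[
(\Delta*K_T)(x_0+h)\ge \frac{\eta}{2}-\frac{3\eta}{2}\cdot\frac{4}{\pi T h}=\frac{\eta}{2}-\frac{12M}{\pi T}.
\]
The case where $\eta$ is approached by negative values of $\Delta$ is symmetric: there one uses $\Delta(x_0-s)\le \Delta(x_0)+Ms$ and evaluates at $x_0-h$.

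\textbf{Step 3 (conclusion).} Combining the two steps,
\[
\eta\le \frac{24M}{\pi T}+\frac1\pi\int_{-T}^{T}\frac{|f-g|}{|u|}\,du ,
\]
which is stronger than the stated inequality with constants $16$ and $6$. If the bookkeeping turns out tighter than expected, the room in the constants absorbs it, for instance if $h$ has to be rescaled or if one handles separately the case $Th$ small, where the claim is trivial because $\eta\le 1$.

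\textbf{Main obstacle.} I expect the technical point to be the rigorous form of the inversion formula in Step 1 for merely bounded-variation $\Delta$, specifically the behaviour of $(f-g)/u$ near $u=0$. I would handle it by the auxiliary Gaussian regularisation, together with dominated convergence using the assumed integrability.
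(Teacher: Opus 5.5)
Your argument is the classical Esseen--Feller smoothing proof with the Fej\'er kernel. Once one step is repaired it is correct, and it gives the sharper constants $24/\pi<16$ and $1/\pi<6$. The paper does not prove this lemma; it quotes it from Tenenbaum's book. Your proof is therefore a self-contained replacement for that citation, not an alternative to an argument in the text.

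The step that needs repair is in Step 2. The pointwise claim $\Delta(x_0+h-y)\ge \eta/2-\epsilon$ for $|y|\le h$ is false. Write $x_0+h-y=x_0+s$ with $s=h-y\in[0,2h]$. Your monotonicity/Lipschitz bound then only gives
\[
\Delta(x_0+h-y)\ \ge\ \eta-\epsilon-M(h-y)\ =\ \tfrac{\eta}{2}-\epsilon+My ,
\]
and this falls to $-\epsilon$ at $y=-h$. What rescues the estimate is that $K_T$ is even, so $\int_{|y|\le h}y\,K_T(y)\,dy=0$. Hence
\[
\int_{|y|\le h}\Delta(x_0+h-y)K_T(y)\,dy\ \ge\ \Bigl(\tfrac{\eta}{2}-\epsilon\Bigr)\int_{|y|\le h}K_T(y)\,dy .
\]
Combine this with $\Delta\ge-\eta$ on $|y|>h$ and your tail bound $\int_{|y|>h}K_T\le 4/(\pi T h)$. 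The result is exactly your displayed inequality, up to an extra $-\epsilon$ that you then send to $0$. The negative case works the same way, using $\Delta(x_0-h-y)\le -\eta/2+\epsilon+My$ for $|y|\le h$. This is how Feller handles it, and the evenness of the kernel is essential here.

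In Step 1 the Gaussian regularisation is unnecessary. $F*K_T$ and $G*K_T$ are distribution functions whose characteristic functions $fk_T$ and $gk_T$ are integrable, so they have continuous densities given by Fourier inversion. The right-hand side $R(x)$ of your formula is differentiable, and $R'$ equals the difference of these densities; the $x$-derivative of the integrand is bounded by $2$ on $[-T,T]$. Both $\Delta*K_T$ and $R$ tend to $0$ as $x\to-\infty$: the first by dominated convergence, the second by the Riemann--Lebesgue lemma using your integrability assumption. Hence $\Delta*K_T=R$, and the bound $\sup|\Delta*K_T|\le\frac{1}{2\pi}\int_{-T}^{T}|f-g|/|u|\,du$ follows directly.
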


As in \cite{BettinDrappeau-main}, we encounter some difficulty in the estimating integral above when $\tau$ is very small. To this end, we need the following lemma, which essentially ensures the existence of a low moment for $S$ in \eqref{eq:S-var-definition}.
\begin{lem}\label{lem:veryverysmalt}
There exists a sufficiently large $L>0$, such that, for $|t|\le 1$,
\[
\ExpX\!\left[e^{it S(z)}\right]
=
1
+
\mathcal{O}\!\left( |t|\log^{L} X\right).
\]
\end{lem}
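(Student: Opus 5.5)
The plan is to linearize the exponential and reduce the statement to a first-moment bound for $\vert S\vert$. Since $\vert e^{iu}-1\vert\le \vert u\vert$ for real $u$, we have
\[
\bigl\vert \ExpX[e^{itS(z)}]-1\bigr\vert \le \vert t\vert\, \ExpX\bigl[\vert S(z)\vert\bigr].
\]
It therefore suffices to prove $\ExpX[\vert S(z)\vert]\ll \log^{L}X$ for some fixed $L$; I expect $L=2$, and even $L=1$, to be attainable. By \eqref{eq:total-cost} and the choice of $\psi_1,\psi_2$, for $z=\langle 0;\alpha_1,\dots,\alpha_{\ell(z)}\rangle$ we have
\[
\vert S(z)\vert\le \frac{2}{\sqrt D}\sum_{j=1}^{\ell(z)}\vert \alpha_j\vert .
\]
So the task is to bound the average over $\FFFF(X)$ of $\sum_{j\le \ell(z)}\vert\alpha_j\vert$.

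\emph{Step 1: the length is deterministically logarithmic.} For $z=h_{\tick{\alpha}}(0)$ with $\vert\tick{\alpha}\vert=\ell(z)$ and $z=a/c$, the lemma relating heights and Jacobians gives $J_{\tick{\alpha}}(0)=\vert c\vert^{-4}$. The contraction ratio \eqref{eq:contractionratio} satisfies $\rho<1$, so $J_{\tick{\alpha}}(0)\le C\rho_1^{\ell}$ for some $\rho<\rho_1<1$. Hence $\ell(z)\ll \log \vert c\vert+1\ll \log X$ for every $z\in\FFFF(X)$. Consequently
\[
\ExpX\Bigl[\sum_{j\le\ell(z)}\vert\alpha_j\vert\Bigr]\le \sum_{j\le C\log X}\ExpX\bigl[\vert\alpha_j(z)\vert\,\mathbf 1_{\ell(z)\ge j}\bigr].
\]
It is then enough to show that each summand is $\ll \log X$, uniformly in $j$; in fact I expect $\ll 1$.

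\emph{Step 2: counting rationals with a prescribed digit.} Fix $j$ and $\alpha\in\ringint\setminus\{\pm1\}$. Any $z\in\FFFF(X)$ with $\alpha_j(z)=\alpha$ is of the form
\[
z=h_{\tick{\beta}}\circ h_\alpha(w),
\]
where $\tick{\beta}$ is an admissible branch of depth $j-1$ and $w=\Tgaus^{j}(z)\in K\cap I_D$ has height $\hfat(w)=\vert c_w\vert$. By the chain rule, the bounded distortion property (as used in the proof of \cref{lem:3-step-lemma}), and $J_{\tick\gamma}(0)=\hfat^{-4}$, we get
\[
\hfat(z)^{-4}\asymp J_{\tick{\beta}}(0)\,\vert\alpha\vert^{-4}\,\hfat(w)^{-4}.
\]
Thus $\hfat(z)^2<X$ forces $\hfat(w)^2\ll X\,J_{\tick\beta}(0)^{1/2}\vert\alpha\vert^{-2}$, and in particular $\vert\alpha\vert^2\ll X$. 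The number of $w$ of height $\hfat(w)^2\le Y$ is $\ll Y^2$, by the same count that gives $\#\FFFF(X)\asymp X^2$. Summing over $\tick\beta$ gives
\[
\#\{z\in\FFFF(X):\alpha_j(z)=\alpha\}\ll \frac{X^2}{\vert\alpha\vert^{4}}\sum_{\vert\tick\beta\vert=j-1}J_{\tick\beta}(0)\ll \frac{X^2}{\vert\alpha\vert^{4}}.
\]
The last step uses that $\sum_{\vert\tick\beta\vert=n}J_{\tick\beta}(0)$ is comparable to $\Vert\Myoperator^{n}[1]\Vert_a$, which is bounded uniformly in $n$ by \cref{prop:quasi-comp-swfamily} at $(s,t)=(1,0)$. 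Here one needs to treat the Markov-partition restrictions as in \cref{rem:quicknote}, since only finitely many $\alpha$ yield non-full branches.

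\emph{Step 3: summation.} Dividing by $\#\FFFF(X)\asymp X^2$ and summing over $2\le\vert\alpha\vert\ll X^{1/2}$ (there are $\asymp A\,dA$ lattice points at radius $A$) gives
\[
\ExpX\bigl[\vert\alpha_j\vert\mathbf 1_{\ell\ge j}\bigr]\ll\sum_{\vert\alpha\vert\ll\sqrt X}\vert\alpha\vert^{-3}\ll 1.
\]
Combining with Step 1 yields $\ExpX\vert S\vert\ll\log X$, which proves the lemma, with any $L\ge1$ and in particular some large $L$. The main obstacle is Step 2. The issue is making the factorization $\hfat(z)\asymp\hfat(\tick\beta)\vert\alpha\vert\hfat(w)$ rigorous uniformly over all depths, given that the branches are not full. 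My first attempt is to work entirely with restricted branches $\langle\tick\beta\rangle\in B^{j-1}(\cdot,\cdot)$ and the uniform distortion constants from \cite{Kimetall}. If that gets messy, the fallback is to lose a factor $\log X$ by crudely bounding $\sum_{\tick\beta}J_{\tick\beta}(0)$ through the Lasota--Yorke inequality \eqref{eq:LS-nonnormalized}, which is harmless because $L$ is allowed to be large.
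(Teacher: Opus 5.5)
Your proposal is correct and follows the same route as the paper. The paper also linearizes via $|e^{itS(z)}-1|\le |t|\sum_{j}|\psi_j(\Tgaus^{j-1}(z))|$, but then simply defers the resulting first-moment bound to Bettin--Drappeau and Baladi--Hachemi. Your explicit digit count supplies that step: $\ell(z)\ll\log X$ deterministically, and $\#\{z\in\FFFF(X):\alpha_j(z)=\alpha\}\ll X^2|\alpha|^{-4}$ uniformly in $j$. It even gives $L=1$, because $\sum_{\alpha}|\alpha|^{-3}<\infty$ over the two-dimensional lattice $\ringint$.

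The distortion issue you flag is resolved as follows. Group the words $\tick{\beta}$ by the part of $\Mpartition$ containing $h_\alpha(w)$; by the Markov property, admissibility depends only on that part. Then compare $J_{\tick{\beta}}$ with its value at a reference point of that part. The sum over $\tick{\beta}$ becomes $\Myoperator^{j-1}[1]$ evaluated at finitely many points, which is uniformly bounded in $j$.
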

\begin{proof}
Let $z\in \FFFF$. Then, for $t$ sufficiently small, we have 
\begin{align*}
    \vert e^{itS(z)}-1\vert\leq \vert t \vert \sum_{j=1}^{r(z)} \vert \psi_j(\Tgaus^{j-1}(z)\vert.
\end{align*}
The statement then follows along the same lines as \cite{BettinDrappeau-main}, see also \cite[Lemma 3.1-3.2]{BaladiHachemi}.
\end{proof}

\begin{proof}[Proof of \cref{thm:maintheorem}]
Using \cref{prop:asymptotic:expansion} together with \cref{lem:s0_asymptotic} and \cref{prop:evaloftheintegral}, we have
\begin{align*}
\chi_S(t)&=\exp\bigl(\log X \, (2s_0(t)-2)\bigr)\left(1+\mathcal{O}\!\left(\frac{1}{X^\delta}+|t|\right)\right)\\
&=\exp\Bigl( \log X \bigl( \kappa_D t^2 \log|t| + \mathcal{O}(t^2) \bigr) \Bigr)\left(1+\mathcal{O}\!\left(\frac{1},{X^\delta}+|t|\right)\right)
\end{align*}
for some $\kappa_D>0$. Now, given any $v\in \FFFF(X)$ we have the approximation $\tilde{\mathfrak{D}}(v)=S(v)+\mathcal{O}(1)$ form \eqref{eq:dedekindapprox}. Define the characteristic function
\begin{align*}
    f(t):=\ExpX\!\left[\exp\!\left( it\frac{S(z)}{\sqrt{\kappa_D\log X\log\log X}}\right)\right]
\end{align*}
and set $u:=\frac{t}{\sqrt{\kappa_D \log X\log\log X}}$. Notice that $f(t)=\chi_{X}(u)$. Thus, using \cref{prop:asymptotic:expansion} together with \cref{lem:s0_asymptotic} and \cref{prop:evaloftheintegral}, we have
\begin{align*}
\chi_S(u) = \exp\!\left[ \frac{u^2}{\log\log X} \left( \log \vert u \vert - \frac12 \log\log X + \mathcal{O}(\log\log\log X) \right) \right] \bigl( 1 + g(X,u) \bigr),
\end{align*}
with
\begin{align*}
    g(X,u) = \mathcal{O}\left( \frac{1}{X^\delta} + \frac{|u|}{\sqrt{(\log X)(\log\log X)}}\right).
\end{align*}
Let $0 \le T_1 \le T_2 \le T$ be parameters at our disposal. Selecting $T = (\log\log X)^{1-\epsilon}$ in \cref{lem:Berry-Eessen}, we only need to show
\begin{align*}
    \left(\int_{0}^{T_1} + \int_{T_1}^{T_2} + \int_{T_2}^{T} \right)
    \frac{|e^{-u^2/2} - \chi_S(u)|}{u} \, du \ll \frac{1}{\left(\log\log X\right)^{1-\epsilon}}.
\end{align*}
 For $0 \le u \le T_1 = 1/(\log^{2L} X)$, we estimate using \cref{lem:veryverysmalt},
\begin{align*}
    \int_{0}^{T_1} \frac{|e^{-u^2/2} - \chi_S(u)|}{u} \, du 
    \ll \frac{1}{\log X}.
\end{align*}
For $\frac{1}{\log^{2L} X} \le u \le \log\log\log X$, we have
\begin{align*}
    e^{-u^2/2} - \chi_S(u) 
    &= e^{-u^2/2} \Bigl( 1 - \exp\Bigl[ \frac{u^2}{\log\log X} 
       \bigl( \log u + \mathcal{O}(\log\log\log X) \bigr) \Bigr] \Bigr) \\
    &\ll \frac{e^{-u^2/2} \, u^2 (\log u + \log\log\log X)}{\log\log X},
\end{align*}
and therefore
\begin{align*}
    \int_{T_1}^{T_2} \frac{|e^{-u^2/2} - \chi_S(u)|}{u} \, du 
    &\ll \frac{1}{\sqrt{(\log X)(\log\log X)}} 
       \int_{1/(\log^{2L} X)}^{\log\log\log X} e^{-u^2/2} (u\log u + u) \, du \\
    &\ll \frac{1}{\log\log X}.
\end{align*}
Finally, with $T_2 = \log\log\log X$ we obtain
\begin{align*}
    \int_{\log\log\log X}^{T} \frac{|e^{-u^2/2}-\chi_S(u)|}{u} \, du \ll \frac{1}{\left(\log\log X\right)^{1-\epsilon}},
\end{align*}
which finishes the proof.
\end{proof}

\begin{proof}[Proof of \cref{cor:ourcorollary}]
Since for any $z\in K$ and any $a\in \ringint$ we have $\tilde{\mathfrak{D}}(a+z)=\tilde{\mathfrak{D}}(z)$, it is enough to consider $I_2\cap \{\Im(z)\geq 0\}$. We also know that $\tilde{\mathfrak{D}}(-z)=-\tilde{\mathfrak{D}}(\bar{z})$, see \cite[Theorem 1]{ItoConj}. Then, \cref{thm:maintheorem} ensures the lower bound $\gg \sqrt{\log X\log \log X}$ for the restricted first moment.
\end{proof}

\bibliographystyle{amsplain}
 \bibliography{bibliography}

\end{document}